\documentclass{amsart} 

\usepackage{float}
\usepackage{etoolbox}
\patchcmd{\subsection}{-.5em}{.5em}{}{}
\usepackage{graphicx}
\usepackage{fancyhdr}
\pagestyle{fancy}
\lhead{\it Least p-Variances Theory}
\rhead{\it Mohammad Masjed-Jamei}
%\fancyhf{}
%\renewcommand{\headrulewidth}{2pt}

\usepackage{makecell}

\graphicspath{{Images/}}
\usepackage{color}
\usepackage{lscape}
\usepackage{subfigure}
\newtheorem{theorem}{Theorem}[section]

\theoremstyle{definition}
\newtheorem{definition}[theorem]{Definition}
\newtheorem{corollary}[theorem]{Corollary}
\theoremstyle{remark}
\newtheorem{remark}[theorem]{{\bf{Remark}}}
\numberwithin{equation}{section}
\usepackage[]{hyperref}

%\usepackage{chngcntr}
%\newcounter{case}[section]
%\newenvironment{case}[1][]{\refstepcounter{case}
%   \textbf{Case\,\,\thecase. #1}}{}
%\counterwithin{case}{section}

\newtheorem{example}[theorem]{Example}

\usepackage{calc}
\setlength{\footskip}{\paperheight
  -(1in+\voffset+\topmargin+\headheight+\headsep +\textheight)
  -0.5in}

\voffset=-1.5cm 
\headsep=30pt

\usepackage{layout}

\makeatletter
\@namedef{subjclassname@2020}{\textup{2020} Mathematics Subject Classification}
\makeatother

\begin{document}

%\layout

\title[Least p-Variances Theory]{Least p-Variances Theory}

%\author[M. Masjed-Jamei]{\sc Mohammad Masjed-Jamei$ ^{1,2}$}
\author[M. Masjed-Jamei]{by\\ \sc Mohammad Masjed-Jamei$ ^{1,2}$}
\thanks{$ ^{1} $Research Fellow of the Alexander von Humboldt Foundation, Germany,\\(mmjamei@gmail.com).}
\thanks{$ ^{2} $Faculty of Mathematics, K.N.Toosi University of Technology, Iran, (mmjamei@kntu.ac.ir).}

%\address[M. Masjed-Jamei]{Department of Mathematics, K.N.Toosi University of Technology, Tehran, Iran.}
%
%\email[M. Masjed-Jamei ]{,~}

\subjclass[2020]{Primary: 93E24, 62J10, 42C05, Secondary: 33C47, 65F25.}
\date{}

\begin{abstract}
As a result of a rather long-time research started in 2016, this theory whose structure is based on a fixed variable and an algebraic inequality, improves and somehow generalizes the well-known least squares theory. In fact, the fixed variable has a fundamental role in constituting the least p-variances theory. In this sense, some new concepts such as p-covariances with respect to a fixed variable, p-correlation coefficient with respect to a fixed variable and p-uncorrelatedness with respect to a fixed variable are first defined in order to establish least p-variance approximations. Then, we obtain a specific system called p-covariances linear system and apply the p-uncorrelatedness condition on its elements to find a general representation for p-uncorrelated variables. Afterwards, we apply the concept of p-uncorrelatedness for continuous functions particularly for polynomial sequences and find some new sequences such as a generic two-parameter hypergeometric polynomial of $ _4{F_3} $ type that satisfy such a p-uncorrelatedness property. In the sequel, we obtain an upper bound for 1-covariances, an approximation for p-variances, an improvement for the approximate solutions of over-determined systems and an improvement for the Bessel inequality and Parseval identity. Finally, we generalize the notion of least p-variance approximations based on several fixed orthogonal variables. For a better comprehension, here we bring a list of the main contents of this theory including 15 sections.

\end{abstract}

\keywords{Least p-Variance approximations, Least squares theory, p-Covariances and p-Correlation coefficients, p-Uncorrelatedness with respect to a fixed variable, Hypergeometric polynomials, generalized Gram-Schmidt orthogonalization process.}
\maketitle

\tableofcontents

\section{Introduction}\label{sec1}
Least squares theory is known as an essential tool for optimal approximations and regression analysis and has found extensive applications in mathematics, statistics, physics and engineering \cite{ref6, ref9, ref25, ref30}. Some important parts of this theory include: best linear unbiased estimator, Gauss-Markov theorem, moving least squares method and its improvements \cite{ref12, ref28}, Least-squares spectral analysis, orthogonal projections and least squares function and polynomial approximations \cite{ref7}.

It was officially discovered and published by A. M. Legendre \cite{ref13} in 1805 though it is also co-credited to C. F. Gauss who contributed significant theoretical advances to the method and may have previously used it in his work in 1795 \cite{ref29}.

The base of this theory is to minimize the sum of the squares of the residuals, which are the difference between observed values and the fitted values provided by an approximate linear (or nonlinear) model.

In many linear models \cite{ref27}, in order to reduce the influence of errors in the derived observations, one would like to use a greater number of samplings than the number of unknown parameters in the model, which leads to the so called overdetermined linear systems. In other words, let $ b \in {{\bf{R}}^m} $ be a given vector and $ A \in {{\bf{R}}^{m\times n}} $ for $ m>n $ a given matrix. The problem is to find a vector $ x \in {{\bf{R}}^n} $ such that $ Ax $ is the best approximation to $ b $.

There are many possible ways of defining the best solution, see \cite{ref2}. A very common choice is to let $ x $ be a solution of the minimization problem
\[\mathop {\min }\limits_x \,\,\left\| {Ax - b\,} \right\|_2^2,\]
where $ \parallel . \parallel_{2} $ denotes the Euclidean vector norm. Now, if one refers to $ r = b - Ax $ as the residual vector, a least squares solution in fact minimizes $ \left\| {r\,} \right\|_2^2 = \sum\limits_{i = 1}^m {r_i^2}  $.

In linear statistical models, one assumes that the vector $ b \in {{\bf{R}}^m} $ of observations is related to the unknown parameters vector $ x \in {{\bf{R}}^n} $ by a linear relation
\begin{equation}\label{eq1.1}
Ax = b + e,
\end{equation}
where $ A \in {{\bf{R}}^{m\times n}} $ is a predetermined matrix of rank $ n $ and $ e $ denotes a vector of random errors. In the standard linear model, the following conditions are known as the basic conditions of Gauss-Markov theorem:
\begin{equation}\label{eq1.2}
E(e) = 0\,\,\,\,\,\,\,{\rm{and}}\,\,\,\,\,\,{\mathop{\rm var}} (e) = {\sigma ^2}{I_n},
\end{equation}
i.e., the random errors $ e_{i} $ are uncorrelated and all have zero means with the same variance in which $ {\sigma ^2} $ is the true error variance.

In summary, the Gauss-Markov theorem expresses that if the linear model \eqref{eq1.1} is available, where $ e $ is a random vector with mean and variance given by \eqref{eq1.2}, then the optimal solution of \eqref{eq1.1} is the least squares estimator, obtained by minimizing the sum of squares $ \,\left\| {Ax - b - e\,} \right\|\,_2^2 $. Furthermore, $ E({s^2}) = {\sigma ^2} $, where
\[ {s^2} = \frac{1}{{m - n}}\left\| {b - Ax - Ae\,} \right\|_2^2. \]
Similar to other available approximate techniques, least squares method contains some constrains and limitations. For instance, in the above mentioned theorem, the conditions \eqref{eq1.2} are necessary.

In this research, we introduce a new fixed variable in the minimization problem to somehow miniaturize the primitive quantity associated with sampling errors. Let us begin with a basic identity which gives rise to an important algebraic inequality, too.

Let ${\bf{S}}  $ denote an inner product space. Knowing that the elements of $ {\bf{S}} $ satisfy four properties of linearity, symmetry, homogeneity and positivity, there is an identity in this space which is directly related to the definition of the projection of two arbitrary elements of $ {\bf{S}} $, i.e. 
\begin{equation}\label{eq1.3}
{\rm{pro}}{{\rm{j}}_{\,{x_2}}}{x_1}\, = \frac{{\left\langle {{x_1},{x_2}} \right\rangle }}{{\left\langle {{x_2},{x_2}} \right\rangle }}\,\,{x_2}\,,
\end{equation}
where $ \left\langle {{x_1},{x_2}} \right\rangle  $ indicates the inner product of $ x_{1} $ and $ x_{2} $ and $ \left\langle {{x_2},{x_2}} \right\rangle  = \left\| {{x_2}} \right\|{\,^2}\, \ge 0 $ denotes the norm square value of $ x_{2} $. In other words, suppose $ x,y,z $ are three specific elements of $ {\bf{S}} $ and $ p \in [0,1] $ is a free parameter. Noting \eqref{eq1.3}, the following identity holds true
\begin{equation}\label{eq1.4}
\left\langle {x - (1 - \sqrt {1 - p} )\,{\rm{pro}}{{\rm{j}}_{\,z}}x\,,\,\,y - (1 - \sqrt {1 - p} )\,{\rm{pro}}{{\rm{j}}_{\,z}}y} \right\rangle  = \left\langle {x,y} \right\rangle  - p\frac{{\left\langle {x,z} \right\rangle \left\langle {y,z} \right\rangle }}{{\left\langle {z,z} \right\rangle }},
\end{equation}
and for $ y=x $ it leads to the Schwarz inequality
\begin{equation}\label{eq1.5}
\left\langle {x,x} \right\rangle  - p\frac{{{{\left\langle {x,z} \right\rangle }^2}}}{{\left\langle {z,z} \right\rangle }} \ge 0\,,\,\,\,\,\,\,\,\forall p \in [0,1].
\end{equation}
This identity can also be presented in mathematical statistics. Essentially as the content of this research is about the least variance of an approximation based on a fixed variable, if we equivalently use the expected value symbol $ E(.) $ instead of the inner product symbol $\left\langle . \right\rangle   $, though there is no difference between them for use, we will get to a new definition of covariance notion as follows:

\begin{definition}\label{def1.1}
p-Covariances with respect to a fixed variable\\
Let $ X, Y $ and $ Z $ be three random variables and $ p \in [0,1] $. With reference to \eqref{eq1.4} we correspondingly define
\begin{align}\label{eq1.6}
{{\mathop{\rm cov}} _p}(X,Y;Z) &= E\left( {\left( {X - (1 - \sqrt {1 - p} )\frac{{E(XZ)}}{{E({Z^2})}}\,Z} \right)\left( {Y - (1 - \sqrt {1 - p} )\frac{{E(YZ)}}{{E({Z^2})}}\,Z} \right)} \right)\\
&= E(XY) - p\frac{{E(XZ)E(YZ)}}{{E({Z^2})}}\,,\nonumber
\end{align}
and call it "p-covariance of $ X $ and $ Y $ with respect to the fixed variable $ Z $".
\end{definition}
Note in \eqref{eq1.6} that 
\[\frac{{E(XZ)}}{{E({Z^2})}}\,Z = {\rm{pro}}{{\rm{j}}_{\,Z}}X\,,\]
and therefore e.g. for $ p=1 $,
\[{{\mathop{\rm cov}} _1}(X,Y;Z) = E\Big( {\left( {X - \,{\rm{pro}}{{\rm{j}}_{\,Z}}X} \right)\left( {Y - \,{\rm{pro}}{{\rm{j}}_{\,Z}}Y} \right)} \Big).\]
If $ p=1 $ and $ Z $ follows a uniform distribution, say $ Z = c^{*}  $ where $ c^{*}  \ne 0 $ is a constant value, then \eqref{eq1.6} will reduce to an ordinary covariance as
\[{{\mathop{\rm cov}} _1}\,(X,Y; c^{*} ) = E(XY) - E(X)E(Y) = {\mathop{\rm cov}} \,(X,Y)\,.\]
Also for $ p=0 $, it follows that the fixed variable $ Z $ has no effect on definition \eqref{eq1.6}.

For $ Y=X $ in \eqref{eq1.6}, an extended definition of the ordinary variance is concluded as
\begin{equation}\label{eq1.7}
{{\mathop{\rm var}} _p}\,(X;Z) = {{\mathop{\rm cov}} _p}(X,X;Z) = E({X^2}) - p\frac{{{E^2}(XZ)}}{{E({Z^2})}}\, \ge 0\,,
\end{equation}
where $ E({Z^2}) $ is to be always positive.

Moreover, for any fixed variable $ Z $ and $ p \in [0,1] $,
\begin{equation}\label{eq1.8}
0 \le {{\mathop{\rm var}} _1}\,(X;Z) \le {{\mathop{\rm var}} _p}\,(X;Z) \le {{\mathop{\rm var}} _0}\,(X;Z) = E({X^2}),
\end{equation}
which is equivalent to
\begin{multline*}
0 \le \left\langle {x - \frac{{\left\langle {x,z} \right\rangle }}{{\left\langle {z,z} \right\rangle }}\,\,z\,,\,\,x - \frac{{\left\langle {x,z} \right\rangle }}{{\left\langle {z,z} \right\rangle }}\,\,z} \right\rangle \\
\,\,\,\, \le \left\langle {x - (1 - \sqrt {1 - p} )\frac{{\left\langle {x,z} \right\rangle }}{{\left\langle {z,z} \right\rangle }}\,\,z\,,\,\,x - (1 - \sqrt {1 - p} )\frac{{\left\langle {x,z} \right\rangle }}{{\left\langle {z,z} \right\rangle }}\,\,z} \right\rangle  \le \left\langle {x,x} \right\rangle \,\,\,\,\,\,\,\,\,\forall z \in \mathbf{S},
\end{multline*}
in an inner product space. Note after simplification, the second part of the latter inequality is in the same form as \eqref{eq1.5}.

Although inequality \eqref{eq1.8} shows that the best option is $ p=1 $, we prefer to apply the parametric case $ p \in [0,1] $ throughout this paper. The reasons for making such a decision will be revealed in forthcoming sections (see Figure 1 and the illustrative example \ref{subsec2.4} in this regard).

The following properties hold true for definitions \eqref{eq1.6} and \eqref{eq1.7}:\\
\\
a1)$\qquad\qquad\qquad\qquad {{\mathop{\rm cov}} _p}(X,Y;Z) = {{\mathop{\rm cov}} _p}(Y,X;Z).$\\
a2)$ \qquad\qquad\qquad\qquad {{\mathop{\rm cov}} _p}\,(\alpha X,\beta Y;Z) = \alpha \beta \,\,{{\mathop{\rm cov}} _p}(X,Y;Z)\,\,\,\,\,\,\,\,\,\,\,(\alpha ,\beta  \in\mathbb{R} ). $\\
a3)$ \qquad\qquad\qquad\qquad {{\mathop{\rm cov}} _p}\,(X + \alpha ,Y + \beta ;Z) =$\\
 $\qquad\qquad{{\mathop{\rm cov}} _p}\,(X,Y;Z) + \alpha \,{{\mathop{\rm cov}} _p}\,(1,Y;Z) + \beta \,\,{{\mathop{\rm cov}} _p}\,(X,1;Z) + \alpha \beta \,\,{{\mathop{\rm cov}} _p}\,(1,1;Z).$\\
a4) $\qquad\qquad\qquad\qquad{{\mathop{\rm var}} _p}\,(X;Z) = 0\,\,{\rm{if}}\,p = 1\,\,{\text {and}}\,\,Z = c^{*} X\,\,(c^{*}  \ne 0).$\\
a5) $ \qquad {{\mathop{\rm cov}} _p}\,(X,Y;c^{*} X) = {{\mathop{\rm cov}} _p}\,(X,Y;c^{*} Y) = (1 - p)E(XY)\,\,\,\,\,\,(c^{*}  \ne 0). $\\
a6) $\quad {{\mathop{\rm cov}} _p}\,\left( {\sum\limits_{k = 0}^n {{c_k}{X_k}} ,{X_m};Z} \right) = \sum\limits_{k = 0}^n {{c_k}{{{\mathop{\rm cov}} }_p}\,({X_k},{X_m};Z)} \,\,\,\,\,\,\,(\{ {c_k}\} _{k = 0}^n \in\mathbb{R} ). $\\
and
\begin{equation}\label{eq1.9}
{\rm{var}}_{p}(\alpha X + \beta Y;Z) = {\alpha ^2}\,{{\mathop{\rm var}} _p}\,(X;Z) + {\beta ^2}{{\mathop{\rm var}} _p}\,(Y;Z) + 2\alpha \beta \,\,{{\mathop{\rm cov}} _p}\,(X,Y;Z).
\end{equation}

\begin{definition}
p-Correlation coefficient with respect to a fixed variable\\
Based on relations \eqref{eq1.6} and \eqref{eq1.7}, we can define a generalization of the Pearson correlation coefficient as 
\begin{equation}\label{eq1.10}
{\rho _p}\,(X,Y;Z) = \frac{{{{{\mathop{\rm cov}} }_p}\,(X,Y;Z)}}{{\sqrt {{{{\mathop{\rm var}} }_p}\,(X;Z){\rm var}_{p}(Y;Z)} }},
\end{equation}
and call it "p-correlation coefficient of $ X $ and $ Y $ with respect to the fixed variable $ Z $".
\end{definition}
It is straightforward to observe that
\[{\rho _p}(X,Y;Z) \in [ - 1,1],\]
because if the values
\[U = X - (1 - \sqrt {1 - p} )\frac{{E(XZ)}}{{E({Z^2})}}\,Z\,\,\,\,\,\,\,{\rm{and}}\,\,\,\,\,\,V = Y - (1 - \sqrt {1 - p} )\frac{{E(YZ)}}{{E({Z^2})}}\,Z,\]
are replaced in the Cauchy-Schwarz inequality \cite{ref18}
\begin{equation}\label{eq1.11}
{E^2}(UV) \le E({U^2})E({V^2}),
\end{equation} 
then
\[{\mathop{\rm cov}} _p^2(X,Y;Z) \le {{\mathop{\rm var}} _p}\,(X;Z)\,\,{\rm var}_{p}(Y;Z).\]
In this sense, note that
\[E(UZ) = \sqrt {1 - p} \,E(XZ)\,\,\,\,\,\,{\rm{and}}\,\,\,\,\,E(VZ) = \sqrt {1 - p} \,E(YZ),\]
which are equal to zero only for the case $ p=1 $.
\\
\noindent
{\textbf{Definition 1.2.1.}}\label{def1.2.1}
%\begin{definition}
p-Normal standard variable with respect to a fixed variable\\
Noting the definitions \eqref{eq1.10} and \eqref{eq1.6}, since 
\[{\rho _p}\,(X,Y;Z) = E\left( {\left( {\frac{{X - (1 - \sqrt {1 - p} )\,{\rm{pro}}{{\rm{j}}_{\,Z}}X}}{{\sqrt {{{{\mathop{\rm var}} }_p}\,(X;Z)} }}} \right)\left( {\frac{{Y - (1 - \sqrt {1 - p} ){\rm{pro}}{{\rm{j}}_{\,Z}}Y}}{{\sqrt {{\rm var}_{p}(Y;Z)} }}} \right)} \right),\]
a p-Normal standard variable, say $ {N_p}(X;Z) $ can be defined with respect to the fixed variable $ Z $ as
\[{N_p}(X;Z) = \frac{{X - (1 - \sqrt {1 - p} )\,{\rm{pro}}{{\rm{j}}_{\,Z}}X}}{{\sqrt {{{{\mathop{\rm var}} }_p}\,(X;Z)} }}.\]
For instance, we have 
\[{N_1}(X;Z = c^{*} ) = \frac{{X - E(X)}}{{\sqrt {{\mathop{\rm var}} \,(X)} }}.\]
%\end{definition}

\begin{definition}
p-Uncorrelatedness with respect to a fixed variable\\
If in \eqref{eq1.10} $ {\rho _p}(X,Y;Z) = 0 $, which is equivalent to the condition
\[p\,E(XZ)E(YZ) = E(XY)E({Z^2}),\]
then we say that $ X $ and $ Y $ are p-uncorrelated with respect to the fixed variable $ Z $.
\end{definition}
Such a definition can be expressed in an inner product space, too. We say that two elements $ x,y \in {\bf{S}} $ are p-uncorrelated with respect to the fixed element $ z \in {\bf{S}} $ if
\begin{equation}\label{eq1.12}
\left( {x - (1 - \sqrt {1 - p} )\,\,{\rm{pro}}{{\rm{j}}_z}x} \right)\, \bot \,\,\left( {y - (1 - \sqrt {1 - p} )\,\,{\rm{pro}}{{\rm{j}}_z}y} \right),
\end{equation}
or equivalently
\[p\,\left\langle {x,z} \right\rangle \left\langle {y,z} \right\rangle  = \left\langle {x,y} \right\rangle \left\langle {z,z} \right\rangle .\]
As \eqref{eq1.12} shows, $ p=0 $ gives rise to the well-known orthogonality property. In summary, 0-uncorrelatedness gives the orthogonality notion, $ p=1 $ results in a complete uncorrelatedness and $ p \in (0,1) $ leads to an incomplete uncorrelatedness. In this paper, we apply the phrase "complete uncorrelated" instead of 1-uncorrelated.

The aforesaid definition can similarly be expressed in a probability space. We say that two events $ A $ and $ B $ are p-independent with respect to the event $ C $ if
\[p\,{P_r}(\left. {A\,} \right|C)\,{P_r}(\left. {B\,} \right|C) = {P_r}(A \cap B)\,,\]
which is equivalent to
\[p\,{P_r}(A \cap C){P_r}(B \cap C) = {P_r}(A \cap B)P_r^2(C).\]
Hence, e.g. 1-independent means a complete independent with respect to the event $ C $.

\section{Least p-variance approximations based on a fixed variable }\label{sec2}
Let $ \{ {X_k}\} _{k = 0}^n $ and $ Y $ be arbitrary random variables and consider the following approximation in the sequel
\begin{equation}\label{eq2.1}
Y \cong \sum\limits_{k = 0}^n {{c_k}{X_k}} \,,
\end{equation}
in which $ \{ {c_k}\} _{k = 0}^n $ are unknown coefficients to be appropriately determined.

According to \eqref{eq1.7}, the p-variance of the remaining term
\begin{equation}\label{eq2.2}
R({c_0},{c_1},...,{c_n}) = \sum\limits_{k = 0}^n {{c_k}{X_k}}  - Y\,,
\end{equation}
is defined with respect to the fixed variable $ Z $ as
\begin{multline}\label{eq2.3}
{{\mathop{\rm var}} _p}\,\left( {R({c_0},...,{c_n});Z} \right) = E\left( {{{\left( {R({c_0},...,{c_n}) - (1 - \sqrt {1 - p} )\frac{{E(Z\,R({c_0},...,{c_n}))}}{{E({Z^2})}}\,Z} \right)}^2}} \right)\\
 = E\left( {{{\left( {\sum\limits_{k = 0}^n {{c_k}\left( {{X_k} - (1 - \sqrt {1 - p} )\frac{{E(Z{X_k})}}{{E({Z^2})}}Z} \right)}  - (Y\, - (1 - \sqrt {1 - p} )\frac{{E(ZY)}}{{E({Z^2})}}Z)} \right)}^2}} \right),
\end{multline}
where $ \frac{{E(Z\,R({c_0},...,{c_n}))}}{{E({Z^2})}}\,Z $ shows the projection of the error term \eqref{eq2.2} on the fixed variable $ Z $ and $ \frac{{E(Z{X_k})}}{{E({Z^2})}}Z $ the projection of each elements on $ Z $. For the special case $ p=1 $, we have in fact considered \eqref{eq2.3} as
\[{{\mathop{\rm var}} _1}\,\left( {R({c_0},...,{c_n});Z} \right) = E\left( {{{\left( {R({c_0},...,{c_n}) - {\rm{pro}}{{\rm{j}}_z}R({c_0},...,{c_n})} \right)}^2}} \right).\]
We wish to find the unknown coefficients $ \{ {c_k}\} _{k = 0}^n $ in \eqref{eq2.3} such that $ {{\mathop{\rm var}} _p}\,\left( {R({c_0},...,{c_n});Z} \right) $ is minimized. In this direction, it is important to point out that, according to inequality \eqref{eq1.8}, the following inequalities always hold for any arbitrary variable $ Z $:
\begin{multline}\label{eq2.4}
0 \le {{\mathop{\rm var}} _1}\,\left( {R({c_0},...,{c_n});Z} \right) \le {{\mathop{\rm var}} _p}\,\left( {R({c_0},...,{c_n});Z} \right) \\
\le {{\mathop{\rm var}} _0}\,\left( {R({c_0},...,{c_n});Z} \right) = E\left( {{R^2}({c_0},...,{c_n})} \right).
\end{multline}
This means that inequalities \eqref{eq2.4} are valid for any free selection of $ \{ {c_k}\} _{k = 0}^n $ especially when they minimize the quantity \eqref{eq2.3}. In other words, we have
\[\mathop {\min }\limits_{{c_0},...,{c_n}} {{\mathop{\rm var}} _1}\left( {R({c_0},...,{c_n});\,\,Z} \right) \le \mathop {\min }\limits_{{c_0},...,{c_n}} {{\mathop{\rm var}} _p}\left( {R({c_0},...,{c_n});\,\,Z} \right) \le \mathop {\min }\limits_{{c_0},...,{c_n}} E\left( {{R^2}({c_0},...,{c_n})} \right),\]
which shows the superiority of the present theory with respect to the ordinary least squares theory (see examples \ref{subsec2.4} and \ref{ex13.1} in this regard).

To minimize \eqref{eq2.3}, for every $ j = 0,1,\,...\,,\,n $ we have
{\small
\begin{multline}\label{eq2.5}
\frac{{\partial {{{\mathop{\rm var}} }_p}\,(R({c_0},...,{c_n});Z)}}{{\partial {c_j}}} = 0\,\,\, \Rightarrow \\
2E\left( ({X_j} - (1 - \sqrt {1 - p} )\frac{{E({X_j}Z)}}{{E({Z^2})}}Z)\right.\\
\left.\left( {\sum\limits_{k = 0}^n {{c_k}\left( {{X_k} - (1 - \sqrt {1 - p} )\frac{{E({X_k}Z)}}{{E({Z^2})}}Z} \right)}  - (Y\, - (1 - \sqrt {1 - p} )\frac{{E(YZ)}}{{E({Z^2})}}Z)} \right) \right) = 0,
\end{multline}
}
\normalsize
leading to the linear system
\begin{align}\label{eq2.6}
&\left[ {\begin{array}{*{20}{c}}
{\begin{array}{*{20}{c}}
{{{{\mathop{\rm var}} }_p}\,({X_0};Z)}\\
{{{{\mathop{\rm cov}} }_p}\,({X_1},{X_0};Z)}\\
 \vdots \\
{{{{\mathop{\rm cov}} }_p}\,({X_n},{X_0};Z)}
\end{array}}&{\begin{array}{*{20}{c}}
{{{{\mathop{\rm cov}} }_p}\,({X_0},{X_1};Z)}\\
{{{{\mathop{\rm var}} }_p}\,({X_1};Z)}\\
 \vdots \\
{{{{\mathop{\rm cov}} }_p}\,({X_n},{X_1};Z)}
\end{array}}&{\begin{array}{*{20}{c}}
 \cdots \\
 \cdots \\
 \vdots \\
 \cdots 
\end{array}}&{\begin{array}{*{20}{c}}
{{{{\mathop{\rm cov}} }_p}\,({X_0},{X_n};Z)}\\
{{{{\mathop{\rm cov}} }_p}\,({X_1},{X_n};Z)}\\
 \vdots \\
{{{{\mathop{\rm var}} }_p}\,({X_n};Z)}
\end{array}}
\end{array}} \right]\left[ {\begin{array}{*{20}{c}}
{{c_0}}\\
{{c_1}}\\
 \vdots \\
{{c_n}}
\end{array}} \right] \\
&\qquad\qquad\qquad\qquad\qquad\qquad\qquad= \left[ {\begin{array}{*{20}{c}}
{{{{\mathop{\rm cov}} }_p}\,({X_0},Y;Z)}\\
{{{{\mathop{\rm cov}} }_p}\,({X_1},Y;Z)}\\
 \vdots \\
{{{{\mathop{\rm cov}} }_p}\,({X_n},Y;Z)}
\end{array}} \right].\nonumber
\end{align}
Only finding the analytical solution of the above system is sufficient to guarantee that $ {{\mathop{\rm var}} _p}\,\left( {R({c_0},...,{c_n});Z} \right) $ is minimized, because we automatically have 
\[\left\{ {\frac{{{\partial ^2}{{{\mathop{\rm var}} }_p}\,(R({c_0},...,{c_n});Z)}}{{\partial {c_j}^2}}} \right\}_{j = 0}^n \ge 0.\]
For instance, after solving (2.6) for $ n=1 $, the approximation \eqref{eq2.1} takes the form
\begin{multline*}
Y \cong \frac{{{{{\mathop{\rm var}} }_p}\,({X_1};Z){{{\mathop{\rm cov}} }_p}\,({X_0},Y;Z) - {{{\mathop{\rm cov}} }_p}\,({X_0},{X_1};Z){{{\mathop{\rm cov}} }_p}\,({X_1},Y;Z)}}{{{{{\mathop{\rm var}} }_p}\,({X_1};Z){{{\mathop{\rm var}} }_p}\,({X_0};Z) - {\mathop{\rm cov}} _p^2\,({X_0},{X_1};Z)}}\,{X_0}\\
+ \frac{{{{{\mathop{\rm var}} }_p}\,({X_0};Z){{{\mathop{\rm cov}} }_p}\,({X_1},Y;Z) - {{{\mathop{\rm cov}} }_p}\,({X_0},{X_1};Z){{{\mathop{\rm cov}} }_p}\,({X_0},Y;Z)}}{{{{{\mathop{\rm var}} }_p}\,({X_1};Z){{{\mathop{\rm var}} }_p}\,({X_0};Z) - {\mathop{\rm cov}} _p^2\,({X_0},{X_1};Z)}}\,{X_1}\,.
\end{multline*}
In general, two continuous and discrete cases can be considered for the system \eqref{eq2.6}, which we call it from now on "p-covariances linear system with respect to the fixed variable $ Z $".
%\\
%\\
%\noindent
\subsection{First case of p-covariances linear system} \label{subsec2.1}
%{\textbf{2.1. First case.}}
If $ \left\{ {{X_k} = {\Phi _k}(x)} \right\}_{k = 0}^n $, $ Y = f(x) $ and $Z = z(x)$ are defined in a continuous space with a probability density function as
\[{P_r}\left( {X = x} \right) = \frac{{w(x)}}{{\int_{\,a}^b {w(x)\,dx} }},\]
for any arbitrary function $ w(x) $ positive on the interval $ [a,b] $, then the components of the linear system \eqref{eq2.6} appear as
\begin{multline}\label{eq2.7}
{{\mathop{\rm cov}} _p}\,\left( {{\Phi _i}(x),{\Phi _j}(x)\,;z(x)} \right) = \\
\frac{1}{{\int_{\,a}^b {w(x)\,dx} }}\left( {\int_{\,a}^b {w(x)\,{\Phi _i}(x)\,{\Phi _j}(x)\,dx}  - p\frac{{\int_{\,a}^b {w(x)\,{\Phi _i}(x)\,z(x)\,dx} \,\int_{\,a}^b {w(x)\,{\Phi _j}(x)\,z(x)\,dx} }}{{\,\int_{\,a}^b {w(x)\,{z^2}(x)\,dx} }}} \right),
\end{multline}
where $ \int_{\,a}^b {w(x)\,{z^2}(x)\,dx}  > 0 $.
%\\
%\\
%\noindent
\subsection{Second case of p-covariances linear system}\label{subsec2.2}
%{\textbf{2.2. Second case.}}
If the above-mentioned variables are defined on a counter set, say $ {A^*} = \{ {x_k}\} _{k = 0}^m $ with a discrete probability density function as
\[{P_r}\left( {X = x} \right) = \frac{{j(x)}}{{\sum\limits_{x \in {A^*}} {j(x)} }},\]
for any arbitrary function $ j(x) $ positive on $ x \in {A^*} $, then
\begin{multline}\label{eq2.8}
{{\mathop{\rm cov}} _p}\,\left( {{\Phi _i}(x),{\Phi _j}(x)\,;z(x)} \right) = \\
\frac{1}{{\sum\limits_{x \in {A^*}} {j(x)} }}\left( {\sum\limits_{x \in {A^*}} {j(x){\Phi _i}(x)\,{\Phi _j}(x)}  - p\frac{{\sum\limits_{x \in {A^*}} {j(x){\Phi _i}(x)\,z(x)} \,\sum\limits_{x \in {A^*}} {j(x){\Phi _j}(x)\,z(x)} }}{{\,\sum\limits_{x \in {A^*}} {j(x)\,{z^2}(x)} }}} \right),
\end{multline}
where $ \sum\limits_{x \in {A^*}} {j(x)\,{z^2}(x)}  > 0 $.

One of the most important cases of the approximation \eqref{eq2.1} is when $ \left\{ {{X_k} = {\Phi _k}(x) = {x^k}} \right\}_{k = 0}^n $ having a uniform distribution function, which leads to the Hilbert problem \cite{ref2, ref26} in a continuous space and to the polynomial type regression in a discrete space. In other words, if $ \left\{ {{X_k} = {x^k}} \right\}_{k = 0}^n $, $ Y = f(x) $, $ Z=z(x) $ and $ w(x)=1 $ for $ x\in [0,1] $ are substituted into \eqref{eq2.6}, then
\[{{\mathop{\rm cov}} _p}\,({x^i},{x^j};z(x)) = \frac{1}{{i + j + 1}} - p\frac{{\int_0^1 {\,{x^i}z(x)\,dx} \,\int_0^1 {\,{x^j}z(x)\,dx} }}{{\int_0^1 {\,{z^2}(x)\,dx} }}\,\,\,\,\,\,\,\,{\rm for}\,\,\,i,j = 0,1,...,n,\]
and
\[{{\mathop{\rm cov}} _p}\,({x^j},f(x);\,z(x)) = \int_0^1 {\,{x^j}f(x)\,dx}  - p\frac{{\int_0^1 {\,{x^j}z(x)\,dx} \,\int_0^1 {\,f(x)z(x)\,dx} }}{{\int_0^1 {\,{z^2}(x)\,dx} }}\,\,\,\,\,{\rm for}\,\,j = 0,1,...,n.\]
For $ p=0 $, we clearly encounter with the Hilbert problem.

Similarly, if in a discrete space $ \left\{ {{X_k} = {x^k}} \right\}_{k = 0}^n $, $ {A^*} = \{ {x_k}\} _{k = 1}^m $, $ Y = f(x) $ where $ f({x_k}) = {y_k} $, $ Z = z(x) $ and $ j(x)=1 $, the entries of the system \eqref{eq2.6} respectively take the forms
\[{{\mathop{\rm cov}} _p}\,({x^i},{x^j};z(x)) = \frac{1}{m}\sum\limits_{k = 1}^m {{{({x_k})}^{i + j}}} \, - \frac{p}{m}\frac{{\sum\limits_{k = 1}^m {{{({x_k})}^i}z({x_k})} \,\sum\limits_{k = 1}^m {{{({x_k})}^j}z({x_k})} }}{{\,\sum\limits_{k = 1}^m {{z^2}({x_k})} }}\,\,\,\,\,\,{\rm for}\,\,\,i,j = 0,1,...,n,\]
and
\[{{\mathop{\rm cov}} _p}\,({x^j},f(x);\,z(x)) = \frac{1}{m}\sum\limits_{k = 1}^m {{y_k}{{({x_k})}^j}} \, - \frac{p}{m}\frac{{\sum\limits_{k = 1}^m {{{({x_k})}^j}z({x_k})} \,\sum\limits_{k = 1}^m {{y_k}\,z({x_k})} }}{{\,\sum\limits_{k = 1}^m {{z^2}({x_k})} }}\,\,{\rm for}\,\,j = 0,1,...,n.\]
As a sample, the system corresponding to the linear regression $ Y = {c_0}{X_0} + {c_1}{X_1} = {c_0} + {c_1}x $ with respect to the fixed variable $ Z=z(x) $ appears as
\begin{multline*}
\left[ {\begin{array}{*{20}{c}}
{m\sum\limits_{k = 1}^m {{z^2}({x_k})}  - p{{\big(\sum\limits_{k = 1}^m {z({x_k})}\big )^2}},}&{\sum\limits_{k = 1}^m {{x_k}} \sum\limits_{k = 1}^m {{z^2}({x_k})}  - p\sum\limits_{k = 1}^m {z({x_k})} \sum\limits_{k = 1}^m {{x_k}z({x_k})} }\\
{\sum\limits_{k = 1}^m {{x_k}} \sum\limits_{k = 1}^m {{z^2}({x_k})}  - p\sum\limits_{k = 1}^m {z({x_k})} \sum\limits_{k = 1}^m {{x_k}z({x_k})} \,,}&{\sum\limits_{k = 1}^m {x_k^2} \sum\limits_{k = 1}^m {{z^2}({x_k})}  - p{{\big(\sum\limits_{k = 1}^m {{x_k}z({x_k})} \big)^2}}}
\end{array}} \right]\\
 \times\left[ {\begin{array}{*{20}{c}}
{{c_0}}\\
{{c_1}}
\end{array}} \right]
= \left[ {\begin{array}{*{20}{c}}
{\sum\limits_{k = 1}^m {{y_k}} \sum\limits_{k = 1}^m {{z^2}({x_k})}  - p\sum\limits_{k = 1}^m {z({x_k})} \sum\limits_{k = 1}^m {{y_k}z({x_k})} }\\
{\sum\limits_{k = 1}^m {{x_k}{y_k}} \sum\limits_{k = 1}^m {{z^2}({x_k})}  - p\sum\limits_{k = 1}^m {{x_k}z({x_k})} \sum\limits_{k = 1}^m {{y_k}z({x_k})} }
\end{array}} \right].
\end{multline*}
It is interesting to know that if $ z(x) = c^{*}  \ne 0 $ is substituted into the above partially system for $ p \ne 1 $, the output gives the well-known result
\[Y = \frac{{\sum\limits_{k = 1}^m {x_k^2} \sum\limits_{k = 1}^m {{y_k}}  - \sum\limits_{k = 1}^m {{x_k}} \sum\limits_{k = 1}^m {{x_k}{y_k}} }}{{m\sum\limits_{k = 1}^m {x_k^2}  - {{(\sum\limits_{k = 1}^m {{x_k}} )}^2}}} + \frac{{m\sum\limits_{k = 1}^m {{x_k}{y_k}}  - \sum\limits_{k = 1}^m {{x_k}} \sum\limits_{k = 1}^m {{y_k}} }}{{m\sum\limits_{k = 1}^m {x_k^2}  - {{(\sum\limits_{k = 1}^m {{x_k}} )}^2}}}x\,,\]
i.e. $ p \ne 1 $ has been automatically deleted. In this sense, if the only case $ p=1 $ is substituted into the above mentioned system for 
$ z(x) = c^{*}  \ne 0 $ as
\[\left[ {\begin{array}{*{20}{c}}
{0,}&0\\
{0\,,}&{m\sum\limits_{k = 1}^m {x_k^2}  - {{\big(\sum\limits_{k = 1}^m {{x_k}} \big)^2}}}
\end{array}} \right]\left[ {\begin{array}{*{20}{c}}
{{c_0}}\\
{{c_1}}
\end{array}} \right] = \left[ {\begin{array}{*{20}{c}}
0\\
{m\sum\limits_{k = 1}^m {{x_k}{y_k}}  - \sum\limits_{k = 1}^m {{x_k}} \sum\limits_{k = 1}^m {{y_k}} }
\end{array}} \right],\]
the output gives the result
\[Y = {c_0} + \frac{{m\sum\limits_{k = 1}^m {{x_k}{y_k}}  - \sum\limits_{k = 1}^m {{x_k}} \sum\limits_{k = 1}^m {{y_k}} }}{{m\sum\limits_{k = 1}^m {x_k^2}  - {{(\sum\limits_{k = 1}^m {{x_k}} )}^2}}}x\,,\]
where $ c_{0} $ is a free parameter.

In the next section, we show that why $ c_{0} $ became a free parameter when $ z(x) = c^{*}  \ne 0 $ and $ p=1 $.

\subsection{Some reducible cases of the p-covariances linear system}\label{subsec2.3}

\subsubsection{First case.}\label{subsubsec2.3.1}
Suppose there exists a specific distribution for the fixed variable $ Z $ such that
\begin{equation}\label{eq2.9}
E(Z{X_k}) = 0\qquad \text{for every}\qquad k = 0,1,...,n.
\end{equation}
Then, the p-covariances system \eqref{eq2.6} reduces to the well-known normal equations system \cite{ref2}
\begin{equation}\label{eq2.10}
\left[ {\begin{array}{*{20}{c}}
{\begin{array}{*{20}{c}}
{E\,(X_0^2)}\\
{E({X_1}{X_0})}\\
 \vdots \\
{E({X_n}{X_0})}
\end{array}}&{\begin{array}{*{20}{c}}
{E({X_0}{X_1})}\\
{E\,(X_1^2)}\\
 \vdots \\
{E({X_n}{X_1})}
\end{array}}&{\begin{array}{*{20}{c}}
 \cdots \\
 \cdots \\
 \vdots \\
 \cdots 
\end{array}}&{\begin{array}{*{20}{c}}
{E({X_0}{X_n})}\\
{E({X_1}{X_n})}\\
 \vdots \\
{E\,(X_n^2)}
\end{array}}
\end{array}} \right]\left[ {\begin{array}{*{20}{c}}
{{c_0}}\\
{{c_1}}\\
 \vdots \\
{{c_n}}
\end{array}} \right] = \left[ {\begin{array}{*{20}{c}}
{E({X_0}Y)}\\
{E({X_1}Y)}\\
 \vdots \\
{E({X_n}Y)}
\end{array}} \right],
\end{equation}
and
\begin{equation}\label{eq2.11}
\mathop {\min }\limits_{{c_0},...,{c_n}} {{\mathop{\rm var}} _p}\Big( { {R({c_0},...,{c_n});\,\,Z\,} \Big|\,\,\{ E(Z{X_k})\} _{k = 0}^n = 0\,} \Big) = \mathop {\min }\limits_{{c_0},...,{c_n}} E\left( {{R^2}({c_0},...,{c_n})} \right).
\end{equation}
Also, it is obvious for $ p=0 $ that
\[\mathop {\min }\limits_{{c_0},...,{c_n}} {{\mathop{\rm var}} _0}\left( {R({c_0},...,{c_n});\,\,Z} \right) = \mathop {\min }\limits_{{c_0},...,{c_n}} E\left( {{R^2}({c_0},...,{c_n})} \right).\]

\subsubsection{Second case.}\label{subsubsec2.3.2}
If $ Z $ follows a uniform distribution (say $ Z =c^{*}  \ne 0 $) and $ {X_0} = 1 $, the condition \eqref{eq2.9} is no longer valid for $ k=0 $ because $ c^{*} E(1) \ne 0 $. Therefore, noting that $ {{\mathop{\rm cov}} _p}\,(X,Y;c^{*} ) = E(XY) - p\,E(X)E(Y) $, the linear system \eqref{eq2.6} reduces to
\begin{multline}\label{eq2.12}
\left[ {\begin{array}{*{20}{c}}
{\begin{array}{*{20}{c}}
{1 - p}\\
{(1 - p)E({X_1})}\\
 \vdots \\
{(1 - p)E({X_n})}
\end{array}}&{\begin{array}{*{20}{c}}
{(1 - p)E({X_1})}\\
{E\,(X_1^2) - p\,{E^2}({X_1})}\\
 \vdots \\
{E({X_n}{X_1}) - pE({X_n})E({X_1})}
\end{array}}&{\begin{array}{*{20}{c}}
 \cdots \\
 \cdots \\
 \vdots \\
 \cdots 
\end{array}}&{\begin{array}{*{20}{c}}
{(1 - p)E({X_n})}\\
{E({X_1}{X_n}) - pE({X_1})E({X_n})}\\
 \vdots \\
{E\,(X_n^2) - p{E^2}({X_n})}
\end{array}}
\end{array}} \right]\\
\times \left[ {\begin{array}{*{20}{c}}
{{c_0}}\\
{{c_1}}\\
 \vdots \\
{{c_n}}
\end{array}} \right]
= \left[ {\begin{array}{*{20}{c}}
{(1 - p)E(Y)}\\
{E({X_1}Y) - pE({X_1})E(Y)}\\
 \vdots \\
{E({X_n}Y) - pE({X_n})E(Y)}
\end{array}} \right],
\end{multline}
Relation \eqref{eq2.12} shows that for $ p=1 $, $ c_{0} $ is a free parameter. For $ {X_0} = 1 $, the approximation \eqref{eq2.1} takes the simplified form
\[Y \cong {c_0} + \sum\limits_{k = 1}^n {{c_k}{X_k}} \,,\]
and replacing it in the normal system \eqref{eq2.10} yields
\begin{equation}\label{eq2.13}
\left[ {\begin{array}{*{20}{c}}
{\begin{array}{*{20}{c}}
1\\
{E({X_1})}\\
 \vdots \\
{E({X_n})}
\end{array}}&{\begin{array}{*{20}{c}}
{E({X_1})}\\
{E\,(X_1^2)}\\
 \vdots \\
{E({X_n}{X_1})}
\end{array}}&{\begin{array}{*{20}{c}}
 \cdots \\
 \cdots \\
 \vdots \\
 \cdots 
\end{array}}&{\begin{array}{*{20}{c}}
{E({X_n})}\\
{E({X_1}{X_n})}\\
 \vdots \\
{E\,(X_n^2)}
\end{array}}
\end{array}} \right]\left[ {\begin{array}{*{20}{c}}
{{c_0}}\\
{{c_1}}\\
 \vdots \\
{{c_n}}
\end{array}} \right] = \left[ {\begin{array}{*{20}{c}}
{E(Y)}\\
{E({X_1}Y)}\\
 \vdots \\
{E({X_n}Y)}
\end{array}} \right].
\end{equation}
Now, the important point is that after some elementary operations, the system \eqref{eq2.13} will exactly be transformed to the linear system \eqref{eq2.12}. This means that for any arbitrary $ p \in [0,1] $ we have
\begin{multline}\label{eq2.14}
\,\,\,\,\,\,\,\,\,\,\,\,\,\,\,\,\,\,\mathop {\min }\limits_{{c_1},...,{c_n}} {{\mathop{\rm var}} _p}\Big( { {R({c_0},...,{c_n});Z\,} \Big|\,\,Z =c^{*} \,\,{\rm{and}}\,\,{X_0} = 1} \Big)\\
 = \mathop {\min }\limits_{{c_1},...,{c_n}} E\left( {{R^2}({c_0},...,{c_n}) \, \Big|{X_0} = 1\,\,{\rm{and}}\,\,{c_0} = E(Y) - \sum\limits_{k = 1}^n {{c_k}E({X_k})} } \right).
\end{multline}
Such a result in \eqref{eq2.14} can similarly be proved for every $ Z = {X_k}$ where $ k = 1,2,..,n$, as the following example approves it.

\subsection{An illustrative example and the role of the fixed variable in it}\label{subsec2.4}
The previous sections \ref{subsec2.1}, \ref{subsec2.2} and \ref{subsec2.3} affirm that the fixed variable $ Z $ has a basic role in the theory of least p-variances. Here we present an illustrative example to show the importance of such a fixed function in constituting the initial approximation \eqref{eq2.1}. Let $ Y = \sqrt {1 - x}  $ be defined on $ [0,1] $ together with the probability density function $ w(x)=1 $ and the fixed function $ z(x) = {x^\lambda } $ where $ \lambda  >  - 1/2 $ because $ \int_{\,0}^1 {w(x)\,{z^2}(x)\,dx}  = \frac{1}{{2\lambda  + 1}} $. For the basis functions $ \left\{ {{X_k} = {x^k}} \right\}_{k = 0}^2 $, the initial approximation \eqref{eq2.1} takes the form
\begin{equation}\label{eq2.15}
\sqrt {1 - x}  \cong {c_0}(\lambda ,p) + {c_1}(\lambda ,p)\,x + {c_2}(\lambda ,p)\,{x^2} = {{\bf{A}}_2}(x;\lambda ,p),
\end{equation}
in which the unknown coefficients satisfy the following linear system according to \eqref{eq2.6}:
\begin{align}\label{eq2.16}
&\left[ {\begin{array}{*{20}{c}}
{1 - \frac{{2\lambda  + 1}}{{{{(\lambda  + 1)}^2}}}p,}&{\frac{1}{2} - \frac{{2\lambda  + 1}}{{(\lambda  + 1)(\lambda  + 2)}}p,}&{\frac{1}{3} - \frac{{2\lambda  + 1}}{{(\lambda  + 1)(\lambda  + 3)}}p}\\
{\frac{1}{2} - \frac{{2\lambda  + 1}}{{(\lambda  + 1)(\lambda  + 2)}}p,}&{\frac{1}{3} - \frac{{2\lambda  + 1}}{{{{(\lambda  + 2)}^2}}}p,}&{\frac{1}{4} - \frac{{2\lambda  + 1}}{{(\lambda  + 2)(\lambda  + 3)}}p}\\
{\frac{1}{3} - \frac{{2\lambda  + 1}}{{(\lambda  + 1)(\lambda  + 3)}}p,}&{\frac{1}{4} - \frac{{2\lambda  + 1}}{{(\lambda  + 2)(\lambda  + 3)}}p,}&{\frac{1}{5} - \frac{{2\lambda  + 1}}{{{{(\lambda  + 3)}^2}}}p}
\end{array}} \right]\left[ {\begin{array}{*{20}{c}}
{{c_0}(\lambda ,p)}\\
{{c_1}(\lambda ,p)}\\
{{c_2}(\lambda ,p)}
\end{array}} \right]\\
&\qquad\qquad\qquad\qquad = \left[ {\begin{array}{*{20}{c}}
{\frac{2}{3} - \frac{{2\lambda  + 1}}{{\lambda  + 1}}B(\lambda  + 1,\frac{3}{2})\,p}\\
{\frac{4}{{15}} - \frac{{2\lambda  + 1}}{{\lambda  + 2}}B(\lambda  + 1,\frac{3}{2})\,p}\\
{\frac{{16}}{{105}} - \frac{{2\lambda  + 1}}{{\lambda  + 3}}B(\lambda  + 1,\frac{3}{2})\,p}
\end{array}} \right],\nonumber
\end{align} 
where
\[B(a,b) = \int_{\,0}^1 {{x^{a - 1}}{{(1 - x)}^{b - 1}}dx}  = \frac{{\Gamma (a)\Gamma (b)}}{{\Gamma (a + b)}}\,\,\,\,\,\,\,(a,b > 0).\]
After solving \eqref{eq2.16}, $ {{\bf{A}}_2}(x;\lambda ,p) $ will be determined and since the remaining term of \eqref{eq2.15} is defined as
\[{{\bf{R}}_2}(x;\lambda ,p) = {{\bf{A}}_2}(x;\lambda ,p) - \sqrt {1 - x} ,\]
so
\[{{\mathop{\rm var}} _p}\left( {{{\bf{R}}_2}(x;\lambda ,p);{x^\lambda }} \right) = E\left( {{{\left( {{{\bf{R}}_2}(x;\lambda ,p)} \right)}^2}} \right) - p\,(2\lambda  + 1){E^2}\left( {{x^\lambda }{{\bf{R}}_2}(x;\lambda ,p)} \right).\]
However, there are two different cases for the parameter $ \lambda  >  - 1/2 $ which should be studied separately.

\subsubsection{First case.}
If $ \lambda  = 0,1,2 $, then the fixed functions $ z(x) = 1,x,{x^2} $ coincide respectively with the first, second and third basis functions in \eqref{eq2.15} and the system \eqref{eq2.16} is simplified for $ \lambda  = 0 $ as
\[\left[ {\begin{array}{*{20}{c}}
{1 - p,}&{\frac{1}{2} - \frac{1}{2}p,}&{\frac{1}{3} - \frac{1}{3}p}\\
{\frac{1}{2} - \frac{1}{2}p,}&{\frac{1}{3} - \frac{1}{4}p,}&{\frac{1}{4} - \frac{1}{6}p}\\
{\frac{1}{3} - \frac{1}{3}p,}&{\frac{1}{4} - \frac{1}{6}p,}&{\frac{1}{5} - \frac{1}{9}p}
\end{array}} \right]\left[ {\begin{array}{*{20}{c}}
{{c_0}(0,p)}\\
{{c_1}(0,p)}\\
{{c_2}(0,p)}
\end{array}} \right] = \left[ {\begin{array}{*{20}{c}}
{\frac{2}{3} - \frac{2}{3}\,p}\\
{\frac{4}{{15}} - \frac{1}{3}p}\\
{\frac{{16}}{{105}} - \frac{2}{9}\,p}
\end{array}} \right],\]
and for $ \lambda=1 $ as
\[\left[ {\begin{array}{*{20}{c}}
{1 - \frac{3}{4}p,}&{\frac{1}{2} - \frac{1}{2}p,}&{\frac{1}{3} - \frac{3}{8}p}\\
{\frac{1}{2} - \frac{1}{2}p,}&{\frac{1}{3} - \frac{1}{3}p,}&{\frac{1}{4} - \frac{1}{4}p}\\
{\frac{1}{3} - \frac{3}{8}p,}&{\frac{1}{4} - \frac{1}{4}p,}&{\frac{1}{5} - \frac{3}{{16}}p}
\end{array}} \right]\left[ {\begin{array}{*{20}{c}}
{{c_0}(1,p)}\\
{{c_1}(1,p)}\\
{{c_2}(1,p)}
\end{array}} \right] = \left[ {\begin{array}{*{20}{c}}
{\frac{2}{3} - \frac{2}{5}\,p}\\
{\frac{4}{{15}} - \frac{4}{{15}}p}\\
{\frac{{16}}{{105}} - \frac{1}{5}\,p}
\end{array}} \right],\]
and finally for $ \lambda=2 $ as
\[\left[ {\begin{array}{*{20}{c}}
{1 - \frac{5}{9}p,}&{\frac{1}{2} - \frac{5}{{12}}p,}&{\frac{1}{3} - \frac{1}{3}p}\\
{\frac{1}{2} - \frac{5}{{12}}p,}&{\frac{1}{3} - \frac{5}{{16}}p,}&{\frac{1}{4} - \frac{1}{4}p}\\
{\frac{1}{3} - \frac{1}{3}p,}&{\frac{1}{4} - \frac{1}{4}p,}&{\frac{1}{5} - \frac{1}{5}p}
\end{array}} \right]\left[ {\begin{array}{*{20}{c}}
{{c_0}(2,p)}\\
{{c_1}(2,p)}\\
{{c_2}(2,p)}
\end{array}} \right] = \left[ {\begin{array}{*{20}{c}}
{\frac{2}{3} - \frac{{16}}{{63}}\,p}\\
{\frac{4}{{15}} - \frac{4}{{21}}p}\\
{\frac{{16}}{{105}} - \frac{{16}}{{105}}\,p}
\end{array}} \right].\]
According to the result \eqref{eq2.14}, the solutions of all above systems must be the same and independent of $ p $, so that in the final form we have
\[{{\bf{A}}_2}(x;0,p) = {{\bf{A}}_2}(x;1,p) = {{\bf{A}}_2}(x;2,p) = {{\bf{A}}_2}(x;\lambda ,0) =  - \frac{4}{7}\,{x^2} - \frac{8}{{35}}x + \frac{{34}}{{35}}.\]
Note for $ p=1 $, after solving \eqref{eq2.16} we get
\[{{\bf{A}}_2}(x;0,1) =  - \frac{4}{7}\,{x^2} - \frac{8}{{35}}x + {c_0}(0,1),\]
in which $ {c_0}(0,1) = E\left( {\sqrt {1 - x} } \right) + \frac{4}{7}\,E({x^2}) + \frac{8}{{35}}E(x) = \frac{{34}}{{35}} $ according to \eqref{eq2.14} again.

In this sense,
\[E\left( {{{\bf{R}}_2}(x;0,1)} \right) = E\left( {{{\bf{A}}_2}(x;0,1) - \sqrt {1 - x} } \right) = 0,\]
implies that
\begin{multline*}
{{\mathop{\rm var}} _1}\left( {{{\bf{R}}_2}(x;0,1);z(x) = 1} \right) = E\left( {{{\left( {{{\bf{R}}_2}(x;0,1)} \right)}^2}} \right)\\
 = {\int_{\,0}^1 {\left( { - \frac{4}{7}\,{x^2} - \frac{8}{{35}}x + \frac{{34}}{{35}} - \sqrt {1 - x} } \right)} ^2}dx = \frac{1}{{2450}} \cong 0.000408.
\end{multline*}

\subsubsection{Second case.}
Any assumption other than $ \lambda=0,1,2 $ causes the system \eqref{eq2.16} to have a unique solution. For example, $ \lambda  = 1/2 $ simplifies \eqref{eq2.16} in the form
\begin{equation}\label{eq2.17}
\left[ {\begin{array}{*{20}{c}}
{1 - \frac{8}{9}p,}&{\frac{1}{2} - \frac{8}{{15}}p,}&{\frac{1}{3} - \frac{8}{{21}}p}\\
{\frac{1}{2} - \frac{1}{2}p,}&{\frac{1}{3} - \frac{8}{{25}}p,}&{\frac{1}{4} - \frac{8}{{35}}p}\\
{\frac{1}{3} - \frac{1}{3}p,}&{\frac{1}{4} - \frac{8}{{35}}p,}&{\frac{1}{5} - \frac{8}{{49}}p}
\end{array}} \right]\left[ {\begin{array}{*{20}{c}}
{{c_0}(\frac{1}{2},p)}\\
{{c_1}(\frac{1}{2},p)}\\
{{c_2}(\frac{1}{2},p)}
\end{array}} \right] = \left[ {\begin{array}{*{20}{c}}
{\frac{2}{3} - \frac{1}{6}\,\pi p}\\
{\frac{4}{{15}} - \frac{1}{{10}}\pi p}\\
{\frac{{16}}{{105}} - \frac{1}{{14}}\,\pi p}
\end{array}} \right].
\end{equation}
After solving the system \eqref{eq2.17} we obtain
\[{{\bf{A}}_2}(x;\frac{1}{2},p) =  - \frac{7}{3}\,\frac{{(75\pi  + 64)p - 300}}{{1224p - 1225}}{x^2} + 20\frac{{(21\pi  - 80)p + 14}}{{1224p - 1225}}x + \frac{1}{2}\frac{{(105\pi  + 2048)p - 2380}}{{1224p - 1225}}.\]
For the special case $ p=1 $, i.e.
\[{{\bf{A}}_2}(x;\frac{1}{2},1) =  - (\frac{{1652}}{3} - 175\,\pi )\,{x^2} + (1320 - 420\,\pi )\,x + 166 - \frac{{105}}{2}\,\pi ,\]
we have
\[E\left( {{x^{\frac{1}{2}}}{{\bf{R}}_2}(x;\frac{1}{2},1)} \right) =  - \frac{2}{7}(\frac{{1652}}{3} - 175\,\pi )\, + \frac{2}{5}(1320 - 420\,\pi )\, + \frac{2}{3}(166 - \frac{{105}}{2}\,\pi ) - \frac{\pi }{8},\]
and consequently
\[{{\mathop{\rm var}} _1}\left( {{{\bf{R}}_2}(x;\frac{1}{2},1);{x^{\frac{1}{2}}}} \right) = E\left( {{{\Big({{\bf{R}}_2}(x;\frac{1}{2},1)\Big)^2}}} \right) - 2{E^2}\left( {{x^{\frac{1}{2}}}\,{{\bf{R}}_2}(x;\frac{1}{2},1)} \right) \cong 0.000388.\]
As we observe
\[{{\mathop{\rm var}} _1}\left( {{{\bf{R}}_2}(x;\frac{1}{2},1);{x^{\frac{1}{2}}}} \right) < {{\mathop{\rm var}} _1}\left( {{{\bf{R}}_2}(x;0,1);1} \right) = E\left( {{{\left( {{{\bf{R}}_2}(x;0,1)} \right)}^2}} \right),\]
which clearly shows the role of the fixed function in the obtained approximations.

\subsection{How to choose a suitable option for the fixed variable: A geometric interpretation}\label{subsec2.5}
Although inequality \eqref{eq2.16} holds for any arbitrary variable $ Z $, our wish is to determine some conditions to find a suitable option for the fixed variable $ Z $ such that
\begin{equation}\label{eq2.18}
{{\mathop{\rm var}} _1}\,(X;Z) \ll E({X^2}),
\end{equation}
and/or
\begin{equation}\label{eq2.19}
\frac{{{E^2}(XZ)}}{{E({Z^2})}} \gg 0\,.
\end{equation}
Various states can be considered for the above-mentioned goal. For example, replacing $ Z =c^{*}  \ne 0 $ in \eqref{eq2.19} yields $ {E^2}(X) \gg 0\, $ which means if the magnitude of the mean value $ E(X) \ne 0 $ is very big, we expect that the presented theory based on the fixed variable $ Z =c^{*}  \ne 0 $ acts much better than the ordinary least squares theory.

Another approach is to directly minimize the value $ {{\mathop{\rm var}} _1}\,(X;Z) $ in \eqref{eq2.18} such that
\[X - \frac{{E(XZ)}}{{E({Z^2})}}\,Z \to 0\,.\]
The following figure describes our aim in a vector space. 
\begin{figure}[ht]
\centering
{\resizebox*{12cm}{!}{\includegraphics{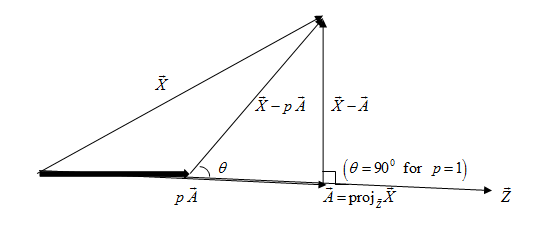}}}
%\vspace{-8mm}
\caption{Least p-variances based on the fixed vector $ \vec Z $ and the role of $ p $ from 0 to 1} \label{fig1}
\end{figure}

In this figure, $ \left\| {\vec X} \right\|_{\,2}^{\,2} = \sum\limits_{k = 1}^m {x_k^2}  $ leads to the same as ordinary least squares for $ p=0 $, $ \left\| {\vec X - \vec A} \right\|_{\,2}^{\,2} = \left\| {\vec X - {\rm{pro}}{{\rm{j}}_{\,\vec Z}}\vec X\,} \right\|_{\,2}^{\,2} $ leads to the complete type of least variances based on $ \vec{Z} $ and $ p=1 $ and finally $ \left\| {\vec X - p\,\vec A} \right\|_{\,2}^{\,2} $ leads to the least p-variances based on $ \vec{Z} $ and $ p \in [0,1] $.

\section{p-uncorrelatedness condition on the p-covariances linear system and its consequences}\label{sec3}
Since the coefficients matrix of the system \eqref{eq2.6} is symmetric and all its diagonal entries are positive, the aforesaid system is solvable and its unknown coefficients are computable. However, as we observed in the previous example, analytically solving such linear systems is difficult from computational point of view. To resolve this problem, we can apply the condition
\begin{equation}\label{eq3.1}
{{\mathop{\rm cov}} _p}\,({X_i},{X_j};Z) = {{\mathop{\rm var}} _p}\,({X_j};Z)\,{\delta _{i,j}}\,\,\,{\rm{for}}\,{\rm{every}}\,\,\,i,j = 0,1,...,n\,\,\,{\rm{with}}\,\,\,{\delta _{i,j}} = \left\{ \begin{array}{l}
0\,\,\,\,\,(i \ne j),\\
1\,\,\,\,\,\,(i = j),
\end{array} \right.
\end{equation}
on the elements of the system \eqref{eq2.6} to easily obtain the unknown coefficients in the form
\[{c_k} = \frac{{{{{\mathop{\rm cov}} }_p}\,({X_k},Y;Z)}}{{{{{\mathop{\rm var}} }_p}\,({X_k};Z)}}.\]
In this case
\begin{equation}\label{eq3.2}
Y \cong \sum\limits_{k = 0}^n {\frac{{{{{\mathop{\rm cov}} }_p}({X_k},Y;Z)}}{{{{{\mathop{\rm var}} }_p}\,({X_k};Z)}}{X_k}} \,,
\end{equation}
would be the best approximation in the sense of least p-variance of the error with respect to the fixed variable $ Z $.
\begin{theorem}\label{thm3.1}
Any finite set of random variables satisfying the p-uncorrelatedness condition \eqref{eq3.1} is linearly independent.
\end{theorem}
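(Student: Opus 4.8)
The plan is to show that any vanishing linear combination of the $X_k$ has only zero coefficients, using nothing more than the bilinearity recorded in property a6 together with the positivity of the diagonal entries ${{\mathop{\rm var}} _p}(X_k;Z)$, which is exactly the hypothesis noted just before the statement of the theorem.

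First I would suppose that $\sum_{k=0}^n a_k X_k = 0$ in the underlying space and test this relation, for each fixed index $j$, by forming the p-covariance against $X_j$. Property a6 lets me distribute the functional over the sum,
\[{{\mathop{\rm cov}} _p}\left(\sum_{k=0}^n a_k X_k,\,X_j;Z\right) = \sum_{k=0}^n a_k\,{{\mathop{\rm cov}} _p}(X_k,X_j;Z).\]
The p-uncorrelatedness hypothesis \eqref{eq3.1} annihilates every term with $k\neq j$ and identifies the surviving $k=j$ term with ${{\mathop{\rm var}} _p}(X_j;Z)$, so the right-hand side collapses to $a_j\,{{\mathop{\rm var}} _p}(X_j;Z)$. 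Since the left-hand side equals ${{\mathop{\rm cov}} _p}(0,X_j;Z)=0$, I arrive at $a_j\,{{\mathop{\rm var}} _p}(X_j;Z)=0$ for every $j$, and positivity of the diagonal entries then forces $a_j=0$ throughout, which is precisely linear independence.

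A more conceptual restatement, which I would append as a remark, is to introduce the transformed variables $U_k = X_k - (1-\sqrt{1-p})\,{\rm{pro}}{{\rm{j}}_{\,Z}}X_k$, so that Definition \ref{def1.1} reads ${{\mathop{\rm cov}} _p}(X_i,X_j;Z)=\langle U_i,U_j\rangle$. In this language condition \eqref{eq3.1} states exactly that the $U_k$ form a mutually orthogonal family of vectors of positive norm, hence a linearly independent one by the classical argument; since the assignment $X_k\mapsto U_k$ is linear, being the identity minus a scalar multiple of the projection onto $Z$, any dependence among the $X_k$ would transport to a dependence among the $U_k$ and is thereby excluded.

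The step I expect to carry the real weight is the appeal to the \emph{strict} positivity of ${{\mathop{\rm var}} _p}(X_j;Z)$, and I would flag this explicitly at the outset. Without it the statement fails: taking $p=1$ with $X_0=Z$ and $X_1=2Z$ gives $U_0=U_1=0$, so all p-covariances and p-variances vanish and \eqref{eq3.1} holds vacuously, yet $\{X_0,X_1\}$ is patently dependent. Thus the theorem tacitly presupposes the standing assumption, recorded immediately before it, that the diagonal entries of the system \eqref{eq2.6} are positive, and the proof should open by invoking that hypothesis.
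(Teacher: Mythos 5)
Your proof is correct and follows essentially the same route as the paper's: apply ${{\mathop{\rm cov}} _p}(\,\cdot\,,X_j;Z)$ to the vanishing linear combination, distribute by linearity (property a6), let the p-uncorrelatedness condition \eqref{eq3.1} collapse the sum to $a_j\,{{\mathop{\rm var}} _p}(X_j;Z)$, and conclude $a_j=0$. Your explicit flagging of the strict positivity of ${{\mathop{\rm var}} _p}(X_j;Z)$ -- together with the counterexample $X_0=Z$, $X_1=2Z$ at $p=1$ -- is a worthwhile sharpening, since the paper invokes this only tacitly through the remark preceding the theorem that the diagonal entries of \eqref{eq2.6} are positive.
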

\begin{proof}
Assume that $ \sum\limits_{k = 0}^n {{a_k}{X_k}}  = 0 $ where $ \{ {a_k}\} _{k = 0}^n $ are not all zero and $ \{ {X_k}\} _{k = 0}^n $ are p-uncorrelated variables satisfying \eqref{eq3.1}. Then, applying $ {{\mathop{\rm cov}} _p}\,(.\,,{X_j};Z) $ on both sides of this assumption yields 
\[0 = {{\mathop{\rm cov}} _p}\left( {\sum\limits_{k = 0}^n {{a_k}{X_k}} ,{X_j};Z} \right) = \sum\limits_{k = 0}^n {{a_k}{{{\mathop{\rm cov}} }_p}\,({X_k},{X_j};Z)}  = {a_j}{{\mathop{\rm var}} _p}\,({X_j};Z),\]
which implies that $ {a_j} = 0 $ for $ j=0,1,\ldots,n $, i.e. a contradiction.
\end{proof}

The question is now how to find linear combinations that are p-uncorrelated with respect to the variable $ Z $. There is a basic theorem in this direction similar to the Gram-Schmidt orthogonalization theorem \cite{ref5}.

\begin{theorem}\label{thm3.2}
Let $ \{ {V_k}\} _{k = 0}^{n} $ be a finite or infinite sequence of random variables such that any finite number of elements $ \{ {V_k}\} _{k = 0}^{n} $ are linearly independent. One can find constants $ \{ {a_{i,j}}\}  $ such that the elements  
\begin{equation}\label{eq3.3}
\begin{array}{l}
{X_0} = {V_0},\\
{X_1} = {V_1} + {a_{12}}{V_0},\\
{X_2} = {V_2} + {a_{22}}{V_1} + {a_{23}}{V_0},\\
\,\,\,\,\,\,\,\,\,\,\,\,\,\,\,\,\,\,\,\,\,\,\,\,\, \vdots \\
{X_n} = {V_n} + {a_{n2}}{V_{n - 1}} + ... + {a_{n,n + 1}}{V_0},
\end{array}
\end{equation}
are mutually p-uncorrelated with respect to the fixed variable $ Z $
\end{theorem}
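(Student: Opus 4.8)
The plan is to proceed by induction on the index, in direct analogy with the classical Gram--Schmidt process, after first recording that $\mathrm{cov}_p(\cdot,\cdot;Z)$ behaves as a \emph{symmetric bilinear form}. Indeed, the closed expression in \eqref{eq1.6}, namely $\mathrm{cov}_p(X,Y;Z)=E(XY)-p\,E(XZ)E(YZ)/E(Z^2)$, is manifestly bilinear and symmetric in $(X,Y)$: the term $E(XY)$ is bilinear and $E(XZ)E(YZ)$ is a product of two linear functionals of $X$ and of $Y$. Property a6) is exactly the linearity in the first slot that I shall use, and a1) supplies symmetry. Moreover, \eqref{eq1.7}--\eqref{eq1.8} show this form is positive semidefinite, and the Cauchy--Schwarz bound $E^2(XZ)\le E(X^2)E(Z^2)$ sharpens this to $\mathrm{var}_p(X;Z)\ge (1-p)E(X^2)$, so for $p\in[0,1)$ the form is in fact positive definite.

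I would build the $X_k$ recursively in terms of the already-produced $X_0,\dots,X_{k-1}$ rather than the $V_j$ directly, setting
\[
X_k = V_k + \sum_{i=0}^{k-1} b_{k,i}\,X_i,
\]
and then re-expand each $X_i$ back into $V_0,\dots,V_i$ at the end to recover the triangular shape displayed in \eqref{eq3.3}. Applying $\mathrm{cov}_p(\cdot,X_j;Z)$ to this expression and invoking the inductive hypothesis $\mathrm{cov}_p(X_i,X_j;Z)=\mathrm{var}_p(X_j;Z)\,\delta_{i,j}$ for $i,j<k$ collapses the sum to a single surviving term,
\[
\mathrm{cov}_p(X_k,X_j;Z) = \mathrm{cov}_p(V_k,X_j;Z) + b_{k,j}\,\mathrm{var}_p(X_j;Z),
\]
so the required p-uncorrelatedness forces the unique choice $b_{k,j}=-\,\mathrm{cov}_p(V_k,X_j;Z)/\mathrm{var}_p(X_j;Z)$ for each $j<k$. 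The base case $X_0=V_0$ is vacuous, and the step above closes the induction.

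The one place requiring care, and the main obstacle, is guaranteeing that every denominator $\mathrm{var}_p(X_j;Z)$ is nonzero. The triangular structure with leading coefficient $1$ ensures each $X_j$ is a nontrivial combination of $V_0,\dots,V_j$, hence $X_j\neq 0$ by the assumed linear independence; combined with the positive definiteness noted above this yields $\mathrm{var}_p(X_j;Z)>0$ for every $p\in[0,1)$. At the boundary value $p=1$ the form may degenerate---by a4), $\mathrm{var}_1(X;Z)=0$ precisely when $X$ is a scalar multiple of the fixed variable $Z$---so I would either restrict to $p\in[0,1)$, the range flagged after \eqref{eq1.8}, or impose the mild genericity condition that no constructed $X_j$ is proportional to $Z$, under which the recursion goes through verbatim. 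Finally, substituting the definitions of $X_0,\dots,X_{k-1}$ into the displayed recursion converts the coefficients $b_{k,i}$ into the claimed constants $\{a_{i,j}\}$, and the induction shows the resulting system is mutually p-uncorrelated with respect to $Z$.
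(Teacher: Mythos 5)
Your proposal is correct and follows essentially the same route as the paper's proof: the paper also builds the sequence recursively as $X_n = V_n - \sum_{k=0}^{n-1}\frac{\mathrm{cov}_p(X_k,V_n;Z)}{\mathrm{var}_p(X_k;Z)}X_k$ and collapses $\mathrm{cov}_p(X_n,X_j;Z)$ using the p-uncorrelatedness of the previously constructed elements, which is exactly your inductive step with $b_{k,j}=-\,\mathrm{cov}_p(V_k,X_j;Z)/\mathrm{var}_p(X_j;Z)$. The only difference is that you additionally justify the non-vanishing of the denominators $\mathrm{var}_p(X_j;Z)$ via the bound $\mathrm{var}_p(X;Z)\ge(1-p)E(X^2)$ and flag the degenerate case $p=1$ with $X_j$ proportional to $Z$, a point the paper's proof silently takes for granted.
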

\begin{proof}
Let us set recursively 
\begin{equation}\label{eq3.4}
\begin{array}{l}
{X_0} = {V_0},\\
{X_1} = {V_1} - \frac{{{{{\mathop{\rm cov}} }_p}\,({X_0},{V_1};Z)}}{{{{{\mathop{\rm var}} }_p}\,({X_0};Z)}}{X_0},\\
\,\,\,\,\,\,\,\,\,\,\,\,\,\,\,\,\,\,\,\,\,\,\,\,\, \vdots \\
{X_n} = {V_n} - \sum\limits_{k = 0}^{n - 1} {\frac{{{{{\mathop{\rm cov}} }_p}\,({X_k},{V_n};Z)}}{{{{{\mathop{\rm var}} }_p}\,({X_k};Z)}}{X_k}} .
\end{array}
\end{equation}
As $ X_{n} $ is a linear combination of $ \{ {V_k}\} _{k = 0}^n $ in \eqref{eq3.4}, it is enough to show that $ X_{n} $ is p-uncorrelated to $ \{ {X_j}\} _{j = 0}^{n - 1} $ with respect to the variable $ Z $. For $ j=0,1,\ldots,n-1 $ we have
\begin{align*}
{{\mathop{\rm cov}} _p}\,({X_n},{X_j};Z) &= {{\mathop{\rm cov}} _p}\,\left( {{V_n} - \sum\limits_{k = 0}^{n - 1} {\frac{{{{{\mathop{\rm cov}} }_p}\,({X_k},{V_n};Z)}}{{{{{\mathop{\rm var}} }_p}\,({X_k};Z)}}{X_k}} ,{X_j};Z} \right)\\
& = {{\mathop{\rm cov}} _p}\,({V_n},{X_j};Z) - \sum\limits_{k = 0}^{n - 1} {\frac{{{{{\mathop{\rm cov}} }_p}\,({X_k},{V_n};Z)}}{{{{{\mathop{\rm var}} }_p}\,({X_k};Z)}}{{{\mathop{\rm cov}} }_p}\,({X_k},{X_j};Z)} \\
&= {{\mathop{\rm cov}} _p}\,({V_n},{X_j};Z) - {{\mathop{\rm cov}} _p}\,({X_j},{V_n};Z) = 0.
\end{align*}
\end{proof}
In fact, theorem \ref{thm3.2} is a generalization of Gram-Schmidt orthogonalization process when $ p=0 $ or the condition $ E(Z{V_k}) = 0 $ holds for every $ k = 0,1,...,n $.
\begin{theorem}\label{thm3.3}
The reverse proposition of the previous theorem is: There are constants $ \{ {b_{i,j}}\}  $  such that 
\begin{equation}\label{eq3.5}
\begin{array}{l}
{V_0} = {X_0},\\
{V_1} = {X_1} + {b_{12}}{X_0},\\
{V_2} = {X_2} + {b_{22}}{X_1} + {b_{23}}{X_0},\\
\,\,\,\,\,\,\,\,\,\,\,\,\,\,\,\,\,\,\,\,\,\,\,\,\, \vdots \\
{V_n} = {X_n} + {b_{n2}}{X_{n - 1}} + ... + {b_{n,n + 1}}{X_0},
\end{array}
\end{equation}
and
\[{{\mathop{\rm cov}} _p}\,({X_n},{V_k};Z) = 0\,\,\,\,\,\,\,\,{\rm{for}}\,\,\,k = 0,1,...,n - 1,\]
provided that
\begin{equation}\label{eq3.6}
{{\mathop{\rm cov}} _p}\,({X_i},{X_j};Z) = 0\,\,\,\,\,\,\,{\rm{for}}\,\,\,\,\,i \ne j.
\end{equation}
\end{theorem}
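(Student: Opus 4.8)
The plan is to read the relations in \eqref{eq3.3} as a single linear change of variables and simply invert it, and then to read off the covariance identities from the bilinearity of $\text{cov}_p$. The whole statement is really two separate assertions: the \emph{existence} of the constants $\{b_{i,j}\}$ with the displayed triangular shape, and the \emph{vanishing} of $\text{cov}_p(X_n,V_k;Z)$ for $k<n$. I would treat them in that order.

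First I would collect the relations \eqref{eq3.3} supplied by Theorem \ref{thm3.2} into the matrix form $\mathbf{X}=A\,\mathbf{V}$, where $\mathbf{X}=(X_0,\dots,X_n)^{\mathsf T}$, $\mathbf{V}=(V_0,\dots,V_n)^{\mathsf T}$, and the $k$-th row of $A$ records the coefficients $1,a_{k2},\dots$ occurring in the expression for $X_k$. The decisive structural feature is that $X_k=V_k+(\text{combination of }V_0,\dots,V_{k-1})$, so $A$ is lower-triangular with every diagonal entry equal to $1$. A unit lower-triangular matrix is invertible, and its inverse is again unit lower-triangular; writing $\mathbf{V}=A^{-1}\mathbf{X}$ and reading off the rows produces exactly the representation \eqref{eq3.5}, with the $\{b_{i,j}\}$ being the entries of $A^{-1}$ and the coefficient of $X_k$ in $V_k$ equal to $1$, precisely as displayed. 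Equivalently, and perhaps cleaner to write out, one may solve \eqref{eq3.3} recursively: $V_0=X_0$, and once $V_0,\dots,V_{n-1}$ have been expressed through $X_0,\dots,X_{n-1}$, back-substitution into the $n$-th relation expresses $V_n$ as a combination of $X_0,\dots,X_n$ alone.

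It then remains to verify the covariance identities, and here I would invoke the hypothesis \eqref{eq3.6}. Fix $k\le n-1$. By the representation \eqref{eq3.5} the variable $V_k$ is a linear combination of $X_0,\dots,X_k$ only, say $V_k=\sum_{j=0}^{k}c_{k,j}X_j$. Applying the linearity property (a6) together with the symmetry property (a1) of $\text{cov}_p$ gives
\[\text{cov}_p(X_n,V_k;Z)=\sum_{j=0}^{k}c_{k,j}\,\text{cov}_p(X_n,X_j;Z).\]
Since $j\le k\le n-1$, every index $j$ occurring here is different from $n$, so \eqref{eq3.6} forces each summand $\text{cov}_p(X_n,X_j;Z)$ to vanish. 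Hence $\text{cov}_p(X_n,V_k;Z)=0$ for all $k=0,1,\dots,n-1$, which is the asserted relation.

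I do not anticipate any genuine obstacle: the statement amounts to the observation that a unit-triangular change of basis is invertible, together with the bilinearity of $\text{cov}_p$ already recorded in the properties (a1) and (a6). The only points demanding care are bookkeeping ones — confirming that inverting $A$ preserves the unit-triangular shape so that the $b_{i,j}$ appear with exactly the index pattern of \eqref{eq3.5}, and noting that for $k<n$ the variable $V_k$ involves no $X_j$ with $j\ge n$, which is precisely what makes \eqref{eq3.6} applicable to every term of the expansion.
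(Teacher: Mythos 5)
Your proposal is correct and follows essentially the same route as the paper: invert the unit lower-triangular system \eqref{eq3.3} to obtain the representation \eqref{eq3.5}, then expand $\mathrm{cov}_p(X_n,V_k;Z)$ by bilinearity and kill every term using the hypothesis \eqref{eq3.6}. The only difference is one of detail — the paper simply asserts "the system \eqref{eq3.3} is invertible," whereas you spell out why (a unit lower-triangular change of basis has a unit lower-triangular inverse), which is a harmless and welcome elaboration of the same argument.
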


\begin{proof}
By virtue of the fact that the system \eqref{eq3.3} is invertible, the general element of \eqref{eq3.5} in the form
\[{V_k} = {X_k} + {b_{k2}}{X_{k - 1}} + ... + {b_{k,k + 1}}{X_0},\]
gives
\begin{align*}
{{\mathop{\rm cov}} _p}\,({X_n},{V_k};Z) &= {{\mathop{\rm cov}} _p}\,({X_n},{X_k} + \sum\limits_{j = 2}^{k + 1} {{b_{k,j}}{X_{k + 1 - j}}} ;Z)\\
&= {{\mathop{\rm cov}} _p}\,({X_n},{X_k};Z) + \sum\limits_{j = 2}^{k + 1} {{b_{k,j}}{{{\mathop{\rm cov}} }_p}\,({X_n},{X_{k + 1 - j}};Z)}  = 0,
\end{align*}
which is valid for every $ k=0,1,\ldots, n-1 $ according to the condition \eqref{eq3.6}.
\end{proof}

\subsection{A general representation for p-uncorrelated variables}\label{subsec3.4}
Both previous theorems \ref{thm3.2} and \ref{thm3.3} help us now obtain a general representation for p-uncorrelated variables with respect to the variable $ Z $. Assume that $ \{ {V_k}\} _{k = 0}^n $ is an arbitrary independent sequence of random variables and $ \{ {X_k}\} _{k = 0}^n $ satisfy the condition \eqref{eq3.1} as before. Noting the last row of \eqref{eq3.4} and this fact from \eqref{eq3.3} that $ {X_n} = \sum\limits_{k = 0}^n {{a_{n,n + 1 - k}}\,{V_k}}  $ where $ {a_{n,1}} = 1 $, for $ m \le n $ we have
\begin{align}\label{eq3.7}
&{{\mathop{\rm cov}} _p}\,({X_n},{X_m};Z) = {{\mathop{\rm cov}} _p}\left( {{X_n},{V_m} - \sum\limits_{j = 0}^{m - 1} {\frac{{{{{\mathop{\rm cov}} }_p}\,({X_j},{V_m};Z)}}{{{{{\mathop{\rm var}} }_p}\,({X_j};Z)}}{X_j}} ;Z} \right)\\
&\qquad\qquad= {{\mathop{\rm cov}} _p}\,({X_n},{V_m};Z) - \sum\limits_{j = 0}^{m - 1} {\frac{{{{{\mathop{\rm cov}} }_p}\,({X_j},{V_m};Z)}}{{{{{\mathop{\rm var}} }_p}\,({X_j};Z)}}\,{{{\mathop{\rm cov}} }_p}\left( {{X_n},{X_j};Z} \right)} \nonumber\\
&\qquad\qquad= {{\mathop{\rm cov}} _p}\,({V_m},{X_n};Z) = {{\mathop{\rm cov}} _p}\,({V_m},\sum\limits_{k = 0}^n {{a_{n,n + 1 - k}}\,{V_k}} ;Z)\nonumber\\
&\qquad\qquad= \sum\limits_{k = 0}^n {{a_{n,n + 1 - k}}\,{{{\mathop{\rm cov}} }_p}\,\left( {{V_m},{V_k};Z} \right)}  = {{\mathop{\rm var}} _p}\,({X_n};Z)\,{\delta _{m,n}}.\nonumber
\end{align}
For $ m = 0,1,...,n $, relation \eqref{eq3.7} leads eventually to the linear system
\begin{align}\label{eq3.8}
&\left[ {\begin{array}{*{20}{c}}
{\begin{array}{*{20}{c}}
{{{{\mathop{\rm var}} }_p}\,({V_0};Z)}\\
{{{{\mathop{\rm cov}} }_p}\,({V_1},{V_0};Z)}\\
 \vdots \\
{{{{\mathop{\rm cov}} }_p}\,({V_n},{V_0};Z)}
\end{array}}&{\begin{array}{*{20}{c}}
{{{{\mathop{\rm cov}} }_p}\,({V_0},{V_1};Z)}\\
{{{{\mathop{\rm var}} }_p}\,({V_1};Z)}\\
 \vdots \\
{{{{\mathop{\rm cov}} }_p}\,({V_n},{V_1};Z)}
\end{array}}&{\begin{array}{*{20}{c}}
 \cdots \\
 \cdots \\
 \vdots \\
 \cdots 
\end{array}}&{\begin{array}{*{20}{c}}
{{{{\mathop{\rm cov}} }_p}\,({V_0},{V_n};Z)}\\
{{{{\mathop{\rm cov}} }_p}\,({V_1},{V_n};Z)}\\
 \vdots \\
{{{{\mathop{\rm var}} }_p}\,({V_n};Z)}
\end{array}}
\end{array}} \right]\left[ {\begin{array}{*{20}{c}}
{{a_{n,n + 1}}}\\
{{a_{n,n}}}\\
 \vdots \\
{{a_{n,1}}}
\end{array}} \right] \\
&\qquad\qquad\qquad\qquad\qquad\qquad= \left[ {\begin{array}{*{20}{c}}
0\\
0\\
 \vdots \\
{{{{\mathop{\rm var}} }_p}\,({X_n};Z)}
\end{array}} \right].\nonumber
\end{align}
If the determinant
\begin{equation}\label{eq3.9}
\Delta _n^{(p)}\left( {\{ {V_k}\} _{k = 0}^n;Z} \right) = \left| {\begin{array}{*{20}{c}}
{\begin{array}{*{20}{c}}
{{{{\mathop{\rm var}} }_p}\,({V_0};Z)}\\
{{{{\mathop{\rm cov}} }_p}\,({V_1},{V_0};Z)}\\
 \vdots \\
{{{{\mathop{\rm cov}} }_p}\,({V_n},{V_0};Z)}
\end{array}}&{\begin{array}{*{20}{c}}
{{{{\mathop{\rm cov}} }_p}\,({V_0},{V_1};Z)}\\
{{{{\mathop{\rm var}} }_p}\,({V_1};Z)}\\
 \vdots \\
{{{{\mathop{\rm cov}} }_p}\,({V_n},{V_1};Z)}
\end{array}}&{\begin{array}{*{20}{c}}
 \cdots \\
 \cdots \\
 \vdots \\
 \cdots 
\end{array}}&{\begin{array}{*{20}{c}}
{{{{\mathop{\rm cov}} }_p}\,({V_0},{V_n};Z)}\\
{{{{\mathop{\rm cov}} }_p}\,({V_1},{V_n};Z)}\\
 \vdots \\
{{{{\mathop{\rm var}} }_p}\,({V_n};Z)}
\end{array}}
\end{array}} \right|,
\end{equation}
is defined with $ \Delta _{ - 1}^{(p)}(.) = 1 $, the first result of solving the system \eqref{eq3.8} is that the value $ {{\mathop{\rm var}} _p}\,({X_n};Z) $ can be computed in terms of determinant \eqref{eq3.9} as
\begin{equation}\label{eq3.10}
{{\mathop{\rm var}} _p}\,({X_n};Z) = \frac{{\Delta _n^{(p)}\left( {\{ {V_k}\} _{k = 0}^n;Z} \right)}}{{\Delta _{n - 1}^{(p)}\left( {\{ {V_k}\} _{k = 0}^{n - 1};Z} \right)}}\,.
\end{equation}
On the other side, it follows from \eqref{eq3.10} as a first order equation that
\begin{equation}\label{eq3.11}
\Delta _n^{(p)}\left( {\{ {V_k}\} _{k = 0}^n;Z} \right) = \prod\limits_{k = 0}^n {{{{\mathop{\rm var}} }_p}\,({X_k};Z)} \,\, \ge 0\,.
\end{equation}
The second result of solving \eqref{eq3.8} is to derive a general representation for $ X_{n} $ as
\begin{multline}\label{eq3.12}
\Delta _{n - 1}^{(p)}\left( {\{ {V_k}\} _{k = 0}^{n - 1};Z} \right){X_n}\\
 = \left| \begin{array}{l}
\begin{array}{*{20}{c}}
{\begin{array}{*{20}{c}}
{{{{\mathop{\rm var}} }_p}\,({V_0};Z)}\\
{{{{\mathop{\rm cov}} }_p}\,({V_1},{V_0};Z)}\\
 \vdots \\
{{{{\mathop{\rm cov}} }_p}\,({V_{n - 1}},{V_0};Z)}
\end{array}}&{\begin{array}{*{20}{c}}
{{{{\mathop{\rm cov}} }_p}\,({V_0},{V_1};Z)}\\
{{{{\mathop{\rm var}} }_p}\,({V_1};Z)}\\
 \vdots \\
{{{{\mathop{\rm cov}} }_p}\,({V_{n - 1}},{V_1};Z)}
\end{array}}&{\begin{array}{*{20}{c}}
 \cdots \\
 \cdots \\
 \vdots \\
 \cdots 
\end{array}}&{\begin{array}{*{20}{c}}
{{{{\mathop{\rm cov}} }_p}\,({V_0},{V_n};Z)}\\
{{{{\mathop{\rm cov}} }_p}\,({V_1},{V_n};Z)}\\
 \vdots \\
{{{{\mathop{\rm cov}} }_p}\,({V_{n - 1}},{V_n};Z)}
\end{array}}
\end{array}\\
\qquad\qquad{V_0}\qquad\qquad\qquad\qquad\,\,{V_1}\qquad\qquad\qquad\cdots \qquad\qquad\qquad{V_n}
\end{array} \right|\,,
\end{multline}
where we have exploited \eqref{eq3.10} to derive it.

Inequality \eqref{eq3.11} shows that the determinant of the coefficients matrix of the system \eqref{eq2.6} is always non-negative. Moreover, if $ p=0 $ or $ E(Z{X_k}) = 0 $ for every $ k=0,1,\ldots, n $ in \eqref{eq3.12}, it leads to the Gram-Schmidt orthogonalization process. For instance, the expanded forms of \eqref{eq3.12} for $ n=0,1,2 $ are 
\begin{align*}
{X_0} &= {V_0}, \nonumber\\
{X_1} &= {V_1} - \frac{{{{{\mathop{\rm cov}} }_p}\,({V_0},{V_1};Z)}}{{{{{\mathop{\rm var}} }_p}\,({V_0};Z)}}{V_0}, \nonumber\\
{X_2} &= {V_2} - \frac{{{{{\mathop{\rm var}} }_p}\,({V_0};Z){{{\mathop{\rm cov}} }_p}\,({V_1},{V_2};Z) - {{{\mathop{\rm cov}} }_p}\,({V_0},{V_1};Z){{{\mathop{\rm cov}} }_p}\,({V_0},{V_2};Z)}}{{{{{\mathop{\rm var}} }_p}\,({V_0};Z){{{\mathop{\rm var}} }_p}\,({V_1};Z) - {\mathop{\rm cov}} _p^2({V_0},{V_1};Z)}}\,\,{V_1} \nonumber\\
& \,\,\,\,\,- \frac{{{{{\mathop{\rm var}} }_p}\,({V_1};Z){{{\mathop{\rm cov}} }_p}\,({V_0},{V_2};Z) - {{{\mathop{\rm cov}} }_p}\,({V_0},{V_1};Z){{{\mathop{\rm cov}} }_p}\,({V_1},{V_2};Z)}}{{{{{\mathop{\rm var}} }_p}\,({V_0};Z){{{\mathop{\rm var}} }_p}\,({V_1};Z) - {\mathop{\rm cov}} _p^2({V_0},{V_1};Z)}}\,\,{V_0}\,,\label{eq3.13}
\end{align*}
satisfying the conditions
\[{{\mathop{\rm cov}} _p}\,({X_0},{X_1};Z) = {{\mathop{\rm cov}} _p}\,({X_0},{X_2};Z) = {{\mathop{\rm cov}} _p}\,({X_1},{X_2};Z) = 0.\]
Note although representation \eqref{eq3.12} is computationally slower than the recursive algorithm described in theorem \ref{thm3.2}, it is however of theoretical interest as we have used it to find new sequences of p-uncorrelated functions (see examples of sections \ref{sec6}, \ref{sec7}, \ref{sec8} and \ref{sec9}).

\section{p-uncorrelated expansions with respect to a fixed variable}\label{sec4}
Let $ \{ {X_k}\} _{k = 0}^n $ be a sequence of p-uncorrelated variables with respect to the variable $ Z $. If $ n\rightarrow\infty $ in approximation \eqref{eq3.2}, the series
\[\sum\limits_{k = 0}^\infty  {\frac{{{{{\mathop{\rm cov}} }_p}\,({X_k},Y;Z)}}{{{{{\mathop{\rm var}} }_p}\,({X_k};Z)}}{X_k}}, \]
is called a p-uncorrelated expansion with respect to $ Z $ for $ Y $ and then we write
\begin{equation}\label{eq4.1}
Y \sim \sum\limits_{k = 0}^\infty  {\frac{{{{{\mathop{\rm cov}} }_p}\,({X_k},Y;Z)}}{{{{{\mathop{\rm var}} }_p}\,({X_k};Z)}}{X_k}} \,.
\end{equation}
In the case where $ n $ is finite, there is a separate theorem as follows.

\begin{theorem}\label{thm4.1}
Let $ \{ {V_k}\} _{k = 0}^n $ be linearly independent variables and $ \{ {X_k}\} _{k = 0}^n $ be their corresponding p-uncorrelated elements generated by $ \{ {V_k}\} _{k = 0}^n $ in theorem \ref{thm3.2}. If $ \sum\limits_{k = 0}^n {{a_k}{V_k}}  = {W_n} $ then
\begin{equation}\label{eq4.2}
{W_n} = \sum\limits_{k = 0}^n {\frac{{{{{\mathop{\rm cov}} }_p}\,({X_k},{W_n};Z)}}{{{{{\mathop{\rm var}} }_p}\,({X_k};Z)}}{X_k}} \,.
\end{equation}
\end{theorem}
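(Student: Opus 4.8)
The plan is to recognize \eqref{eq4.2} as the ``Fourier-coefficient'' identity attached to the p-uncorrelated system $\{X_k\}_{k=0}^n$, and to prove it by the standard two-step argument used for orthogonal expansions: first show that $W_n$ genuinely lies in the span of the $X_k$, then read off its coefficients by testing against each $X_j$. Since the only structure being used is bilinearity and the diagonalising condition \eqref{eq3.1}, I expect the computation itself to be short once the span issue is settled.

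First I would argue that $W_n\in\mathrm{span}\{X_0,\dots,X_n\}$. By construction (Theorem \ref{thm3.2}) each $X_k$ is a linear combination of $V_0,\dots,V_k$, while the reverse relations \eqref{eq3.5} of Theorem \ref{thm3.3} express each $V_k$ conversely as a linear combination of $X_0,\dots,X_k$; thus the two triangular systems are mutually invertible and the two spans coincide. Because $W_n=\sum_{k=0}^n a_k V_k$, there exist constants $b_0,\dots,b_n$ with $W_n=\sum_{k=0}^n b_k X_k$. Next I would apply the functional $\mathrm{cov}_p(\,\cdot\,,X_j;Z)$ to this expansion. Using the linearity property (a6) and then the p-uncorrelatedness condition \eqref{eq3.1}, every off-diagonal term collapses and only the $k=j$ contribution survives:
\[
\mathrm{cov}_p(W_n,X_j;Z)=\sum_{k=0}^n b_k\,\mathrm{cov}_p(X_k,X_j;Z)=b_j\,\mathrm{var}_p(X_j;Z).
\]
Solving for $b_j$ and invoking the symmetry property (a1) to rewrite $\mathrm{cov}_p(W_n,X_j;Z)=\mathrm{cov}_p(X_j,W_n;Z)$ reproduces exactly the coefficient $b_j=\mathrm{cov}_p(X_j,W_n;Z)/\mathrm{var}_p(X_j;Z)$ claimed in \eqref{eq4.2}.

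The only delicate point, and the one I expect to be the main obstacle, is the admissibility of the division, namely that $\mathrm{var}_p(X_j;Z)>0$ for every $j$ so that no denominator vanishes. I would note that this positivity is already built into the construction of Theorem \ref{thm3.2}, whose recursion \eqref{eq3.4} divides by these very quantities; equivalently, the assumed linear independence of $\{V_k\}$ forces the Gram-type product $\prod_{k=0}^n\mathrm{var}_p(X_k;Z)=\Delta_n^{(p)}$ appearing in \eqref{eq3.11} to be strictly positive, so each factor is nonzero. Once this non-degeneracy is in place, the coefficient extraction above is routine and the identity \eqref{eq4.2} follows.
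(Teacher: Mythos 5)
Your proposal is correct and follows essentially the same route as the paper's own proof: invert the triangular system of Theorem \ref{thm3.3} to write $W_n=\sum_k b_k X_k$, then apply $\mathrm{cov}_p(\,\cdot\,,X_j;Z)$ and use the p-uncorrelatedness condition to isolate each coefficient. Your closing remark on the non-vanishing of the denominators $\mathrm{var}_p(X_j;Z)$ — justified by the fact that the recursion \eqref{eq3.4} already presupposes division by these quantities — addresses a point the paper leaves implicit, and is a sensible addition.
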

\begin{proof}
First from theorem \ref{thm3.3} and relation \eqref{eq3.5}, we have
\begin{align}\label{eq4.3}
&{W_n} = \sum\limits_{k = 0}^n {{a_k}{V_k}}  = {a_0}({X_0}) + {a_1}({X_1} + {b_{12}}{X_0}) + ... + {a_n}({X_n} + {b_{n2}}{X_{n - 1}} + ... + {b_{n,n + 1}}{X_0})\\
&\qquad\qquad \qquad\qquad \qquad\qquad \qquad\qquad = {c_0}{X_0} + {c_1}{X_1} + ... + {c_n}{X_n}.\nonumber
\end{align}
Now, for $ k=0,1,\ldots,n $ apply $ {{\mathop{\rm cov}} _p}(.\,,{X_k};Z) $ on both sides of \eqref{eq4.3} to get
\[{{\mathop{\rm cov}} _p}({W_n},{X_k};Z) = {{\mathop{\rm cov}} _p}\,(\sum\limits_{j = 0}^n {{c_j}{X_j}} ,{X_k};Z) = \sum\limits_{j = 0}^n {{c_j}{{{\mathop{\rm cov}} }_p}\,({X_j},{X_k};Z)}  = {c_k}{{\mathop{\rm var}} _p}({X_k};Z),\]
which yields
\[{c_k} = \frac{{{{{\mathop{\rm cov}} }_p}\,({W_n},{X_k};Z)}}{{{{{\mathop{\rm var}} }_p}\,({X_k};Z)}},\]
and \eqref{eq4.2} is therefore proved.
\end{proof}

%\noindent {\textbf{4.1.1. Remark.}}
\begin{remark}\label{rem4.1.1}
There is an important point in theorem \ref{thm4.1}. Let us reconsider $ n+1 $ mutually p-uncorrelated elements $ \{ {X_k}\} _{k = 0}^n $ satisfying the condition
\begin{equation}\label{eq4.4}
{{\mathop{\rm cov}} _p}\,({X_i},{X_j};Z) = {{\mathop{\rm var}} _p}\,({X_j};Z)\,{\delta _{i,j}}.
\end{equation}
Since
\[{{\mathop{\rm cov}} _p}\,({X_i},{X_j};Z) = E\left( {\left( {{X_i} - (1 - \sqrt {1 - p} )\frac{{E({X_i}Z)}}{{E({Z^2})}}\,Z} \right)\left( {{X_j} - (1 - \sqrt {1 - p} )\frac{{E({X_j}Z)}}{{E({Z^2})}}\,Z} \right)} \right),\]
by defining the variable
\begin{equation}\label{eq4.5}
{T_{j,p}}(Z) = {X_j} - (1 - \sqrt {1 - p} )\frac{{E({X_j}Z)}}{{E({Z^2})}}\,Z = {X_j} - (1 - \sqrt {1 - p} )\,{\rm{pro}}{{\rm{j}}_{\,Z}}{X_j},
\end{equation}
we observe in (4.4) that
\begin{equation}\label{eq4.6}
E\Big( {{T_{i,p}}(Z){T_{j,p}}(Z)} \Big) = E\Big( {T_{j,p}^2(Z)} \Big){\delta _{i,j}}.
\end{equation}
Now notice that the orthogonal sequence $ \left\{ {{T_{j,p}}(Z)} \right\}_{j = 0}^n $ in \eqref{eq4.5} is generated only if we have previously created the uncorrelated sequence $ \left\{ {{X_j}} \right\}_{j = 0}^n $ because of the term $ \frac{{E({X_j}Z)}}{{E({Z^2})}}\,Z $ otherwise generating such orthogonal sequences is impossible. Also, note that the original form of these two sequences are completely different, so their corresponding approximations are different such that the approximation corresponding to the orthogonal base $ \left\{ {{T_{j,p}}(Z)} \right\}_{j = 0}^n $ is optimized in the sense of ordinary least squares while the approximation corresponding to the uncorrelated base $ \left\{ {{X_j}} \right\}_{j = 0}^n $ is optimized in the sense of least p-variances.

Referring to \eqref{eq4.5} and \eqref{eq4.6}, let us now consider the following orthogonal approximation sum for the same variable $ W_{n} $ as stated in \eqref{eq4.2},
\begin{equation}\label{eq4.7}
{S_n} = \sum\limits_{j = 0}^n {\frac{{E\left( {{W_n}\,{T_{j,p}}(Z)} \right)}}{{E\left( {T_{j,p}^2(Z)} \right)}}\,{T_{j,p}}(Z)} \,.
\end{equation}
Since in \eqref{eq4.7},
\[E\left( {{W_n}\,{T_{j,p}}(Z)} \right) = E\left( {{W_n}\,\left( {{X_j} - (1 - \sqrt {1 - p} )\frac{{E({X_j}Z)}}{{E({Z^2})}}\,Z} \right)} \right) = {{\mathop{\rm cov}} _p}\,({X_j},{W_n};Z),\]
and
\[E\left( {T_{j,p}^2(Z)} \right) = E\left( {{{\left( {{X_j} - (1 - \sqrt {1 - p} )\frac{{E({X_j}Z)}}{{E({Z^2})}}\,Z} \right)}^2}} \right) = {{\mathop{\rm var}} _p}\,({X_j};Z),\]
the sum is simplified as
\begin{align}\label{eq4.8}
{S_n}& = \sum\limits_{j = 0}^n {\frac{{{{{\mathop{\rm cov}} }_p}\,({X_j},{W_n};Z)}}{{{{{\mathop{\rm var}} }_p}\,({X_j};Z)}}\,\left( {{X_j} - (1 - \sqrt {1 - p} )\frac{{E({X_j}Z)}}{{E({Z^2})}}\,Z} \right)} \,\\
&= {W_n} - (1 - \sqrt {1 - p} )\frac{Z}{{E({Z^2})}}\,\sum\limits_{j = 0}^n {\frac{{{{{\mathop{\rm cov}} }_p}\,({X_j},{W_n};Z)\,E({X_j}Z)}}{{{{{\mathop{\rm var}} }_p}\,({X_j};Z)}}} .\nonumber
\end{align}
The main point is here: If in \eqref{eq4.8}, $ p=0 $ or $ \left\{ {E({X_j}Z) = 0} \right\}_{j = 0}^n $ (see section \ref{subsubsec2.3.1}) and/or the limit condition
\begin{equation}\label{eq4.9}
\sum\limits_{j = 0}^\infty  {\frac{{{{{\mathop{\rm cov}} }_p}\,({X_j},{W_n};Z)\,E({X_j}Z)}}{{{{{\mathop{\rm var}} }_p}\,({X_j};Z)}}}  = 0,
\end{equation}
is satisfied, then $ \mathop {\lim }\limits_{n \to \infty } \,\,{S_n} = \mathop {\lim }\limits_{n \to \infty } \,\,{W_n} = {W^*} $ and subsequently 
\begin{equation}\label{eq4.10}
\sum\limits_{j = 0}^\infty  {\frac{{{{{\mathop{\rm cov}} }_p}\,({X_j},{W^*};Z)}}{{{{{\mathop{\rm var}} }_p}\,({X_j};Z)}}{X_j}} \, = \sum\limits_{j = 0}^\infty  {\frac{{E\left( {{W^*}{T_{j,p}}(Z)} \right)}}{{E\left( {T_{j,p}^2(Z)} \right)}}\,{T_{j,p}}(Z)} \,.
\end{equation}
This means that the two aforesaid expansions in \eqref{eq4.10} coincide each other only if the condition \eqref{eq4.9} holds. However, in many cases, this condition will not occur independently as $ W_{n} $ is distinct for each choice especially in continuous spaces, see  corollary \ref{coro9.1.2} in this regard. We finally add in a general case that
\begin{align*}
\sum\limits_{j = 0}^n {{C_j}\,{T_{j,p}}(Z)} &= \sum\limits_{j = 0}^n {{C_j}\,\left( {{X_j} - (1 - \sqrt {1 - p} )\frac{{E({X_j}Z)}}{{E({Z^2})}}\,Z} \right)} \\
& = \sum\limits_{j = 0}^n {{C_j}{X_j}}  - (1 - \sqrt {1 - p} )\frac{Z}{{E({Z^2})}}\,E\left( {Z\sum\limits_{j = 0}^n {{C_j}{X_j}} } \right) \\
& = \sum\limits_{j = 0}^n {{C_j}{X_j}}  - (1 - \sqrt {1 - p} )\,{\rm{pro}}{{\rm{j}}_{\,Z}}\left( {\sum\limits_{j = 0}^n {{C_j}{X_j}} } \right)\,.
\end{align*}

\end{remark}

\subsection{A biorthogonality property}\label{subsec4.2}
Let $ \{ {X_k}\} _{k = 0} $ be a sequence of complete uncorrelated variables satisfying the condition
\[{{\mathop{\rm cov}} _1}\,({X_k},{X_j};Z) = {{\mathop{\rm var}} _1}\,({X_j};Z)\,{\delta _{k,j}}.\]
Since
\[ E\left( {{X_k}\left( {{X_j} - \frac{{E({X_j}\,Z)}}{{E({Z^2})}}\,Z} \right)} \right)={{\mathop{\rm cov}} _1}\,({X_k},{X_j};Z),\]
so the relation
\[E\left( {{X_k}\left( {{X_j} - \frac{{E({X_j}\,Z)}}{{E({Z^2})}}\,Z} \right)} \right) = {{\mathop{\rm var}} _1}\,({X_j};Z)\,{\delta _{k,j}}.\]
implies that the sequence $ \{ {X_k}\} _{k = 0} $ is biorthogonal with respect to the sequence $ {\left\{ {{X_j} - \frac{{E({X_j}\,Z)}}{{E({Z^2})}}\,Z} \right\}_{j = 0}} $. This means that every complete uncorrelated sequence can be biorthogonal but its reverse proposition is not true. See also corollary \ref{coro8.1}. To study the topic of biorthogonality we refer the readers to \cite{ref3, ref8}

\begin{theorem}\label{thm4.3}
Let $ \{ {X_k}\} _{k = 0} $ be p-uncorrelated variables satisfying the condition (4.4) and let $ Y $ be arbitrary. Then
\begin{equation}\label{eq4.11}
{{\mathop{\rm var}} _p}\,\left( {Y - \sum\limits_{k = 0}^n {\frac{{{{{\mathop{\rm cov}} }_p}\,({X_k},Y;Z)}}{{{{{\mathop{\rm var}} }_p}\,({X_k};Z)}}{X_k}} \,;Z} \right) \le {{\mathop{\rm var}} _p}\,\left( {Y - \sum\limits_{k = 0}^n {{\alpha _k}{X_k}} \,;Z} \right),
\end{equation}
for any selection of constants $ \{ {\alpha _k}\} _{k = 0}^n $.
\end{theorem}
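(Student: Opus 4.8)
The plan is to treat $ {{\mathop{\rm var}} _p}(\,\cdot\,;Z) $ as the quadratic form attached to the bilinear pairing $ {{\mathop{\rm cov}} _p}(\,\cdot\,,\cdot\,;Z) $ and to run the classical least-squares best-approximation argument, the only new ingredient being that the p-uncorrelatedness hypothesis \eqref{eq4.4} diagonalizes the Gram matrix of $ \{X_k\}_{k=0}^n $. First I would abbreviate $ v_k = {{\mathop{\rm var}} _p}(X_k;Z) $ and $ b_k = {{\mathop{\rm cov}} _p}(X_k,Y;Z) $, so that the candidate optimal coefficients are $ c_k = b_k/v_k $; here each $ v_k > 0 $ by \eqref{eq1.7}, so the $ c_k $ are well defined.

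For arbitrary constants $ \{\alpha_k\}_{k=0}^n $ I would set $ W = Y - \sum_{k=0}^n \alpha_k X_k $ and expand $ {{\mathop{\rm var}} _p}(W;Z) = {{\mathop{\rm cov}} _p}(W,W;Z) $ by the bilinearity of $ {{\mathop{\rm cov}} _p} $, which is immediate from the definition \eqref{eq1.6} (equivalently from properties a1 and a6 together with \eqref{eq1.9}). This gives
\[{{\mathop{\rm var}} _p}(W;Z) = {{\mathop{\rm var}} _p}(Y;Z) - 2\sum_{k=0}^n \alpha_k b_k + \sum_{k=0}^n \sum_{j=0}^n \alpha_k \alpha_j\, {{\mathop{\rm cov}} _p}(X_k,X_j;Z).\]
Applying the p-uncorrelatedness condition \eqref{eq4.4} collapses the double sum, since $ {{\mathop{\rm cov}} _p}(X_k,X_j;Z) = v_j\,\delta_{k,j} $, leaving
\[{{\mathop{\rm var}} _p}(W;Z) = {{\mathop{\rm var}} _p}(Y;Z) - 2\sum_{k=0}^n \alpha_k b_k + \sum_{k=0}^n \alpha_k^2\, v_k.\]

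Next I would complete the square term by term, using $ \alpha_k^2 v_k - 2\alpha_k b_k = v_k(\alpha_k - c_k)^2 - c_k^2 v_k $, to obtain
\[{{\mathop{\rm var}} _p}(W;Z) = {{\mathop{\rm var}} _p}(Y;Z) - \sum_{k=0}^n c_k^2\, v_k + \sum_{k=0}^n v_k\,(\alpha_k - c_k)^2.\]
The first two terms on the right are independent of $ \{\alpha_k\} $ and coincide with the value of $ {{\mathop{\rm var}} _p}(W;Z) $ at $ \alpha_k = c_k $, i.e. with the left-hand side of \eqref{eq4.11}; the remaining sum is nonnegative because every $ v_k \ge 0 $. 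Hence the quantity $ {{\mathop{\rm var}} _p}(Y - \sum_k \alpha_k X_k;Z) $ is minimized exactly at $ \alpha_k = c_k = {{\mathop{\rm cov}} _p}(X_k,Y;Z)/{{\mathop{\rm var}} _p}(X_k;Z) $, which is precisely inequality \eqref{eq4.11}.

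I do not expect a genuine obstacle here, as this is the standard orthogonal-projection minimization transplanted into the p-covariance pairing. The only steps demanding care are (i) justifying the full bilinear expansion of $ {{\mathop{\rm cov}} _p} $ over the two linear combinations, which is exactly what the definition \eqref{eq1.6} and properties a1, a6 supply, and (ii) invoking the positivity $ v_k = {{\mathop{\rm var}} _p}(X_k;Z) > 0 $ from \eqref{eq1.7}, both to make each $ c_k $ meaningful and to guarantee that the completed squares are genuinely nonnegative so that the minimum is attained at the asserted coefficients.
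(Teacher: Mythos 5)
Your proof is correct and follows essentially the same route as the paper: both expand $ {{\mathop{\rm var}}_p}\big(Y-\sum_{k=0}^n \alpha_k X_k;Z\big) $ bilinearly, use the p-uncorrelatedness condition \eqref{eq4.4} to diagonalize the quadratic part, and complete the square in each $ \alpha_k $ to identify the minimizer $ \alpha_k = {{\mathop{\rm cov}}_p}(X_k,Y;Z)/{{\mathop{\rm var}}_p}(X_k;Z) $. The only cosmetic difference is that the paper writes the completed square as $ \big(\alpha_k\sqrt{v_k}-b_k/\sqrt{v_k}\big)^2 $ whereas you write $ v_k(\alpha_k-c_k)^2 $, which is the same identity.
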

\begin{proof}
According to \eqref{eq1.9}, we first have
\begin{align}\label{eq4.12}
&{{\mathop{\rm var}} _p}\,\left( {Y - \sum\limits_{k = 0}^n {{\alpha _k}{X_k}} \,;Z} \right) = {{\mathop{\rm var}} _p}\,\left( {Y\,;Z} \right) + {{\mathop{\rm var}} _p}\,\left( {\sum\limits_{k = 0}^n {{\alpha _k}{X_k}} \,;Z} \right) - 2\,{{\mathop{\rm cov}} _p}\,\left( {\sum\limits_{k = 0}^n {{\alpha _k}{X_k}} ,Y\,;Z} \right)\\
&\qquad\quad = {{\mathop{\rm var}} _p}\,\left( {Y\,;Z} \right) + \sum\limits_{k = 0}^n {\alpha _k^2\,{{{\mathop{\rm var}} }_p}\,\left( {\,{X_k};Z} \right)}  - 2\sum\limits_{k = 0}^n {{\alpha _k}{{{\mathop{\rm cov}} }_p}\,\left( {{X_k},Y\,;Z} \right)} \nonumber \\
&\qquad\quad = {{\mathop{\rm var}} _p}\,\left( {Y\,;Z} \right) + \sum\limits_{k = 0}^n {{{\left( {{\alpha _k}\sqrt {{{{\mathop{\rm var}} }_p}\,\left( {\,{X_k};Z} \right)}  - \frac{{{{{\mathop{\rm cov}} }_p}\,({X_k},Y;Z)}}{{\sqrt {{{{\mathop{\rm var}} }_p}\,\left( {\,{X_k};Z} \right)} }}} \right)}^2}}  - \sum\limits_{k = 0}^n {\frac{{{\mathop{\rm cov}} _p^2\,({X_k},Y;Z)}}{{{{{\mathop{\rm var}} }_p}\,({X_k};Z)}}} ,\nonumber
\end{align}
in which the following identity has been used
\[{{\mathop{\rm var}} _p}\,\left( {\sum\limits_{k = 0}^n {{a_k}{X_k}} \,;Z} \right) = \sum\limits_{k = 0}^n {a_k^2\,{{{\mathop{\rm var}} }_p}\,\left( {\,{X_k};Z} \right)}  \Leftrightarrow {{\mathop{\rm cov}} _p}\,\left( {{X_i},{X_j}\,;Z} \right) = 0\,\,\,{\rm{for}}\,\,i \ne j.\]
Since $ {{\mathop{\rm var}} _p}\,\left( {Y\,;Z} \right) - \sum\limits_{k = 0}^n {\frac{{{\mathop{\rm cov}} _p^2\,({X_k},Y;Z)}}{{{{{\mathop{\rm var}} }_p}\,({X_k};Z)}}}  $ is independent of $ \{ {\alpha _k}\} _{k = 0}^n $ in the last equality of \eqref{eq4.12}, the minimum of $ {{\mathop{\rm var}} _p}\,\left( {Y - \sum\limits_{k = 0}^n {{\alpha _k}{X_k}} \,;Z} \right) $ is therefore achieved only when
\begin{equation}\label{eq4.13}
{\alpha _k}\sqrt {{{{\mathop{\rm var}} }_p}\,\left( {\,{X_k};Z} \right)}  - \frac{{{{{\mathop{\rm cov}} }_p}\,({X_k},Y;Z)}}{{\sqrt {{{{\mathop{\rm var}} }_p}\,\left( {\,{X_k};Z} \right)} }} = 0,
\end{equation}
which results in the left side of inequality \eqref{eq4.11}.
\end{proof}
With reference to \eqref{eq1.8}, inequality \eqref{eq4.11} can be completed as follows
\begin{equation}\label{eq4.14}
{{\mathop{\rm var}} _p}\,\left( {Y - \sum\limits_{k = 0}^n {\frac{{{{{\mathop{\rm cov}} }_p}\,({X_k},Y;Z)}}{{{{{\mathop{\rm var}} }_p}\,({X_k};Z)}}{X_k}} \,;Z} \right) \le {{\mathop{\rm var}} _p}\,\left( {Y - \sum\limits_{k = 0}^n {{\alpha _k}{X_k}} \,;Z} \right) \le E\,\left( {{{\Big(Y - \sum\limits_{k = 0}^n {{\alpha _k}{X_k}} \Big)^2}}} \right),
\end{equation}
where $ \{ {\alpha _k}\} _{k = 0}^n $ are arbitrary.

Again, inequality \eqref{eq4.14} shows the superiority of p-uncorrelated approximations \eqref{eq3.2} with respect to any approximation that is made by the least square method (in particular orthogonal approximations). See example \ref{ex13.1} in this regard.

\begin{corollary}\label{coro4.4}
Let $ \{ {V_k}\} _{k = 0}^n $ be an independent set of random variables.

The problem of finding that linear combination of $ {V_0},{V_1},...,{V_n} $ which minimizes 
\[ {{\mathop{\rm var}} _p}\,\left( {Y - \sum\limits_{k = 0}^n {{\beta _k}{V_k}} \,;Z} \right) ,\]
for any selection of constants $ \{ {\beta _k}\} _{k = 0}^n $ is solved by $ \sum\limits_{k = 0}^n {\frac{{{{{\mathop{\rm cov}} }_p}\,({X_k},Y;Z)}}{{{{{\mathop{\rm var}} }_p}\,({X_k};Z)}}{X_k}} $ where $ \{ {X_k}\} _{k = 0}^n $ are mutually p-uncorrelated with respect to the variable $ Z $. This corollary tells us that every least p-variance problem with respect to a fixed variable is solved by an appropriate truncated p-uncorrelated expansion of type \eqref{eq4.1}.
\end{corollary}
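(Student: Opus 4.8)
The plan is to reduce the minimization over linear combinations of the original variables $ \{ {V_k}\} _{k = 0}^n $ to the minimization over the p-uncorrelated variables $ \{ {X_k}\} _{k = 0}^n $ that has already been settled, and then to invoke Theorem \ref{thm4.3}. The essential observation is that the two families generate exactly the same set of linear combinations, so that optimizing over one span is literally the same problem as optimizing over the other.

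First I would recall from Theorem \ref{thm3.2} and relation \eqref{eq3.3} that each $ {X_k} $ is a linear combination of $ {V_0},\ldots,{V_k} $, while conversely Theorem \ref{thm3.3} and relation \eqref{eq3.5} express each $ {V_k} $ as a linear combination of $ {X_0},\ldots,{X_k} $. Both transformations are triangular with unit diagonal, hence invertible as square linear maps on $ \mathbb{R}^{n+1} $. Consequently the collection of all linear combinations $ \sum_{k=0}^n {\beta _k}{V_k} $ coincides with the collection of all linear combinations $ \sum_{k=0}^n {\alpha _k}{X_k} $: given any coefficients $ \{ {\beta _k}\} $ one may rewrite $ \sum {\beta _k}{V_k} = \sum {\alpha _k}{X_k} $ for suitable $ \{ {\alpha _k}\} $, and vice versa, the correspondence $ \{ {\beta _k}\} \leftrightarrow \{ {\alpha _k}\} $ being a bijection because the change of basis is invertible.

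This span equivalence makes the two minimization problems identical,
\[ \mathop {\min }\limits_{\{ {\beta _k}\} } {{\mathop{\rm var}} _p}\Big( {Y - \sum_{k=0}^n {\beta _k}{V_k}\,;Z} \Big) = \mathop {\min }\limits_{\{ {\alpha _k}\} } {{\mathop{\rm var}} _p}\Big( {Y - \sum_{k=0}^n {\alpha _k}{X_k}\,;Z} \Big), \]
and the minimizing element of the span is the same random variable in either description. The right-hand problem was solved in Theorem \ref{thm4.3}, whose proof via \eqref{eq4.12} and the vanishing condition \eqref{eq4.13} shows that the optimal coefficients are $ {\alpha _k} = {{\mathop{\rm cov}} _p}({X_k},Y;Z)/{{\mathop{\rm var}} _p}({X_k};Z) $. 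Hence the minimizer equals $ \sum_{k=0}^n \frac{{{\mathop{\rm cov}} _p}({X_k},Y;Z)}{{{\mathop{\rm var}} _p}({X_k};Z)}{X_k} $, which is precisely the truncated p-uncorrelated expansion of type \eqref{eq4.1} asserted in the statement.

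I do not anticipate a genuine obstacle: the corollary is essentially bookkeeping on top of Theorem \ref{thm4.3}, which already carries all the analytic content. The only point that deserves care is that the passage $ \{ {\beta _k}\} \leftrightarrow \{ {\alpha _k}\} $ must be a bijection rather than merely surjective, so that the two infima are taken over the same parameter set and are attained by the same element; this is guaranteed by the unit-triangular, and therefore invertible, structure of the change of basis in \eqref{eq3.3} and \eqref{eq3.5}.
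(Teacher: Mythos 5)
Your proposal is correct and matches the paper's intended reasoning: the paper states this corollary without a separate proof precisely because it follows from Theorem \ref{thm4.3} combined with the span equivalence given by the triangular, unit-diagonal relations \eqref{eq3.3} and \eqref{eq3.5} — exactly the bookkeeping you carry out (the same basis-exchange device appears explicitly in the paper's proof of Theorem \ref{thm4.7}). No gaps; your care about the coefficient correspondence being a bijection is the only point of substance, and you handle it correctly.
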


\begin{corollary}\label{coro4.5}
Substituting \eqref{eq4.13} into \eqref{eq4.12} gives
\begin{multline}\label{eq4.15}
{{\mathop{\rm var}} _p}\,\left( {Y - \sum\limits_{k = 0}^n {\frac{{{{{\mathop{\rm cov}} }_p}\,({X_k},Y;Z)}}{{{{{\mathop{\rm var}} }_p}\,({X_k};Z)}}{X_k}} \,;Z} \right) = {{\mathop{\rm var}} _p}\,(Y;Z) - \sum\limits_{k = 0}^n {\frac{{{\mathop{\rm cov}} _p^2({X_k},Y;Z)}}{{{{{\mathop{\rm var}} }_p}\,({X_k};Z)}}} \\
= {{\mathop{\rm var}} _p}\,(Y;Z)\left( {1 - \sum\limits_{k = 0}^n {\rho _p^2\,({X_k},Y;Z)} } \right).
\end{multline}
On the other side, the positivity of a p-variance implies that
\[\sum\limits_{k = 0}^n {\rho _p^2\,({X_k},Y;Z)}  \le 1,\]
where $ \{ {X_k}\} _{k = 0}^n $ satisfy \eqref{eq4.4}. Also, if $ \{ {X_k}\} _{k = 0}^{\infty} $ is an infinite sequence of p-uncorrelated variables, then the latter inequality changes to
\begin{equation}\label{eq4.16}
\sum\limits_{k = 0}^\infty  {\rho _p^2\,({X_k},Y;Z)}  \le 1\,\,\,\,{\rm{or}}\,\,\,\,\,\sum\limits_{k = 0}^\infty  {\frac{{{\mathop{\rm cov}} _p^2({X_k},Y;Z)}}{{{{{\mathop{\rm var}} }_p}\,({X_k};Z)}}}  \le {{\mathop{\rm var}} _p}\,(Y;Z).
\end{equation}
Finally, the convergence of the series in \eqref{eq4.16} causes that
\[\mathop {\lim }\limits_{k \to \infty } \,\,\rho _p^2\,({X_k},Y;Z) = 0\,\,\,\,\,\,{\rm{or}}\,\,\,\,\,\mathop {\lim }\limits_{k \to \infty } \,\,\frac{{{\mathop{\rm cov}} _p^2({X_k},Y;Z)}}{{{{{\mathop{\rm var}} }_p}\,({X_k};Z)}} = 0\,.\]
\end{corollary}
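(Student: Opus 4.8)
The plan is to extract the whole corollary from the single identity \eqref{eq4.12} already derived inside the proof of Theorem~\ref{thm4.3}, so the opening step demands no new computation. First I would impose the optimality condition \eqref{eq4.13} directly in the last line of \eqref{eq4.12}. That condition states precisely that each bracketed term ${\alpha _k}\sqrt{{{\mathop{\rm var}} _p}(X_k;Z)} - {{\mathop{\rm cov}} _p}(X_k,Y;Z)/\sqrt{{{\mathop{\rm var}} _p}(X_k;Z)}$ vanishes, equivalently that ${\alpha _k} = {{\mathop{\rm cov}} _p}(X_k,Y;Z)/{{\mathop{\rm var}} _p}(X_k;Z)$; hence the middle sum-of-squares term of \eqref{eq4.12} drops out and its left-hand side becomes exactly the left-hand side of \eqref{eq4.15}. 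What remains on the right is ${{\mathop{\rm var}} _p}(Y;Z) - \sum_{k=0}^{n} {\mathop{\rm cov}} _p^2(X_k,Y;Z)/{{\mathop{\rm var}} _p}(X_k;Z)$, the first equality of \eqref{eq4.15}. The second equality is only a rewriting through definition \eqref{eq1.10}: since ${\mathop{\rm cov}} _p^2(X_k,Y;Z)/{{\mathop{\rm var}} _p}(X_k;Z) = {{\mathop{\rm var}} _p}(Y;Z)\,\rho _p^2(X_k,Y;Z)$, factoring ${{\mathop{\rm var}} _p}(Y;Z)$ out of the sum yields ${{\mathop{\rm var}} _p}(Y;Z)\bigl(1 - \sum_{k=0}^{n}\rho _p^2(X_k,Y;Z)\bigr)$.

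For the finite Bessel-type inequality I would appeal to positivity. The left-hand side of \eqref{eq4.15} is itself a p-variance, hence non-negative by \eqref{eq1.7}; therefore ${{\mathop{\rm var}} _p}(Y;Z)\bigl(1 - \sum_{k=0}^{n}\rho _p^2(X_k,Y;Z)\bigr) \ge 0$. Dividing by the positive factor ${{\mathop{\rm var}} _p}(Y;Z)$ leaves $\sum_{k=0}^{n}\rho _p^2(X_k,Y;Z) \le 1$, which holds for every $n$.

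To reach the infinite statement \eqref{eq4.16} I would observe that the partial sums $s_n = \sum_{k=0}^{n}\rho _p^2(X_k,Y;Z)$ form a non-decreasing sequence, since every added term is a square and hence non-negative, that is bounded above by $1$; by monotone convergence of bounded sequences the series converges and $\sum_{k=0}^{\infty}\rho _p^2(X_k,Y;Z) \le 1$. Multiplying through by ${{\mathop{\rm var}} _p}(Y;Z)$ gives the second form of \eqref{eq4.16}. Finally, convergence of a series forces its general term to zero, so $\lim_{k\to\infty}\rho _p^2(X_k,Y;Z) = 0$, which by \eqref{eq1.10} is equivalent to $\lim_{k\to\infty}{\mathop{\rm cov}} _p^2(X_k,Y;Z)/{{\mathop{\rm var}} _p}(X_k;Z) = 0$.

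All of these steps are routine; the only point requiring care is the division by ${{\mathop{\rm var}} _p}(Y;Z)$ in the Bessel step, which tacitly uses ${{\mathop{\rm var}} _p}(Y;Z) > 0$. If ${{\mathop{\rm var}} _p}(Y;Z) = 0$ then the correlation coefficients in \eqref{eq1.10} are undefined and the assertion is vacuous, so I would state this positivity hypothesis explicitly at the outset and regard it as the (mild) main obstacle.
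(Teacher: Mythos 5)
Your proof is correct and follows essentially the same route as the paper: substituting \eqref{eq4.13} into \eqref{eq4.12} to annihilate the sum-of-squares term, rewriting via the definition \eqref{eq1.10} of $ {\rho _p} $, invoking non-negativity of the p-variance for the finite Bessel-type bound, and then using monotone bounded partial sums and the vanishing of the general term of a convergent series for \eqref{eq4.16} and the final limit. Your explicit remark that the division step requires $ {{\mathop{\rm var}} _p}\,(Y;Z) > 0 $ is a small point of rigor the paper leaves tacit (it is already implicit in the definition of $ {\rho _p} $), but it does not alter the argument.
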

\begin{definition}\label{def4.6}
Convergence in p-variance with respect to a fixed variable\\
Reconsider the approximation \eqref{eq2.1} and constitute the p-variance of its error just like relation \eqref{eq2.3}. If 
\begin{equation}\label{eq4.17}
\mathop {\lim }\limits_{n \to \infty } \,\,{{\mathop{\rm var}} _p}\,\left( {R({c_0},...,{c_n});Z} \right) = 0,
\end{equation}
we call the limit relation \eqref{eq4.17} ``Convergence in p-variance with respect to the fixed variable $ Z $". In this sense, by noting the identity \eqref{eq4.15} if 
\begin{multline*}
0 = \mathop {\lim }\limits_{n \to \infty } \,\,{{\mathop{\rm var}} _p}\,\left( {Y - \sum\limits_{k = 0}^n {\frac{{{{{\mathop{\rm cov}} }_p}\,({X_k},Y;Z)}}{{{{{\mathop{\rm var}} }_p}\,({X_k};Z)}}{X_k}} \,;Z} \right)\\
\,\,\,\, = {{\mathop{\rm var}} _p}\,(Y;Z)\,\,\mathop {\lim }\limits_{n \to \infty } \left( {1 - \sum\limits_{k = 0}^n {\rho _p^2\,({X_k},Y;Z)} } \right),
\end{multline*}
then
\[\sum\limits_{k = 0}^\infty  {\rho _p^2\,({X_k},Y;Z)}  = 1\,\,\,\,\,{\rm{or}}\,\,\,\,\,\sum\limits_{k = 0}^\infty  {\frac{{{\mathop{\rm cov}} _p^2\,({X_k},Y;Z)}}{{{{{\mathop{\rm var}} }_p}\,({X_k};Z)}}}  = {{\mathop{\rm var}} _p}\,(Y;Z)\,,\]
\end{definition}
provided that $ \{ {X_k}\} _{k = 0} $ are mutually p-uncorrelated.

\begin{theorem}\label{thm4.7}
Let $ \{ {V_k}\} _{k = 0}^{n} $ be linearly independent variables and $ \{ {X_k}\} _{k = 0}^{n} $ be their corresponding p-uncorrelated elements generated by $ \{ {V_k}\} _{k = 0}^{n} $ in theorem \ref{thm3.2}. Then, for any selection of constants $ \{ {\lambda_k}\} _{k = 0}^{n-1} $ we have
\begin{equation}\label{eq4.18}
{{\mathop{\rm var}} _p}\,\left( {{X_n}\,;Z} \right) \le {{\mathop{\rm var}} _p}\,\left( {{V_n} + \sum\limits_{k = 0}^{n - 1} {{\lambda _k}{V_k}} \,;Z} \right).
\end{equation}
\end{theorem}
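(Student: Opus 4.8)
The plan is to exploit the ``monic'' structure of $ X_n $ together with the additivity of the p-variance over a p-uncorrelated system, mirroring the classical extremal (minimal-norm) property of monic orthogonal polynomials. First I would change basis from $ \{V_k\} $ to $ \{X_k\} $. By the triangular relations \eqref{eq3.3} the element $ X_n $ equals $ V_n $ plus a linear combination of $ V_0,\dots,V_{n-1} $; conversely, by \eqref{eq3.5} each $ V_k $ is $ X_k $ plus a linear combination of $ X_0,\dots,X_{k-1} $. Consequently the general competitor can be rewritten as
\[
V_n+\sum_{k=0}^{n-1}\lambda_k V_k = X_n+\sum_{k=0}^{n-1}\mu_k X_k,
\]
for suitable constants $ \{\mu_k\}_{k=0}^{n-1} $ depending linearly on $ \{\lambda_k\}_{k=0}^{n-1} $. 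The decisive observation is that the coefficient of $ X_n $ on the right is exactly $ 1 $: the $ V_k $ with $ k<n $ feed only into $ X_0,\dots,X_{n-1} $, so the sole contribution to $ X_n $ comes from the leading term $ V_n=X_n+(\text{lower-order }X_j) $ in \eqref{eq3.5}, and this normalization is untouched by the free parameters $ \{\lambda_k\} $.

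Next I would expand the right-hand p-variance over the p-uncorrelated system $ \{X_k\}_{k=0}^{n} $. Since these elements satisfy \eqref{eq4.4}, all cross p-covariances vanish, and the additivity identity already employed in the proof of Theorem \ref{thm4.3} gives
\[
{{\mathop{\rm var}} _p}\,\Big(X_n+\sum_{k=0}^{n-1}\mu_k X_k\,;Z\Big) = {{\mathop{\rm var}} _p}\,(X_n;Z)+\sum_{k=0}^{n-1}\mu_k^{2}\,{{\mathop{\rm var}} _p}\,(X_k;Z).
\]
Because every $ {{\mathop{\rm var}} _p}\,(X_k;Z)\ge 0 $ by \eqref{eq1.7}, the trailing sum is non-negative, whence
\[
{{\mathop{\rm var}} _p}\,\Big(V_n+\sum_{k=0}^{n-1}\lambda_k V_k\,;Z\Big)\ge {{\mathop{\rm var}} _p}\,(X_n;Z),
\]
which is precisely \eqref{eq4.18}; moreover equality holds exactly when all $ \mu_k=0 $, i.e. when the competitor coincides with $ X_n $.

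The main obstacle is the bookkeeping in the change of basis, namely verifying rigorously that the coefficient of $ X_n $ stays pinned at $ 1 $ and is independent of $ \{\lambda_k\} $; this is where the triangular, invertible form of \eqref{eq3.3}--\eqref{eq3.5} is indispensable. Once that normalization is secured, the inequality is immediate from non-negativity of the p-variances. A cleaner route that sidesteps the explicit coefficients is to invoke Theorem \ref{thm4.3} (equivalently Corollary \ref{coro4.4}) directly: setting $ Y:=V_n $, the quantity $ {{\mathop{\rm var}} _p}(V_n+\sum_{k=0}^{n-1}\lambda_k V_k;Z) $ is, over all linear combinations of $ V_0,\dots,V_{n-1} $ (equivalently of $ X_0,\dots,X_{n-1} $, since the two sets span the same space), minimized by the truncated p-uncorrelated expansion of $ V_n $. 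By \eqref{eq3.4} that minimizer is exactly $ V_n-\sum_{k=0}^{n-1}\tfrac{{{\mathop{\rm cov}} }_p(X_k,V_n;Z)}{{{\mathop{\rm var}} }_p(X_k;Z)}X_k=X_n $, so the minimal value is $ {{\mathop{\rm var}} _p}(X_n;Z) $, giving \eqref{eq4.18} at once.
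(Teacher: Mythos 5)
Your proposal is correct, and your primary argument is genuinely different from the paper's. The paper proves \eqref{eq4.18} by specializing Theorem \ref{thm4.3}: it replaces $n$ by $n-1$ in \eqref{eq4.11}, takes $Y=V_n$, recognizes via the last line of \eqref{eq3.4} that the minimizing truncated expansion $V_n-\sum_{k=0}^{n-1}\frac{{\mathop{\rm cov}}_p(X_k,V_n;Z)}{{\mathop{\rm var}}_p(X_k;Z)}X_k$ is exactly $X_n$, and then uses the invertibility of the triangular system \eqref{eq3.3} to trade an arbitrary combination $-\sum_{k=0}^{n-1}\alpha_k X_k$ for an arbitrary combination $\sum_{k=0}^{n-1}\lambda_k V_k$ --- this is precisely the ``cleaner route'' you sketch in your final paragraph. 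Your main route instead rewrites the competitor in the $X$-basis with the coefficient of $X_n$ pinned at $1$, and then applies the additivity identity ${\mathop{\rm var}}_p\big(X_n+\sum_{k=0}^{n-1}\mu_k X_k\,;Z\big)={\mathop{\rm var}}_p(X_n;Z)+\sum_{k=0}^{n-1}\mu_k^2\,{\mathop{\rm var}}_p(X_k;Z)$, valid because all cross p-covariances vanish, together with non-negativity of p-variances. This is more elementary: it bypasses the completing-the-square machinery of Theorem \ref{thm4.3} entirely, and it makes the equality case ($\mu_k=0$ for all $k$, i.e.\ the competitor equals $X_n$) transparent --- though note that this equality characterization tacitly requires ${\mathop{\rm var}}_p(X_k;Z)>0$ for $k<n$, which is anyway implicit since the recursion \eqref{eq3.4} divides by these quantities. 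What the paper's route buys is brevity, given Theorem \ref{thm4.3} as a black box; what your route buys is self-containedness and an explicit structural reason why the leading coefficient cannot be altered by the $\lambda_k$.
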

\begin{proof}
First suppose in \eqref{eq4.11} that $ n \to n - 1 $ and then take $ Y=V_{n} $. Noting the last equality of \eqref{eq3.4} gives
\begin{equation}\label{eq4.19}
{{\mathop{\rm var}} _p}\,\left( {{V_n} - \sum\limits_{k = 0}^{n - 1} {\frac{{{{{\mathop{\rm cov}} }_p}\,({X_k},{V_n};Z)}}{{{{{\mathop{\rm var}} }_p}\,({X_k};Z)}}{X_k}} \,;Z} \right) = {{\mathop{\rm var}} _p}\,\left( {{X_n}\,;Z} \right) \le {{\mathop{\rm var}} _p}\,\left( {{V_n} - \sum\limits_{k = 0}^{n - 1} {{\alpha _k}{X_k}} \,;Z} \right).
\end{equation}
On the other hand, according to relations \eqref{eq3.3}, there exists an arbitrary sequence, say $ \{ {\lambda_k}\} _{k = 0}^{n-1} $, such that $  - \sum\limits_{k = 0}^{n - 1} {{\alpha _k}{X_k}}  = \sum\limits_{k = 0}^{n - 1} {{\lambda _k}{V_k}}  $ in \eqref{eq4.19} which completes the proof of \eqref{eq4.18}.
\end{proof}
\begin{corollary}\label{coro4.8}
Let $ \{ {X_k}\} _{k = 0}^{n} $ be p-uncorrelated variables with respect to the fixed variable $ Z $. Then, for any variable $ Y $ and every $ j=0,1,\ldots, n $ we have
\begin{multline*}
{{\mathop{\rm cov}} _p}\,\left( {Y - \sum\limits_{k = 0}^n {\frac{{{{{\mathop{\rm cov}} }_p}\,({X_k},Y;Z)}}{{{{{\mathop{\rm var}} }_p}\,({X_k};Z)}}{X_k}} ,{X_j}\,;Z} \right)\\
 = {{\mathop{\rm cov}} _p}\,\left( {Y,{X_j}\,;Z} \right) - \sum\limits_{k = 0}^n {\frac{{{{{\mathop{\rm cov}} }_p}\,({X_k},Y;Z)}}{{{{{\mathop{\rm var}} }_p}\,({X_k};Z)}}{{{\mathop{\rm cov}} }_p}\left( {{X_k},{X_j}\,;Z} \right)} \\
 = {{\mathop{\rm cov}} _p}\,\left( {Y,{X_j}\,;Z} \right) - {{\mathop{\rm cov}} _p}\,\left( {{X_j},Y\,;Z} \right) = 0.
\end{multline*}
\end{corollary}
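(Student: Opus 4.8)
The plan is to evaluate the left-hand side directly, using only the linearity of ${\mathop{\rm cov}}_p$ in its first slot together with the symmetry and p-uncorrelatedness properties already in hand. Writing $c_k={\mathop{\rm cov}}_p(X_k,Y;Z)/{\mathop{\rm var}}_p(X_k;Z)$ for the expansion coefficients, which are well defined since ${\mathop{\rm var}}_p(X_k;Z)>0$ for p-uncorrelated variables, I would first apply property a6 to distribute ${\mathop{\rm cov}}_p(\,\cdot\,,X_j;Z)$ across the finite combination defining the residual:
\[
{\mathop{\rm cov}}_p\Big(Y-\sum_{k=0}^n c_k X_k,\,X_j;Z\Big)={\mathop{\rm cov}}_p(Y,X_j;Z)-\sum_{k=0}^n c_k\,{\mathop{\rm cov}}_p(X_k,X_j;Z).
\]
This is precisely the first displayed equality in the statement.

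The second step is to collapse the sum via the p-uncorrelatedness condition \eqref{eq4.4}, ${\mathop{\rm cov}}_p(X_k,X_j;Z)={\mathop{\rm var}}_p(X_j;Z)\,\delta_{k,j}$. Only the index $k=j$ contributes, and there the factor ${\mathop{\rm var}}_p(X_j;Z)$ cancels the denominator hidden in $c_j$, leaving exactly ${\mathop{\rm cov}}_p(X_j,Y;Z)$. Hence the whole expression reduces to ${\mathop{\rm cov}}_p(Y,X_j;Z)-{\mathop{\rm cov}}_p(X_j,Y;Z)$, and the symmetry property a1 then forces this difference to vanish, completing the argument.

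I do not expect any genuine obstacle: the result is the natural analogue of the classical statement that a least-squares residual is orthogonal to every basis element, here transplanted to the ${\mathop{\rm cov}}_p$ inner-product-like setting. The only points that require mild care are the bookkeeping of the Kronecker delta when the summation is interchanged with the covariance, and the implicit nondegeneracy ${\mathop{\rm var}}_p(X_k;Z)>0$ that makes the coefficients $c_k$ meaningful; both are guaranteed by the hypothesis that the $\{X_k\}_{k=0}^n$ are p-uncorrelated with respect to $Z$.
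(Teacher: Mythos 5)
Your proposal is correct and is essentially the same argument the paper itself embeds in the displayed chain of equalities of Corollary \ref{coro4.8}: linearity (property a6) to distribute $ {{\mathop{\rm cov}} _p}(\,\cdot\,,{X_j};Z) $ over the residual, the p-uncorrelatedness condition \eqref{eq4.4} to collapse the sum to the $ k=j $ term with the variance cancelling the denominator, and symmetry (property a1) to conclude the difference vanishes. Nothing further is needed.
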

It is now a good position to practically enter the p-uncorrelatedness topic and introduce a sequence of functions in continuous and discrete spaces.

\section{p-uncorrelated functions with respect to a fixed function}\label{sec5}
Let $ \left\{ {{\Phi _k}(x)} \right\}_{k = 0} $ be a sequence of continuous functions defined on $ [a,b] $ and $ w(x) $ be positive on this interval. Also let $ z(x) $ be a continuous function such that $ \int_{\,a}^b {w(x)\,{z^2}(x)\,dx}  > 0 $. Referring to \eqref{eq2.7}, we say that $ \left\{ {{\Phi _k}(x)} \right\}_{k = 0}^\infty  $ are (weighted) p-uncorrelated functions with respect to the fixed function $ z(x) $ if they satisfy the condition
\begin{multline}\label{eq5.1}
\int_{\,a}^b {w(x)\,{\Phi _m}(x)\,{\Phi _n}(x)\,dx}  - p\frac{{\int_{\,a}^b {w(x)\,{\Phi _m}(x)\,z(x)\,dx} \,\int_{\,a}^b {w(x)\,{\Phi _n}(x)\,z(x)\,dx} }}{{\,\int_{\,a}^b {w(x)\,{z^2}(x)\,dx} }}\\
 = \left( {\int_{\,a}^b {w(x)\,\Phi _n^2(x)\,dx}  - p\frac{{{{(\int_{\,a}^b {w(x)\,{\Phi _n}(x)\,z(x)\,dx} )}^2}\,}}{{\,\int_{\,a}^b {w(x)\,{z^2}(x)\,dx} }}} \right)\,{\delta _{m,n}}.
\end{multline}
Especially when $ w(x)z(x) = v(x) $, \eqref{eq5.1} becomes
\begin{multline*}
\int_{\,a}^b {w(x)\,{\Phi _m}(x)\,{\Phi _n}(x)\,dx}  - \lambda _p^2\int_{\,a}^b {v(x)\,{\Phi _m}(x)\,dx} \,\int_{\,a}^b {v(x)\,{\Phi _n}(x)\,dx} \\
 = \left( {\int_{\,a}^b {w(x)\,\Phi _n^2(x)\,dx}  - \lambda _p^2{{\Big(\int_{\,a}^b {v(x)\,{\Phi _n}(x)\,dx}\Big )^2}}} \right)\,{\delta _{m,n}},
\end{multline*}
where $ \lambda _p^2 = p/\int_{\,a}^b {\,\frac{{{v^2}(x)}}{{w(x)}}\,dx}  $ (e.g. see relation \eqref{eq9.9}).

Noting \eqref{eq2.8}, such a definition in \eqref{eq5.1} can similarly be considered for discrete functions as
\begin{multline*}
\sum\limits_{x \in {A^*}} {j(x){\Phi _m}(x)\,{\Phi _n}(x)}  - p\frac{{\sum\limits_{x \in {A^*}} {j(x){\Phi _m}(x)\,z(x)} \,\sum\limits_{x \in {A^*}} {j(x){\Phi _n}(x)\,z(x)} }}{{\,\sum\limits_{x \in {A^*}} {j(x)\,{z^2}(x)} }}\\
 = \left( {\sum\limits_{x \in {A^*}} {j(x)\Phi _n^2(x)\,}  - p\frac{{{{(\sum\limits_{x \in {A^*}} {j(x){\Phi _n}(x)\,z(x)} )}^2}\,}}{{\,\sum\limits_{x \in {A^*}} {j(x)\,{z^2}(x)} }}} \right)\,{\delta _{m,n}}\,.
\end{multline*}

%\noindent
%{\textbf{5.1. Remark.}} 
\begin{remark}\label{rem5.1}
Relation \eqref{eq5.1} clarifies that, after deriving p-uncorrelated functions, one will be able to define a sequence of orthogonal functions in the form
\begin{equation}\label{eq5.2}
{\Phi _n}(x) - (1 - \sqrt {1 - p} )\frac{{\,\int_{\,a}^b {w(x)\,{\Phi _n}(x)\,z(x)\,dx} }}{{\,\int_{\,a}^b {w(x)\,{z^2}(x)\,dx} }}\,z(x) = {G_n}(x;p)\,,
\end{equation}
having the orthogonality property
\begin{align*}
\int_{\,a}^b {w(x)\,{G_m}(x;p){G_n}(x;p)\,dx}  &= \left( {\int_{\,a}^b {w(x)\,G_n^2(x;p)\,dx} } \right)\,{\delta _{m,n}} \\
&= \left( {\int_{\,a}^b {w(x)\,dx} } \right)\Big( {{{{\mathop{\rm var}} }_p}({\Phi _n}(x);\,z(x))} \Big){\delta _{m,n}},
\end{align*}
whereas its reverse proposition is not feasible when $ \int_{\,a}^b {w(x)\,{\Phi _n}(x)\,z(x)\,dx}  \ne 0 $ or $ p \ne 0 $ in \eqref{eq5.2}. This means that from orthogonal sequences, one cannot derive p-uncorrelated functions as they are somehow an extension of orthogonal functions for $ p=0 $, while from p-uncorrelated functions the aforesaid aim is always possible by applying \eqref{eq5.2}. The correctness of such corollaries has been approved by various illustrative examples in the next sections.
\end{remark}

There are essentially two classifications for deriving (weighted) p-uncorrelated functions with respect to a fixed function.
\\
\\
\noindent
{\textbf{5.1.1. First classification.}} If the weight function in \eqref{eq5.1} is non-classical, i.e. not belonging to the Pearson distributions family and its symmetric analogue \cite{ref15, ref21}, the procedure to derive p-uncorrelated functions will lead to the same as generalized Gram-Schmidt process presented in theorem \ref{thm3.2} or equivalently in the representation \eqref{eq3.12}. In this sense, note that there are ten classical sequences of real polynomials \cite{ref21} orthogonal with respect to the Pearson distributions family 
\[W\,\left( {\left. {\begin{array}{*{20}{c}}
{\begin{array}{*{20}{c}}
{d,}&e
\end{array}}\\
{a,\,\,\,b,\,\,\,c}
\end{array}} \right|\,x} \right) = \exp \left( {\int {\frac{{dx + e}}{{a{x^2} + bx + c}}dx} } \right)\,\,\,\,\,\,\,\,\,\,\,(a,b,c,d,e \in\mathbb{R}),\]
and its symmetric analogue \cite{ref15}
\[{W^*}\left( {\left. {\begin{array}{*{20}{c}}
{r,}&s\\
{p,}&q
\end{array}\,} \right|\,x} \right) = \exp \left( {\int {\frac{{r\,{x^2} + s}}{{x(p{x^2} + q)}}} \,dx} \right)\,\,\,\,\,\,\,\,\,\,(p,q,r,s \in\mathbb{R}).\]
Five of them are infinitely orthogonal with respect to particular cases of the two above-mentioned distributions and five other ones are finitely orthogonal which are limited to some parametric constraints. For other applications of orthogonal polynomials, see e.g. \cite{ref1, ref22, ref23, ref24}.The following table shows the main characteristics of these sequences.
\vspace{-0.3cm}
%\FloatBarrier
%\small{ 
\begin{table}[H] 
\begin{center}
\caption{\label{tab1} Characteristics of ten sequences of classical orthogonal polynomials } 
\makebox[0cm]{
\begin{tabular}{| c | c | l | } 
\hline 
{Symbol} & {Weight Function} & \makecell{{Kind, Interval} \\ {\&} \\{Parameters Constraint}}\\ 
\hline 
$ P_n^{(u,v)}(x) $ & $ W\left( {\left. {\begin{array}{*{20}{c}}
{\begin{array}{*{20}{c}}
{ - u - v,}&{ - u + v}
\end{array}}\\
{\begin{array}{*{20}{c}}
{ - 1,}&{0,}&1
\end{array}}
\end{array}\,} \right|\,x} \right) = {(1 - x)^u}{(1 + x)^v} $ & 
\makecell{{Infinite}, $[-1,1]$  \\
$ u > - 1\,,v > - 1$}\\ 
\hline 
$ L_n^{(u)}(x) $ & $ W\left( {\left. {\begin{array}{*{20}{c}}
{\begin{array}{*{20}{c}}
{ - 1,}&u
\end{array}}\\
{\begin{array}{*{20}{c}}
{0,}&{1,}&0
\end{array}}
\end{array}\,} \right|\,x} \right) = {x^u}\exp ( - x) $ & \makecell{$ 
\text{Infinite}, [0,\infty)$ \\ $ u > - 1 $} \\ 
\hline 
$ {H_n}(x) $ & $ W\left( {\left. {\begin{array}{*{20}{c}}
{\begin{array}{*{20}{c}}
{ - 2,}&0
\end{array}}\\
{\begin{array}{*{20}{c}}
{0,}&{0,}&1
\end{array}}
\end{array}\,} \right|\,x} \right) = \exp ( - {x^2}) $ & \makecell{ $ \text{Infinite}, (-\infty,\infty) $ \\ - }\\ 
\hline 
$ J_n^{(u,v)}(x) $ & $ 
W\left( {\left. {\begin{array}{*{20}{c}}
{\begin{array}{*{20}{c}}
{ - 2u,}&v
\end{array}}\\
{\begin{array}{*{20}{c}}
{1,}&{0,}&1
\end{array}}
\end{array}\,} \right|\,x} \right) = 
{(1 + {x^2})^{ - u}}\,\exp (v\,\arctan x)
$ & \makecell{ $ \text{Finite}, (-\infty,\infty)$\\  
$\max n < (u - 1)/2 $} \\ 
\hline 
$ M_n^{(u,v)}(x) $ & $ W\left( {\left. {\begin{array}{*{20}{c}}
{\begin{array}{*{20}{c}}
{ - u,}&v
\end{array}}\\
{\begin{array}{*{20}{c}}
{1,}&{1,}&0
\end{array}}
\end{array}\,} \right|\,x} \right) = {x^v}{(x + 1)^{ - (u + v)}} $ & \makecell{$ \text{Finite}, [0,\infty)$ \\ 
$ \begin{array}{l} 
\max n < (u - 1)/2,\\ 
v > - 1 
\end{array} $} \\ 
\hline 
$ N_n^{(u)}(x) $ & $ W\left( {\left. {\begin{array}{*{20}{c}}
{\begin{array}{*{20}{c}}
{ - u,}&1
\end{array}}\\
{\begin{array}{*{20}{c}}
{1,}&{0,}&0
\end{array}}
\end{array}\,} \right|\,x} \right) = {x^{ - u}}\exp ( - 1/x) $ & \makecell{ $ 
\text{Finite}, [0,\infty) $ \\ $\max \,n < (u - 1)/2 $} \\ 
\hline 
$ {S_n}\left( {\left. {\begin{array}{*{20}{c}} 
{ - 2u - 2v - 2,}&{2u}\\ 
{ - 1,}&1 
\end{array}\,} \right|\,x} \right) $ & $ {W^*}\left( {\left. {\begin{array}{*{20}{c}}
{ - 2u - 2v,}&{2u}\\
{ - 1,}&1
\end{array}\,} \right|\,x} \right) = {x^{2u}}{(1 - {x^2})^v}$ & \makecell{$ \text{Infinite}, [-1,1] $ \\
$ u > - 1/2\,,\,\,v > - 1 $} \\ 
\hline 
$ {S_n}\left( {\left. {\begin{array}{*{20}{c}} 
{ - 2,}&{2u}\\ 
{0,}&1 
\end{array}\,} \right|\,x} \right) $ & $ {W^*}\left( {\left. {\begin{array}{*{20}{c}}
{ - 2,}&{2u}\\
{0,}&1
\end{array}\,} \right|\,x} \right) = {x^{2u}}\exp ( - {x^2}) $ & \makecell{ $ 
\text{Infinite}, (-\infty,\infty) $\\ 
 $ u > - 1/2 $} \\ 
\hline 
$ {S_n}\left( {\left. {\begin{array}{*{20}{c}} 
{ - 2u - 2v + 2,}&{ - 2u}\\ 
{1,}&1 
\end{array}\,} \right|\,x} \right) $ & $ {W^*}\left( {\left. {\begin{array}{*{20}{c}}
{ - 2u - 2v,}&{ - 2u}\\
{1,}&1
\end{array}} \right|\,x} \right) = {x^{ - 2u}}{(1 + {x^2})^{ - v}} $ & \makecell{$ 
\text{Finite}, (-\infty,\infty) $ \\ 
$\begin{array}{l} 
\,n < u + v - 1/2,\\ 
u < 1/2\,,\,\,v > 0 
\end{array} $} \\ 
\hline 
$ {S_n}\left( {\left. {\begin{array}{*{20}{c}} 
{ - 2u + 2,}&2\\ 
{1,}&0 
\end{array}\,} \right|\,x} \right) $ & $ {W^*}\left( {\left. {\begin{array}{*{20}{c}}
{ - 2u,}&2\\
{1,}&0
\end{array}\,} \right|\,x} \right) = {x^{ - 2u}}{\rm{exp}}( - 1/{x^2}) 
$ & \makecell{$ \text{Finite}, (-\infty,\infty) $ \\ $\max \,n < u - 1/2 $}\\ 
\hline 
\end{tabular} 
}
\end{center} 
\end{table} 
%} 
where
\[{S_n}\left( {\left. {\begin{array}{*{20}{c}}
{r,}&s\\
{p,}&q
\end{array}\,} \right|\,x} \right) = \sum\limits_{k = 0}^{[n/2]} {\,\left( {\begin{array}{*{20}{c}}
{[n/2]}\\
k
\end{array}} \right)\prod\limits_{i = 0}^{[n/2] - (k + 1)} {\frac{{\left( {2i + {{( - 1)}^{n + 1}} + 2\,[n/2]} \right)\,p + r}}{{\left( {2i + {{( - 1)}^{n + 1}} + 2} \right)\,\,q + s}}} \,\,{x^{n - 2k}} = {S_n}(x),} \]
is a four-parametric sequence of symmetric orthogonal polynomials \cite{ref15} that satisfies the second order differential equation
\[{x^2}(p{x^2} + q)\,{S''_n}(x) + x(r{x^2} + s)\,{S'_n}(x) - \left( {n(r + (n - 1)p){x^2} + (1 - {{( - 1)}^n})\,s/2} \right)\,{S_n}(x) = 0.\]
\\
\noindent
{\textbf{5.1.2. Second classification.}} If the weight function belongs to the family of distributions presented in table \ref{tab1}, there is an analogue theorem below for p-uncorrelated functions similar to the well-known Sturm-Liouville theorem for orthogonal functions \cite{ref21}.

\begin{theorem}\label{thm5.2}
Let $ a(x),\,b(x),\,u(x) $ and $ v(x) $ be four given functions continuous on the interval $ [a,b] $ and $ p\in[0,1] $. If
\[w(x) = u(x)\,\exp \left( {\int {\frac{{b(x) - a'(x)}}{{a(x)}}dx} } \right),\]
is a positive function on $ [a,b] $ and the sequence $ \left\{ {{\Phi _n}(x)} \right\}_{n = 0}^\infty  $ with the corresponding eigenvalues $ \{ {\lambda _n}\}  $ satisfies an integro-differential equation of the form
\begin{multline}\label{eq5.3}
a(x)\,\Phi ''_{n}(x) + b(x)\,\Phi '_{n}(x) + \left( {{\lambda _n}\,u(x) + v(x)} \right){\Phi _n}(x)\\
= p\left( {{\lambda _n} + {A_z}} \right)\,u(x)\,z(x)\frac{{\,\int_{\,a}^b {w(x)\,{\Phi _n}(x)\,z(x)\,dx} }}{{\,\int_{\,a}^b {w(x)\,{z^2}(x)\,dx} }},
\end{multline}
such that the function $ z(x) $ satisfies the equation
\begin{equation}\label{eq5.4}
\frac{{a(x)\,z''(x) + b(x)\,z'(x) + v(x)\,z(x)}}{{u(x)\,z(x)}} = {A_z},
\end{equation}
where $ A_{z} $ is a constant value, and also
\begin{equation}\label{eq5.5}
\begin{array}{l}
{\alpha _1}\,{\Phi _n}(a) + {\beta _1}\Phi '_{n}(a) = 0\,,\\[2mm]
{\alpha _2}\,{\Phi _n}(b) + {\beta _2}\Phi '_{n}(b) = 0\,,
\end{array}
\end{equation}
are two initial conditions where $ {\alpha _1}\,,{\beta _1},{\alpha _2} $ and $ {\beta _2} $ are given constants, then $ \left\{ {{\Phi _n}(x)} \right\}_{n = 0}^\infty  $ are (weighted) p-uncorrelated functions with respect to the fixed function $ z(x) $.
\end{theorem}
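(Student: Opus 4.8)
The plan is to convert the integro-differential equation \eqref{eq5.3} into a self-adjoint (Sturm--Liouville) form and then run the classical ``multiply, subtract, integrate'' argument, checking that the inhomogeneous forcing term collapses exactly into the projection term that appears in the p-uncorrelatedness condition \eqref{eq5.1}. First I would introduce the integrating factor $\rho(x)=\exp\!\big(\int \frac{b(x)-a'(x)}{a(x)}\,dx\big)$, so that $\rho a\,\Phi''+\rho b\,\Phi'=(\rho a\,\Phi')'$ and, crucially, $\rho u=w$ by the very definition of $w$. Multiplying \eqref{eq5.3} by $\rho$ then rewrites it as $L[\Phi_n]=-\lambda_n w\Phi_n + p(\lambda_n+A_z)\,wz\,\frac{I_n}{J}$, where $L[y]:=(\rho a\,y')'+\rho v\,y$ is formally self-adjoint, $I_n:=\int_a^b w\Phi_n z\,dx$, and $J:=\int_a^b wz^2\,dx>0$. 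I would also record, using \eqref{eq5.4}, that $L[z]=A_z\,wz$; this is where the hypothesis on $z$ enters, and its real content for the argument is that $A_z$ is one and the same constant in every instance of \eqref{eq5.3}.

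Next I would apply Lagrange's identity to two eigenfunctions, $\int_a^b(\Phi_m L[\Phi_n]-\Phi_n L[\Phi_m])\,dx=[\rho a(\Phi_m\Phi_n'-\Phi_n\Phi_m')]_a^b$, and observe that the right-hand boundary term vanishes by the endpoint conditions \eqref{eq5.5}: at each endpoint the relations $\alpha\Phi+\beta\Phi'=0$ for both $\Phi_m$ and $\Phi_n$ force their Wronskian to vanish. Substituting the self-adjoint forms and using $\int_a^b wz\Phi_m\,dx=I_m$ and $\int_a^b wz\Phi_n\,dx=I_n$, the whole expression reduces to $(\lambda_m-\lambda_n)\big(\int_a^b w\Phi_m\Phi_n\,dx-p\,I_mI_n/J\big)=0$. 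Dividing by $\lambda_m-\lambda_n\neq0$ for $m\neq n$ yields precisely the off-diagonal form of \eqref{eq5.1}, while the case $m=n$ is a tautology; hence $\{\Phi_n\}$ are p-uncorrelated with respect to $z$.

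The step I expect to carry the weight is the algebraic collapse of the forcing contributions. The cross term is $\frac{p}{J}\big[(\lambda_n+A_z)I_nI_m-(\lambda_m+A_z)I_mI_n\big]=\frac{p}{J}I_mI_n(\lambda_n-\lambda_m)$, and it is only because the coefficient in \eqref{eq5.3} is exactly $(\lambda_n+A_z)$ --- with $A_z$ a common constant guaranteed by \eqref{eq5.4} --- that the additive $A_z$ cancels and leaves a factor $(\lambda_n-\lambda_m)$ matching the main term. If $A_z$ depended on $n$, or if the forcing coefficient were not affine in $\lambda_n$ with this precise slope, the two contributions would not combine into a single factor $(\lambda_m-\lambda_n)$, and the projection term $p\,I_mI_n/J$ would fail to materialize; this is the delicate engineering of the hypotheses.

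A secondary point I would verify carefully is the vanishing of the boundary term, i.e.\ that $\rho a$ is finite (or vanishes) at $a$ and $b$ so that only the Wronskian matters, together with the simplicity and distinctness of the eigenvalues $\lambda_n$ that is needed in order to divide out $(\lambda_m-\lambda_n)$ --- both of which hold in the ten classical families of Table \ref{tab1}. With these in hand the argument is essentially the Sturm--Liouville orthogonality proof, modified only by the extra forcing term whose entire purpose is to reproduce the $-p\,E(XZ)E(YZ)/E(Z^2)$ correction built into the definition of p-covariance.
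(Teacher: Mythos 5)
Your proposal is correct and follows essentially the same route as the paper's own proof: multiply \eqref{eq5.3} by the integrating factor $\exp\big(\int\frac{b(x)-a'(x)}{a(x)}dx\big)$ to reach the self-adjoint form, run the standard multiply--subtract--integrate argument, and observe that the forcing terms collapse to $\frac{p}{J}(\lambda_n-\lambda_m)I_nI_m$ precisely because $A_z$ is a common constant, so that dividing by $\lambda_n-\lambda_m$ gives the off-diagonal case of \eqref{eq5.1}. Your explicit justification of the vanishing boundary term via the Wronskian and conditions \eqref{eq5.5}, and your remark on needing distinct eigenvalues, are points the paper leaves implicit, but the argument is the same.
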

\begin{proof}
For the sake of simplicity, we assume in \eqref{eq5.3} that $ \frac{{\,\int_{\,a}^b {w(x)\,{\Phi _n}(x)\,z(x)\,dx} }}{{\,\int_{\,a}^b {w(x)\,{z^2}(x)\,dx} }} = \beta _n^* $. Then, equation \eqref{eq5.3} can be written in the self-adjoint form
\begin{equation}\label{eq5.6}
{\left( {r(x)\,\Phi '_{n}(x)} \right)^\prime } + \left( {{\lambda _n}\,w(x) + q(x)} \right){\Phi _n}(x) = p\left( {{\lambda _n} + {A_z}} \right)\,\beta _n^*\,w(x)\,z(x),
\end{equation}
and for the index $ m $ as
\begin{equation}\label{eq5.7}
{\left( {r(x)\,\Phi '_{m}(x)} \right)^\prime } + \left( {{\lambda _m}\,w(x) + q(x)} \right){\Phi _m}(x) = p\left( {{\lambda _m} + {A_z}} \right)\,\beta _m^*\,w(x)\,z(x),
\end{equation}
in which
\[r(x) = a(x)\,\exp \left( {\int {\frac{{b(x) - a'(x)}}{{a(x)}}dx} } \right),\]
and
\[q(x) = v(x)\,\exp \left( {\int {\frac{{b(x) - a'(x)}}{{a(x)}}dx} } \right).\]
On multiplying by $ {\Phi _m}(x) $ and $ {\Phi _n}(x) $ in respectively \eqref{eq5.6} and \eqref{eq5.7} and subtracting we get
\begin{multline*}
\left[ {r(x)\left( {{\Phi _m}(x)\Phi '_{n}(x) - {\Phi _n}(x)\Phi '_{m}(x)} \right)} \right]_{\,a}^{\,b} + \left( {{\lambda _n} - {\lambda _m}} \right)\int_{\,a}^b {w(x)\,{\Phi _n}(x)\,{\Phi _m}(x)\,dx} \\
 = p\left( {{\lambda _n} + {A_z}} \right)\beta _n^*\int_{\,a}^b {w(x)\,{\Phi _m}(x)\,z(x)\,dx}  - p\left( {{\lambda _m} + {A_z}} \right)\beta _m^*\int_{\,a}^b {w(x)\,{\Phi _n}(x)\,z(x)\,dx} \\
 = p\left( {{\lambda _n} - {\lambda _m}} \right)\frac{{\,\int_{\,a}^b {w(x)\,{\Phi _n}(x)\,z(x)\,dx} \int_{\,a}^b {w(x)\,{\Phi _m}(x)\,z(x)\,dx} }}{{\,\int_{\,a}^b {w(x)\,{z^2}(x)\,dx} }},
\end{multline*}
which results in
\begin{multline*}
\left( {{\lambda _n} - {\lambda _m}} \right)\left( {\int_{\,a}^b {w(x)\,{\Phi _n}(x)\,{\Phi _m}(x)\,dx}  - p\frac{{\,\int_{\,a}^b {w(x)\,{\Phi _n}(x)\,z(x)\,dx} \int_{\,a}^b {w(x)\,{\Phi _m}(x)\,z(x)\,dx} }}{{\,\int_{\,a}^b {w(x)\,{z^2}(x)\,dx} }}} \right)\\
=  - \left[ {r(x)\,\left( {{\Phi _m}(x)\Phi '_{n}(x) - {\Phi _n}(x)\Phi '_{m}(x)} \right)} \right]_{\,a}^{\,b} = 0\,\,\,\,\, \Leftrightarrow \,\,\,\,n \ne m.
\end{multline*}
\end{proof}
It is important to mention that the integro-differential equation \eqref{eq5.3} can be transformed to a fourth order equation using \eqref{eq5.4}. First of all, without loss of generality, we can assume in \eqref{eq5.3} that $ u(x)=1 $, because it is just enough to divide both side of \eqref{eq5.3} by $ u(x) $. In this case, substituting
\[z(x) = \frac{1}{{p\left( {{\lambda _n} + {A_z}} \right)\beta _n^*}}\left( {a(x)\,\Phi ''_{n}(x) + b(x)\,\Phi '_{n}(x) + \left( {{\lambda _n}\, + v(x)} \right){\Phi _n}(x)} \right)\]
into the equation
\[a(x)\,z''(x) + b(x)\,z'(x) + \left( {v(x) - {A_z}} \right)z(x) = 0,\]
yields
\begin{multline*}
{a^2}(x)\,\Phi _n^{(4)}(x) + 2a(x)\left( {a'(x) + b(x)} \right)\Phi _n^{(3)}(x) + \\
\left( {a(x)\left( {a''(x) + 2b'(x) + 2v(x) + {\lambda _n} - {A_z}} \right) + b(x)\left( {a'(x) + b(x)} \right)} \right)\Phi ''_{n}(x)\\
 + \left( {a(x)\left( {b''(x) + 2v'(x)} \right) + b(x)\left( {b'(x) + 2v(x) + {\lambda _n} - {A_z}} \right)} \right)\Phi '_{n}(x)\\
 + \left( {a(x)v''(x) + b(x)v'(x) + {v^2}(x) + ({\lambda _n} - {A_z})v(x) - {A_z}{\lambda _n}} \right){\Phi _n}(x) = 0,
\end{multline*}
having four independent basis solutions.
\\
\\
\noindent
{\textbf{5.2.1. An extension of the previous theorem.} This turn, assume in the right side of \eqref{eq5.3} that the function $ u(x)z(x) =h(x)$ satisifies a second order differential equation of the form
\[p(x)\,h''(x) + q(x)\,h'(x) + r(x)\,h(x) = 0,\]
instead of satisfying the equation \eqref{eq5.4} where $ p(x),q(x) $ and $ r(x) $ are all known functions. Then, equation \eqref{eq5.3} will be transformed to the fourth order equation
\begin{multline*}
p(x){\left( {a(x)\,\Phi ''_{n}(x) + b(x)\,\Phi '_{n}(x) + \left( {{\lambda _n}\,u(x) + v(x)} \right){\Phi _n}(x)} \right)^{\prime \prime }}\\
 + q(x)\,{\left( {a(x)\,\Phi ''_{n}(x) + b(x)\,\Phi '_{n}(x) + \left( {{\lambda _n}\,u(x) + v(x)} \right){\Phi _n}(x)} \right)^\prime }\\
 + r(x)\,\left( {a(x)\,\Phi ''_{n}(x) + b(x)\,\Phi '_{n}(x) + \left( {{\lambda _n}\,u(x) + v(x)} \right){\Phi _n}(x)} \right) = 0,
\end{multline*}
which is equivalent to
\begin{multline}\label{eq5.8}
p(x)a(x)\,\Phi _n^{(4)}(x) + \Big( {p(x)\left( {2a'(x) + b(x)} \right) + q(x)\,a(x)} \Big)\Phi _n^{(3)}(x)\\
 + \Big( {p(x)u(x){\lambda _n} + p(x)\left( {a''(x) + 2b'(x) + v(x)} \right) + q(x)\left( {a'(x) + b(x)} \right) + r(x)a(x)} \Big)\Phi ''_{n}(x)\\
 + \Big( {\left( {2p(x)u'(x) + q(x)u(x)} \right){\lambda _n} + p(x)\left( {b''(x) + 2v'(x)} \right) + q(x)\left( {b'(x) + v(x)} \right) + r(x)b(x)} \Big)\Phi '_{n}(x)\\
 + \Big( {\left( {p(x)u''(x) + q(x)u'(x) + r(x)u(x)} \right){\lambda _n} + p(x)v''(x) + q(x)v'(x) + r(x)v(x)} \Big){\Phi _n}(x) = 0.
\end{multline}
Noting the above-mentioned comments, equation \eqref{eq5.3} can now be generalized as
\[a(x)\,{\Phi ''_n}(x) + b(x)\,{\Phi '_n}(x) + \left( {{\lambda _n}\,u(x) + v(x)} \right)\,{\Phi _n}(x) = {R_n}(x),\]
whose self-adjoint form
\[{\left( {r(x)\,\Phi '_{n}(x)} \right)^\prime } + \left( {{\lambda _n}\,w(x) + q(x)} \right)\,{\Phi _n}(x) = \frac{{w(x)}}{{u(x)}}\,{R_n}(x),\]
yields
\begin{multline}\label{eq5.9}
\left[ {r(x)\,\left( {{\Phi _m}(x){\Phi '_n}(x) - {\Phi _n}(x){\Phi '_m}(x)} \right)} \right]_{\,a}^{\,b} + \left( {{\lambda _n} - {\lambda _m}} \right)\int_{\,a}^b {w(x)\,{\Phi _n}(x)\,{\Phi _m}(x)\,dx} \\
 = \int_{\,a}^b {\frac{{w(x)}}{{u(x)}}\,\left( {{R_n}(x)\,{\Phi _m}(x) - {R_m}(x)\,{\Phi _n}(x)} \right)\,dx} \,.
\end{multline}
Now, if in \eqref{eq5.9} we set
\[{R_n}(x) = p\left( {{\lambda _n} + {A_z}} \right)\frac{{\,\int_{\,a}^b {w(x)\,z(x)\,{\Phi _n}(x)\,dx} }}{{\,\int_{\,a}^b {w(x)\,{z^2}(x)\,dx} }}u(x)\,z(x),\]
we directly get to theorem \ref{thm5.2}. See also section \ref{subsubsec9.1.3} and relations \eqref{eq9.22} to \eqref{eq9.23}.
\\
\\
\noindent
{\textbf{5.2.2. Remark.}} The previous section shows that equation \eqref{eq5.3} is in fact a non-homogenous second order differential equation whose general solution for $ p=1 $ reads as
\begin{equation}\label{eq5.10}
{\Phi _n}(x) = {c_1}{y_{1,n}}(x) + {c_2}{y_{2,n}}(x) + {\lambda ^*}z(x)\,\,\,\,\,\,\,\,\,({c_1},{c_2},{\lambda ^*} \in\mathbb{R}),
\end{equation}
where $ {c_1}{y_{1,n}}(x) + {c_2}{y_{2,n}}(x) $ is the solution of the homogenous equation
\[a(x)\,{\Phi ''_n}(x) + b(x)\,{\Phi '_n}(x) + \left( {{\lambda _n}\,p(x) + q(x)} \right)\,{\Phi _n}(x) = 0,\]
with two independent solutions $ {y_{1,n}}(x) $ and $ {y_{2,n}}(x) $ respectively.

Hence, substituting (5.10) into (5.3) for $ p=1 $ simplifies the general solution as
\begin{equation}\label{eq5.11}
{\Phi _n}(x) = {c_1}\left( {{y_{1,n}}(x) - \frac{{\,\int_{\,a}^b {w(x)\,z(x)\,{y_{1,n}}(x)\,dx} }}{{\,\int_{\,a}^b {w(x)\,z(x)\,{y_{2,n}}(x)\,dx} }}{y_{2,n}}(x)} \right) + {\lambda ^*}\,z(x),
\end{equation}
provided that $\int_{\,a}^b {w(x)\,z(x)\,{y_{2,n}}(x)\,dx}  \ne 0\,\,\,\,\,(\forall n \notin {{\mathbb{Z}}^ + })$.

On the other hand, since in \eqref{eq5.11},
\[\int_{\,a}^b {{\Phi _n}(x)w(x)\,z(x)\,dx}  = {\lambda ^*}\int_{\,a}^b {w(x)\,{z^2}(x)\,dx} ,\]
the uncorrelated condition (5.1) for $ p=1 $ is simplified as
\begin{multline*}
\int_{\,a}^b {w(x)\,{\Phi _n}(x){\Phi _m}(x)\,dx}  - {\lambda ^*}^2\int_{\,a}^b {w(x)\,{z^2}(x)\,dx} \\
= \left( {\int_{\,a}^b {w(x)\,\Phi _n^2(x)\,dx}  - {\lambda ^*}^2\int_{\,a}^b {w(x)\,{z^2}(x)\,dx} } \right)\,{\delta _{m,n}}.
\end{multline*}
In this sense, note that
\begin{multline*}
\frac{1}{{c_1^2}}\left( {\int_{\,a}^b {w(x)\,\Phi _n^2(x)\,dx}  - {\lambda ^*}^2\int_{\,a}^b {w(x)\,{z^2}(x)\,dx} } \right) = \int_{\,a}^b {w(x)\,y_{1,n}^2(x)\,dx} \\
 + \frac{{\,{{(\int_{\,a}^b {w(x)\,z(x)\,{y_{1,n}}(x)\,dx} )}^2}}}{{\,{{(\int_{\,a}^b {w(x)\,z(x)\,{y_{2,n}}(x)\,dx} )}^2}}}\int_{\,a}^b {w(x)\,y_{2,n}^2(x)\,dx}\\
   - 2\frac{{\,\int_{\,a}^b {w(x)\,z(x)\,{y_{1,n}}(x)\,dx} }}{{\,\int_{\,a}^b {w(x)\,z(x)\,{y_{2,n}}(x)\,dx} }}\int_{\,a}^b {w(x)\,{y_{1,n}}(x){y_{2,n}}(x)\,dx} ,
\end{multline*}
i.e. the aforementioned 1-variance value is always independent of $ \lambda^{*} $.
\noindent
\begin{example}\label{ex5.3}
Let us consider the following integro-differential equation 
\begin{equation}\label{eq5.12}
{\Phi ''_n}(x) + {n^2}\,{\Phi _n}(x) = ({n^2} + A)\,z(x)\,\frac{{\int_{\,0}^\pi  {{\Phi _n}(x)\,z(x)\,dx} }}{{\int_{\,0}^\pi  {{z^2}(x)\,dx} }}\,,
\end{equation}
\end{example}
in which $ z(x) $ satisfies the equation
\begin{equation}\label{eq5.13}
\frac{{z''(x)}}{{z(x)}} = A.
\end{equation}
Noting \eqref{eq5.8}, the equivalent form of \eqref{eq5.12} together with \eqref{eq5.13} is as
\[\,\Phi _n^{(4)}(x) + \left( {{n^2} - A} \right){\Phi ''_n}(x) - A{n^2}{\Phi _n}(x) = ({D^2} + {n^2})({D^2} - A){\Phi _n}(x) = 0,\]
having the general solution
\begin{equation}\label{eq5.14}
{\Phi _n}(x) = {c_1}\sin nx + {c_2}\cos nx + {\lambda ^*}z(x)\,\,\,\,\,\,{\rm{where}}\,\,\,\,\,\,({D^2} - A)\,z(x) = 0.
\end{equation}
According to \eqref{eq5.11}, the solution \eqref{eq5.14} is simplified as
\begin{equation}\label{eq5.15}
{\Phi _n}(x) = {c_1}\left( {\sin nx - \frac{{\,\int_{\,0}^\pi  {z(x)\sin nx\,dx} }}{{\,\int_{\,0}^\pi  {z(x)\cos nx\,dx} }}\cos nx} \right) + {\lambda ^*}\,z(x),
\end{equation}
satisfying the relation
\[\int_{\,0}^\pi  {{\Phi _n}(x){\Phi _m}(x)\,dx}  - {\lambda ^*}^2\int_{\,0}^\pi  {{z^2}(x)\,dx}  = c_1^2\frac{\pi }{2}\left( {1 + \frac{{\,{{(\int_{\,0}^\pi  {z(x)\sin nx\,dx} )}^2}}}{{\,{{(\int_{\,0}^\pi  {z(x)\cos nx\,dx} )}^2}}}} \right)\,{\delta _{m,n}}\,.\]
Notice that the two sub-sequences
\[\,{\Phi _{2n}}(x) = {c_1}\left( {\sin 2nx - \frac{{\,\int_{\,0}^\pi  {z(x)\sin 2nx\,dx} }}{{\,\int_{\,0}^\pi  {z(x)\cos 2nx\,dx} }}\cos 2nx} \right) + {\lambda ^*}\,z(x),\]
and
\[\,{\Phi _{2n + 1}}(x) = {c_1}\left( {\sin (2n + 1)x - \frac{{\,\int_{\,0}^\pi  {z(x)\sin (2n + 1)x\,dx} }}{{\,\int_{\,0}^\pi  {z(x)\cos (2n + 1)x\,dx} }}\cos (2n + 1)x} \right) + {\lambda ^*}\,z(x),\]
in \eqref{eq5.15} are automatically uncorrelated (without any constraint) with respect to every fixed function $ z(x) $ that satisfies \eqref{eq5.13}.

Totally, three cases can occur for the constant value $ A $ in \eqref{eq5.13}.\\
i) If $ A = {\alpha ^2}\,\,\,(\alpha  \ne 0) $, the solution of equation \eqref{eq5.13} is as $ z(x) = {a_1}\,{e^{\alpha x}} + {a_2}\,{e^{ - \alpha x}} $ and subsequently \eqref{eq5.14} takes the form
\[\,{\Phi _n}(x) = {c_1}\sin nx + {c_2}\cos nx + {a_1}\,{e^{\alpha x}} + {a_2}\,{e^{ - \alpha x}}.\]
ii) If $ A =- {\alpha ^2}\,\,\,(\alpha  \ne 0) $ then $ z(x) = {b_1}\,\cos (\alpha x) + {b_2}\,\sin (\alpha x) $ and
\[\,{\Phi _n}(x) = {c_1}\sin nx + {c_2}\cos nx + {b_1}\,\cos (\alpha x) + {b_2}\,\sin (\alpha x).\]
iii) If $ A=0 $ then $ z(x) = {c_3}\,x + {c_4} $ and
\[\,{\Phi _n}(x) = {c_1}\sin nx + {c_2}\cos nx + {c_3}\,x + {c_4}.\]
As a particular sample, if $ z(x) = {e^x} $ is chosen from the above mentioned options, the general solution of the equation
\[\,{\Phi ''_n}(x) + {n^2}\,{\Phi _n}(x) = \frac{{2({n^2} + 1)}}{{{e^{2\pi }} - 1}}{e^x}\,\int_{\,0}^\pi  {{\Phi _n}(x)\,{e^x}dx} \,,\]
is, according to \eqref{eq5.15}, as
\begin{equation}\label{eq5.16}
{\Phi _n}(x) = {c_1}\left( {\sin nx - \frac{{\,\int_{\,0}^\pi  {{e^x}\sin nx\,dx} }}{{\,\int_{\,0}^\pi  {{e^x}\cos nx\,dx} }}\cos nx} \right) + {\lambda ^*}{e^x}.
\end{equation}
On the other hand,
\[\int_{\,0}^\pi  {{e^x}\sin nx\,dx}  =  - \frac{n}{{{n^2} + 1}}\,({( - 1)^n}{e^\pi } - 1)\,\,\,\,{\text {and}}\,\,\,\,\int_{\,0}^\pi  {{e^x}\cos nx\,dx}  = \frac{1}{{{n^2} + 1}}\,({( - 1)^n}{e^\pi } - 1),\]
imply that \eqref{eq5.16} takes the final form
\begin{equation}\label{eq5.17}
{\Phi _n}(x) = {c_1}\left( {\sin nx + n\cos nx} \right) + {\lambda ^*}{e^x},
\end{equation}
where $ {c_1},{\lambda ^*} \in\mathbb{R} $. Note that since in \eqref{eq5.17},
\[\,{\Phi _n}(0) = {\Phi '_n}(0)\,\,\,\,\,{\rm{and}}\,\,\,\,\,\,{\Phi _n}(\pi ) = {\Phi '_n}(\pi ),\]
so
\[\left[ {\,{\Phi _m}(x){\Phi '_n}(x) - {\Phi _n}(x){\Phi '_m}(x)} \right]_{\,0}^{\,\pi } = 0\,.\]
However, for other choices of $ z(x) $ we may have to determine the values $ c_{1} $ and $ \lambda^{*} $ uniquely. Also, for $ n=m $ we obtain 
\[\pi {{\mathop{\rm var}} _1}({\Phi _n}(x);{e^x}) = \int_{\,0}^\pi  {\Phi _n^2(x)\,dx}  - {\lambda ^*}^2\frac{{{e^{2\pi }} - 1}}{2} = c_1^2\frac{\pi }{2}({n^2} + 1)\,\,\,\,(\forall n \in\mathbb{N}).\]
Finally, according to remark 5.1, the sequence
\[{\Phi _n}(x) - {\lambda ^*}\,{e^x} = {c_1}\left( {\sin nx + n\cos nx} \right)\,,\]
is orthogonal with respect to the constant weight function on $ [0,\pi] $, i.e.
\begin{multline*}
\int_{\,0}^\pi  {\Big( {\sin (n + 1)x + (n + 1)\cos (n + 1)x} \Big)\Big( {\sin (m + 1)x + (m + 1)\cos (m + 1)x} \Big)\,dx}\\
  = \frac{\pi }{2}({n^2} + 2n + 2)\,{\delta _{n,m}},
\end{multline*}
in which
\[\sin (n + 1)x + (n + 1)\cos (n + 1)x = \sqrt {{n^2} + 2n + 2} \,\,\sin \Big( {(n + 1)x + \arctan \,(n + 1)} \Big),\]
having the explicit roots
\[{x_k} = \frac{{k\pi  - \arctan \,(n + 1)}}{{n + 1}}\,\,\,\,\,{\rm{for}}\,\,\,\,\,k = 1,2,...,n + 1.\]

\section{p-uncorrelated polynomials sequence (p-UPS)}\label{sec6}
Since the most important part of p-uncorrelated functions (introduced in the previous section) is the class of p-uncorrelated polynomials, we prefer to study them in a separate section and consider their monic form (without loss of generality) in a continuous space. With reference to theorem \ref{thm3.2} (or representation \eqref{eq3.12}) and using the initial monomial data $ \lbrace V_{k}=x^{k}\rbrace_{k=0}^{n} $, a monic type of p-UPS with respect to $ z(x) $, say $ {{\bar P}_n}\left( {x,p;\,z(x)} \right) $, can be generated, so that
\begin{equation}\label{eq6.1}
{{\mathop{\rm cov}} _p}\,\left( {{{\bar P}_n}\left( {x,p;\,z(x)} \right),{{\bar P}_m}\left( {x,p;\,z(x)} \right);z(x)} \right) = 0 \Leftrightarrow n \ne m.
\end{equation}
In a weighted continuous space, the condition \eqref{eq6.1} is equivalent to
\begin{multline}\label{eq6.2}
\int_{\,a}^b {w(x)\,{{\bar P}_n}\left( {x,p;\,z(x)} \right)\,{{\bar P}_m}\left( {x,p;\,z(x)} \right)\,dx}\\
  - p\frac{{\int_{\,a}^b {w(x)z(x)\,{{\bar P}_n}\left( {x,p;\,z(x)} \right)\,dx} \,\int_{\,a}^b {w(x)z(x)\,{{\bar P}_m}\left( {x,p;\,z(x)} \right)\,dx} }}{{\,\int_{\,a}^b {w(x)\,{z^2}(x)\,dx} }}\\
= \left( {\int_{\,a}^b {w(x)\,\bar P_n^{\,2}\left( {x,p;\,z(x)} \right)\,dx}  - p\frac{{{{\left( {\int_{\,a}^b {w(x)z(x)\,{{\bar P}_n}\left( {x,p;\,z(x)} \right)\,dx} } \right)}^2}}}{{\,\int_{\,a}^b {w(x)\,{z^2}(x)\,dx} }}} \right)\,{\delta _{n,m}},
\end{multline}
where $ w(x)>0 $ and $ z(x) $ are two continuous functions such that $ \int_{\,a}^b {w(x)\,{z^2}(x)\,dx}  > 0 $.

The existence of relation \eqref{eq6.2} is when the integrals $ \int_{\,a}^b {w(x)\,{x^n}z(x)\,dx}  $ all exist and are bounded for every $ n = 0,1,... $. In this section, we deal with such spaces of type \eqref{eq6.2} as the role of $ z(x)$ would be undeniable.

Let us recall that a hypergeometric polynomial as a particular case of the generalized hypergeometric series
\[{}_p{F_q}\left( {\left. {\begin{array}{*{20}{c}}
{{a_1},\,\,\,...\,\,,{a_p}}\\
{{b_1},\,\,\,...\,\,\,,{b_q}}
\end{array}} \right|\,x} \right) = \sum\limits_{k = 0}^\infty  {\frac{{{{({a_1})}_k}...{{({a_p})}_k}}}{{{{({b_1})}_k}...{{({b_q})}_k}}}\,\frac{{{x^k}}}{{k!}}} \,,\]
where $ {b_1},...,{b_q} \ne 0, - 1, - 2,... $ and 
\[{(a)_k} = a(a + 1)...(a + k - 1)\,,\]
appears when one of the values $ \{ {a_k}\} _{k = 1}^p $ is a negative integer.

Also, we recall that the aforesaid series is convergent for all $  x $ if $ p < q + 1 $, convergent for $ x \in ( - 1,1) $ if $ p=q+1 $ and it is divergent for all $ x\ne 0 $ if $ p > q + 1 $, while if one of $ \{ {a_k}\} _{k = 1}^p $ is a negative integer, it will reduce to a hypergeometric polynomial which is convergent for all real $ x $.

\begin{theorem}\label{thm6.1}
Let $ \{ {\bar P_n}(x,p;\,z(x))\}  $ be a monic p-UPS with respect to the fixed function $ z(x) $ and $ {\bar Q_m}(x) $ be an arbitrary monic polynomial of degree $ m\leq n $. Then
\[{{\mathop{\rm cov}} _p}\,\left( {{{\bar Q}_m}(x),{{\bar P}_n}(x,p;\,z(x));z(x)} \right) = {{\mathop{\rm var}} _p}\left( {{{\bar P}_n}(x,p;\,z(x));z(x)} \right)\,\,{\delta _{m,n}}.\]
\end{theorem}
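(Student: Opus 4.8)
The plan is to use the fact that the monic family $ \{ {\bar P_k}(x,p;z(x))\} _{k = 0}^{m} $ is a basis for the space of polynomials of degree at most $ m $. Since each $ {\bar P_k} $ is monic of exact degree $ k $, these $ m+1 $ polynomials are linearly independent (which also follows from Theorem \ref{thm3.1}) and therefore span the whole $ (m+1) $-dimensional space. The first step is thus to expand the arbitrary monic polynomial $ {\bar Q_m} $ of degree $ m \le n $ in this basis,
\[
{\bar Q_m}(x) = \sum\limits_{k = 0}^{m} {c_k}\,{\bar P_k}(x,p;z(x)),
\]
and to observe that comparing leading coefficients forces $ {c_m} = 1 $, because $ {\bar Q_m} $ and $ {\bar P_m} $ are both monic.

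Next I would apply $ {{\mathop{\rm cov}} _p}\,(\,\cdot\,,{\bar P_n};z(x)) $ to this expansion. By the bilinearity recorded in property a6 together with the symmetry a1, the p-covariance distributes over the sum,
\[
{{\mathop{\rm cov}} _p}\,\big( {\bar Q_m},{\bar P_n};z(x) \big) = \sum\limits_{k = 0}^{m} {c_k}\,{{\mathop{\rm cov}} _p}\,\big( {\bar P_k},{\bar P_n};z(x) \big).
\]
At this point the p-uncorrelatedness relation \eqref{eq6.1} annihilates every off-diagonal term, giving $ {{\mathop{\rm cov}} _p}\,({\bar P_k},{\bar P_n};z(x)) = 0 $ whenever $ k \ne n $, while for $ k = n $ the definition \eqref{eq1.7} of the p-variance yields $ {{\mathop{\rm cov}} _p}\,({\bar P_n},{\bar P_n};z(x)) = {{\mathop{\rm var}} _p}\,({\bar P_n};z(x)) $. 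If $ m < n $, then every index $ k \le m $ satisfies $ k \ne n $ and the entire sum vanishes, matching $ {{\mathop{\rm var}} _p}\,({\bar P_n};z(x))\,{\delta _{m,n}} = 0 $. If $ m = n $, only the term $ k = n $ survives, and since $ {c_n} = 1 $ it equals $ {{\mathop{\rm var}} _p}\,({\bar P_n};z(x)) = {{\mathop{\rm var}} _p}\,({\bar P_n};z(x))\,{\delta _{m,n}} $. This settles the claimed identity in both cases simultaneously.

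I do not anticipate a genuine obstacle: the argument is the p-covariance analogue of the classical fact that an orthogonal polynomial of degree $ n $ is orthogonal to every polynomial of strictly lower degree. The single point deserving a word of care is the claim that the monic family $ \{ {\bar P_k}\} _{k = 0}^{m} $ is indeed a basis for the polynomials of degree $ \le m $; this is secured by its monic, exact-degree structure and by the linear independence guaranteed by Theorem \ref{thm3.1}. Everything else reduces to a direct application of the bilinearity (a6), the symmetry (a1), the p-uncorrelatedness condition \eqref{eq6.1}, and the definition \eqref{eq1.7} of the p-variance, all already available in the earlier sections.
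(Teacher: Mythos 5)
Your proposal is correct and follows essentially the same route as the paper's own proof: expand $\bar Q_m$ in the monic p-UPS basis $\{\bar P_k\}_{k=0}^{m}$ (with leading coefficient $1$, justified by linear independence via Theorem \ref{thm3.1}), then apply linearity of the p-covariance and the p-uncorrelatedness condition \eqref{eq6.1} to reduce to the cases $m<n$ and $m=n$. No gaps; the extra care you take with the leading-coefficient and basis arguments only makes explicit what the paper states briefly.
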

\begin{proof}
Since ${{\bar P}_n}\left( {x,p;\,z(x)} \right)$ is a p-UPS, its elements are linearly independent according to theorem \ref{thm3.1}. Hence $ \{ {\bar P_k}(x,p;\,z(x))\} _{k = 0}^m $ is a basis for all polynomials of degree at most $ m $ and, for any arbitrary monic polynomial of degree $ m $, say $ {\bar Q_m}(x) $, there exist constants $ \{ {b_k}\}  $ such that
\[{\bar Q_m}(x) = \sum\limits_{k = 0}^m {{b_k}{{\bar P}_k}(x,p;\,z(x))} \,\,\,\,\,{\rm{with}}\,\,\,\,{b_m} = 1.\]
By the linearity of p-covariances with respect to $ z(x) $ and using \eqref{eq6.1} we get
\begin{align}\label{eq6.3}
%\begin{array}{l}
{{\mathop{\rm cov}} _p}\,\left( {{{\bar Q}_m}(x),{{\bar P}_n}(x,p;\,z(x));z(x)} \right) &= \sum\limits_{k = 0}^m {{b_k}{{{\mathop{\rm cov}} }_p}\,\left( {{{\bar P}_k}(x,p;\,z(x)),{{\bar P}_n}(x,p;\,z(x));z(x)} \right)} \,\\
 &= \left\{ \begin{array}{l}
\qquad\qquad\qquad 0\,\,\,\,\,\,\,\,\,\,\,\,\,\,\,\,\,\,\,\,\,\,\,\,\,\,\,\,\,\,\,\,\,\,\,\,\,\,\,\,\,\,\,\,\,\,{\rm{if}}\,\,\,m < n,\\
{{\mathop{\rm var}} _p}\left( {{{\bar P}_n}(x,p;\,z(x));z(x)} \right)\,\,\,\,\,\qquad{\rm{if}}\,\,\,m = n.
\end{array} \right.\nonumber
%\end{array}
\end{align}
\end{proof}
For the monomial basis $ \left\{ {{V_k} = {x^k}} \right\}_{k = 0}^n $ the elements of the main determinant \eqref{eq3.9}, i.e.
\begin{equation}\label{eq6.4}
\Delta _n^{(p)}\left( {\{ {x^k}\} _{k = 0}^n;z(x)} \right) = \left| {\begin{array}{*{20}{c}}
{\begin{array}{*{20}{c}}
{{{{\mathop{\rm var}} }_p}\,(1;z(x))}\\
{{{{\mathop{\rm cov}} }_p}\,(x,1;z(x))}\\
 \vdots \\
{{{{\mathop{\rm cov}} }_p}\,({x^n},1;z(x))}
\end{array}}&{\begin{array}{*{20}{c}}
{{{{\mathop{\rm cov}} }_p}\,(1,x;z(x))}\\
{{{{\mathop{\rm var}} }_p}\,(x;z(x))}\\
 \vdots \\
{{{{\mathop{\rm cov}} }_p}\,({x^n},x;z(x))}
\end{array}}&{\begin{array}{*{20}{c}}
 \cdots \\
 \cdots \\
 \vdots \\
 \cdots 
\end{array}}&{\begin{array}{*{20}{c}}
{{{{\mathop{\rm cov}} }_p}\,(1,{x^n};z(x))}\\
{{{{\mathop{\rm cov}} }_p}\,(x,{x^n};z(x))}\\
 \vdots \\
{{{{\mathop{\rm var}} }_p}\,({x^n};z(x))}
\end{array}}
\end{array}} \right|,
\end{equation}
can be expressed in terms of a special case of moments which are related to $ z(x) $ directly. In other words, if we define
\begin{equation}\label{eq6.5}
{\mu _n}(z(x)) = (\int_{\,a}^b {w(x)\,dx} \,)\,E\left( {{x^n}z(x)} \right) = \int_{\,a}^b {w(x)\,{x^n}z(x)\,dx} \,,
\end{equation}
then the entries of \eqref{eq6.4} can be represented in terms of definition \eqref{eq6.5} as follows
\[{{\mathop{\rm cov}} _p}\,\left( {{x^i},{x^j};z(x)} \right) = \frac{{{\mu _{i + j}}(1)}}{{{\mu _0}(1)}} - p\frac{{{\mu _i}(z(x)){\mu _j}(z(x))}}{{{\mu _0}(1)\,{\mu _0}({z^2}(x))}}\,.\]

\begin{theorem}\label{thm6.2}
A necessary condition for the existence of a monic p-UPS with respect to the function $ z(x) $ is that $ \Delta _n^{(p)}\left( {\{ {x^k}\} _{k = 0}^n;z(x)} \right) \ne 0 $. 
\end{theorem}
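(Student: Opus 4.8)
The plan is to reduce the statement to the product formula \eqref{eq3.11} rather than to analyze the determinant \eqref{eq6.4} directly. Concretely, I would identify the hypothesized monic p-UPS $\{ {\bar P_k}(x,p;z(x))\}_{k=0}^n$ with the p-uncorrelated sequence $\{X_k\}_{k=0}^n$ produced in theorem \ref{thm3.2} from the monomial data $V_k = x^k$: the monomials of distinct degrees are linearly independent, so theorem \ref{thm3.2} applies, and the monic normalization of each ${\bar P_k}$ is exactly the leading-coefficient condition $a_{k,1}=1$ used in deriving \eqref{eq3.7}--\eqref{eq3.11}. Under this identification the relation \eqref{eq3.11} reads
\[
\Delta _n^{(p)}\left( {\{ {x^k}\} _{k = 0}^n;z(x)} \right) = \prod_{k = 0}^n {{{{\mathop{\rm var}} }_p}\,\left( {{{\bar P}_k}(x,p;z(x));z(x)} \right)} .
\]

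The key step is then to extract from the defining biconditional \eqref{eq6.1} that every factor on the right is nonzero. Taking $m=n=k$ in \eqref{eq6.1}, the equivalence ${{\mathop{\rm cov}} _p}({\bar P_n},{\bar P_m};z(x))=0 \Leftrightarrow n\neq m$ forces ${{\mathop{\rm var}} _p}\,({\bar P_k};z(x)) = {{\mathop{\rm cov}} _p}({\bar P_k},{\bar P_k};z(x)) \neq 0$ for each $k=0,1,\ldots,n$; combined with the nonnegativity in \eqref{eq1.7}--\eqref{eq1.8} this upgrades to ${{\mathop{\rm var}} _p}\,({\bar P_k};z(x)) > 0$. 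A product of strictly positive numbers is nonzero, whence $\Delta _n^{(p)}\neq 0$, which is precisely the asserted necessary condition.

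The point that needs care --- and what I regard as the main obstacle --- is a potential circularity in invoking \eqref{eq3.11}, since its derivation passes through \eqref{eq3.10}, which divides by the lower-order determinants $\Delta _{k-1}^{(p)}$ and therefore tacitly presumes they do not vanish. To keep the argument clean I would instead run a short induction on $n$. The base case $\Delta _0^{(p)} = {{\mathop{\rm var}} _p}\,(1;z(x)) = {{\mathop{\rm var}} _p}\,({\bar P_0};z(x)) \neq 0$ holds because ${\bar P_0}=1$ by monicity and degree, and by the previous paragraph its p-variance is nonzero. For the inductive step, assuming $\Delta _{n-1}^{(p)}\neq 0$ makes \eqref{eq3.10} legitimately applicable, so that $\Delta _n^{(p)} = {{\mathop{\rm var}} _p}\,({\bar P_n};z(x))\,\Delta _{n-1}^{(p)}$, a product of two nonzero quantities. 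This establishes the claim and, as a byproduct, validates \eqref{eq3.11} as a genuine telescoping identity along the way.
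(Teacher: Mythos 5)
Your overall strategy coincides with the paper's own: the paper's proof likewise rests on the fact that the monic coefficients $c_k^{(n)}(p)$ of $\bar P_n$ satisfy the linear system whose matrix is the covariance matrix of the monomials with right-hand side $(0,\dots,0,\operatorname{var}_p(\bar P_n;z))^{T}$, and the telescoping identity you exploit is exactly the paper's formula \eqref{eq6.6}, recorded immediately after the proof as a consequence of solving that system. Your two refinements are genuinely useful: you make explicit that the biconditional in \eqref{eq6.1} forces $\operatorname{var}_p(\bar P_k;z)>0$ for every $k$, and your induction supplies the justification that the paper compresses into ``which obviously implies that $\Delta_n^{(p)}\neq 0$'' --- solvability of a single linear system never by itself implies nonsingularity of its matrix, so an argument of your type (using the systems for \emph{all} degrees $k\le n$) is actually needed.

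The one step you assert rather than prove is the identification of the hypothesized monic p-UPS with the Gram--Schmidt output of theorem \ref{thm3.2}. Linear independence of the monomials is not sufficient for this: the construction \eqref{eq3.4} divides by $\operatorname{var}_p(X_k;z)$ at every stage, so its well-definedness is not automatic, and even granted well-definedness you need uniqueness of a monic p-uncorrelated sequence to conclude $X_k=\bar P_k$. Both points can be settled by a simultaneous induction using the positivity you already established: if $X_j=\bar P_j$ for $j<m$, the divisions at stage $m$ are by $\operatorname{var}_p(\bar P_j;z)\neq 0$, so $X_m$ exists; then $X_m-\bar P_m$ has degree $<m$, and writing it as $\sum_{j<m}\alpha_j \bar P_j$ and applying $\operatorname{cov}_p(\,\cdot\,,\bar P_i;z)$ gives $\alpha_i\operatorname{var}_p(\bar P_i;z)=0$, hence $X_m=\bar P_m$. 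Alternatively you can bypass theorem \ref{thm3.2} entirely, as the paper does: theorem \ref{thm6.1} (expand $x^m$ in the basis $\{\bar P_j\}_{j\le m}$, a triangular change of basis requiring no divisions) yields $\operatorname{cov}_p(x^m,\bar P_n;z)=\operatorname{var}_p(\bar P_n;z)\,\delta_{m,n}$ directly, which is precisely the system you need; your induction --- or the division-free Cramer identity $\Delta_n^{(p)}=\operatorname{var}_p(\bar P_n;z)\,\Delta_{n-1}^{(p)}$, which holds for any solution of the system by multilinearity of the determinant, regardless of singularity --- then finishes the proof.
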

\begin{proof}
Suppose ${\bar P_n}(x,p;\,z(x)) = \sum\limits_{k = 0}^n {c_k^{(n)}(p)\,{x^k}} $ is a monic p-UPS with $ c_n^{(n)}(p) = 1 $. Referring to \eqref{eq6.3}, for $ m\leq n $ and $ {\bar Q_m}(x) = {x^m} $ we observe that the relations
\[{{\mathop{\rm cov}} _p}\,\left( {{x^m},{{\bar P}_n}(x,p;\,z(x))\,;z(x)} \right) = \sum\limits_{k = 0}^n {c_k^{(n)}(p)\,{{{\mathop{\rm cov}} }_p}\left( {{x^m},{x^k};z(x)} \right)} = {{\mathop{\rm var}} _p}\left( {{{\bar P}_n}(x,p;\,z(x));z(x)} \right){\delta _{m,n}},\]
will at last lead to the following solvable linear system
\begin{align*}
& \left[ {\begin{array}{*{20}{c}}
{\begin{array}{*{20}{c}}
{{{{\mathop{\rm var}} }_p}\,(1;z(x))}\\
{{{{\mathop{\rm cov}} }_p}\,(x,1;z(x))}\\
 \vdots \\
{{{{\mathop{\rm cov}} }_p}\,({x^n},1;z(x))}
\end{array}}&{\begin{array}{*{20}{c}}
{{{{\mathop{\rm cov}} }_p}\,(1,x;z(x))}\\
{{{{\mathop{\rm var}} }_p}\,(x;z(x))}\\
 \vdots \\
{{{{\mathop{\rm cov}} }_p}\,({x^n},x;z(x))}
\end{array}}&{\begin{array}{*{20}{c}}
 \cdots \\
 \cdots \\
 \vdots \\
 \cdots 
\end{array}}&{\begin{array}{*{20}{c}}
{{{{\mathop{\rm cov}} }_p}\,(1,{x^n};z(x))}\\
{{{{\mathop{\rm cov}} }_p}\,(x,{x^n};z(x))}\\
 \vdots \\
{{{{\mathop{\rm var}} }_p}\,({x^n};z(x))}
\end{array}}
\end{array}} \right]\left[ {\begin{array}{*{20}{c}}
{c_0^{(n)}(p)}\\
{c_1^{(n)}(p)}\\
 \vdots \\
{c_n^{(n)}(p)}
\end{array}} \right] \\
& \qquad\qquad\qquad \qquad\qquad\qquad = \left[ {\begin{array}{*{20}{c}}
0\\
0\\
 \vdots \\
{{{{\mathop{\rm var}} }_p}\left( {{{\bar P}_n}(x,p;\,z(x));z(x)} \right)}
\end{array}} \right],
\end{align*}
which obviously implies that $ \Delta _n^{(p)}\left( {\{ {x^k}\} _{k = 0}^n;z(x)} \right) \ne 0 $.
\end{proof}
One of the consequences of solving the above system is that
\begin{equation}\label{eq6.6}
{{\mathop{\rm var}} _p}\left( {{{\bar P}_n}(x,p;\,z(x));z(x)} \right) = \frac{{\Delta _n^{(p)}\left( {\{ {x^k}\} _{k = 0}^n;z(x)} \right)}}{{\Delta _{n - 1}^{(p)}\left( {\{ {x^k}\} _{k = 0}^n;z(x)} \right)}}\,\,\,\,\,\,\,\,(n \ge 1),
\end{equation}
which is valid for $ n=0 $ if we set $ \Delta _{ - 1}^{(p)}(.) = 1 $.

Another consequence is a general representation for the non-monic case of uncorrelated polynomials as
\begin{equation}\label{eq6.7}
{P_n}\left( {x,p;\,z(x)} \right)=
\begin{vmatrix}
{{{{\mathop{\rm var}} }_p}\,(1;z(x))} & {{{{\mathop{\rm cov}} }_p}\,(1,x;z(x))} & \ldots & {{{{\mathop{\rm cov}} }_p}\,(1,{x^n};z(x))}\\
{{{{\mathop{\rm cov}} }_p}\,(x,1;z(x))} &  {{{{\mathop{\rm var}} }_p}\,(x;z(x))} & \ldots & {{{{\mathop{\rm cov}} }_p}\,(x,{x^n};z(x))}\\
\vdots & \vdots & \vdots & \vdots \\
{{{{\mathop{\rm cov}} }_p}\,({x^{n - 1}},1;z(x))} & {{{{\mathop{\rm cov}} }_p}\,({x^{n - 1}},x;z(x))} & \ldots & {{{{\mathop{\rm cov}} }_p}\,({x^{n - 1}},{x^n};z(x))}\\
1 & x & \ldots & x^{n} 
\end{vmatrix}
\end{equation}

\begin{remark}\label{rem6.3}
For $ p=1 $ and $ z(x) = {x^\lambda } $, since
\[{{\mathop{\rm cov}} _1}({x^i},{x^j};{x^\lambda }) = 0\,\,\, \Leftrightarrow \,\,\,\lambda  = i\,\,\,{\rm{or}}\,\,\,\lambda  = j,\]
the rows of determinant (6.7) show that $ z(x) $ should conditionally be chosen as a member of the monomial basis $ {\{ {x^k}\} _{k = 0}} $. For example, if $ \lambda=5 $, the finite polynomial set $ \{ {P_n}(x,1;\,{x^5})\} _{n = 0}^5 $ is finitely 1-uncorrelated with respect to $ z(x) = {x^5} $.
The determinant \eqref{eq6.7} also shows that for the monic type of the polynomials we always have $ {\bar P_n}(x,1;\,{x^n}) = {x^n} $, e.g. $ {\bar P_5}(x,1;\,{x^5}) = {x^5} $.
\end{remark}
\begin{theorem}\label{thm6.4}
Let $ \{ {P_n}(x,p;\,z(x))\}  $ be a (weighted) p-UPS with respect to the function $ z(x) $. For any arbitrary polynomial $ {Q_n}(x) = \sum\limits_{k = 0}^n {{q_k}{x^k}}  $ of degree $ n $, we have
\begin{align*}
{{\mathop{\rm cov}} _p}\,\left( {{Q_n}(x),{P_n}\left( {x,p;\,z(x)} \right);z(x)} \right) &= {q_n}{{\mathop{\rm cov}} _p}\,\left( {{x^n},{P_n}\left( {x,p;\,z(x)} \right);z(x)} \right)\\
& = {q_n}\,c_n^{(n)}(p)\frac{{\Delta _n^{(p)}\left( {\{ {x^k}\} _{k = 0}^n;z(x)} \right)}}{{\Delta _{n - 1}^{(p)}\left( {\{ {x^k}\} _{k = 0}^n;z(x)} \right)}},
\end{align*}
where $ c_n^{(n)}(p) $ denotes the leading coefficient of $ {P_n}(x,p;\,z(x)) $.
\end{theorem}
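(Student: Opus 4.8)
The plan is to reduce the statement to two facts already established for the \emph{monic} p-UPS, namely Theorem~\ref{thm6.1} (which evaluates the p-covariance of any monic polynomial of degree $\le n$ against the monic p-UPS $\bar P_n$) and relation~\eqref{eq6.6} (which gives ${{\mathop{\rm var}}_p}(\bar P_n;z(x))=\Delta_n^{(p)}/\Delta_{n-1}^{(p)}$). First I would record the normalisation link between the given sequence and the monic one: since every member of a p-UPS is p-uncorrelated with all polynomials of lower degree, the degree-$n$ element is determined up to a multiplicative constant, so
\[
P_n(x,p;z(x))=c_n^{(n)}(p)\,\bar P_n(x,p;z(x)),
\]
where $c_n^{(n)}(p)$ is exactly the leading coefficient named in the statement.

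For the first equality I would expand $Q_n(x)=\sum_{k=0}^n q_k x^k$ and apply the linearity property~a6 of ${{\mathop{\rm cov}}_p}(\cdot,\cdot;z)$ in its first slot to obtain $\sum_{k=0}^n q_k\,{{\mathop{\rm cov}}_p}(x^k,P_n;z(x))$. Because each $x^k$ is monic of degree $k\le n$, Theorem~\ref{thm6.1} applied to $\bar Q_k=x^k$, combined with $P_n=c_n^{(n)}(p)\bar P_n$, forces ${{\mathop{\rm cov}}_p}(x^k,P_n;z(x))=c_n^{(n)}(p)\,{{\mathop{\rm var}}_p}(\bar P_n;z(x))\,\delta_{k,n}$; only the $k=n$ term survives, giving $q_n\,{{\mathop{\rm cov}}_p}(x^n,P_n;z(x))$. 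For the second equality I would feed the surviving term, ${{\mathop{\rm cov}}_p}(x^n,P_n;z(x))=c_n^{(n)}(p)\,{{\mathop{\rm var}}_p}(\bar P_n;z(x))$, into~\eqref{eq6.6} and multiply by $q_n$, producing $q_n\,c_n^{(n)}(p)\,\Delta_n^{(p)}/\Delta_{n-1}^{(p)}$.

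The linearity step and the $\delta_{k,n}$ collapse are routine; the one place needing care — and which I regard as the main obstacle — is justifying the identification $P_n=c_n^{(n)}(p)\bar P_n$, since both Theorem~\ref{thm6.1} and~\eqref{eq6.6} are phrased only for the monic sequence. The hard part therefore reduces to the uniqueness (up to scale) of the monic p-UPS, which in turn rests on the nonvanishing $\Delta_{n-1}^{(p)}(\{x^k\}_{k=0}^{n-1};z(x))\ne0$ supplied by Theorem~\ref{thm6.2}; once that is invoked, the remaining manipulations are pure bookkeeping.
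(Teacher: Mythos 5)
Your proposal is correct and follows essentially the same route as the paper: reduce by linearity to the leading term, annihilate the lower-degree part via Theorem~\ref{thm6.1}, and finish with \eqref{eq6.6} and Theorem~\ref{thm6.2}. The only cosmetic differences are that you expand $Q_n$ fully into monomials where the paper splits it as $q_n x^n + Q^*_{n-1}(x)$, and that you spell out explicitly the normalisation $P_n(x,p;z(x)) = c_n^{(n)}(p)\,\bar P_n(x,p;z(x))$ (justified by the nonvanishing determinant), which the paper leaves implicit in its closing reference to Theorem~\ref{thm6.2}.
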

\begin{proof}
Suppose $ {Q_n}(x) = {q_n}{x^n} + Q_{n - 1}^*(x) $ in which $ Q_{n - 1}^*(x) $ is a polynomial of degree at most $ n-1 $. In this case,
\begin{align*}
&{{\mathop{\rm cov}} _p}\,\left( {{Q_n}(x),{P_n}\left( {x,p;\,z(x)} \right);z(x)} \right)\\
&\qquad = {q_n}{{\mathop{\rm cov}} _p}\,\left( {{x^n},{P_n}\left( {x,p;\,z(x)} \right);z(x)} \right) + {{\mathop{\rm cov}} _p}\,\left( {Q_{n - 1}^*(x),{P_n}\left( {x,p;\,z(x)} \right);z(x)} \right)\\
 &\qquad = {q_n}{{\mathop{\rm cov}} _p}\,\left( {{x^n},{P_n}\left( {x,p;\,z(x)} \right);z(x)} \right),
\end{align*}
and the proof is completed by noting \eqref{eq6.6} and theorem \ref{thm6.2}.
\end{proof}

\subsection{A generic recurrence relation for p-UPS}\label{subsec6.5}
Let $ {\bar Q_m}(x) $ be an arbitrary monic polynomial of degree $ m $ and let $ \{ {\bar P_n}(x,p;\,z(x))\}  $ be a monic p-UPS with respect to the function $ z(x) $. According to theorem \ref{thm6.1} and relation \eqref{eq6.3}, we have
\begin{equation}\label{eq6.8}
{\bar Q_m}(x) = \sum\limits_{k = 0}^m {\frac{{{{{\mathop{\rm cov}} }_p}\,\left( {{{\bar Q}_m}(x),{{\bar P}_k}(x,p;\,z(x));z(x)} \right)}}{{{{{\mathop{\rm var}} }_p}\left( {{{\bar P}_k}(x,p;\,z(x));z(x)} \right)}}{{\bar P}_k}(x,p;\,z(x))} \,.
\end{equation} 
Now, replacing $ m=n+1 $ and $ {\bar Q_{n + 1}}(x) = x\,{\bar P_n}\left( {x,p;\,z(x)} \right) $ in \eqref{eq6.8} as
\begin{equation}\label{eq6.9}
x\,{\bar P_n}(x,p;\,z(x)) = \sum\limits_{k = 0}^{n + 1} {\frac{{{{{\mathop{\rm cov}} }_p}\,\left( {x\,{{\bar P}_n}(x,p;\,z(x)),{{\bar P}_k}(x,p;\,z(x));z(x)} \right)}}{{{{{\mathop{\rm var}} }_p}\left( {{{\bar P}_k}(x,p;\,z(x));z(x)} \right)}}{{\bar P}_k}(x,p;\,z(x))} \,,
\end{equation}
gives a $ n+2 $ term recurrence relation for the sequence $ \{ {\bar P_n}(x,p;\,z(x))\}  $.

In general, since
\begin{multline*}
{{\mathop{\rm cov}} _p}\,\left( {f(x)h(x),g(x);z(x)} \right) = {{\mathop{\rm cov}} _p}\,\left( {f(x),h(x)g(x);z(x)} \right)\\
 + p\frac{{E\left( {f(x)z(x)} \right)E\left( {h(x)g(x)z(x)} \right) - E\left( {g(x)z(x)} \right)E\left( {f(x)h(x)z(x)} \right)}}{{E\left( {{z^2}(x)} \right)}},
\end{multline*}
the coefficients of \eqref{eq6.9} can be changed in the form
\begin{multline*}
{{\mathop{\rm cov}} _p}\,\left( {x\,{{\bar P}_n}(x,p;\,z(x)),{{\bar P}_k}(x,p;\,z(x));z(x)} \right) = {{\mathop{\rm cov}} _p}\,\left( {{{\bar P}_n}(x,p;\,z(x)),x\,{{\bar P}_k}(x,p;\,z(x));z(x)} \right)\\
 + p\frac{{E\left( {x\,{{\bar P}_k}(x,p;\,z(x))z(x)} \right)E\left( {{{\bar P}_n}(x,p;\,z(x))z(x)} \right) - E\left( {x\,{{\bar P}_n}(x,p;\,z(x))z(x)} \right)E\left( {{{\bar P}_k}(x,p;\,z(x))z(x)} \right)}}{{E\left( {{z^2}(x)} \right)}}.
\end{multline*}
Note in the above formula that
\[{{\mathop{\rm cov}} _p}\,\left( {{{\bar P}_n}(x,p;\,z(x)),x\,{{\bar P}_k}(x,p;\,z(x));z(x)} \right) = 0 \Leftrightarrow k + 1 < n.\]
Accordingly, if $ p=0 $ or $ E\left( {z(x){{\bar P}_k}(x,p;z(x))} \right) = 0 $ in \eqref{eq6.9}, it will reduce to the same as celebrated three term recurrence relation of monic orthogonal polynomials \cite{ref4}. See also relations \eqref{eq8.8} and \eqref{eq8.9} in this regard.

\subsection{A complete uncorrelated sequence of hypergeometric polynomials of $ _4{F_3} $ type}
In this section, we introduce a complete uncorrelated hypergeometric polynomial that reveals the importance of the role of a non-constant fixed function. Let $ w(x) = 1 $ be defined on $ [0,1] $ and $ z(x)=x^{r} $ for $ r >  - 1/2 $ as $ \int_{\,0}^1 {w(x){z^2}(x)dx}  = \frac{1}{{2r + 1}} $. Then the components of determinant \eqref{eq6.7} are computed as
\begin{multline}\label{eq6.10}
{{\mathop{\rm cov}} _p}\,\left( {{x^i},{x^j};{x^r}} \right) = \frac{{(r + i + 1)(r + j + 1) - p(2r + 1)(i + j + 1)}}{{(r + i + 1)(r + j + 1)(i + j + 1)}}\\
{\rm{for}}\,\,\,i = 0,1,...,n - 1\,\,{\rm{and}}\,j = 0,1,...,n.
\end{multline}
A few samples of the monic type of the corresponding p-uncorrelated polynomials are
\begin{align*}
{{\bar P}_0}(x,p;\,{x^r}) &= 1,\\
{{\bar P}_1}(x,p;\,{x^r}) &= x - \frac{{r + 1}}{{2(r + 2)}}\frac{{(r + 1)(r + 2) - 2p(2r + 1)}}{{{{(r + 1)}^2} - p(2r + 1)}},\\
{{\bar P}_2}(x,p;\,{x^r}) &= {x^2} - \frac{{r + 2}}{{r + 3}}\frac{{{{(r + 1)}^2}(r + 2)(r + 3) - p(2r + 1)(5{r^2} + 5r + 6)}}{{{{(r + 1)}^2}{{(r + 2)}^2} - 4p(2r + 1)({r^2} + r + 1)}}\,\,x\\
&\quad + \frac{{r + 1}}{{6(r + 3)}}\frac{{(r + 1){{(r + 2)}^2}(r + 3) - 6p(2r + 1)({r^2} + r + 2)}}{{{{(r + 1)}^2}{{(r + 2)}^2} - 4p(2r + 1)({r^2} + r + 1)}}\,,
\end{align*}
satisfying the uncorrelatedness condition
\[\int_{\,0}^1 {{{\bar P}_n}(x,p;\,{x^r}){{\bar P}_m}(x,p;\,{x^r})\,dx}  = p(2r + 1)\int_{\,0}^1 {{x^r}{{\bar P}_n}(x,p;\,{x^r})dx} \,\int_{\,0}^1 {{x^r}{{\bar P}_m}(x,p;\,{x^r})dx}  \Leftrightarrow n \ne m.\]
For $ p=0 $, the above samples are in fact the monic type of shifted Legendre polynomials 
\[{P_n}(x,0;\,{x^r}) = {P_{n, + }}(x) = {}_2{F_1}\left( {\left. {\begin{array}{*{20}{c}}
{ - n,\,\,n + 1}\\
1
\end{array}\,} \right|\,x} \right) = \sum\limits_{k = 0}^n {\frac{{{{( - n)}_k}{{(n + 1)}_k}}}{{{{(1)}_k}}}\,\frac{{{x^k}}}{{k!}}} \,,\]
having the orthogonal property \cite{ref4}
\[\int_{\,0}^1 {{P_{n, + }}(x){P_{m, + }}(x)\,dx}  = \frac{1}{{2n + 1}}{\delta _{n,m}}\,,\]
while for the optimized case $ p=1 $, relation \eqref{eq6.10} is first simplified as
\begin{multline}\label{eq6.11}
{{\mathop{\rm cov}} _1}\left( {{x^i},{x^j};{x^r}} \right) = \frac{{(r - i)(r - j)}}{{(r + i + 1)(r + j + 1)(i + j + 1)}}\\
{\rm{for}}\,\,\,i = 0,1,...,n - 1\,\,{\rm{and}}\,j = 0,1,...,n.
\end{multline}
If \eqref{eq6.11} is directly substituted into \eqref{eq6.7}, with the aid of advanced mathematical software, see e.g. \cite{ref10, ref11} we can find the explicit form of the polynomials as
\begin{equation}\label{eq6.12}
{P_n}(x,1;\,{x^r}) = {}_4{F_3}\left( {\left. {\begin{array}{*{20}{c}}
{ - n,\,\,n + 1,\, - r,\,\,r + 2}\\
{1,\,\, - r + 1,\,\,r + 1}
\end{array}\,} \right|\,x} \right)\,,
\end{equation}
satisfying the complete uncorrelatedness condition
\begin{multline}\label{eq6.13}
\int_{\,0}^1 {{P_n}(x,1;\,{x^r}){P_m}(x,1;\,{x^r})\,dx}  - (2r + 1)\int_{\,0}^1 {{x^r}{P_n}(x,1;\,{x^r})dx} \,\int_{\,0}^1 {{x^r}{P_m}(x,1;\,{x^r})dx} \\
 = \left( {\int_{\,0}^1 {P_n^2(x,1;\,{x^r})\,dx}  - (2r + 1){{\left( {\int_{\,0}^1 {{x^r}{P_n}(x,1;\,{x^r})dx} } \right)}^2}} \right){\delta _{m,n}},
\end{multline}
where
\begin{align}\label{eq6.14}
\int_{\,0}^1 {{x^r}{P_n}(x,1;\,{x^r})dx}  &= \sum\limits_{k = 0}^n {\frac{{{{( - n)}_k}{{(n + 1)}_k}{{( - r)}_k}{{(r + 2)}_k}}}{{{{(1)}_k}{{( - r + 1)}_k}{{(r + 1)}_k}k!\,\,(r + 1 + k)}}\,} \\
& = \frac{1}{{r + 1}}{}_3{F_2}\left( {\left. {\begin{array}{*{20}{c}}
{ - n,\,\,n + 1,\, - r}\\
{1,\,\, - r + 1}
\end{array}\,} \right|\,\,1} \right) = \frac{1}{{r + 1}}\frac{{{{(1 + r)}_n}}}{{{{(1 - r)}_n}}},\nonumber
\end{align}
and
\begin{multline*}
\int_{\,0}^1 {{P_n}(x,1;\,{x^r}){P_m}(x,1;\,{x^r})\,dx}  \\
= \sum\limits_{k = 0}^n {\frac{{{{( - n)}_k}{{(n + 1)}_k}{{( - r)}_k}{{(r + 2)}_k}}}{{{{(2)}_k}{{( - r + 1)}_k}{{(r + 1)}_k}k!\,}}{}_5{F_4}\left( {\left. {\begin{array}{*{20}{c}}
{ - m,\,\,m + 1,\, - r,\,\,r + 2,\,k + 1}\\
{1,\,\, - r + 1,\,\,r + 1,\,\,k + 2}
\end{array}\,} \right|\,\,1} \right)\,} ,
\end{multline*}
which simplifies the left side of \eqref{eq6.13} as
\begin{align}\label{eq6.15}
&\int_{\,0}^1 {{P_n}(x,1;\,{x^r}){P_m}(x,1;\,{x^r})\,dx}  - (2r + 1)\int_{\,0}^1 {{x^r}{P_n}(x,1;\,{x^r})dx} \,\int_{\,0}^1 {{x^r}{P_m}(x,1;\,{x^r})dx} \\
&\qquad = \sum\limits_{k = 0}^n {\frac{{{{( - n)}_k}{{(n + 1)}_k}{{( - r)}_k}{{(r + 2)}_k}}}{{{{(2)}_k}{{( - r + 1)}_k}{{(r + 1)}_k}k!\,}}{}_5{F_4}\left( {\left. {\begin{array}{*{20}{c}}
{ - m,\,\,m + 1,\, - r,\,\,r + 2,\,k + 1}\\
{1,\,\, - r + 1,\,\,r + 1,\,\,k + 2}
\end{array}\,} \right|\,\,1} \right)}\nonumber \\
&\qquad\quad - \frac{{2r + 1}}{{{{(r + 1)}^2}}}\frac{{{{(1 + r)}_n}{{(1 + r)}_m}}}{{{{(1 - r)}_n}{{(1 - r)}_m}}}.\nonumber
\end{align}
Of course, the result of \eqref{eq6.14} has been derived by the following technique. Since
\[H(n) = {}_3{F_2}\left( {\left. {\begin{array}{*{20}{c}}
{ - n,\,\,n + 1,\, - r}\\
{1,\,\, - r + 1}
\end{array}\,} \right|\,\,1} \right),\]
satisfies the first order recurrence equation
\[H(n + 1) = \frac{{n + 1 + r}}{{n + 1 - r}}H(n),\]
so
\[{}_3{F_2}\left( {\left. {\begin{array}{*{20}{c}}
{ - n,\,\,n + 1,\, - r}\\
{1,\,\, - r + 1}
\end{array}\,} \right|\,\,1} \right) = \frac{{{{(1 + r)}_n}}}{{{{(1 - r)}_n}}}.\]
This technique can similarly be used to obtain $ {}_5{F_4}(.) $ in \eqref{eq6.15}. First, we can find out that the sequence
\[S(m) = {}_5{F_4}\left( {\left. {\begin{array}{*{20}{c}}
{ - m,\,\,m + 1,\, - r,\,\,r + 2,\,k + 1}\\
{1,\,\, - r + 1,\,\,r + 1,\,\,k + 2}
\end{array}\,} \right|\,\,1} \right),\]
satisfies the second order recurrence equation
\begin{multline*}
(m + 1)(m + 2 - r)(m + 3 + k)\,S(m + 2)\\
 - (2m + 3)\left( {(m + 1)(m + 2) + r(k + 1)} \right)\,S(m + 1) + (m + 2)(m + 1 + r)(m - k)\,S(m) = 0,
\end{multline*}
having two independent solutions
\[{S_1}(m) = \frac{{(m + r){{(r)}_m}}}{{(m - r){{( - r)}_m}}}\,\,\,\,\,\,{\rm{and}}\,\,\,\,{S_2}(m) = \frac{{{{( - k)}_m}}}{{(k + m + 1)(k + m){{(k)}_m}}}.\]
It is clear that
\begin{equation}\label{eq6.16}
S(m) = A\,{S_1}(m) + B\,{S_2}(m).
\end{equation}
To compute the unknown coefficients $ A $ and $ B $, it is sufficient to replace $ m=0,1 $ respectively in \eqref{eq6.16} to finally get
\begin{multline}\label{eq6.17}
{}_5{F_4}\left( {\left. {\begin{array}{*{20}{c}}
{ - m,\,\,m + 1,\, - r,\,\,r + 2,\,k + 1}\\
{1,\,\, - r + 1,\,\,r + 1,\,\,k + 2}
\end{array}\,} \right|\,\,1} \right)\\
 =  - \frac{{k + 1}}{{(k + r + 1)(r + 1)}}\left( {\frac{{(2r + 1)(m + r){{(r)}_m}}}{{(m - r){{( - r)}_m}}} + \,\frac{{rk(k - r){{( - k)}_m}}}{{(k + m + 1)(k + m){{(k)}_m}}}} \right).
\end{multline}
Thus, in order to compute
\[{{\mathop{\rm var}} _1}\left( {{P_n}(x,1;\,{x^r});\,{x^r}} \right) = \int_{\,0}^1 {P_n^2(x,1;\,{x^r})\,dx}  - (2r + 1){\left( {\int_{\,0}^1 {{x^r}{P_n}(x,1;\,{x^r})dx} } \right)^2},\]
we first suppose in \eqref{eq6.15} that $ m=n $ and then refer to \eqref{eq6.17} to arrive at
\begin{multline}\label{eq6.18}
{S^*} = \sum\limits_{k = 0}^n {\frac{{{{( - n)}_k}{{(n + 1)}_k}{{( - r)}_k}{{(r + 2)}_k}}}{{{{(2)}_k}{{( - r + 1)}_k}{{(r + 1)}_k}k!\,}}{}_5{F_4}\left( {\left. {\begin{array}{*{20}{c}}
{ - n,\,\,n + 1,\, - r,\,\,r + 2,\,k + 1}\\
{1,\,\, - r + 1,\,\,r + 1,\,\,k + 2}
\end{array}\,} \right|\,\,1} \right)} \\
 =  - \frac{{(2r + 1)(n + r){{(r)}_n}}}{{(r + 1)(n - r){{( - r)}_n}}}\sum\limits_{k = 0}^n {\frac{{{{( - n)}_k}{{(n + 1)}_k}{{( - r)}_k}{{(r + 2)}_k}}}{{{{(2)}_k}{{( - r + 1)}_k}{{(r + 1)}_k}k!\,}}\frac{{k + 1}}{{k + r + 1}}} \\
 - \frac{r}{{r + 1}}\sum\limits_{k = 0}^n {\frac{{{{( - n)}_k}{{(n + 1)}_k}{{( - r)}_k}{{(r + 2)}_k}}}{{{{(2)}_k}{{( - r + 1)}_k}{{(r + 1)}_k}k!\,}}\frac{{k + 1}}{{k + r + 1}}\frac{{k(k - r){{( - k)}_n}}}{{(k + n + 1)(k + n){{(k)}_n}}}} .
\end{multline}
Since in general
\[\frac{{{{(a)}_k}}}{{{{(a + 1)}_k}}} = \frac{a}{{a + k}},\]
relation \eqref{eq6.18} is simplified as
\begin{align}\label{eq6.19}
{S^*} &=  - \frac{{(2r + 1)(n + r){{(r)}_n}}}{{{{(r + 1)}^2}(n - r){{( - r)}_n}\,}}\frac{{{{(1 + r)}_n}}}{{{{(1 - r)}_n}\,}} + \frac{{{r^2}}}{{{{(r + 1)}^2}(n + 1)!\,}}\sum\limits_{k = 0}^n {\frac{{{{( - k)}_n}{{(n + 1)}_k}{{( - n)}_k}}}{{{{(1)}_k}{{(n + 2)}_k}}}} \\
& =  - \frac{{(2r + 1)(n + r){{(r)}_n}}}{{{{(r + 1)}^2}(n - r){{( - r)}_n}\,}}\frac{{{{(1 + r)}_n}}}{{{{(1 - r)}_n}\,}} + \frac{{{r^2}}}{{{{(r + 1)}^2}(n + 1)!\,}}\frac{{n!\,\,{{(n + 1)}_n}}}{{{{(n + 2)}_n}}}.\nonumber
\end{align}
Noting that $ {( - k)_n} = 0 $ for any $ k<n $ in \eqref{eq6.19}, the following final result will be derived.
\begin{corollary}\label{coro6.6.1}
If $ r >  - 1/2$ and $ r \notin {{\mathbb{Z}}^ + } $, then
\[\int_{\,0}^1 {P_n^2(x,1;\,{x^r})\,dx}  - (2r + 1){\left( {\int_{\,0}^1 {{x^r}{P_n}(x,1;\,{x^r})dx} } \right)^2} = \frac{{{r^2}}}{{{{(r + 1)}^2}}}\frac{1}{{2n + 1}}.\]
Moreover, \eqref{eq6.15} is simplified as
\begin{multline*}
\int_{\,0}^1 {{P_n}(x,1;\,{x^r}){P_m}(x,1;\,{x^r})\,dx}  - (2r + 1)\int_{\,0}^1 {{x^r}{P_n}(x,1;\,{x^r})dx} \,\int_{\,0}^1 {{x^r}{P_m}(x,1;\,{x^r})dx} \\
 = \frac{{{r^2}}}{{{{(r + 1)}^2}(m + 1)!\,}}\sum\limits_{k = 0}^n {\frac{{{{(n + 1)}_k}{{( - n)}_k}}}{{{{(m + 2)}_k}}}\frac{{{{( - k)}_m}}}{{k!}}}  = 0 \Leftrightarrow m \ne n,
\end{multline*}
which means
\[\sum\limits_{k = m}^n {\frac{{{{(n + 1)}_k}{{( - n)}_k}}}{{{{(m + 2)}_k}}}\frac{{{{( - k)}_m}}}{{k!}}}  = \frac{{(n + 1)!}}{{2n + 1}}{\delta _{n,m}}.\]
\end{corollary}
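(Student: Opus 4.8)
The plan is to read the first assertion directly off the two–term expression \eqref{eq6.19} for $S^{*}=\int_{0}^{1}P_n^{2}(x,1;\,x^{r})\,dx$ (this is the $m=n$ instance of the first summand of \eqref{eq6.15}), and then to propagate the same Pochhammer manipulations to the off-diagonal case $m\neq n$. First I would treat the two summands of \eqref{eq6.19} separately. For the first summand I would use the telescoping identities $(n+r)(r)_n=r\,(1+r)_n$ and $(n-r)(-r)_n=-r\,(1-r)_n$, which collapse it to $\frac{2r+1}{(r+1)^{2}}\bigl((1+r)_n/(1-r)_n\bigr)^{2}$; comparing with \eqref{eq6.14} this is exactly $(2r+1)\bigl(\int_{0}^{1}x^{r}P_n\,dx\bigr)^{2}$. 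For the second summand I would write the Pochhammer symbols as factorials, $(n+1)_n=(2n)!/n!$ and $(n+2)_n=(2n+1)!/(n+1)!$, so that $(n+1)_n/(n+2)_n=(n+1)/(2n+1)$ and the summand reduces to $\frac{r^{2}}{(r+1)^{2}}\frac{1}{2n+1}$. Since ${{\mathop{\rm var}} _1}\left(P_n;x^{r}\right)=S^{*}-(2r+1)\bigl(\int_{0}^{1}x^{r}P_n\,dx\bigr)^{2}$ and the first summand equals precisely the subtracted square, the variance equals the second summand, which is the first assertion.

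For the ``moreover'' part I would run the analogous computation on the full bilinear expression \eqref{eq6.15}. Substituting \eqref{eq6.17} into its $_5F_4$ sum splits it into a piece coming from $S_1(m)$ and a piece coming from $S_2(m)$. The $S_1(m)$ piece, after using the summation $\sum_{k}\frac{(-n)_k(n+1)_k(-r)_k(r+2)_k}{(2)_k(-r+1)_k(r+1)_k\,k!}\frac{k+1}{k+r+1}=\frac{1}{r+1}\frac{(1+r)_n}{(1-r)_n}$ together with $(m+r)(r)_m=r(1+r)_m$ and $(m-r)(-r)_m=-r(1-r)_m$, evaluates to $\frac{2r+1}{(r+1)^{2}}\frac{(1+r)_n(1+r)_m}{(1-r)_n(1-r)_m}$, which cancels the subtracted product term of \eqref{eq6.15} identically. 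This exact cancellation is the conceptual heart of the argument.

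The $S_2(m)$ piece is then simplified term-by-term. I would collapse the $r$-dependent ratios through $\frac{(-r)_k(k-r)}{(-r+1)_k}=-r$ and $\frac{(r+2)_k}{(r+1)_k(k+r+1)}=\frac{1}{r+1}$, use $\frac{k+1}{(2)_k}=\frac{1}{k!}$, and then consolidate the $k$-dependent denominators via $(k)_m(k+m)(k+m+1)=(k)_{m+2}$, followed by $\frac{k}{(k)_{m+2}}=\frac{k!}{(k+m+1)!}$ and $(m+1)!\,(m+2)_k=(m+k+1)!$. These reductions turn the piece into $\frac{r^{2}}{(r+1)^{2}(m+1)!}\sum_{k=0}^{n}\frac{(n+1)_k(-n)_k}{(m+2)_k}\frac{(-k)_m}{k!}$, which is the displayed simplified form of \eqref{eq6.15}. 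The careful bookkeeping in this collapse---tracking the two cancelling factors of $r$ and verifying the factorial consolidation---is where I expect the main (purely computational) obstacle to lie.

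Finally I would extract the combinatorial identity. Since $(-k)_m=0$ whenever $k<m$, the sum effectively runs from $k=m$ to $n$. Invoking the complete-uncorrelatedness property \eqref{eq6.1} (equivalently \eqref{eq6.13}), the left side of \eqref{eq6.15} vanishes for $m\neq n$ and equals $\frac{r^{2}}{(r+1)^{2}}\frac{1}{2n+1}$ for $m=n$ by the first part of this corollary. Equating these evaluations with the simplified form and cancelling the common factor $\frac{r^{2}}{(r+1)^{2}(m+1)!}$ (noting $(m+1)!=(n+1)!$ when $m=n$) yields $\sum_{k=m}^{n}\frac{(n+1)_k(-n)_k}{(m+2)_k}\frac{(-k)_m}{k!}=\frac{(n+1)!}{2n+1}\,\delta_{n,m}$, as claimed.
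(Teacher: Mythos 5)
Your proposal is correct and follows essentially the same route as the paper: both arguments substitute the closed form \eqref{eq6.17} of the $_5F_4$ into \eqref{eq6.15}, use the telescoping Pochhammer identities (equivalently $(a)_k/(a+1)_k = a/(a+k)$) so that the $S_1$-piece exactly reproduces and cancels the subtracted product term while the $S_2$-piece collapses to $\frac{r^2}{(r+1)^2(m+1)!}\sum_{k}\frac{(n+1)_k(-n)_k}{(m+2)_k}\frac{(-k)_m}{k!}$, and then exploit the vanishing $(-k)_m=0$ for $k<m$ together with the complete uncorrelatedness of the $P_n(x,1;x^r)$ (which holds by construction from \eqref{eq6.7}) to obtain both the variance value and the combinatorial identity. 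Your write-up merely makes explicit the factorial bookkeeping that the paper compresses into \eqref{eq6.18}--\eqref{eq6.19}.
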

\begin{remark}\label{rem6.6.2}
It is interesting to know that
\begin{align*}
\mathop {\lim }\limits_{r \to \infty } {P_n}(x,1;\,{x^r}) &= \mathop {\lim }\limits_{r \to \infty } {}_4{F_3}\left( {\left. {\begin{array}{*{20}{c}}
{ - n,\,\,n + 1,\, - r,\,\,r + 2}\\
{1,\,\, - r + 1,\,\,r + 1}
\end{array}\,} \right|\,x} \right)\\
&= {}_2{F_1}\left( {\left. {\begin{array}{*{20}{c}}
{ - n,\,\,n + 1}\\
1
\end{array}\,} \right|\,x} \right) = {P_{n, + }}(x),
\end{align*}
and from corollary \ref{coro6.6.1} we subsequently conclude that 
\begin{multline*}
\mathop {\lim }\limits_{r \to \infty } \frac{{{r^2}}}{{{{(r + 1)}^2}}}\frac{1}{{2n + 1}} = \mathop {\lim }\limits_{r \to \infty } \left( {\int_{\,0}^1 {P_n^2(x,1;\,{x^r})\,dx}  - (2r + 1){{\left( {\int_{\,0}^1 {{x^r}{P_n}(x,1;\,{x^r})dx} } \right)}^2}} \right)\\
 = \int_{\,0}^1 {\mathop {\lim }\limits_{r \to \infty } P_n^2(x,1;\,{x^r})\,dx}  - \mathop {\lim }\limits_{r \to \infty } \,(2r + 1){\left( {\frac{1}{{r + 1}}\frac{{{{(1 + r)}_n}}}{{{{(1 - r)}_n}}}} \right)^2} = \int_{\,0}^1 {P_{n, + }^2(x)\,dx}  = \frac{1}{{2n + 1}}.
\end{multline*}
\end{remark}

\begin{remark}\label{rem6.6.3}
Instead of $ z(x) = {x^r} $, if we take $ z(x) = {(1 - x)^r} $ defined on $ [0,1] $, the condition \eqref{eq6.13} changes to
\begin{align*}
&\int_{\,0}^1 {{P_n}\left( {1 - x,1;\,{{(1 - x)}^r}} \right){P_m}\left( {1 - x,1;\,{{(1 - x)}^r}} \right)\,dx} \\
& - (2r + 1)\int_{\,0}^1 {{{(1 - x)}^r}{P_n}\left( {1 - x,1;\,{{(1 - x)}^r}} \right)dx} \,\int_{\,0}^1 {{{(1 - x)}^r}{P_m}\left( {1 - x,1;\,{{(1 - x)}^r}} \right)dx}  \\
&\qquad\qquad\qquad \qquad\qquad \qquad= \frac{{{r^2}}}{{{{(r + 1)}^2}}}\frac{1}{{2n + 1}}{\delta _{m,n}},
\end{align*}
in which
\begin{multline*}
{P_n}\left( {1 - x,1;\,{{(1 - x)}^r}} \right) = {}_4{F_3}\left( {\left. {\begin{array}{*{20}{c}}
{ - n,\,\,n + 1,\, - r,\,\,r + 2}\\
{1,\,\, - r + 1,\,\,r + 1}
\end{array}\,} \right|\,1 - x} \right)\\
 = \sum\limits_{k = 0}^n {\frac{{{{( - n)}_k}{{(\,n + 1)}_k}{{( - r)}_k}{{(\,r + 2)}_k}}}{{{{(1)}_k}{{( - r + 1)}_k}{{(\,r + 1)}_k}}}{}_4{F_3}\left( {\left. {\begin{array}{*{20}{c}}
{ - n + k,\,\,n + 1 + k,\, - r + k,\,\,r + 2 + k}\\
{1 + k,\,\, - r + 1 + k,\,\,r + 1 + k}
\end{array}\,} \right|\,1} \right)\,\frac{{{{( - x)}^k}}}{{k!}}} .
\end{multline*}
Using corollary \ref{coro6.6.1}, we can now establish an optimized approximation (or expansion) of polynomial type for any expandable function, say $ f(x) $, as follows
\[f(x) \cong \frac{{{{(r + 1)}^2}}}{{{r^2}}}\sum\limits_{k = 0}^{n \to \infty } {\,(2k + 1){{{\mathop{\rm cov}} }_1}\left( {f(x),{P_k}(x,1;\,{x^r});\,{x^r}} \right){P_k}(x,1;\,{x^r})} ,\]
whose error 1-variance is obviously minimized with respect to the fixed function $ z(x)=x^{r} $ on $ [0,1] $.

Furthermore, if $ f(x) $ is of a polynomial type, the approximate operator in the above relation will change to an equality according to finite type theorem \ref{thm4.1} and its important consequences in remark \ref{rem4.1.1}. For example, since
\begin{align*}
{P_0}(x,1;\,{x^r}) &= 1,\,\\
{P_1}(x,1;\,{x^r}) &=  - 2\frac{{r(r + 2)}}{{{r^2} - 1}}x + 1,\,
\end{align*}
and
\[{P_2}(x,1;\,{x^r}) = 6\frac{{r(r + 3)}}{{(r - 2)(r + 1)}}{x^2} - 6\frac{{r(r + 2)}}{{{r^2} - 1}}\,\,x + 1\,,\]
every arbitrary polynomial of degree $ 2 $, say $ a{x^2} + bx + c $, can be expanded in terms of the above samples as follows
\begin{multline*}
a{x^2} + bx + c = \frac{{{{(r + 1)}^2}}}{{{r^2}}}\,{{\mathop{\rm cov}} _1}\left( {a{x^2} + bx + c,{P_0}(x,1;\,{x^r});{x^r}} \right)\\
 + 3\frac{{{{(r + 1)}^2}}}{{{r^2}}}\,{{\mathop{\rm cov}} _1}\left( {a{x^2} + bx + c,{P_1}(x,1;\,{x^r});{x^r}} \right)\left( { - 2\frac{{r(r + 2)}}{{{r^2} - 1}}x + 1} \right)\\
 + 5\frac{{{{(r + 1)}^2}}}{{{r^2}}}\,{{\mathop{\rm cov}} _1}\left( {a{x^2} + bx + c,{P_2}(x,1;\,{x^r});{x^r}} \right)\left( {6\frac{{r(r + 3)}}{{(r - 2)(r + 1)}}{x^2} - 6\frac{{r(r + 2)}}{{{r^2} - 1}}\,\,x + 1} \right),
\end{multline*}
\end{remark}
where
\begin{multline*}
{{\mathop{\rm cov}} _1}\left( {a{x^2} + bx + c,{P_0}(x,1;\,{x^r});{x^r}} \right) = \frac{1}{3}\frac{{r(r - 2)}}{{(r + 1)(r + 3)}}a + \frac{1}{2}\frac{{r(r - 1)}}{{(r + 1)(r + 2)}}b + \frac{{{r^2}}}{{{{(r + 1)}^2}}}c,\\
{{\mathop{\rm cov}} _1}\left( {a{x^2} + bx + c,{P_1}(x,1;\,{x^r});{x^r}} \right) =  - \frac{1}{6}\frac{{r(r - 2)}}{{(r + 1)(r + 3)}}a - \frac{1}{6}\frac{{r(r - 1)}}{{(r + 1)(r + 2)}}b,
\end{multline*}
and
\[{{\mathop{\rm cov}} _1}\left( {a{x^2} + bx + c,{P_2}(x,1;\,{x^r});{x^r}} \right) = \frac{1}{{30}}\frac{{r(r - 2)}}{{(r + 1)(r + 3)}}a.\]
According to remark \ref{rem5.1}, since the polynomials \eqref{eq6.12} are now explicitly known, a new sequence of orthogonal functions can be constructed as follows
\[{G_n}(x,1;r) = {}_4{F_3}\left( {\left. {\begin{array}{*{20}{c}}
{ - n,\,\,n + 1,\, - r,\,\,r + 2}\\
{1,\,\, - r + 1,\,\,r + 1}
\end{array}\,} \right|\,x} \right) - \frac{{2r + 1}}{{r + 1}}\frac{{{{(1 + r)}_n}}}{{{{(1 - r)}_n}}}\,{x^r},\]
satisfying the orthogonality relation
\[\int_{\,0}^1 {{G_n}(x,1;r){G_m}(x,1;r)\,dx}  = \frac{{{r^2}}}{{{{(r + 1)}^2}}}\frac{1}{{2n + 1}}{\delta _{n,m}}.\]
The above orthogonality is valid for every $ n $ if and only if $ r >  - 1/2 $ and $ r \notin {{\mathbb{Z}}^ + } $. However, if $ r = m \in\mathbb{N} $, the polynomial set $ \{ {G_n}(x,1;m)\} _{n = 0}^{m - 1} $ will be finitely orthogonal on $ [0,1] $ according to remark \ref{rem6.3}. For instance, for $ m=3 $ the elements of the finite set
\[\{ {G_n}(x,1;3)\} _{n = 0}^2 = \{  - \frac{7}{4}{x^3} + 1,\,\,\frac{7}{2}{x^3} - \frac{{15}}{4}x + 1,\,\, - \frac{{35}}{2}{x^3} + 27{x^2} - \frac{{45}}{4}x + 1\} ,\]
are mutually orthogonal with respect to the weight function $ w(x)=1 $ on $ [0,1] $ having the norm square value $ \frac{9}{{16}}\lbrace\frac{1}{{2n + 1}}\rbrace_{n=0}^{2} $.

\section{On the ordinary case of p-covariances}\label{sec7}
When $ Z = c^{*}  \ne 0 $ and $ p=1 $ we have $ {{\mathop{\rm cov}} _1}\,(X,Y;c^{*} ) = {\mathop{\rm cov}} \,(X,Y) $. Replacing these assumptions in \eqref{eq3.12} as
\begin{equation}\label{eq7.1}
{X_n} = 
\begin{vmatrix}
{{\mathop{\rm var}} \,({V_0})} & {{\mathop{\rm cov}} \,({V_0},{V_1})} &\ldots & {{\mathop{\rm cov}} \,({V_0},{V_n})} \\
{{\mathop{\rm cov}} \,({V_1},{V_0})} & {{\mathop{\rm var}} \,({V_1})} & \ldots & {{\mathop{\rm cov}} \,({V_1},{V_n})} \\
 \vdots  & \vdots & \vdots & \vdots \\
 {{\mathop{\rm cov}} \,({V_{n - 1}},{V_0})} & {{\mathop{\rm cov}} \,({V_{n - 1}},{V_1})} & \ldots & {{\mathop{\rm cov}} \,({V_{n-1}},{V_n})}\\
 V_{0} & V_{1} & \ldots & V_{n} 
\end{vmatrix},
\end{equation}
generates an ordinary uncorrelated element. Note that the leading coefficient in \eqref{eq3.12} has no effect on being uncorrelated and we have therefore ignored it in the above determinant.

There are totally two cases for the initial value $ V_{0} $ in \eqref{eq7.1}, i.e. when $ V_{0} $ is a constant value (preferably $ V_{0}=1 $) or it is not constant. As we have shown in section \ref{subsubsec2.3.2}, the first case leads to the approved result \eqref{eq2.14}, whereas the non-constant case contains some novel results. The next subsection describes one of them.

\subsection{Another uncorrelated sequence of hypergeometric polynomials of $ _{4}F_{3} $ type}\label{subsec7.1}

Let $ w(x)=1 $ be defined on $ [0,1] $ and consider the initial data $ \{ {V_k} = {x^{r + k}}\} _{k = 0}^n $ for $ r\ne 0 $ because $ V_{0}\ne 1 $. In this case, the components corresponding to \eqref{eq7.1} are computed as
\begin{multline}\label{eq7.2}
{\mathop{\rm cov}} ({x^{r + i}},{x^{r + j}}) = \frac{{(r + i)(r + j)}}{{(r + i + 1)(r + j + 1)(2r + i + j + 1)}}\\
{\rm{for}}\,\,\,i = 0,1,...,n - 1\,\,{\rm{and}}\,j = 0,1,...,n.
\end{multline}
Since
\[{\mathop{\rm var}} ({x^{r + j}}) = \frac{{{{(r + j)}^2}}}{{{{(r + j + 1)}^2}(2r + 1 + 2j)}} > 0,\]
for $ j=0 $ we must also add the condition $ 2r+1>0 $.

If \eqref{eq7.2} is directly substituted into \eqref{eq7.1}, with the aid of advanced mathematical software, we can again find the explicit form of the functions, say $ {X_n} = {f_{r + n}}(x;\,r) $, as
\begin{equation}\label{eq7.3}
{f_{r + n}}(x;\,r) = {x^r}{Q_n}(x;\,r) = {x^r}{}_4{F_3}\left( {\left. {\begin{array}{*{20}{c}}
{ - n,\,\,n + 2r + 1,\,r,\,\,r + 2}\\
{2r + 1,\,\,r + 1,\,\,r + 1}
\end{array}\,} \right|\,x} \right),
\end{equation}
satisfying the ordinary uncorrelatedness condition
\begin{multline*}
\int_{\,0}^1 {{f_{r + n}}(x;\,r){f_{r + m}}(x;\,r)\,dx}  - \int_{\,0}^1 {{f_{r + n}}(x;\,r)dx} \,\int_{\,0}^1 {{f_{r + m}}(x;\,r)dx} \\
 = \left( {\int_{\,0}^1 {f_{r + n}^2(x;\,r)\,dx}  - {{\left( {\int_{\,0}^1 {{f_{r + n}}(x;\,r)dx} } \right)}^2}} \right){\delta _{m,n}},
\end{multline*}
which is equivalent to
\begin{multline}\label{eq7.4}
\int_{\,0}^1 {{x^{2r}}{Q_n}(x;\,r){Q_m}(x;\,r)\,dx}  - \int_{\,0}^1 {{x^r}{Q_n}(x;\,r)\,dx} \,\int_{\,0}^1 {{x^r}{Q_m}(x;\,r)\,dx} \\
= \left( {\int_{\,0}^1 {{x^{2r}}Q_n^2(x;\,r)\,dx}  - {{\left( {\int_{\,0}^1 {{x^r}{Q_n}(x;\,r)\,dx} } \right)}^2}} \right){\delta _{m,n}}.
\end{multline}

\begin{remark}\label{rem7.1.1}
If in general $ {P_r}(X = x) = u(x)/\int_{\,a}^b {u(x)\,dx}  $, one can find a relationship between ordinary covariances and 1-covariances as follows
\begin{multline}\label{eq7.5}
\left( {\int_a^b {w(x)\,dx} } \right)\,{\left. {{\mathop{\rm cov}} \left( {z(x)f(x),\,z(x)g(x)} \right)\,} \right|_{\,u(x) = w(x)}}\\
 = \left( {\int_a^b {w(x)\,{z^2}(x)\,dx} } \right){\left. {{{{\mathop{\rm cov}} }_1}\left( {f(x),\,g(x);\frac{1}{{z(x)}}} \right)\,} \right|_{\,u(x) = w(x)\,z^2(x)}}.
\end{multline}
Hence, \eqref{eq7.4} shows that if $ z(x)=x^{r} $ and $ w(x)=1 $ are replaced in \eqref{eq7.5}, then
\[{\left. {{\mathop{\rm cov}} \left( {{x^r}{Q_n}(x;\,r),\,{x^r}{Q_m}(x;\,r)} \right)\,} \right|_{u(x) = 1}} = \frac{1}{{2r + 1}}{\left. {{{{\mathop{\rm cov}} }_1}\left( {{Q_n}(x;\,r),\,{Q_m}(x;\,r);{x^{ - r}}} \right)\,} \right|_{u(x) = {x^{2r}}}}.\]
In the sequel, in \eqref{eq7.4},
\begin{align}\label{eq7.6}
\int_{\,0}^1 {{x^r}{Q_n}(x;\,r)\,dx}  &= \sum\limits_{k = 0}^n {\frac{{{{( - n)}_k}{{(n + 2r + 1)}_k}{{(r)}_k}{{(r + 2)}_k}}}{{{{(2r + 1)}_k}{{(r + 1)}_k}{{(r + 1)}_k}k!\,\,(r + 1 + k)}}\,} \\
 &= \frac{1}{{r + 1}}{}_3{F_2}\left( {\left. {\begin{array}{*{20}{c}}
{ - n,\,\,n + 2r + 1,\,r}\\
{2r + 1,\,\,r + 1}
\end{array}\,} \right|\,\,1} \right) = \frac{1}{{r + 1}}\frac{{n!}}{{{{(2r + 1)}_n}}},\nonumber
\end{align}
and
\begin{multline*}
\int_{\,0}^1 {{x^{2r}}{Q_n}(x;\,r){Q_m}(x;\,r)\,dx} \\
 = \frac{1}{{2r + 1}}\sum\limits_{k = 0}^n {\frac{{{{( - n)}_k}{{(n + 2r + 1)}_k}{{(r)}_k}{{(r + 2)}_k}}}{{{{(2r + 2)}_k}{{(r + 1)}_k}{{(r + 1)}_k}k!\,}}{}_5{F_4}\left( {\left. {\begin{array}{*{20}{c}}
{ - m,\,\,m + 2r + 1,\,r,\,\,r + 2,\,2r + 1 + k}\\
{2r + 1,\,\,r + 1,\,\,r + 1,\,\,2r + 2 + k}
\end{array}\,} \right|\,\,1} \right)\,} ,
\end{multline*}
yielding
\begin{multline}\label{eq7.7}
\int_{\,0}^1 {{x^{2r}}{Q_n}(x;\,r){Q_m}(x;\,r)\,dx}  - \int_{\,0}^1 {{x^r}{Q_n}(x;\,r)\,dx} \,\int_{\,0}^1 {{x^r}{Q_m}(x;\,r)\,dx} \\
 = \frac{1}{{2r + 1}}\sum\limits_{k = 0}^n {\frac{{{{( - n)}_k}{{(n + 2r + 1)}_k}{{(r)}_k}{{(r + 2)}_k}}}{{{{(2r + 2)}_k}{{(r + 1)}_k}{{(r + 1)}_k}k!\,}}{}_5{F_4}\left( {\left. {\begin{array}{*{20}{c}}
{ - m,\,\,m + 2r + 1,\,r,\,\,r + 2,\,2r + 1 + k}\\
{2r + 1,\,\,r + 1,\,\,r + 1,\,\,2r + 2 + k}
\end{array}\,} \right|\,\,1} \right)\,} \\
 - \frac{1}{{{{(r + 1)}^2}}}\frac{{n!\,\,m!}}{{{{(2r + 1)}_n}{{(2r + 1)}_m}}}.
\end{multline}
To derive \eqref{eq7.6}, we have used a similar computational technique as follows. Since
\[A(n) = {}_3{F_2}\left( {\left. {\begin{array}{*{20}{c}}
{ - n,\,\,n + 2r + 1,\,r}\\
{2r + 1,\,\,r + 1}
\end{array}\,} \right|\,\,1} \right),\]
satisfies the first order equation
\[A(n + 1) = \frac{{n + 1}}{{n + 1 + 2r}}A(n),\]
so
\[{}_3{F_2}\left( {\left. {\begin{array}{*{20}{c}}
{ - n,\,\,n + 2r + 1,\,r}\\
{2r + 1,\,\,r + 1}
\end{array}\,} \right|\,\,1} \right) = \frac{{n!}}{{{{(2r + 1)}_n}}}.\]
Also, to compute the hypergeometric term in \eqref{eq7.7}, since
\[B(m) = {}_5{F_4}\left( {\left. {\begin{array}{*{20}{c}}
{ - m,\,\,m + 2r + 1,\,r,\,\,r + 2,\,2r + 1 + k}\\
{2r + 1,\,\,r + 1,\,\,r + 1,\,\,2r + 2 + k}
\end{array}\,} \right|\,\,1} \right)\,,\]
satisfies the recurrence relation
\begin{multline*}
 - (m + 2r + 1)(m + 2r + 2)(m + 2r + 3 + k)\,B(m + 2)\\
 + (m + 2)(2m + 2r + 3)(m + 2r + 1)\,B(m + 1) + (m + 1)(m + 2)(k - m)\,B(m) = 0,
\end{multline*}
having two independent solutions
\[{B_1}(m) = \frac{{m!}}{{{{(2r + 1)}_m}}}\,\,\,\,\,\,{\rm{and}}\,\,\,\,\,\,{B_2}(m) = \frac{{m!\,\,{{( - k)}_m}}}{{{{(2r + 1)}_m}{{(2r + k + 2)}_m}}},\]
so
\[B(m) = {c_1}\,{B_1}(m) + {c_2}\,{B_2}(m),\]
which finally results in
\begin{multline}\label{eq7.8}
{}_5{F_4}\left( {\left. {\begin{array}{*{20}{c}}
{ - m,\,\,m + 2r + 1,\,r,\,\,r + 2,\,2r + 1 + k}\\
{2r + 1,\,\,r + 1,\,\,r + 1,\,\,2r + 2 + k}
\end{array}\,} \right|\,\,1} \right)\\
 = \frac{1}{{(r + 1)(r + 1 + k)}}\frac{{m!}}{{{{(2r + 1)}_m}}}\left( {2r + 1 + k + r(r + k)\frac{{{{( - k)}_m}}}{{{{(2r + 2 + k)}_m}}}} \right).
\end{multline}
In order to compute
\[{\mathop{\rm var}} \left( {{f_{r + n}}(x,r)} \right) = \int_{\,0}^1 {{x^{2r}}Q_n^2(x;\,r)\,dx}  - {\left( {\int_{\,0}^1 {{x^r}{Q_n}(x;\,r)\,dx} } \right)^2},\]
it is enough to take $ m=n $ in \eqref{eq7.7} and then use \eqref{eq7.8} to arrive at
\begin{multline*}
{B^*} = \sum\limits_{k = 0}^n {\frac{{{{( - n)}_k}{{(n + 2r + 1)}_k}{{(r)}_k}{{(r + 2)}_k}}}{{{{(2r + 2)}_k}{{(r + 1)}_k}{{(r + 1)}_k}k!\,}}{}_5{F_4}\left( {\left. {\begin{array}{*{20}{c}}
{ - n,\,\,n + 2r + 1,\,r,\,\,r + 2,\,2r + 1 + k}\\
{2r + 1,\,\,r + 1,\,\,r + 1,\,\,2r + 2 + k}
\end{array}\,} \right|\,\,1} \right)\,} \\
\,\,\,\,\,\,\, = \frac{{n!}}{{{{(2r + 1)}_n}}}\frac{{2r + 1}}{{{{(r + 1)}^2}}}\sum\limits_{k = 0}^n {\frac{{{{( - n)}_k}{{(n + 2r + 1)}_k}{{(r)}_k}}}{{{{(2)}_k}{{( - r + 1)}_k}{{(r + 1)}_k}k!\,}}} \\
\,\,\,\,\,\,\, + \frac{{n!}}{{{{(2r + 1)}_n}}}\frac{{{r^2}}}{{{{(r + 1)}^2}{{(2r + 2)}_n}}}\sum\limits_{k = 0}^n {\frac{{{{( - n)}_k}{{(n + 2r + 1)}_k}{{( - k)}_n}}}{{{{(n + 2r + 2)}_k}k!\,}}} ,
\end{multline*}
which is finally simplified as
\[{B^*} = \frac{{2r + 1}}{{{{(r + 1)}^2}}}\frac{{{{(n!)}^2}}}{{{{(2r + 1)}_n}{{(2r + 1)}_n}}} + \frac{{{r^2}}}{{{{(r + 1)}^2}\,}}\frac{{{{(n!)}^2}{{(n + 2r + 1)}_n}}}{{{{(2r + 1)}_n}{{(2r + 2)}_n}{{(n + 2r + 2)}_n}}}.\]
\end{remark}
\begin{corollary}\label{coro7.1.2}
If $ r>-1/2 $ and $ r\ne 0 $, then
\[\int_{\,0}^1 {{x^{2r}}Q_n^2(x;\,r)\,dx}  - {\left( {\int_{\,0}^1 {{x^r}{Q_n}(x;\,r)\,dx} } \right)^2} = \frac{1}{{2n + 2r + 1}}{\left( {\frac{{r\,n!}}{{(r + 1){{(2r + 1)}_n}}}} \right)^2}.\]
Moreover, \eqref{eq7.7} is simplified as
\begin{multline*}
\int_{\,0}^1 {{x^{2r}}{Q_n}(x;\,r){Q_m}(x;\,r)\,dx}  - \int_{\,0}^1 {{x^r}{Q_n}(x;\,r)\,dx} \,\int_{\,0}^1 {{x^r}{Q_m}(x;\,r)\,dx} \\
 = \frac{{{r^2}m!}}{{{{(r + 1)}^2}(2r + 1){{(2r + 1)}_m}{{(2r + 2)}_m}\,}}\sum\limits_{k = 0}^n {\frac{{{{(n + 2r + 1)}_k}{{( - n)}_k}}}{{{{(m + 2r + 2)}_k}}}\frac{{{{( - k)}_m}}}{{k!}}}  = 0 \Leftrightarrow m \ne n.
\end{multline*}
Therefore
\begin{equation}\label{eq7.9}
\sum\limits_{k = m}^n {\frac{{{{(n + 2r + 1)}_k}{{( - n)}_k}}}{{{{(m + 2r + 2)}_k}}}\frac{{{{( - k)}_m}}}{{k!}}}  = n!\,\,\frac{{n + 2r + 1}}{{2n + 2r + 1}}{\delta _{n,m}},
\end{equation}
which is a generalization of the second result of corollary \ref{coro6.6.1} for $ r=0 $.
\end{corollary}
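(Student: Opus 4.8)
The plan is to split the statement into its diagonal ($m=n$) and off-diagonal ($m\neq n$) parts and to lean entirely on the hypergeometric evaluations already assembled in Remark \ref{rem7.1.1}, so that no genuinely new closed-form summation is required. For the diagonal claim I would start from the quantity $B^*$ computed at the end of Remark \ref{rem7.1.1}, which is exactly the value of the $k$-sum in \eqref{eq7.7} when $m=n$. Setting $m=n$ in \eqref{eq7.7} writes $\mathop{\rm var}\left(f_{r+n}(x;\,r)\right)$ as $\frac{1}{2r+1}B^* - \frac{1}{(r+1)^2}\frac{(n!)^2}{((2r+1)_n)^2}$. Substituting the two-term closed form of $B^*$ one sees that the first term of $\frac{1}{2r+1}B^*$ is precisely $\frac{1}{(r+1)^2}\frac{(n!)^2}{((2r+1)_n)^2}$, cancelling the subtracted square and leaving $\frac{r^2}{(2r+1)(r+1)^2}\frac{(n!)^2(n+2r+1)_n}{(2r+1)_n(2r+2)_n(n+2r+2)_n}$. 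The remaining work is the Pochhammer bookkeeping $\frac{(n+2r+1)_n}{(n+2r+2)_n}=\frac{n+2r+1}{2n+2r+1}$ together with $(2r+1)(2r+2)_n=(2r+1)_{n+1}=(2r+n+1)(2r+1)_n$, which collapse the expression to $\frac{r^2(n!)^2}{(2n+2r+1)(r+1)^2((2r+1)_n)^2}$, i.e. the asserted value.

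For the off-diagonal identity I would first observe that the vanishing for $m\neq n$ comes for free from the construction: the sequence $\{f_{r+n}(x;\,r)\}$ is generated by the determinant \eqref{eq7.1}, which is the $p=1$, $Z=c^*$ specialization of \eqref{eq3.12}, so by Theorem \ref{thm3.2} its members are mutually uncorrelated and the left side of \eqref{eq7.4} is $0$ whenever $m\neq n$. The purpose of the corollary is thus to make this vanishing explicit as the hypergeometric sum \eqref{eq7.9}. To do so I would substitute the evaluation \eqref{eq7.8} of the inner ${}_5F_4$ into the sum on the right of \eqref{eq7.7}; because \eqref{eq7.8} is a sum of the two terms $2r+1+k$ and $r(r+k)\frac{(-k)_m}{(2r+2+k)_m}$, this splits the $k$-sum into two pieces.

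The decisive simplification is the first piece. Using $\frac{(r+2)_k}{(r+1)_k(r+1+k)}=\frac{1}{r+1}$ and $\frac{2r+1+k}{(2r+2)_k}=\frac{2r+1}{(2r+1)_k}$, the $k$-sum carrying the factor $2r+1+k$ reduces to $(2r+1)\,{}_3F_2\!\left(\begin{smallmatrix}-n,\,n+2r+1,\,r\\ 2r+1,\,r+1\end{smallmatrix};1\right)$, and the summation \eqref{eq7.6} evaluates this ${}_3F_2$ to $n!/(2r+1)_n$. Tracking the overall constants, this first piece equals $\frac{1}{(r+1)^2}\frac{n!\,m!}{(2r+1)_n(2r+1)_m}$, which is exactly the product term subtracted in \eqref{eq7.7}; the two cancel identically. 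What survives is the second piece, which after factoring out constants is precisely the multiple of $\sum_{k=0}^n \frac{(n+2r+1)_k(-n)_k}{(m+2r+2)_k}\frac{(-k)_m}{k!}$ displayed in the corollary. Since $(-k)_m=0$ for $k<m$, the sum starts at $k=m$, giving \eqref{eq7.9}; evaluating the single surviving term at $m=n$ (where $(-n)_n=(-1)^n n!$ and the Pochhammer ratio contributes $\frac{n+2r+1}{2n+2r+1}$) recovers the stated value $n!\frac{n+2r+1}{2n+2r+1}$.

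I expect the only genuinely delicate step to be the constant-chasing that makes the first piece cancel the product term exactly; the two hypergeometric evaluations it relies on, \eqref{eq7.6} and \eqref{eq7.8}, are already in hand, so the obstacle is careful bookkeeping rather than a new summation. A secondary point worth verifying is that all the Pochhammer manipulations stay valid under the hypotheses $r>-1/2$ and $r\neq 0$, which guarantee that the denominators $(r+1)$, $(2r+1)_n$, $(2r+2)_n$ and $(2n+2r+1)$ never vanish, and this is exactly where these constraints enter the statement.
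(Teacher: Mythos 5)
Your proposal is correct and follows essentially the same route as the paper: take $m=n$ in \eqref{eq7.7}, use the two-term closed form of $B^*$ from Remark \ref{rem7.1.1} so that its first term cancels the subtracted square and Pochhammer identities such as $\frac{(n+2r+1)_n}{(n+2r+2)_n}=\frac{n+2r+1}{2n+2r+1}$ and $(2r+1)(2r+2)_n=(2r+1)_n(2r+n+1)$ yield the variance value; then, for general $m$, substitute \eqref{eq7.8} into \eqref{eq7.7}, observe that the $2r+1+k$ piece collapses via \eqref{eq7.6} to exactly the subtracted product term, and identify the surviving piece with the displayed multiple of the sum in \eqref{eq7.9}, whose vanishing for $m\ne n$ is inherited from the uncorrelatedness built into the determinantal construction \eqref{eq7.1}. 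This is the paper's own argument, with the constant-chasing you flag carried out correctly.
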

In section \ref{subsec9.1}, we will show that both polynomials obtained in sections \ref{sec6} and \ref{sec7} are particular cases of a two-parametric sequence of uncorrelated polynomials. 

\section{A class of uncorrelated polynomials based on a predetermined orthogonal polynomial}\label{sec8}

In this section, we introduce a class of uncorrelated polynomials which is constructed by a predetermined sequence of orthogonal polynomials and then study its general properties in detail. In order to clarify the subject, we also present two hypergeometric examples based on Jacobi and Laguerre polynomials. First of all, we should raise two important points.

Let $ {\{ {\Phi _n}(x)\} _{n = 0}} $ (with $ {\Phi _0}(x) = 1 $ valid especially for polynomial sequences) be a sequence of real functions orthogonal with respect to $ w(x) $ on $ [a,b] $, i.e.
\begin{equation}\label{eq8.1}
{\left. {E\left( {{\Phi _m}(x){\Phi _n}(x)} \right)\,} \right|_{w(x)}} = {\left. {E\left( {\Phi _n^2(x)} \right)\,} \right|_{w(x)}}\,{\delta _{m,n}},
\end{equation}
where $ {\left. {(.)\,} \right|_{w(x)}} $ means $ {P_r}(X = x) = \frac{{w(x)}}{{\int_{\,a}^b {w(x)\,dx} }} $ as before.

The first point is that if the mentioned sequence has the special form 
\[ {\Phi _n}(x) = \theta (x)\,{g_n}(x) + {\beta _n} \quad  (\text{with} \,\,{\Phi _0}(x) = 1),\] 
in which $  \theta (x) $ is a function independent of $ n $ and  $ \{ {\beta _n}\}  $ is an arbitrary numeric sequence, then noting \eqref{eq8.1} and \eqref{eq7.5} in remark \ref{rem7.1.1} we have
\begin{multline}\label{eq8.2}
{\left. {{\mathop{\rm cov}} \left( {{\Phi _m}(x),{\Phi _n}(x)} \right)\,} \right|_{\,w(x)}} = {\left. {E\left( {{\Phi _m}(x){\Phi _n}(x)} \right)\,} \right|_{\,w(x)}} - {\left. {E\left( {{\Phi _m}(x){\Phi _0}(x)} \right)\,} \right|_{\,w(x)}}{\left. {E\left( {{\Phi _n}(x){\Phi _0}(x)} \right)\,} \right|_{\,w(x)}}\\
 = {\left. {E\left( {\Phi _n^2(x)} \right)\,} \right|_{\,w(x)}}{\delta _{m,n}} = {\left. {{\mathop{\rm cov}} \,\left( {\theta (x)\,{g_m}(x) + {\beta _m},\,\,\theta (x)\,{g_n}(x) + {\beta _n}} \right)\,} \right|_{\,w(x)}}\\
 = {\left. {{\mathop{\rm cov}} \,\left( {\theta (x)\,{g_m}(x),\theta (x)\,{g_n}(x)} \right)\,} \right|_{\,w(x)}} = \frac{{\int_a^b {w(x)\,{\theta ^2}(x)\,dx} }}{{\int_a^b {w(x)\,dx} }}{\left. {{{{\mathop{\rm cov}} }_1}\left( {{g_m}(x),\,{g_n}(x);\frac{1}{{\theta (x)}}} \right)\,} \right|_{\,w(x)\,{\theta ^2}(x)}},
\end{multline}
leading to the following biorthogonality relation according to the sub-section \ref{subsec4.2},
\[{\left. {E\left( {{g_m}(x)\left( {{g_n}(x) - \frac{{E\left( {{g_n}(x)/\theta (x)} \right)}}{{E\left( {1/{\theta ^2}(x)} \right)}}\frac{1}{{\theta (x)}}} \right)} \right)\,} \right|_{\,w(x)\,{\theta ^2}(x)}} = \frac{{\int_a^b {w(x)\,dx} }}{{\int_a^b {w(x)\,{\theta ^2}(x)\,dx} }}{\left. {E\left( {\Phi _n^2(x)} \right)\,} \right|_{\,w(x)}}{\delta _{m,n}}.\]
Also, if it satisfies a second order equation as
\[a(x)\,{\Phi ''_n}(x) + b(x)\,{\Phi '_n}(x) + {\lambda _n}u(x)\,{\Phi _n}(x) = 0,\]
then $ {\{ {g_n}(x)\} _{n = 0}} $ will satisfy the equation
\begin{multline}\label{eq8.3}
\left( {a(x)\theta (x)} \right)\,{g''_n}(x) + \left( {2a(x)\theta '(x) + b(x)\theta (x)} \right)\,{g'_n}(x)\\
+ \left( {a(x)\theta ''(x) + b(x)\theta '(x) + {\lambda _n}u(x)\theta (x)} \right)\,{g_n}(x) =  - {\lambda _n}{\beta _n}u(x).
\end{multline}
In this sense, the second point is that the relation
\[\frac{{\,\int_{\,a}^b {w(x)\,\theta (x)\,{g_n}(x)\,dx} }}{{\,\int_{\,a}^b {w(x)\,dx} }} = \frac{{\,\int_{\,a}^b {w(x)\,\left( {{\Phi _n}(x) - {\beta _n}} \right)\,dx} }}{{\,\int_{\,a}^b {w(x)\,dx} }} =  - {\beta _n},\]
will change equation \eqref{eq8.3} to
\begin{multline*}
\left( {a(x)\theta (x)} \right)\,{g''_n}(x) + \left( {2a(x)\theta '(x) + b(x)\theta (x)} \right)\,{g'_n}(x)\\
 + \left( {a(x)\theta ''(x) + b(x)\theta '(x) + {\lambda _n}u(x)\theta (x)} \right)\,{g_n}(x) = {\lambda _n}u(x)\frac{{\,\int_{\,a}^b {w(x)\,\theta (x)\,{g_n}(x)\,dx} }}{{\,\int_{\,a}^b {w(x)\,dx} }},
\end{multline*}
which is a particular case of equation \eqref{eq5.3} in theorem \ref{thm5.2}.

Now, noting the two above points, for a real parameter $ \lambda $ let $ {\left\{ {{P_n}(x;\lambda ) = \sum\limits_{k = 0}^n {a_k^{(n)}{{(x - \lambda )}^k}} } \right\}_{n = 0}} $ be a sequence of polynomials orthogonal with respect to $ w(x) $ on $ [a,b] $ as
\begin{equation}\label{eq8.4}
{\left. {E\left( {{P_m}(x;\lambda ){P_n}(x;\lambda )} \right)\,} \right|_{w(x)}} = {\left. {E\left( {P_n^2(x;\lambda )} \right)\,} \right|_{w(x)}}\,{\delta _{m,n}}.
\end{equation}
It can be verified that the sequence
\begin{equation}\label{eq8.5}
{Q_n}(x;\lambda ) = \frac{{{P_{n + 1}}(x;\lambda ) - {P_{n + 1}}(\lambda ;\lambda )}}{{x - \lambda }} = \sum\limits_{k = 0}^n {a_{k + 1}^{(n + 1)}{{(x - \lambda )}^k}} ,
\end{equation}
is also a polynomial of degree $ n $.

With reference to \eqref{eq8.2} and \eqref{eq8.4}, the following equalities hold for the polynomial sequence \eqref{eq8.5},
\begin{align}\label{eq8.6}
&\frac{{\int_a^b {w(x)\,{{(x - \lambda )}^2}\,dx} }}{{\int_a^b {w(x)\,dx} }}{\left. {{{{\mathop{\rm cov}} }_1}\left( {{Q_m}(x;\lambda ),\,{Q_n}(x;\lambda );\frac{1}{{x - \lambda }}} \right)\,} \right|_{\,w(x)\,{{(x - \lambda )}^2}}}\\
&\quad = {\left. {{\mathop{\rm cov}} \left( {(x - \lambda ){Q_m}(x;\lambda ),(x - \lambda ){Q_n}(x;\lambda )} \right)\,} \right|_{\,w(x)}}\nonumber\\
&\quad  = {\left. {{\mathop{\rm cov}} \left( {{P_{m + 1}}(x;\lambda ) - {P_{m + 1}}(\lambda ;\lambda ),{P_{n + 1}}(x;\lambda ) - {P_{n + 1}}(\lambda ;\lambda )} \right)\,} \right|_{\,w(x)}}\nonumber\\
&\quad  = {\left. {{\mathop{\rm cov}} \left( {{P_{m + 1}}(x;\lambda ),{P_{n + 1}}(x;\lambda )} \right)\,} \right|_{\,w(x)}} = {\left. {E\left( {P_{n + 1}^2(x;\lambda )} \right)\,} \right|_{\,w(x)}}{\delta _{m,n}}.\nonumber
\end{align}
\begin{corollary}\label{coro8.1}
From \eqref{eq8.6}, the relation
\[{\left. {{{{\mathop{\rm cov}} }_1}\left( {{Q_m}(x;\lambda ),\,{Q_n}(x;\lambda );\frac{1}{{x - \lambda }}} \right)\,} \right|_{\,w(x)\,{{(x - \lambda )}^2}}} = \frac{{\int_a^b {w(x)\,P_{n + 1}^2(x;\lambda )\,dx} }}{{\int_a^b {w(x)\,{{(x - \lambda )}^2}\,dx} }}\,\,{\delta _{m,n}},\]
shows that the polynomial set $ {\left\{ {{Q_n}(x;\lambda )  } \right\}_{n = 0}} $ is a complete uncorrelated sequence with respect to the fixed function $ z(x) = \frac{1}{{x - \lambda }} $ and the probability function $ {P_r}(X = x) = \frac{{w(x)\,{{(x - \lambda )}^2}}}{{\int_{\,a}^b {w(x)\,{{(x - \lambda )}^2}dx} }} $ respectively. Also, relation \eqref{eq8.6} shows that the two defined sequences $ {\left\{ {{P_n}(x;\lambda ) = \sum\limits_{k = 0}^n {a_k^{(n)}{{(x - \lambda )}^k}} } \right\}_{n = 0}} $ and $ {\left\{ {{Q_n}(x;\lambda ) = \sum\limits_{k = 0}^n {a_{k + 1}^{(n + 1)}{{(x - \lambda )}^k}} } \right\}_{n = 0}} $ are biorthogonal with respect to the weight function $ (x - \lambda )\,w(x) $ on $ [a,b] $, as we have
\begin{multline*}
{\left. {{{{\mathop{\rm cov}} }_1}\left( {{Q_m}(x;\lambda ),\,{Q_n}(x;\lambda );\frac{1}{{x - \lambda }}} \right)\,} \right|_{\,w(x)\,{{(x - \lambda )}^2}}}\\
 = E{\left. {\left( {{Q_m}(x;\lambda )\left( {{Q_n}(x;\lambda ) - \frac{{E\left( {{Q_n}(x;\lambda )/(x - \lambda )} \right)}}{{E\left( {1/{{(x - \lambda )}^2}} \right)}}\frac{1}{{x - \lambda }}} \right)} \right)\,} \right|_{\,w(x)\,{{(x - \lambda )}^2}}},
\end{multline*}
where
\[{Q_n}(x;\lambda ) - \frac{{E\left( {{Q_n}(x;\lambda )/(x - \lambda )} \right)}}{{E\left( {1/{{(x - \lambda )}^2}} \right)}}\frac{1}{{x - \lambda }} = \frac{{{P_{n + 1}}(x;\lambda )}}{{x - \lambda }}.\]
\end{corollary}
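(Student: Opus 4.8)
The entire corollary is a readout of the already-established identity \eqref{eq8.6}, so my plan is to unpack that single relation in three steps rather than to compute anything new from scratch. First I would isolate the $1$-covariance in \eqref{eq8.6}: dividing both sides by the positive factor $\int_a^b w(x)\,(x-\lambda)^2\,dx \big/ \int_a^b w(x)\,dx$ and writing $E(P_{n+1}^2(x;\lambda))|_{w(x)} = \int_a^b w(x)\,P_{n+1}^2(x;\lambda)\,dx \big/ \int_a^b w(x)\,dx$, the two normalizing integrals $\int_a^b w(x)\,dx$ cancel and leave
\[
\left.{{\mathop{\rm cov}}_1}\left(Q_m(x;\lambda),\,Q_n(x;\lambda);\tfrac{1}{x-\lambda}\right)\right|_{\,w(x)\,(x-\lambda)^2} = \frac{\int_a^b w(x)\,P_{n+1}^2(x;\lambda)\,dx}{\int_a^b w(x)\,(x-\lambda)^2\,dx}\,\delta_{m,n}.
\]
Because the right-hand side vanishes exactly when $m\neq n$ and its diagonal value $\int_a^b w\,P_{n+1}^2 \big/ \int_a^b w\,(x-\lambda)^2$ is strictly positive, this is precisely the statement that $\{Q_n(x;\lambda)\}_{n=0}$ is a complete uncorrelated sequence with respect to $z(x)=1/(x-\lambda)$ under the probability density $w(x)(x-\lambda)^2\big/\int_a^b w(x)(x-\lambda)^2\,dx$. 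This settles the first assertion.

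For the expectation (biorthogonal) form I would invoke the $p=1$ rewriting of a $1$-covariance recorded in subsection \ref{subsec4.2}, namely $\left.{{\mathop{\rm cov}}_1}(X,Y;Z)=E\bigl(X\bigl(Y-\frac{E(YZ)}{E(Z^2)}Z\bigr)\bigr)\right.$. Taking $X=Q_m(x;\lambda)$, $Y=Q_n(x;\lambda)$ and $Z=1/(x-\lambda)$ under the measure $w(x)(x-\lambda)^2$ reproduces the second displayed relation of the corollary verbatim, so the only genuine task left is to identify the inner factor.

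The remaining identity
\[
Q_n(x;\lambda) - \frac{E\bigl(Q_n(x;\lambda)/(x-\lambda)\bigr)}{E\bigl(1/(x-\lambda)^2\bigr)}\frac{1}{x-\lambda} = \frac{P_{n+1}(x;\lambda)}{x-\lambda}
\]
is where the actual computation lives, and I expect the weight/index bookkeeping to be the main (though minor) obstacle. Using \eqref{eq8.5} in the form $(x-\lambda)Q_n(x;\lambda)=P_{n+1}(x;\lambda)-P_{n+1}(\lambda;\lambda)$, the two expectations against $w(x)(x-\lambda)^2$ simplify because the $(x-\lambda)^2$ of the measure cancels the denominators: $E(1/(x-\lambda)^2)=\int_a^b w\,dx\big/\int_a^b w(x-\lambda)^2\,dx$, while $E(Q_n/(x-\lambda))=\int_a^b w(x)(x-\lambda)Q_n(x;\lambda)\,dx\big/\int_a^b w(x-\lambda)^2\,dx$. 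Substituting $(x-\lambda)Q_n=P_{n+1}-P_{n+1}(\lambda;\lambda)$ and invoking the orthogonality \eqref{eq8.4} of $P_{n+1}$ to the constant $P_0(x;\lambda)=1$, which kills $\int_a^b w\,P_{n+1}\,dx$, reduces the numerator to $-P_{n+1}(\lambda;\lambda)\int_a^b w\,dx$. Hence the ratio equals $-P_{n+1}(\lambda;\lambda)$, and $Q_n(x;\lambda)-\frac{-P_{n+1}(\lambda;\lambda)}{x-\lambda}=Q_n(x;\lambda)+\frac{P_{n+1}(\lambda;\lambda)}{x-\lambda}=\frac{P_{n+1}(x;\lambda)}{x-\lambda}$ by \eqref{eq8.5} again, proving the identity.

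Finally, feeding this inner factor back into the expectation shows the pairing is $\int_a^b (x-\lambda)w(x)\,Q_m(x;\lambda)\,P_{n+1}(x;\lambda)\,dx$ up to the positive normalization $\int_a^b w(x-\lambda)^2\,dx$, so the vanishing-off-diagonal property read off in the first step is exactly the biorthogonality of $\{P_n(x;\lambda)\}$ and $\{Q_n(x;\lambda)\}$ with respect to the weight $(x-\lambda)w(x)$. Thus all three claims follow from \eqref{eq8.6}, the subsection \ref{subsec4.2} identity, and the single elementary ratio computation, which is the step warranting the most care.
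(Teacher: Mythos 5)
Your proposal is correct and follows essentially the same route as the paper: read the first relation off \eqref{eq8.6} by cancelling the normalizing factors, invoke the subsection \ref{subsec4.2} identity $\mathrm{cov}_1(X,Y;Z)=E\bigl(X\bigl(Y-\tfrac{E(YZ)}{E(Z^2)}Z\bigr)\bigr)$, and verify the inner-factor identity via \eqref{eq8.5} together with the orthogonality of $P_{n+1}$ to constants, so that the pairing becomes $\int_a^b (x-\lambda)w(x)\,Q_m(x;\lambda)P_{n+1}(x;\lambda)\,dx \propto \delta_{m,n}$. Your ratio computation yielding $-P_{n+1}(\lambda;\lambda)$ supplies the detail the paper leaves implicit, and your final step reproduces the paper's own ``direct proof'' \eqref{eq8.7} of the biorthogonality.
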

There is a direct proof for this conclusion, too. If we suppose
\[{\left\langle {{P_m}(x;\lambda ),{P_n}(x;\lambda )} \right\rangle _{w(x)}} = {\left\langle {{P_n}(x;\lambda ),{P_n}(x;\lambda )} \right\rangle _{w(x)}}{\delta _{m,n}},\]
then for every $ m\in\mathbb{N}  $ we have
\begin{align}\label{eq8.7}
&{\left\langle {{Q_n}(x;\lambda ),{P_m}(x;\lambda )} \right\rangle _{(x - \lambda )\,w(x)}} = {\left\langle {\frac{{{P_{n + 1}}(x;\lambda ) - {P_{n + 1}}(\lambda ;\lambda )}}{{x - \lambda }},{P_m}(x;\lambda )} \right\rangle _{(x - \lambda )\,w(x)}}\\[3mm]
&\qquad\qquad = {\left\langle {{P_{n + 1}}(x;\lambda ),{P_m}(x;\lambda )} \right\rangle _{w(x)}} - {P_{n + 1}}(\lambda ;\lambda ){\left\langle {1,{P_m}(x;\lambda )} \right\rangle _{w(x)}}\notag\\[3mm]
&\qquad\qquad  = {\left\langle {{P_{n + 1}}(x;\lambda ),{P_{n + 1}}(x;\lambda )} \right\rangle _{w(x)}}{\delta _{n + 1,m}}.\notag
\end{align}
Using \eqref{eq8.7}, one can create a biorthogonal approximation (or expansion) for any appropriate function, say $ f(x) $, in terms of the uncorrelated polynomials $ {\left\{ {{Q_k}(x;\lambda )} \right\}_{k = 0}} $ as follows
\[f(x) \cong \sum\limits_{k = 0}^{n \to \infty } {{c_k}{Q_k}(x;\lambda )}  = \sum\limits_{k = 0}^{n \to \infty } {\frac{{{{\left\langle {f(x),{P_{k + 1}}(x;\lambda )} \right\rangle }_{(x - \lambda )\,w(x)}}}}{{{{\left\langle {{P_{k + 1}}(x;\lambda ),{P_{k + 1}}(x;\lambda )} \right\rangle }_{w(x)}}}}\frac{{{P_{k + 1}}(x;\lambda ) - {P_{k + 1}}(\lambda ;\lambda )}}{{x - \lambda }}} ,\]
whose error is clearly minimized with respect to the fixed function $ z(x) = \frac{1}{{x - \lambda }} $ in the sense of least 1-variances.

In the sequel, since $ {\left\{ {{P_n}(x;\lambda )} \right\}_{n = 0}} $ was assumed to be orthogonal, its monic type must satisfy a three term recurrence relation \cite{ref4} of the form
\begin{equation}\label{eq8.8}
{\bar P_{n + 1}}(x;\lambda ) = (x - {B_n}){\bar P_n}(x;\lambda ) - {C_n}{\bar P_{n - 1}}(x;\lambda )\,\,\,\,\,\,\text{with}\,\,\,\,\,{\bar P_0}(x;\lambda ) = 1\,\,\,\,{\rm{and}}\,\,\,{\bar P_1}(x;\lambda ) = x - {B_1}.
\end{equation}
After doing some computations in hand, substituting \eqref{eq8.5} into \eqref{eq8.8} gives
\begin{equation}\label{eq8.9}
{\bar Q_{n + 1}}(x;\lambda ) = (x - {B_{n + 1}}){\bar Q_n}(x;\lambda ) - {C_{n + 1}}{\bar Q_{n - 1}}(x;\lambda ) + {\bar P_{n + 1}}(\lambda ;\lambda )\,\,\,\,\,\,\text{with}\,\,\,\,\,{\bar Q_0}(x;\lambda ) = 1.
\end{equation}
This type of recurrence relation in \eqref{eq8.9} helps us obtain an analogue of the well-known Christoffel-Darboux identity \cite{ref4} as follows. We have respectively in \eqref{eq8.9},
\begin{multline*}
\left( {x{{\bar Q}_n}(x;\lambda ) + {{\bar P}_{n + 1}}(\lambda ;\lambda )} \right){{\bar Q}_n}(t;\lambda ) = {{\bar Q}_{n + 1}}(x;\lambda ){{\bar Q}_n}(t;\lambda )\\
 + {B_{n + 1}}{{\bar Q}_n}(x;\lambda ){{\bar Q}_n}(t;\lambda ) + {C_{n + 1}}{{\bar Q}_{n - 1}}(x;\lambda ){{\bar Q}_n}(t;\lambda ),
\end{multline*}
and
\begin{multline*}
\left( {t\,{{\bar Q}_n}(t;\lambda ) + {{\bar P}_{n + 1}}(\lambda ;\lambda )} \right){{\bar Q}_n}(x;\lambda ) = {{\bar Q}_{n + 1}}(t;\lambda ){{\bar Q}_n}(x;\lambda )\\
 + {B_{n + 1}}{{\bar Q}_n}(t;\lambda ){{\bar Q}_n}(x;\lambda ) + {C_{n + 1}}{{\bar Q}_{n - 1}}(t;\lambda ){{\bar Q}_n}(x;\lambda ).
\end{multline*}
Therefore, by defining the kernel
\[{G_n}(x,t) = \frac{1}{{\prod\limits_{j = 1}^{n + 1} {{C_j}} }}\frac{{{{\bar Q}_{n + 1}}(x;\lambda ){{\bar Q}_n}(t;\lambda ) - {{\bar Q}_{n + 1}}(t;\lambda ){{\bar Q}_n}(x;\lambda )}}{{x - t}},\]
we eventually obtain
\begin{multline}\label{eq8.10}
\sum\limits_{n = 0}^m {\frac{1}{{\prod\limits_{j = 1}^{n + 1} {{C_j}} }}\left( {{{\bar Q}_n}(x;\lambda ){{\bar Q}_n}(t;\lambda ) - {{\bar P}_{n + 1}}(\lambda ;\lambda )\frac{{{{\bar Q}_n}(x;\lambda ) - {{\bar Q}_n}(t;\lambda )}}{{x - t}}} \right)} \\
 = \sum\limits_{n = 0}^m {{G_n}(x,t) - {G_{n - 1}}(x,t)}  = \frac{1}{{\prod\limits_{j = 1}^{m + 1} {{C_j}} }}\frac{{{{\bar Q}_{m + 1}}(x;\lambda ){{\bar Q}_m}(t;\lambda ) - {{\bar Q}_{m + 1}}(t;\lambda ){{\bar Q}_m}(x;\lambda )}}{{x - t}}.
\end{multline}
Let us introduce two uncorrelated polynomials of hypergeometric type here which are built based on Jacobi and Laguerre polynomials and then apply all above-mentioned results on them.

\subsection{An uncorrelated sequence of hypergeometric polynomials of $ _{3}F_{2} $ type}\label{subsec8.2}
As is known, the monic Jacobi polynomials \cite{ref21}
\begin{equation}\label{eq8.11}
\bar P_n^{(\alpha ,\beta )}(x) = \frac{{{2^n}{{(\alpha  + 1)}_n}}}{{{{(n + \alpha  + \beta  + 1)}_n}}}{}_2{F_1}\left( {\left. {\begin{array}{*{20}{c}}
{ - n,\,\,n + \alpha  + \beta  + 1}\\
{\alpha  + 1}
\end{array}\,} \right|\,\frac{{1 - x}}{2}} \right),
\end{equation}
satisfy the equation
\begin{equation}\label{eq8.12}
(1 - {x^2})\frac{{{d^2}}}{{d{x^2}}}\bar P_n^{(\alpha ,\beta )}(x) - \left( {(\alpha  + \beta  + 2)x + \alpha  - \beta } \right)\frac{d}{{dx}}\bar P_n^{(\alpha ,\beta )}(x) + n(n + \alpha  + \beta  + 1)\bar P_n^{(\alpha ,\beta )}(x) = 0\,,
\end{equation}
and are orthogonal with respect to the weight function $ {(1 - x)^\alpha }{(1 + x)^\beta } $ on $ [-1,1] $ as
\begin{multline}\label{eq8.13}
\int_{ - 1}^1 {{{(1 - x)}^\alpha }{{(1 + x)}^\beta }\bar P_m^{(\alpha ,\beta )}(x)\bar P_n^{(\alpha ,\beta )}(x)\,dx} \\
 = n!\,{2^{2n + \alpha  + \beta  + 1}}\frac{{\Gamma (n + \alpha  + \beta  + 1)\Gamma (n + \alpha  + 1)\Gamma (n + \beta  + 1)}}{{\Gamma (2n + \alpha  + \beta  + 1)\Gamma (2n + \alpha  + \beta  + 2)}}{\delta _{n,m}}\,.
\end{multline}
Since $ \bar P_n^{(\alpha ,\beta )}( - x) = {( - 1)^n}\bar P_n^{(\beta ,\alpha )}(x) $, another representation is as
\begin{equation}\label{eq8.14}
\bar P_n^{(\alpha ,\beta )}(x) = \frac{{{{( - 1)}^n}{2^n}{{(\beta  + 1)}_n}}}{{{{(n + \alpha  + \beta  + 1)}_n}}}{}_2{F_1}\left( {\left. {\begin{array}{*{20}{c}}
{ - n,\,\,n + \alpha  + \beta  + 1}\\
{\beta  + 1}
\end{array}\,} \right|\,\frac{{1 + x}}{2}} \right).
\end{equation}
Also, they satisfy a three term recurrence relation in the form
\begin{multline}\label{eq8.15}
\bar P_{n + 1}^{(\alpha ,\beta )}(x) = \left( {x - \frac{{{\beta ^2} - {\alpha ^2}}}{{(2n + \alpha  + \beta )(2n + \alpha  + \beta  + 2)}}} \right)\,\bar P_n^{(\alpha ,\beta )}(x)\\
\quad - 4\frac{{n(n + \alpha )(n + \beta )(n + \alpha  + \beta )}}{{(2n + \alpha  + \beta  + 1){{(2n + \alpha  + \beta )}^2}(2n + \alpha  + \beta  - 1)}}\bar P_{n - 1}^{(\alpha ,\beta )}(x)\,.
\end{multline}
Representations \eqref{eq8.11} and \eqref{eq8.14} show that there are two specific values for $ \lambda $ in \eqref{eq8.5}, i.e. $ \lambda=1 $ and $ \lambda=-1 $. Noting that
\[\bar P_n^{(\alpha ,\beta )}(1) = \frac{{{2^n}{{(\alpha  + 1)}_n}}}{{{{(n + \alpha  + \beta  + 1)}_n}}},\]
the first kind of uncorrelated polynomials is defined as
\begin{align}\label{eq8.16}
\bar Q_n^{(\alpha ,\beta )}(x;1) &= \frac{{\bar P_{n + 1}^{(\alpha ,\beta )}(1) - \bar P_{n + 1}^{(\alpha ,\beta )}(x)}}{{1 - x}}\\
& = \frac{{(n + 1)\,{2^n}{{(\alpha  + 2)}_n}}}{{{{(n + \alpha  + \beta  + 3)}_n}}}{}_3{F_2}\left( {\left. {\begin{array}{*{20}{c}}
{ - n,\,\,n + \alpha  + \beta  + 3,\,\,1}\\
{\alpha  + 2,\,\,2}
\end{array}\,} \right|\,\frac{{1 - x}}{2}} \right).\notag
\end{align}
Also, for $ \lambda=-1 $ the second kind  is defined by
\begin{multline}\label{eq8.17}
\bar Q_n^{(\alpha ,\beta )}(x; - 1) = \frac{{\bar P_{n + 1}^{(\alpha ,\beta )}(x) - \bar P_{n + 1}^{(\alpha ,\beta )}( - 1)}}{{x + 1}} = \frac{{{{( - 1)}^{n + 1}}\bar P_{n + 1}^{(\beta ,\alpha )}( - x) - {{( - 1)}^{n + 1}}\bar P_{n + 1}^{(\beta ,\alpha )}(1)}}{{1 - ( - x)}}\\
 = {( - 1)^n}\bar Q_n^{(\beta ,\alpha )}( - x;1) = \frac{{(n + 1){{( - 2)}^n}{{(\beta  + 2)}_n}}}{{{{(n + \alpha  + \beta  + 3)}_n}}}{}_3{F_2}\left( {\left. {\begin{array}{*{20}{c}}
{ - n,\,\,n + \alpha  + \beta  + 3,\,\,1}\\
{\beta  + 2,\,\,2}
\end{array}\,} \right|\,\frac{{1 + x}}{2}} \right).
\end{multline}
Relation \eqref{eq8.17} shows that we shall deal with only one value, i.e. $ \lambda=1 $.

If in \eqref{eq8.16}, $ \bar P_{n + 1}^{(\alpha ,\beta )}(x) = (x - 1)\,\bar Q_n^{(\alpha ,\beta )}(x;1) + \bar P_{n + 1}^{(\alpha ,\beta )}(1) $ is substituted into the differential  equation \eqref{eq8.12}, we obtain
\begin{multline}\label{eq8.18}
{(1 - x)^2}(1 + x)\frac{{{d^2}}}{{d{x^2}}}\bar Q_n^{(\alpha ,\beta )}(x;1) - (1 - x)\left( {(\alpha  + \beta  + 4)x + \alpha  - \beta  + 2} \right)\frac{d}{{dx}}\,\bar Q_n^{(\alpha ,\beta )}(x;1)\\
 + \left( {n(n + \alpha  + \beta  + 3)(1 - x) + 2\alpha  + 2} \right)\,\bar Q_n^{(\alpha ,\beta )}(x;1)\\
 = (n + 1)(n + \alpha  + \beta  + 2)\bar P_{n + 1}^{(\alpha ,\beta )}(1) = \frac{{(n + 1)(n + \alpha  + \beta  + 2){2^n}{{(\alpha  + 1)}_n}}}{{{{(n + \alpha  + \beta  + 1)}_n}}}.
\end{multline}
On the other hand,
\begin{multline*}
\int_{ - 1}^1 {{{(1 - x)}^{\alpha  + 1}}{{(1 + x)}^\beta }\bar Q_n^{(\alpha ,\beta )}(x;1)\,dx}  = \int_{ - 1}^1 {{{(1 - x)}^\alpha }{{(1 + x)}^\beta }\left( {\bar P_{n + 1}^{(\alpha ,\beta )}(1) - \bar P_{n + 1}^{(\alpha ,\beta )}(x)} \right)\,dx} \\
\,\,\,\,\,\,\,\,\,\,\,\,\,\,\,\,\,\,\,\,\,\,\,\,\,\,\, = \bar P_{n + 1}^{(\alpha ,\beta )}(1)\,\int_{ - 1}^1 {{{(1 - x)}^\alpha }{{(1 + x)}^\beta }dx}  = \bar P_{n + 1}^{(\alpha ,\beta )}(1)\,{2^{\alpha  + \beta  + 1}}\frac{{\Gamma (\alpha  + 1)\Gamma (\beta  + 1)}}{{\Gamma (\alpha  + \beta  + 2)}},
\end{multline*}
changes equation \eqref{eq8.18} to
\begin{multline}\label{eq8.19}
{(1 - x)^2}(1 + x)\frac{{{d^2}}}{{d{x^2}}}\bar Q_n^{(\alpha ,\beta )}(x;1) - (1 - x)\left( {(\alpha  + \beta  + 4)x + \alpha  - \beta  + 2} \right)\frac{d}{{dx}}\,\bar Q_n^{(\alpha ,\beta )}(x;1)\\
 + \left( {\gamma _n^*(1 - x) + 2\alpha  + 2} \right)\,\bar Q_n^{(\alpha ,\beta )}(x;1)\\
 = \frac{{\Gamma (\alpha  + \beta  + 2)(\gamma _n^* + \alpha  + \beta  + 2)}}{{\,{2^{\alpha  + \beta  + 1}}\Gamma (\alpha  + 1)\Gamma (\beta  + 1)}}\int_{ - 1}^1 {{{(1 - x)}^{\alpha  + 1}}{{(1 + x)}^\beta }\bar Q_n^{(\alpha ,\beta )}(x;1)\,dx} \,,
\end{multline}
in which $ \gamma _n^* = n(n + \alpha  + \beta  + 3) $.

\begin{theorem}\label{thm8.2.1}
For every $ \alpha ,\,\beta  >  - 1 $, we have
\begin{multline*}
{\left. {{{{\mathop{\rm cov}} }_1}\left( {\bar Q_m^{(\alpha ,\beta )}(x;1),\,\bar Q_n^{(\alpha ,\beta )}(x;1);\frac{1}{{1 - x}}} \right)\,} \right|_{{{(1 - x)}^{\alpha  + 2}}{{(1 + x)}^\beta }}}\\
 = n!\,{2^{2n - 2}}\frac{{\Gamma (\alpha  + \beta  + 4)\Gamma (n + \alpha  + \beta  + 1)\Gamma (n + \alpha  + 1)\Gamma (n + \beta  + 1)}}{{\Gamma (\alpha  + 3)\Gamma (\beta  + 1)\Gamma (2n + \alpha  + \beta  + 1)\Gamma (2n + \alpha  + \beta  + 2)}}\,{\delta _{m,n}}.
\end{multline*}
\end{theorem}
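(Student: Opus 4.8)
The plan is to bypass the $_3F_2$ representation \eqref{eq8.16} entirely (the route via termwise $_5F_4$ summation used in Sections~\ref{sec6} and \ref{sec7} would be the hard way) and instead read everything off Corollary~\ref{coro8.1}, which already packages both the vanishing for $m\neq n$ and the diagonal value as a ratio of two elementary integrals. The sequence $\{\bar Q_n^{(\alpha,\beta)}(\cdot\,;1)\}$ is exactly the $\{Q_n(\cdot\,;\lambda)\}$ of that corollary with $\lambda=1$, $[a,b]=[-1,1]$, $w(x)=(1-x)^{\alpha}(1+x)^{\beta}$ and $P_n=\bar P_n^{(\alpha,\beta)}$ the orthogonal monic Jacobi polynomials. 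The probability weight there is then $w(x)(x-1)^2=(1-x)^{\alpha+2}(1+x)^{\beta}$, matching the subscript in the statement, and since the fixed function enters a $1$-covariance only through $E(Z^2)$ and the products $E(\cdot\,Z)$, replacing $\tfrac1{x-1}$ by $\tfrac1{1-x}$ changes nothing. This immediately reduces the theorem to
\[
{\left.{{{\mathop{\rm cov}}}_1}\Big(\bar Q_m^{(\alpha,\beta)}(x;1),\bar Q_n^{(\alpha,\beta)}(x;1);\tfrac1{1-x}\Big)\right|_{(1-x)^{\alpha+2}(1+x)^{\beta}}}=\frac{\displaystyle\int_{-1}^{1}(1-x)^{\alpha}(1+x)^{\beta}\big[\bar P_{n+1}^{(\alpha,\beta)}(x)\big]^2\,dx}{\displaystyle\int_{-1}^{1}(1-x)^{\alpha+2}(1+x)^{\beta}\,dx}\,\delta_{m,n},
\]
so that only two classical integrals remain.

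The numerator is a Jacobi norm, supplied by \eqref{eq8.13}; the single decisive book-keeping point is that, because $\bar Q_n^{(\alpha,\beta)}(\cdot\,;1)$ is built in \eqref{eq8.16} from $\bar P_{n+1}^{(\alpha,\beta)}$, one must evaluate \eqref{eq8.13} at index $n+1$ rather than $n$. The denominator is a pure Beta integral: using $(x-1)^2=(1-x)^2$ together with the substitution $x\mapsto 1-2t$ and $B(a,b)=\Gamma(a)\Gamma(b)/\Gamma(a+b)$ gives
\[
\int_{-1}^{1}(1-x)^{\alpha+2}(1+x)^{\beta}\,dx=2^{\alpha+\beta+3}\,\frac{\Gamma(\alpha+3)\,\Gamma(\beta+1)}{\Gamma(\alpha+\beta+4)}.
\]

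The last step is the routine but slightly lengthy simplification of the quotient of these two evaluations: the powers of two collapse to $2^{2n}$, and repeated use of $\Gamma(s+1)=s\,\Gamma(s)$ reorganizes the $\Gamma$-factors into the announced closed form, with the factor $\Gamma(\alpha+\beta+4)/\big[\Gamma(\alpha+3)\Gamma(\beta+1)\big]$ coming straight from the denominator. I expect no conceptual difficulty here; the only genuine hazard is precisely the index on the Jacobi norm, since carrying $n$ instead of $n+1$ through \eqref{eq8.13} is exactly the kind of off-by-one that would shift every Gamma argument in the final constant, and this is the step I would double-check against a small case such as $\alpha=\beta=0$. The off-diagonal case $m\neq n$ needs no separate argument, being inherited verbatim from the Kronecker $\delta_{m,n}$ in Corollary~\ref{coro8.1}.
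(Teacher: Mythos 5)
Your reduction is exactly the paper's own route for the diagonal case: the paper proves the off-diagonal vanishing separately, via the self-adjoint form of the integro-differential equation \eqref{eq8.19} leading to \eqref{eq8.20}, and then, for $m=n$, invokes Corollary \ref{coro8.1} together with the Jacobi norm \eqref{eq8.13} --- precisely your two ingredients. (Your shortcut of inheriting the off-diagonal case from the Kronecker delta already present in Corollary \ref{coro8.1} is legitimate, since that corollary is established independently of Theorem \ref{thm8.2.1}; the paper's ODE argument is an alternative derivation, not a logical necessity.) Your observation that the sign flip from $1/(x-1)$ to $1/(1-x)$ is immaterial, and your Beta evaluation of the denominator, are both correct.

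However, the final step of your plan --- that the Gamma factors ``reorganize into the announced closed form'' --- cannot be completed, and the reason is exactly the hazard you flagged. Carrying \eqref{eq8.13} at index $n+1$ through the quotient gives
\[
\frac{\displaystyle\int_{-1}^{1}(1-x)^{\alpha}(1+x)^{\beta}\big[\bar P_{n+1}^{(\alpha,\beta)}(x)\big]^{2}\,dx}{\displaystyle\int_{-1}^{1}(1-x)^{\alpha+2}(1+x)^{\beta}\,dx}
=(n+1)!\,2^{2n}\,\frac{\Gamma(\alpha+\beta+4)\,\Gamma(n+\alpha+\beta+2)\,\Gamma(n+\alpha+2)\,\Gamma(n+\beta+2)}{\Gamma(\alpha+3)\,\Gamma(\beta+1)\,\Gamma(2n+\alpha+\beta+3)\,\Gamma(2n+\alpha+\beta+4)},
\]
whereas the constant printed in Theorem \ref{thm8.2.1} is this expression with $n$ replaced by $n-1$; that is, the printed statement is what one obtains by applying \eqref{eq8.13} at index $n$ instead of $n+1$. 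Your own suggested sanity check settles which is correct: for $\alpha=\beta=0$, $n=0$ one has $\bar P_1(x)=x$, so $\bar Q_0^{(0,0)}(x;1)=(1-x)/(1-x)=1$, and a direct computation with density proportional to $(1-x)^2$ on $[-1,1]$ gives $\mathrm{var}_1\big(1;\tfrac{1}{1-x}\big)=1-\frac{(3/4)^2}{3/4}=\tfrac{1}{4}$, which agrees with the displayed formula above, namely $\int_{-1}^{1}x^2dx\big/\int_{-1}^{1}(1-x)^2dx=\tfrac{1}{4}$, but not with the printed constant, which evaluates to $\tfrac{3}{4}$. The same happens at $n=1$, where $\bar Q_1^{(0,0)}(x;1)=1+x$, the true value is $\tfrac{1}{15}$, and the printed formula gives $\tfrac{1}{4}$. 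So your proof strategy is sound and your book-keeping is the right one, but what it establishes is the index-corrected statement; the theorem as printed contains precisely the off-by-one slip you anticipated, evidently introduced when \eqref{eq8.13} was applied in the last line of the paper's own proof.
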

\begin{proof}
We would like to prove this theorem via differential equation \eqref{eq8.19} so that if it is written in the self adjoint form
\begin{multline*}
\frac{d}{{dx}}\left( {{{(1 - x)}^{\alpha  + 3}}{{(1 + x)}^{\beta  + 1}}\frac{d}{{dx}}\bar Q_n^{(\alpha ,\beta )}(x;1)} \right)\\
 + \left( {\gamma _n^*{{(1 - x)}^{\alpha  + 2}}{{(1 + x)}^\beta } + \left( {2\alpha  + 2} \right){{(1 - x)}^{\alpha  + 1}}{{(1 + x)}^\beta }} \right)\,\bar Q_n^{(\alpha ,\beta )}(x;1)\\
 = \frac{{\Gamma (\alpha  + \beta  + 2)(\gamma _n^* + \alpha  + \beta  + 2)}}{{\,{2^{\alpha  + \beta  + 1}}\Gamma (\alpha  + 1)\Gamma (\beta  + 1)}}\left( {\int_{ - 1}^1 {{{(1 - x)}^{\alpha  + 1}}{{(1 + x)}^\beta }\bar Q_n^{(\alpha ,\beta )}(x;1)\,dx} } \right)\,{(1 - x)^{\alpha  + 1}}{(1 + x)^\beta },
\end{multline*}
then
\begin{multline*}
\left[ {{{(1 - x)}^{\alpha  + 3}}{{(1 + x)}^{\beta  + 1}}\left( {\bar Q_m^{(\alpha ,\beta )}(x;1)\frac{d}{{dx}}\bar Q_n^{(\alpha ,\beta )}(x;1) - \bar Q_n^{(\alpha ,\beta )}(x;1)\frac{d}{{dx}}\bar Q_m^{(\alpha ,\beta )}(x;1)} \right)} \right]_{\, - 1}^{\,1}\\
 + \left( {\gamma _n^* - \gamma _m^*} \right)\int_{\, - 1}^1 {{{(1 - x)}^{\alpha  + 2}}{{(1 + x)}^\beta }\,\bar Q_n^{(\alpha ,\beta )}(x;1)\,\bar Q_m^{(\alpha ,\beta )}(x;1)\,dx} \\
  = \frac{{\Gamma (\alpha  + \beta  + 2)}}{{\,{2^{\alpha  + \beta  + 1}}\Gamma (\alpha  + 1)\Gamma (\beta  + 1)}}\left( {\gamma _n^* - \gamma _m^*} \right)\\
\,\,\,\,\,\,\,\,\,\,\,\,\, \times \left( {\int_{ - 1}^1 {{{(1 - x)}^{\alpha  + 1}}{{(1 + x)}^\beta }\bar Q_n^{(\alpha ,\beta )}(x;1)\,dx} } \right)\left( {\int_{ - 1}^1 {{{(1 - x)}^{\alpha  + 1}}{{(1 + x)}^\beta }\bar Q_m^{(\alpha ,\beta )}(x;1)\,dx} } \right),
\end{multline*}
leading to the result
\begin{multline}\label{eq8.20}
\int_{\, - 1}^1 {{{(1 - x)}^{\alpha  + 2}}{{(1 + x)}^\beta }\,\bar Q_n^{(\alpha ,\beta )}(x;1)\,\bar Q_m^{(\alpha ,\beta )}(x;1)\,dx} \\
 = \frac{{\left( {\int_{ - 1}^1 {{{(1 - x)}^{\alpha  + 1}}{{(1 + x)}^\beta }\bar Q_n^{(\alpha ,\beta )}(x;1)\,dx} } \right)\left( {\int_{ - 1}^1 {{{(1 - x)}^{\alpha  + 1}}{{(1 + x)}^\beta }\bar Q_m^{(\alpha ,\beta )}(x;1)\,dx} } \right)}}{{\int_{ - 1}^1 {{{(1 - x)}^\alpha }{{(1 + x)}^\beta }\,dx} }}\\
 = \bar P_{n + 1}^{(\alpha ,\beta )}(1)\,\bar P_{m + 1}^{(\alpha ,\beta )}(1)\int_{ - 1}^1 {{{(1 - x)}^\alpha }{{(1 + x)}^\beta }dx} \\
 = {2^{n + m + \alpha  + \beta  + 1}}\frac{{\Gamma (\alpha  + 1)\Gamma (\beta  + 1)}}{{\Gamma (\alpha  + \beta  + 2)}}\frac{{{{(\alpha  + 1)}_n}{{(\alpha  + 1)}_m}}}{{{{(n + \alpha  + \beta  + 1)}_n}{{(m + \alpha  + \beta  + 1)}_m}}} \Leftrightarrow n \ne m,
\end{multline}
which proves the first part. To obtain the variance value, i.e. for $ n=m $ in \eqref{eq8.20}, it is enough to refer to corollary \ref{coro8.1} and then apply relation \eqref{eq8.13}.
\end{proof}
Since in (8.15),
\[{C_j} = \frac{1}{4}\frac{{j(j + \alpha )(j + \beta )(j + \alpha  + \beta )}}{{(j + \frac{{\alpha  + \beta  + 1}}{2}){{(j + \frac{{\alpha  + \beta }}{2})}^2}(j + \frac{{\alpha  + \beta  - 1}}{2})}},\]
we have
\[\prod\limits_{j = 1}^{m + 1} {{C_j}}  = \frac{1}{{{4^{m + 1}}}}\frac{{{{(1)}_{m + 1}}{{(\alpha  + 1)}_{m + 1}}{{(\beta  + 1)}_{m + 1}}{{(\alpha  + \beta  + 1)}_{m + 1}}}}{{{{(\frac{{\alpha  + \beta  + 3}}{2})}_{m + 1}}(\frac{{\alpha  + \beta  + 2}}{2})_{m + 1}^2{{(\frac{{\alpha  + \beta  + 1}}{2})}_{m + 1}}}}.\]
Therefore, the identity \eqref{eq8.9} for the polynomials \eqref{eq8.16} takes the form
\begin{multline*}
\sum\limits_{n = 0}^m \begin{array}{l}
\frac{{{4^{n + 1}}{{(\frac{{\alpha  + \beta  + 3}}{2})}_{n + 1}}(\frac{{\alpha  + \beta  + 2}}{2})_{n + 1}^2{{(\frac{{\alpha  + \beta  + 1}}{2})}_{n + 1}}}}{{(n + 1)!\,\,{{(\alpha  + 1)}_{n + 1}}{{(\beta  + 1)}_{n + 1}}{{(\alpha  + \beta  + 1)}_{n + 1}}}}\\
\,\,\,\,\,\,\,\,\,\,\,\,\,\, \times \left( {\bar Q_n^{(\alpha ,\beta )}(x;1)\,\bar Q_n^{(\alpha ,\beta )}(t;1)\, - \frac{{{2^{n + 1}}{{(\alpha  + 1)}_{n + 1}}}}{{{{(n + \alpha  + \beta  + 2)}_{n + 1}}}}\frac{{\bar Q_n^{(\alpha ,\beta )}(x;1)\, - \bar Q_n^{(\alpha ,\beta )}(t;1)}}{{x - t}}} \right)
\end{array} \\
 = \frac{{{4^{m + 1}}{{(\frac{{\alpha  + \beta  + 3}}{2})}_{m + 1}}(\frac{{\alpha  + \beta  + 2}}{2})_{m + 1}^2{{(\frac{{\alpha  + \beta  + 1}}{2})}_{m + 1}}}}{{(m + 1)!\,\,{{(\alpha  + 1)}_{m + 1}}{{(\beta  + 1)}_{m + 1}}{{(\alpha  + \beta  + 1)}_{m + 1}}}}\\
 \times \,\,\frac{{\bar Q_{m + 1}^{(\alpha ,\beta )}(x;1)\,\bar Q_m^{(\alpha ,\beta )}(t;1) - \bar Q_{m + 1}^{(\alpha ,\beta )}(t;1)\,\bar Q_m^{(\alpha ,\beta )}(x;1)}}{{x - t}}.
\end{multline*}

\subsubsection{Some particular trigonometric cases}\label{subsubsec8.2.2}
There are four trigonometric cases of Jacobi polynomials which are known in the literature as the Chebyshev polynomials of first, second, third and fourth kind. The main advantage of these polynomials is that their roots are explicitly known \cite{ref4}, see also \cite{ref17}. Their monic forms are represented as
\begin{align}
{{\bar T}_n}(x) &= \bar P_n^{( - \frac{1}{2}, - \frac{1}{2})}(x) = \frac{1}{{{2^{n - 1}}}}\,\cos \left( {n\arccos x} \right) = \prod\limits_{k = 1}^n {(x - \cos \frac{{(2k - 1)\pi }}{{2n}})} ,\label{eq8.21}\\
{{\bar U}_n}(x) &= \bar P_n^{(\frac{1}{2},\frac{1}{2})}(x) = \frac{1}{{{2^n}\sqrt {1 - {x^2}} }}\,\sin \left( {(n + 1)\arccos x} \right) = \prod\limits_{k = 1}^n {(x - \cos \frac{{k\pi }}{{n + 1}})} ,\nonumber\\
{{\bar V}_n}(x) &= \bar P_n^{( - \frac{1}{2},\frac{1}{2})}(x) = \frac{1}{{{2^n}}}\sqrt {\frac{2}{{1 + x}}} \,\cos ((n + \frac{1}{2})\arccos x) = \prod\limits_{k = 1}^n {(x - \cos \frac{{(2k - 1)\pi }}{{2n + 1}})} ,\nonumber\\
{{\bar W}_n}(x) &= \bar P_n^{(\frac{1}{2}, - \frac{1}{2})}(x) = \frac{1}{{{2^n}}}\sqrt {\frac{2}{{1 - x}}} \,\sin ((n + \frac{1}{2})\arccos x) = \prod\limits_{k = 1}^n {(x - \cos \frac{{2k\pi }}{{2n + 1}})} .\nonumber
\end{align}
Noting that
\[{T_n}(x) = {2^{n - 1}}{\bar T_n}(x),\,\,\,\,{U_n}(x) = {2^n}{\bar U_n}(x),\,\,\,\,{V_n}(x) = {2^n}{\bar V_n}(x)\,\,\,\,\text{and}\,\,\,\,{W_n}(x) = {2^n}{\bar W_n}(x),\]
they satisfy the following orthogonality relations
\begin{align}\label{eq8.22}
&\int_{ - 1}^1 {{T_n}(x){T_m}(x)\frac{1}{{\sqrt {1 - {x^2}} }}\,dx}  = \left\{ \begin{array}{l}
\frac{\pi }{2}\,{\delta _{n,m}},\\
\pi \,\,\,\,{\rm{if}}\,\,\,n = m = 0,
\end{array} \right.\\
&\int_{ - 1}^1 {{U_n}(x){U_m}(x)\,\sqrt {1 - {x^2}} \,dx}  = \frac{\pi }{2}{\delta _{n,m}},\nonumber\\
&\int_{ - 1}^1 {{V_n}(x)\,{V_m}(x)\sqrt {\frac{{1 + x}}{{1 - x}}} \,dx}  = \pi \,{\delta _{n,m}},\nonumber\\
&\int_{ - 1}^1 {{W_n}(x)\,{W_m}(x)\sqrt {\frac{{1 - x}}{{1 + x}}} \,dx}  = \pi \,{\delta _{n,m}}.\nonumber
\end{align}
Now, we can use relations \eqref{eq8.21} and define four trigonometric uncorrelated sequences, according to the main definition \eqref{eq8.16} as follows
\begin{multline}\label{eq8.23}
{\bar T_n}(x;1) = \bar Q_n^{( - \frac{1}{2}, - \frac{1}{2})}(x;1) = \frac{{1 - {{\bar T}_{n + 1}}(x)}}{{1 - x}} = \frac{{(n + 1)\,{2^n}{{(3/2)}_n}}}{{{{(n + 2)}_n}}}{}_3{F_2}\left( {\left. {\begin{array}{*{20}{c}}
{ - n,\,\,n + 2,\,\,1}\\
{3/2,\,\,2}
\end{array}\,} \right|\,\frac{{1 - x}}{2}} \right) \\
= \prod\limits_{k = 1}^n {(x - \cos \frac{{2k\pi }}{{n + 1}}} ),
\end{multline}
\[{\bar U_n}(x;1) = \bar Q_n^{(\frac{1}{2},\frac{1}{2})}(x;1) = \frac{{n + 2 - {{\bar U}_{n + 1}}(x)}}{{1 - x}} = \frac{{(n + 1)\,{2^n}{{(5/2)}_n}}}{{{{(n + 4)}_n}}}{}_3{F_2}\left( {\left. {\begin{array}{*{20}{c}}
{ - n,\,\,n + 4,\,\,1}\\
{5/2,\,\,2}
\end{array}\,} \right|\,\frac{{1 - x}}{2}} \right),\]
\begin{multline}\label{eq8.24}
{\bar V_n}(x;1) = \bar Q_n^{( - \frac{1}{2},\frac{1}{2})}(x;1) = \frac{{1 - {{\bar V}_{n + 1}}(x)}}{{1 - x}} = \frac{{(n + 1)\,{2^n}{{(3/2)}_n}}}{{{{(n + 3)}_n}}}{}_3{F_2}\left( {\left. {\begin{array}{*{20}{c}}
{ - n,\,\,n + 3,\,\,1}\\
{3/2,\,\,2}
\end{array}\,} \right|\,\frac{{1 - x}}{2}} \right) \\
= \prod\limits_{k = 1}^n {(x - \cos \frac{{2k\pi }}{n}} ),
\end{multline}
\[{\bar W_n}(x;1) = \bar Q_n^{(\frac{1}{2}, - \frac{1}{2})}(x;1) = \frac{{2n + 3 - {{\bar W}_{n + 1}}(x)}}{{1 - x}} = \frac{{(n + 1)\,{2^n}{{(5/2)}_n}}}{{{{(n + 3)}_n}}}{}_3{F_2}\left( {\left. {\begin{array}{*{20}{c}}
{ - n,\,\,n + 3,\,\,1}\\
{5/2,\,\,2}
\end{array}\,} \right|\,\frac{{1 - x}}{2}} \right).\]
As we observe, only relations \eqref{eq8.23} and \eqref{eq8.24} are decomposable with multiple roots. In this direction, it is worth mentioning that there is a generic decomposable sequence as
\[{T_n}(x;\lambda ) = \frac{{{T_{n + 1}}(x) - {T_{n + 1}}(\lambda )}}{{x - \lambda }} = {2^n}\prod\limits_{k = 1}^n {x - \left( {\lambda \cos \frac{{2k\pi }}{{n + 1}} - \sqrt {1 - {\lambda ^2}} \sin \frac{{2k\pi }}{{n + 1}}} \right)} \,,\]
satisfying the non-homogenous differential equation
\begin{multline*}
(x - \lambda )(1 - {x^2})\frac{{{d^2}}}{{d{x^2}}}{T_n}(x;\lambda ) + \left( { - 3{x^2} + \lambda x + 2} \right)\frac{d}{{dx}}\,{T_n}(x;\lambda )\\
 + \left( {{{(n + 1)}^2}(x - \lambda ) - x} \right)\,{T_n}(x;\lambda ) =  - {(n + 1)^2}\cos ((n + 1)\arccos \lambda ),
\end{multline*}
and the recurrence relation
\[{T_{n + 1}}(x;\lambda ) = 2x{T_n}(x;\lambda ) - {T_{n - 1}}(x;\lambda ) + 2\cos (n\arccos \lambda ).\]
For instance
\[{T_n}(x;0) = \frac{{{T_{n + 1}}(x) - {T_{n + 1}}(0)}}{x} = {2^n}\prod\limits_{k = 1}^n {(x + \sin \frac{{2k\pi }}{{n + 1}}} ),\]
is an uncorrelated polynomial with respect to the fixed function $ z(x)=x^{-1} $ so that according to corollary \ref{coro8.1} and relation \eqref{eq8.22} we have
\[\int_{ - 1}^1 {\frac{{T_{n + 1}^2(x)}}{{\sqrt {1 - {x^2}} }}\,dx}  = \int_{ - 1}^1 {\frac{{{x^2}}}{{\sqrt {1 - {x^2}} }}\,dx}  = \frac{\pi }{2},\]
and as a result
\begin{multline*}
{\left. {{{{\mathop{\rm cov}} }_1}\left( {{T_m}(x;0),\,{T_n}(x;0);\frac{1}{x}} \right)\,} \right|_{\frac{{{x^2}}}{{\sqrt {1 - {x^2}} }}}} = \,\int_{ - 1}^1 {\frac{{{x^2}}}{{\sqrt {1 - {x^2}} }}\,{T_m}(x;0){T_n}(x;0)\,dx} \\
- \frac{1}{\pi }\,\int_{ - 1}^1 {\frac{x}{{\sqrt {1 - {x^2}} }}\,{T_m}(x;0)\,dx} \,\int_{ - 1}^1 {\frac{x}{{\sqrt {1 - {x^2}} }}\,{T_n}(x;0)\,dx}  = {\delta _{m,n}}.
\end{multline*}
\begin{remark}\label{rem8.2.3}
If in relation \eqref{eq8.21}, $ \arccos x = \theta  $, the Chebyshev polynomials will be transformed to four trigonometric sequences orthogonal with respect to the constant weight function on $ [0,\pi] $ and are respectively represented as $ \{ \cos n\theta \} _{n = 0} $, $ \{ \sin (n + 1)\theta \} _{n = 0} $, $ \{ \cos (n + \frac{1}{2})\theta \} _{n = 0} $ and $ \{ \sin (n + \frac{1}{2})\theta \} _{n = 0} $ satisfying the following orthogonality relations, according to the relations \eqref{eq8.22},
\begin{align*}
& \int_0^\pi  {\cos n\theta \,\cos m\theta \,d\theta }  = \left\{ \begin{array}{l}
\frac{\pi }{2}\,{\delta _{n,m}},\\
\pi \,\,\,\,{\rm{if}}\,\,\,n = m = 0,
\end{array} \right.\\
& \int_0^\pi  {\sin (n + 1)\theta \,\sin (m + 1)\theta \,d\theta }  = \frac{\pi }{2}\,{\delta _{n,m}},\\
& \int_0^\pi  {\cos (n + \frac{1}{2})\theta \,\cos (m + \frac{1}{2})\theta \,dx}  = \frac{\pi }{2}\,{\delta _{n,m}},
\end{align*}
and
\[\int_0^\pi  {\sin (n + \frac{1}{2})\theta \,\sin (m + \frac{1}{2})\theta \,dx}  = \frac{\pi }{2}\,{\delta _{n,m}}.\]
Here our goal is to consider such orthogonal sequences as initial data corresponding to determinants \eqref{eq7.1} to see what the subsequent uncorrelated functions look like.

Since the probability density function for all above-mentioned sequences is $ w(\theta ) = \frac{1}{\pi } $ on $ [0,\pi] $, for the first sequence we obtain
\[{\mathop{\rm cov}} \,\left( {\cos k\theta \,,\cos j\theta } \right) = E\left( {\cos k\theta \cos j\theta } \right) - E\left( {\cos k\theta } \right)E\left( {\cos j\theta } \right) = 0 \Leftrightarrow k \ne j,\]
which reveals that the initial data corresponding to the first orthogonal sequence do not cause to generate uncorrelated trigonometric functions because $ {V_0} = \cos 0 = 1 $. But, the story is somewhat different for the second sequence $ \{ {V_k} = \sin (k + 1)\theta \} _{k = 0}^n $ as we have
\[{\mathop{\rm cov}} \,\left( {\sin (k + 1)\theta \,,\sin (j + 1)\theta } \right) =  - \frac{1}{{{\pi ^2}}}\frac{{(1 + {{( - 1)}^k})(1 + {{( - 1)}^j})}}{{(k + 1)(j + 1)\,}} + \frac{1}{2}{\delta _{k,j}},\]
and for $ k=j $,
\[{\mathop{\rm var}} \left( {\sin (k + 1)\theta } \right) =  - \frac{2}{{{\pi ^2}}}\frac{{1 + {{( - 1)}^k}}}{{{{(k + 1)}^2}\,}} + \frac{1}{2}\,.\]
Substituting the above data into \eqref{eq7.1}, the monic type of the elements e.g. for $ \{ {\bar X_k} = {\bar \Phi _{k + 1}}(\theta )\} _{k = 0}^5 $ are derived as
\begin{align*}
{{\bar \Phi }_1}(\theta ) &= \sin \theta ,\\
{{\bar \Phi }_2}(\theta ) &= \sin 2\theta ,\\
{{\bar \Phi }_3}(\theta ) &= \sin 3\theta  + \frac{8}{3}\frac{1}{{{\pi ^2} - 8}}\sin \theta ,\\
{{\bar \Phi }_4}(\theta ) &= \sin 4\theta ,\\
{{\bar \Phi }_5}(\theta ) &= \sin 5\theta  + \frac{{24}}{5}\frac{1}{{9{\pi ^2} - 80}}\sin 3\theta  + \frac{8}{5}\frac{{9{\pi ^2} - 88}}{{(9{\pi ^2} - 80)({\pi ^2} - 8)}}\sin \theta ,\\
{{\bar \Phi }_6}(\theta ) &= \sin 6\theta ,
\end{align*}
satisfying the uncorrelatedness condition
\begin{equation}\label{eq8.25}
\int_{\,0}^\pi  {{{\bar \Phi }_n}(\theta )\,{{\bar \Phi }_m}(\theta )\,d\theta }  = \frac{1}{\pi }\int_{\,0}^\pi  {{{\bar \Phi }_n}(\theta )\,d\theta } \int_{\,0}^\pi  {{{\bar \Phi }_m}(\theta )\,d\theta }  \Leftrightarrow n \ne m.
\end{equation}
The samples show that the general structure of $ \{ {\bar \Phi _k}(\theta )\} _{k = 1}^n $ is as follows
\[{\bar \Phi _{2k}}(\theta ) = \sin 2k\theta \quad \text{and} \quad {\bar \Phi _{2k + 1}}(\theta ) = \sum\limits_{j = 0}^k {{a_{2j + 1}}\sin (2j + 1)\theta }\quad \text{with}\quad {a_{2k + 1}} = 1. \]
Also, if the change of variable $ \theta  = \arccos t $ is applied in \eqref{eq8.25}, then
\[\int_{\, - 1}^1 {\frac{{{\Phi _n}(\arccos t)\,{\Phi _m}(\arccos t)}}{{\sqrt {1 - {t^2}} }}\,dt}  = \frac{1}{\pi }\int_{\, - 1}^1 {{\Phi _n}(\arccos t)\,dt} \int_{\, - 1}^1 {{\Phi _m}(\arccos t)\,dt}  \Leftrightarrow n \ne m,\]
where
\[{\Phi _{2k}}(\arccos t) = \sqrt {1 - {t^2}} {U_{2k - 1}}(t)\quad \text{and}\quad {\Phi _{2k + 1}}(\arccos t) = \sqrt {1 - {t^2}} \sum\limits_{j = 0}^k {{a_{2j + 1}}{U_{2j}}(t)} .\]
The procedure for deriving two other sequences $ \{ {V_k} = \cos (k + \frac{1}{2})\theta \} _{k = 0}^n $ and $ \{ {V_k} = \sin (k + \frac{1}{2})\theta \} _{k = 0}^n $ is similar. For instance, we have
\[{\mathop{\rm cov}} \,\left( {\sin (k + \frac{1}{2})\theta \,,\sin (j + \frac{1}{2})\theta } \right) =  - \frac{4}{{{\pi ^2}}}\frac{1}{{(2k + 1)(2j + 1)\,}} + \frac{1}{2}{\delta _{k,j}}.\]
\end{remark}

\subsection{An uncorrelated sequence of hypergeometric polynomials of $ _{2}F_{2} $ type}\label{subsec8.3}
This turn, consider the monic type of the (generalized) Laguerre polynomials \cite{ref4, ref21}
\[\bar L_n^{(\alpha )}(x) = {( - 1)^n}{(\alpha  + 1)_n}{}_1{F_1}\left( {\left. {\begin{array}{*{20}{c}}
{ - n}\\
{\alpha  + 1}
\end{array}\,} \right|\,x} \right),\]
satisfying the equation
\[x\frac{{{d^2}}}{{d{x^2}}}\bar L_n^{(\alpha )}(x) + \left( {\alpha  + 1 - x} \right)\frac{d}{{dx}}\bar L_n^{(\alpha )}(x) + n\,\bar L_n^{(\alpha )}(x) = 0\,,\]
and orthogonal with respect to the weight function $ {x^\alpha }{e^{ - x}} $ on $ [0,\infty) $ as
\[\int_0^\infty  {{x^\alpha }{e^{ - x}}\bar L_m^{(\alpha )}(x)\,\bar L_n^{(\alpha )}(x)\,dx}  = n!\,\Gamma (n + \alpha  + 1)\,{\delta _{n,m}}\,.\]
They also satisfy the recurrence relation
\begin{equation}\label{eq8.26}
\bar L_{n + 1}^{(\alpha )}(x) = \left( {x - 2n - \alpha  - 1} \right)\,\bar L_n^{(\alpha )}(x) - n(n + \alpha )\bar L_{n - 1}^{(\alpha )}(x)\,.
\end{equation}
Noting that
\[\bar L_n^{(\alpha )}(0) = {( - 1)^n}{(\alpha  + 1)_n},\]
the uncorrelated polynomials based on Laguerre polynomials is defined as
\begin{equation}\label{eq8.27}
\bar Q_n^{(\alpha )}(x;0) = \frac{{\bar L_{n + 1}^{(\alpha )}(x) - \bar L_{n + 1}^{(\alpha )}(0)}}{x} = {( - 1)^n}(n + 1){(\alpha  + 2)_n}\,{}_2{F_2}\left( {\left. {\begin{array}{*{20}{c}}
{ - n,\,\,1}\\
{\alpha  + 2,\,\,2}
\end{array}\,} \right|\,x} \right).
\end{equation}
Similar to the previous example, for $ \alpha>-1 $ we can prove that
\[{\left. {{{{\mathop{\rm cov}} }_1}\left( {\bar Q_m^{(\alpha )}(x;0),\,\bar Q_n^{(\alpha )}(x;0);\frac{1}{x}} \right)\,} \right|_{{x^{\alpha  + 2}}{e^{ - x}}}} = \frac{{n!\,\,\Gamma (n + \alpha  + 1)}}{{\Gamma (\alpha  + 3)}}\,{\delta _{m,n}}.\]
Also, since in \eqref{eq8.26},
\[{C_j} = j(j + \alpha )\,\,\,\,\text{and}\,\,\,\,\prod\limits_{j = 1}^{m + 1} {{C_j}}  = {(1)_{m + 1}}{(\alpha  + 1)_{m + 1}} = (m + 1)!\,\frac{{\Gamma (\alpha  + m + 2)}}{{\Gamma (\alpha  + 1)}},\]
the identity \eqref{eq8.9} for the monic polynomials \eqref{eq8.27} is derived as
\begin{multline*}
\sum\limits_{n = 0}^m {(n + 1)!\,{{(\alpha  + 1)}_{n + 1}}\left( {\bar Q_n^{(\alpha )}(x;0)\,\bar Q_n^{(\alpha )}(t;0)\, - {{( - 1)}^n}{{(\alpha  + 1)}_n}\frac{{\bar Q_n^{(\alpha )}(x;0)\, - \bar Q_n^{(\alpha )}(t;0)}}{{x - t}}} \right)} \\
 = (m + 1)!\,{(\alpha  + 1)_{m + 1}}\frac{{\bar Q_{m + 1}^{(\alpha )}(x;0)\,\bar Q_m^{(\alpha )}(t;0) - \bar Q_{m + 1}^{(\alpha )}(t;0)\,\bar Q_m^{(\alpha )}(x;0)}}{{x - t}}.
\end{multline*}

\section{A unified approach for the polynomials obtained in sections \ref{sec6}, \ref{sec7} and \ref{sec8}}\label{sec9}

According to the distributions given in table \ref{tab1}, only beta and gamma weight functions can be considered for the non-symmetric infinite cases of uncorrelated polynomials, as the normal distribution is somehow connected to a special case of gamma distribution. In this direction, the general properties of two polynomials \eqref{eq6.12} and \eqref{eq7.3} reveal that the most general case of complete uncorrelated polynomials relevant to the beta weight function is when $ w(x) = {x^a}{(1 - x)^b} $ and $ w(x)\,z(x) = {x^c}{(1 - x)^d} $, i.e. $  z(x) = {x^{c - a}}{(1 - x)^{d - b}} $ where $ a,b,c,d\in\mathbb{R} $ and $ x\in[0,1] $. Hence, if the corresponding uncorrelated polynomial is indicated as $ {{\bf{P}}_n}(x;a,b,c,d) $, we have
\begin{align}\label{eq9.1}
&\int_{\,0}^1 {{x^a}{{(1 - x)}^b}{{\bf{P}}_n}(x;a,b,c,d)\,{{\bf{P}}_m}(x;a,b,c,d)\,dx} \\
& - \frac{{\Gamma (2c + 2d - a - b + 2)}}{{\Gamma (2c - a + 1)\Gamma (2d - b + 1)}}\int_{\,0}^1 {{x^c}{{(1 - x)}^d}{{\bf{P}}_n}(x;a,b,c,d)\,dx} \,\int_{\,0}^1 {{x^c}{{(1 - x)}^d}{{\bf{P}}_m}(x;a,b,c,d)\,dx}\notag \\
&\qquad = \left( \int_{\,0}^1 {{x^a}{{(1 - x)}^b}{\bf{P}}_n^2(x;a,b,c,d)\,dx} \right.\notag \\
&\qquad\quad \left. - \frac{{\Gamma (2c + 2d - a - b + 2)}}{{\Gamma (2c - a + 1)\Gamma (2d - b + 1)}}{{\left( {\int_{\,0}^1 {{x^c}{{(1 - x)}^d}{{\bf{P}}_n}(x;a,b,c,d)\,dx} } \right)}^2}\right){\delta _{m,n}},\notag
\end{align}
provided that
\[2c - a + 1 > 0,\,\,\,2d - b + 1 > 0\,\,\,\,\,{\rm{and}}\,\,\,\,b,d >  - 1.\]
The components of the determinant \eqref{eq6.7} corresponding to this generic polynomial are computed as
\begin{multline}\label{eq9.2}
{\left. {{{{\mathop{\rm cov}} }_1}\,\left( {{x^i},{x^j};{x^{c - a}}{{(1 - x)}^{d - b}}} \right)\,} \right|_{w(x) = {x^a}{{(1 - x)}^b}}} = \\
\Gamma (b + 1)\,\frac{{\Gamma (a + i + j + 1)}}{{\Gamma (a + b + i + j + 2)}} - \frac{{\Gamma (2c + 2d - a - b + 2){\Gamma ^2}(d + 1)}}{{\Gamma (2c - a + 1)\Gamma (2d - b + 1)}}\,\frac{{\Gamma (c + i + 1)\Gamma (c + j + 1)}}{{\Gamma (c + d + i + 2)\Gamma (c + d + j + 2)}},
\end{multline}
in which $ 2c - a + 1 > 0,\,\,\,2d - b + 1 > 0 $ and $ b,d >  - 1 $.

According to the preceding information, the polynomials \eqref{eq6.12} can be represented as 
\begin{equation}\label{eq9.3}
{}_4{F_3}\left( {\left. {\begin{array}{*{20}{c}}
{ - n,\,\,n + 1,\, - r,\,\,r + 2}\\
{1,\,\, - r + 1,\,\,r + 1}
\end{array}\,} \right|\,x} \right)\, = {{\bf{P}}_n}(x;0,0,r,0),
\end{equation}
and the polynomials \eqref{eq7.3} as
\begin{equation}\label{eq9.4}
{}_4{F_3}\left( {\left. {\begin{array}{*{20}{c}}
{ - n,\,\,n + 2r + 1,\,r,\,\,r + 2}\\
{2r + 1,\,\,r + 1,\,\,r + 1}
\end{array}\,} \right|\,x} \right) = {{\bf{P}}_n}(x;2r,0,r,0),
\end{equation}
and since in \eqref{eq8.20},
\begin{multline*}
\int_{\,0}^1 {{x^{\alpha  + 2}}{{(1 - x)}^\beta }{}_3{F_2}\left( {\left. {\begin{array}{*{20}{c}}
{ - n,\,\,n + \alpha  + \beta  + 3,\,\,1}\\
{\alpha  + 2,\,\,2}
\end{array}\,} \right|\,x} \right){}_3{F_2}\left( {\left. {\begin{array}{*{20}{c}}
{ - m,\,\,m + \alpha  + \beta  + 3,\,\,1}\\
{\alpha  + 2,\,\,2}
\end{array}\,} \right|\,x} \right)\,dx} \\
 =\frac{1}{{\int_{\,0}^1 {{x^\alpha }{{(1 - x)}^\beta }dx} }}(\left( {\int_{\,0}^1 {{x^{\alpha  + 1}}{{(1 - x)}^\beta }{}_3{F_2}\left( {\left. {\begin{array}{*{20}{c}}
{ - n,\,\,n + \alpha  + \beta  + 3,\,\,1}\\
{\alpha  + 2,\,\,2}
\end{array}\,} \right|\,x} \right)\,dx} } \right) \\
\times \left( {\int_{\,0}^1 {{x^{\alpha  + 1}}{{(1 - x)}^\beta }{}_3{F_2}\left( {\left. {\begin{array}{*{20}{c}}
{ - m,\,\,m + \alpha  + \beta  + 3,\,\,1}\\
{\alpha  + 2,\,\,2}
\end{array}\,} \right|\,x} \right)\,dx} } \right)\quad \Leftrightarrow \,\, n \ne m, 
\end{multline*}
the shifted polynomials \eqref{eq8.17} on $ [0,1] $ are represented as
\[{}_3{F_2}\left( {\left. {\begin{array}{*{20}{c}}
{ - n,\,\,n + \alpha  + \beta  + 3,\,\,1}\\
{\alpha  + 2,\,\,2}
\end{array}\,} \right|\,x} \right) = {{\bf{P}}_n}(x;\alpha  + 2,\beta ,\alpha  + 1,\beta ).\]
Similarly, the most general case of complete uncorrelated polynomials relevant to the gamma weight function is when $ w(x) = {x^a}{e^{ - bx}} $ and $ w(x)\,z(x) = {x^c}{e^{ - d\,x}} $, i.e. $ z(x) = {x^{c - a}}{e^{ - (d - b)\,x}} $ where $ a,b,c,d \in\mathbb{R} $ and $ x\in\mathbb{R}^{+} $. Therefore, if the corresponding uncorrelated polynomial is indicated as $ {{\bf{Q}}_n}(x;a,b,c,d) $, then
\begin{multline}\label{eq9.5}
\int_{\,0}^\infty  {{x^a}{e^{ - bx}}\,{{\bf{Q}}_n}(x;a,b,c,d)\,{{\bf{Q}}_m}(x;a,b,c,d)\,dx} \\
 - \frac{{\Gamma (2c - a + 1)}}{{{{(2d - b)}^{2c - a + 1}}}}\int_{\,0}^\infty  {{x^c}{e^{ - d\,x}}\,{{\bf{Q}}_n}(x;a,b,c,d)\,dx} \,\int_{\,0}^\infty  {{x^c}{e^{ - d\,x}}\,{{\bf{Q}}_m}(x;a,b,c,d)\,dx} \\
 = \left( {\int_{\,0}^\infty  {{x^a}{e^{ - bx}}\,{\bf{Q}}_n^2(x;a,b,c,d)\,dx}  - \frac{{\Gamma (2c - a + 1)}}{{{{(2d - b)}^{2c - a + 1}}}}{{\left( {\int_{\,0}^\infty  {{x^c}{e^{ - d\,x}}\,{{\bf{Q}}_n}(x;a,b,c,d)\,dx} } \right)}^2}} \right){\delta _{m,n}},
\end{multline}
provided that
\[2c - a + 1 > 0,\,\,\,2d - b > 0\,\,\,\,{\rm{and}}\,\,\,\,b,d > 0.\]
The components of the determinant \eqref{eq6.7} corresponding to this second generic polynomial are computed as
\begin{multline}\label{eq9.6}
{\left. {{{{\mathop{\rm cov}} }_1}\,\left( {{x^i},{x^j};{x^{c - a}}{e^{ - (d\, - b)x}}} \right)\,} \right|_{w(x) = {x^a}{e^{ - bx}}}}\\
= \frac{{\Gamma (a + i + j + 1)}}{{{b^{a + i + j + 1}}}} - \frac{{\Gamma (2c - a + 1)}}{{{{(2d - b)}^{2c - a + 1}}}}\,\frac{{\Gamma (c + i + 1)\Gamma (c + j + 1)}}{{{d^{2c + i + j + 2}}}},
\end{multline}
in which $ 2c - a + 1 > 0,\,\,\,2d - b > 0\,\,\,\,{\rm{and}}\,\,\,\,b,d > 0 $.

As a sample, the polynomials \eqref{eq8.27} can be represented as
\[{}_2{F_2}\left( {\left. {\begin{array}{*{20}{c}}
{ - n,\,\,1}\\
{\alpha  + 2,\,\,2}
\end{array}\,} \right|\,x} \right) = {{\bf{Q}}_n}(x;\alpha  + 2,1,\alpha  + 1,1)\,.\]

\subsection{A basic example of uncorrelated hypergeometric polynomials of $ _{4}F_{3} $ type}\label{subsec9.1}
In this section, we are going to obtain the explicit form of $ {{\bf{P}}_n}(x;a,0,c,0) $ using an interesting technique. Since $ b=d=0 $, so $ w(x)=x^{a} $ and $ z(x)=x^{c-a} $ defined on $ [0,1] $. In the first step, we can simplify \eqref{eq9.2} for $ b=d=0 $ as
\[{\left. {{{{\mathop{\rm cov}} }_1}\,\left( {{x^i},{x^j};{x^{c - a}}} \right)\,} \right|_{w(x) = {x^a}}} = \frac{{(c - a - i)(c - a - j)}}{{(i + c + 1)(j + c + 1)(i + j + a + 1)}}\,,\]
where $2c - a + 1 > 0,\,\,\,a \ne c\,\,\,\,{\rm{and}}\,\,\,\,a,c >  - 1$.

Referring to the results \eqref{eq9.3} and \eqref{eq9.4} and this fact that $ {{\bf{P}}_n}(x;a,0,c,0) $ is a generalization of both of them, we can imagine that it is of $ _{4}F_{3} $ type without loss of generality.

Since in a general $ _{4}F_{3} $ polynomial of the form \[{}_4{F_3}\left( {\left. {\begin{array}{*{20}{c}}
{ - n,\,\,{a_2},\,{a_3},\,\,{a_4}}\\
{{b_1},\,\,{b_2},\,\,{b_3}}
\end{array}\,} \right|\,x} \right) = \sum\limits_{k = 0}^n {{u_k}\,{x^k}} \,\,\,\,{\rm{where}}\,\,\,\,\,{u_k} = \frac{{{{( - n)}_k}{{({a_2})}_k}{{({a_3})}_k}{{({a_4})}_k}}}{{{{({b_1})}_k}{{({b_2})}_k}{{({b_3})}_k}k!}},\]
we have
\begin{equation}\label{eq9.7}
\frac{{{u_{n - 1}}}}{{{u_n}}} =  - \frac{{n\,(n + {b_1} - 1)(n + {b_2} - 1)(n + {b_3} - 1)}}{{(n + {a_2} - 1)(n + {a_3} - 1)(n + {a_4} - 1)}},
\end{equation}
which is to be equal to the minus of the coefficient of $ x^{n-1} $ in the determinant of the monic polynomial $ {{\bf{\bar P}}_n}(x;a,0,c,0) $ in \eqref{eq6.7}, if for simplicity we set
\[{a_{i,j}} = \frac{{(c - a - i)(c - a - j)}}{{(i + c + 1)(j + c + 1)(i + j + a + 1)}}\,\,\,\,\,\,{\rm{for}}\,\,\,i = 0,1,...,n - 1\,\,\,{\rm{and}}\,\,\,j = 0,1,...,n,\]
to achieve our goal, we should therefore compute the two following determinants (according to the main determinant \eqref{eq6.7}),
\[M_{n}=\begin{vmatrix}
{{a_{0,0}}} & {{a_{0,1}}} & \cdots & {a_{0,n - 2}} & {a_{0,n - 1}} \\   
{{a_{1,0}}} & {{a_{1,1}}} &\cdots & {a_{1,n - 2}} & {a_{1,n - 1}} \\
 \vdots & \vdots & \vdots &\vdots & \vdots \\
{{a_{n - 1,0}}} & {{a_{n - 1,1}}} & \cdots &  {a_{n - 1,n - 2}} & {a_{n - 1,n - 1}}
\end{vmatrix},\]
and
\[N_{n}=\begin{vmatrix}
{{a_{0,0}}} & {{a_{0,1}}} & \cdots & {a_{0,n - 2}} & {a_{0,n}} \\   
{{a_{1,0}}} & {{a_{1,1}}} &\cdots & {a_{1,n - 2}} & {a_{1,n }} \\
 \vdots & \vdots & \vdots &\vdots & \vdots \\
{{a_{n - 1,0}}} & {{a_{n - 1,1}}} & \cdots &  {a_{n - 1,n - 2}} & {a_{n - 1,n }}
\end{vmatrix},\]
and then obtain the quotient $  - {N_n}/{M_n} $ (with the aid of advanced mathematical software) as 
\[ - \frac{{{N_n}}}{{{M_n}}} =  - \frac{{n\,(n + a)(n + a - c)(n + c)}}{{(2n + a)(n + a - c - 1)(n + c + 1)}},\]
and compare it with \eqref{eq9.7} to finally obtain the explicit form of the polynomials as
\begin{equation}\label{eq9.8}
{{\bf{P}}_n}(x;a,0,c,0) = {}_4{F_3}\left( {\left. {\begin{array}{*{20}{c}}
{ - n,\,\,n + a + 1,\,a - c,\,\,c + 2}\\
{a + 1,\,\,a - c + 1,\,\,c + 1}
\end{array}\,} \right|\,x} \right)\,,
\end{equation}
satisfying the uncorrelatedness condition \eqref{eq9.1} with $ b=d=0 $, i.e.
\begin{multline}\label{eq9.9}
\int_{\,0}^1 {{x^a}\,{{\bf{P}}_n}(x;a,0,c,0)\,{{\bf{P}}_m}(x;a,0,c,0)\,dx} \\
 - (2c - a + 1)\,\int_{\,0}^1 {{x^c}\,{{\bf{P}}_n}(x;a,0,c,0)\,dx} \,\int_{\,0}^1 {{x^c}\,{{\bf{P}}_m}(x;a,0,c,0)\,dx} \\
 = \left( {\int_{\,0}^1 {{x^a}\,{\bf{P}}_n^2(x;a,0,c,0)\,dx}  - (2c - a + 1){{\left( {\int_{\,0}^1 {{x^c}\,{{\bf{P}}_n}(x;a,0,c,0)\,dx} } \right)}^2}} \right){\delta _{m,n}}\\
  \Leftrightarrow 2c - a + 1 > 0,\,\,a \ne c\,\,\,{\rm{and}}\,\,\,a,c >  - 1\,.
\end{multline}
For the limit case $ 2c - a + 1 = 0 $ in \eqref{eq9.9}, the polynomials \eqref{eq9.8} would reduce to the well-known special case of shifted Jacobi polynomials defined on $ [0,1] $, i.e.
\[{{\bf{P}}_n}(x;a,0,\frac{{a - 1}}{2},0) = P_{n, + }^{(0,a)}(x) = {}_2{F_1}\left( {\left. {\begin{array}{*{20}{c}}
{ - n,\,\,n + a + 1}\\
{a + 1}
\end{array}\,} \right|\,x} \right)\,\,\,\,\,\,\,\,\,(a >  - 1).\]
Also, we have
\begin{align*}
\mathop {\lim }\limits_{c \to \infty } {{\bf{P}}_n}(x;a,0,c,0) &= \mathop {\lim }\limits_{c \to \infty } {}_4{F_3}\left( {\left. {\begin{array}{*{20}{c}}
{ - n,\,\,n + a + 1,\,a - c,\,\,c + 2}\\
{a + 1,\,\,a - c + 1,\,\,c + 1}
\end{array}\,} \right|\,x} \right)\, \\
&= {}_2{F_1}\left( {\left. {\begin{array}{*{20}{c}}
{ - n,\,\,n + a + 1}\\
{a + 1}
\end{array}\,} \right|\,x} \right) = P_{n, + }^{(0,a)}(x),
\end{align*}
and
\[\mathop {\lim }\limits_{a \to \infty } {{\bf{P}}_n}(x;a,0,c,0) = {}_2{F_1}\left( {\left. {\begin{array}{*{20}{c}}
{ - n,\,\,c + 2}\\
{c + 1}
\end{array}\,} \right|\,x} \right)\,\,\,\,\,\,\,\,\,(c >  - 1).\]
It seems that the results are straightforward for $ a=c=0 $ and $ b=d=s $ in \eqref{eq9.5} and \eqref{eq9.6}, as they take the Laplace transform form
\[L\left( {f(x)} \right) = \int_{\,0}^\infty  {{e^{ - sx}}f(x)\,dx} \,\,\,\,\,\,(s > 0),\]
and \eqref{eq9.5} becomes
\begin{multline*}
L\Big( {{{\bf{Q}}_n}(x;0,s,0,s)\,{{\bf{Q}}_m}(x;0,s,0,s)} \Big) - \frac{1}{s}L\Big( {{{\bf{Q}}_n}(x;0,s,0,s)} \Big)L\Big( {{{\bf{Q}}_m}(x;0,s,0,s)} \Big)\\
 = \left( {L\Big( {{\bf{Q}}_n^2(x;0,s,0,s)} \Big) - \frac{1}{s}{L^2}\Big( {{{\bf{Q}}_n}(x;0,s,0,s)} \Big)} \right){\delta _{m,n}}.
\end{multline*}
As we mentioned, the polynomials \eqref{eq9.8} are a generalization of \eqref{eq6.12} or \eqref{eq9.3} for $ (a,c)=(0,r) $ and a generalization of \eqref{eq7.3} or \eqref{eq9.4} for $ (a,c)=(2r,r) $.

In order to evaluate the existing integrals in \eqref{eq9.9}, we first have
\begin{multline}\label{eq9.10}
\int_{\,0}^1 {{x^c}\,{{\bf{P}}_n}(x;a,0,c,0)\,dx}  = \sum\limits_{k = 0}^n {\frac{{{{( - n)}_k}{{(n + a + 1)}_k}{{(a - c)}_k}{{(c + 2)}_k}}}{{{{(a + 1)}_k}{{(a - c + 1)}_k}{{(c + 1)}_k}k!\,\,(c + 1 + k)}}\,} \\
\,\,\,\,\,\,\,\,\,\,\,\,\, = \frac{1}{{c + 1}}{}_3{F_2}\left( {\left. {\begin{array}{*{20}{c}}
{ - n,\,\,n + a + 1,\,a - c}\\
{a + 1,\,\,a - c + 1}
\end{array}\,} \right|\,\,1} \right) = \frac{1}{{c + 1}}\frac{{n!\,\,{{(c + 1)}_n}}}{{{{(a + 1)}_n}\,{{(a + 1 - c)}_n}}},
\end{multline}
and
\begin{multline}\label{eq9.11}
\int_{\,0}^1 {{x^a}\,{{\bf{P}}_n}(x;a,0,c,0)\,{{\bf{P}}_m}(x;a,0,c,0)\,dx} \\
 = \frac{1}{{a + 1}}\sum\limits_{k = 0}^n \frac{{{{( - n)}_k}{{(n + a + 1)}_k}{{(a - c)}_k}{{(c + 2)}_k}}}{{{{(a + 2)}_k}{{(a - c + 1)}_k}{{(c + 1)}_k}k!\,}}\\
 \times {}_5{F_4}\left( {\left. {\begin{array}{*{20}{c}}
{ - m,\,\,m + a + 1,\,a - c,\,\,c + 2,\,a + 1 + k}\\
{a + 1,\,\,a - c + 1,\,c + 1,\,\,a + 2 + k}
\end{array}\,} \right|\,\,1} \right)\, ,
\end{multline}
which simplifies the left side of \eqref{eq9.9} as
\begin{align}\label{eq9.12}
&\int_{\,0}^1 {{x^a}\,{{\bf{P}}_n}(x;a,0,c,0)\,{{\bf{P}}_m}(x;a,0,c,0)\,dx}  \\
&\qquad - (2c - a + 1)\,\int_{\,0}^1 {{x^c}\,{{\bf{P}}_n}(x;a,0,c,0)\,dx} \,\int_{\,0}^1 {{x^c}\,{{\bf{P}}_m}(x;a,0,c,0)\,dx} \notag\\
&\quad = \frac{1}{{a + 1}}\sum\limits_{k = 0}^n \frac{{{{( - n)}_k}{{(n + a + 1)}_k}{{(a - c)}_k}{{(c + 2)}_k}}}{{{{(a + 2)}_k}{{(a - c + 1)}_k}{{(c + 1)}_k}k!\,}}\notag\\
&\qquad\times {}_5{F_4}\left( {\left. {\begin{array}{*{20}{c}}
{ - m,\,\,m + a + 1,\,a - c,\,\,c + 2,\,a + 1 + k}\\
{a + 1,\,\,a - c + 1,\,c + 1,\,\,a + 2 + k}
\end{array}\,} \right|\,\,1} \right)\, \notag\\
&\qquad - \frac{{2c - a + 1}}{{{{(c + 1)}^2}}}\frac{{n!\,\,{{(c + 1)}_n}}}{{{{(a + 1)}_n}\,{{(a + 1 - c)}_n}}}\frac{{m!\,\,{{(c + 1)}_m}}}{{{{(a + 1)}_m}\,{{(a + 1 - c)}_m}}}.\notag
\end{align}
To calculate \eqref{eq9.10}, we have again used the recurrence relations technique. Since
\[N(n) = {}_3{F_2}\left( {\left. {\begin{array}{*{20}{c}}
{ - n,\,\,n + a + 1,\,a - c}\\
{a + 1,\,\,a - c + 1}
\end{array}\,} \right|\,\,1} \right),\]
satisfies the first order relation
\[N(n + 1) = \frac{{(n + 1)(n + c + 1)}}{{(n + a + 1)(n + a - c + 1)}}N(n),\]
so
\[{}_3{F_2}\left( {\left. {\begin{array}{*{20}{c}}
{ - n,\,\,n + a + 1,\,a - c}\\
{a + 1,\,\,a - c + 1}
\end{array}\,} \right|\,\,1} \right) = \frac{{n!\,\,{{(c + 1)}_n}}}{{{{(a + 1)}_n}\,{{(a + 1 - c)}_n}}}.\]
However, note that all results obtained in \eqref{eq9.10}, \eqref{eq7.6} and \eqref{eq6.14} could also be derived by the Saalschutz theorem \cite{ref11}, which says if $ c^{*} $ is a negative integer and $ {a^*} + {b^*} + {c^*} + 1 = {d^*} + {e^*} $ then
\[_3{F_2}\left( {\left. {\begin{array}{*{20}{c}}
{\begin{array}{*{20}{c}}
{{a^*,}}&{{b^*,}}&{{c^*}}
\end{array}}\\
{\begin{array}{*{20}{c}}
{{d^*,}}&{{e^*}}
\end{array}}
\end{array}\,} \right|\,\,1} \right) = \frac{{{{({d^*} - {a^*})}_{|{c^*}|}}{{({d^*} - {b^*})}_{|{c^*}|}}}}{{{{({d^*})}_{|{c^*}|}}{{({d^*} - {a^*} - {b^*})}_{|{c^*}|}}}}.\]
Similarly, to calculate $ {}_5{F_4}(.) $ in \eqref{eq9.11}, we can apply a recurrence technique as follows. Since
\[M(m) = {}_5{F_4}\left( {\left. {\begin{array}{*{20}{c}}
{ - m,\,\,m + a + 1,\,a - c,\,\,c + 2,\,a + 1 + k}\\
{a + 1,\,\,a - c + 1,\,c + 1,\,\,a + 2 + k}
\end{array}\,} \right|\,\,1} \right),\]
satisfies the second order equation
\begin{multline*}
(2m + a + 2)(m + a + 2)(m + a + 1)(m + a + 2 - c)(m + a + 3 + k)\,M(m + 2)\\
 - (2m + a + 3)(m + a + 1)(m + 2)\left( {2m(m + a + 3) + (2c - a)k + (a + 2)(c + 2)} \right)\,M(m + 1)\\
 + (2m + a + 4)(m + 2)(m + 1)(m + 1 + c)(m - k)\,M(m) = 0,
\end{multline*}
having two independent solutions
\[{M_1}(m) = \frac{{m!\,\,{{(c + 1)}_m}}}{{{{(a + 1)}_m}{{(a + 1 - c)}_m}}},\]
and
\[{M_2}(m) = \frac{{m!\,\,{{( - k)}_m}}}{{{{(a + 1)}_m}{{(a + 2 + k)}_m}}},\]
so
\begin{align}\label{eq9.13}
&{}_5{F_4}\left( {\left. {\begin{array}{*{20}{c}}
{ - m,\,\,m + a + 1,\,a - c,\,\,c + 2,\,a + 1 + k}\\
{a + 1,\,\,a - c + 1,\,c + 1,\,\,a + 2 + k}
\end{array}\,} \right|\,\,1} \right)\\
&\quad = \frac{{m!\,}}{{(c + 1)(c + 1 + k){{(a + 1)}_m}}} \notag \\
&\qquad \times \left( {(2c - a + 1)(a + 1 + k)\frac{{{{(c + 1)}_m}}}{{{{(a + 1 - c)}_m}}} + \,(a - c)(a - c + k)\frac{{{{( - k)}_m}}}{{{{(a + 2 + k)}_m}}}} \right).\notag
\end{align}
Hence, in order to compute
\begin{multline*}
{{\mathop{\rm var}} _1}\left( {{{\bf{P}}_n}(x;a,0,c,0);\,{x^{c - a}}} \right) \\
= \int_{\,0}^1 {{x^a}\,{\bf{P}}_n^2(x;a,0,c,0)\,dx}  - (2c - a + 1){\left( {\int_{\,0}^1 {{x^c}\,{{\bf{P}}_n}(x;a,0,c,0)\,dx} } \right)^2},
\end{multline*}
we first suppose in \eqref{eq9.12} that $ m=n $ and then refer to \eqref{eq9.13} to arrive at
\begin{align*}
\left( {a + 1} \right){M^*} &= \sum\limits_{k = 0}^n \frac{{{{( - n)}_k}{{(n + a + 1)}_k}{{(a - c)}_k}{{(c + 2)}_k}}}{{{{(a + 2)}_k}{{(a - c + 1)}_k}{{(c + 1)}_k}k!\,}}\\
&\qquad\times {}_5{F_4}\left( {\left. {\begin{array}{*{20}{c}}
{ - n,\,\,n + a + 1,\,a - c,\,\,c + 2,\,a + 1 + k}\\
{a + 1,\,\,a - c + 1,\,c + 1,\,\,a + 2 + k}
\end{array}\,} \right|\,\,1} \right) \\
 &= \frac{{(a + 1)(2c - a + 1)}}{{{{(c + 1)}^2}}}\frac{{{{(n!)}^2}(c + 1)_n^2}}{{(a + 1)_n^2(a - c + 1)_n^2}}\\
  &\quad + \frac{{(a + 1){{(a - c)}^2}}}{{{{(c + 1)}^2}}}\frac{{n!}}{{(n + a + 1)(a + 1)_n^2}}\sum\limits_{k = 0}^n {\frac{{{{( - n)}_k}{{(n + a + 1)}_k}{{( - k)}_n}}}{{{{(n + a + 2)}_k}k!\,}}} .
\end{align*}
Again, noting that $ {( - k)_n} = 0 $ for any $ k<n $, the following final result will be derived.

\begin{corollary}\label{coro9.1.1}
If $ 2c - a + 1 > 0,\,\,a \ne c\,\,\,{\rm{and}}\,\,\,a,c >  - 1 $, then
\begin{align*}
&\int_{\,0}^1 {{x^a}\,{\bf{P}}_n^2(x;a,0,c,0)\,dx}  - (2c - a + 1){\left( {\int_{\,0}^1 {{x^c}\,{{\bf{P}}_n}(x;a,0,c,0)\,dx} } \right)^2} \\
&\qquad\qquad\qquad\qquad\qquad= \frac{1}{{2n + a + 1}}\left( {\frac{{(a - c)\,n!}}{{(c + 1)\,{{(a + 1)}_n}}}} \right)^2.
\end{align*}
Moreover, \eqref{eq9.12} is simplified as
\begin{multline*}
\int_{\,0}^1 {{x^a}\,{{\bf{P}}_n}(x;a,0,c,0)\,{{\bf{P}}_m}(x;a,0,c,0)\,dx}  \\
- (2c - a + 1)\,\int_{\,0}^1 {{x^c}\,{{\bf{P}}_n}(x;a,0,c,0)\,dx} \,\int_{\,0}^1 {{x^c}\,{{\bf{P}}_m}(x;a,0,c,0)\,dx} \\
\,\,\,\,\,\,\,\,\,\,\,\,\,\,\,\, = \frac{{{{(a - c)}^2}}}{{{{(c + 1)}^2}}}\frac{{m!}}{{(m + a + 1)(a + 1)_m^2\,}}\sum\limits_{k = 0}^n {\frac{{{{(n + a + 1)}_k}{{( - n)}_k}}}{{{{(m + a + 2)}_k}}}\frac{{{{( - k)}_m}}}{{k!}}}  = 0 \Leftrightarrow m \ne n,
\end{multline*}
leading to the same as relation \eqref{eq7.9} for $ 2r=\alpha>-1 $.
\end{corollary}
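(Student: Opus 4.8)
The plan is to read off both formulas from the computation of $(a+1)M^{*}$ displayed just before the corollary, where $M^{*}=\int_{0}^{1}x^{a}\,{\bf{P}}_n^{2}(x;a,0,c,0)\,dx$. The first move is to collapse the residual sum $\sum_{k=0}^{n}\frac{(-n)_k(n+a+1)_k(-k)_n}{(n+a+2)_k\,k!}$ occurring in the second summand. Since $(-k)_n=(-k)(-k+1)\cdots(-k+n-1)$ carries a zero factor for every $k$ with $0\le k\le n-1$, only the term $k=n$ survives; evaluating it with $(-n)_n=(-1)^{n}n!$ and the telescoping ratio $\frac{(n+a+1)_n}{(n+a+2)_n}=\frac{n+a+1}{2n+a+1}$ reduces the whole sum to $\frac{n!\,(n+a+1)}{2n+a+1}$.

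Inserting this value and dividing by $a+1$ exhibits $M^{*}$ as a sum of two closed terms, one carrying the factor $(2c-a+1)$ and one carrying $(a-c)^{2}$. To pass from $M^{*}$ to the variance I would subtract $(2c-a+1)\big(\int_0^1 x^c{\bf{P}}_n\,dx\big)^2$, whose value is the square of \eqref{eq9.10}, namely $\frac{1}{(c+1)^2}\frac{(n!)^2(c+1)_n^2}{(a+1)_n^2(a+1-c)_n^2}$. The decisive step is that this subtracted quantity is identical to the first term of $M^{*}$ --- the two differ only through the trivially equal Pochhammer symbols $(a-c+1)_n=(a+1-c)_n$ --- so they cancel, leaving exactly $\frac{1}{2n+a+1}\big(\frac{(a-c)\,n!}{(c+1)(a+1)_n}\big)^2$, the asserted variance.

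For the off-diagonal statement I would rerun the splitting of \eqref{eq9.12} with general $m$, substituting \eqref{eq9.13} for the inner ${}_5F_4$. The piece proportional to $(2c-a+1)$ factors into an $m$-dependent constant times a $k$-sum that, after the reductions $\frac{(c+2)_k}{(c+1)_k(c+1+k)}=\frac{1}{c+1}$ and $\frac{a+1+k}{(a+2)_k}=\frac{a+1}{(a+1)_k}$, becomes the terminating ${}_3F_2$ already summed in \eqref{eq9.10}; restoring the prefactor reproduces verbatim the subtracted product term of \eqref{eq9.12}, so the $(2c-a+1)$ contributions again cancel in full. The surviving $(a-c)$-piece is put into final shape by $\frac{(a-c)_k(a-c+k)}{(a-c+1)_k}=a-c$ together with the concatenation $(a+2)_k(a+2+k)_m=(a+2)_{m+k}$, yielding $\frac{(a-c)^2}{(c+1)^2}\frac{m!}{(m+a+1)(a+1)_m^2}\sum_{k=0}^{n}\frac{(n+a+1)_k(-n)_k}{(m+a+2)_k}\frac{(-k)_m}{k!}$.

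It remains to see that this expression vanishes precisely for $m\ne n$. The factors $(-k)_m$ and $(-n)_k$ confine the sum to $m\le k\le n$, so it is empty and hence zero whenever $m>n$; for $m<n$ the vanishing is not visible term by term, but it is already guaranteed because $\{{\bf{P}}_n(x;a,0,c,0)\}$ was constructed to satisfy the uncorrelatedness \eqref{eq9.9} via theorem \ref{thm6.2} and the determinant \eqref{eq6.7}, which forces the left side of \eqref{eq9.9} to vanish off the diagonal. Reading the diagonal value together with this orthogonality gives the compact identity $\sum_{k=m}^{n}\frac{(n+a+1)_k(-n)_k}{(m+a+2)_k}\frac{(-k)_m}{k!}=n!\,\frac{n+a+1}{2n+a+1}\,\delta_{n,m}$, which reduces to \eqref{eq7.9} at $a=2r$. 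I expect the only genuine difficulty to be the Pochhammer bookkeeping in the two inner sums and keeping the exact cancellation in view; no evaluation deeper than the $(-k)_n$-collapse and the telescoping ratio is needed.
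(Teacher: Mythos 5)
Your proposal is correct and follows essentially the same route as the paper: it reads both formulas off the $(a+1)M^*$ display via the collapse $(-k)_n=0$ for $k<n$, the telescoping ratio $\frac{(n+a+1)_n}{(n+a+2)_n}=\frac{n+a+1}{2n+a+1}$, and the cancellation of the $(2c-a+1)$ pieces against the square of \eqref{eq9.10}, and your Pochhammer reductions for general $m$ (together with the empty-sum observation for $m>n$ and the appeal to the uncorrelatedness built into the construction for $m<n$) are exactly the technique the paper uses here and in corollaries \ref{coro6.6.1} and \ref{coro7.1.2}. The only difference is that you spell out the general-$m$ bookkeeping that the paper leaves implicit, which is a faithful completion rather than a different argument.
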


\begin{corollary}\label{coro9.1.2}
Using the latter corollary, one can now construct an optimized polynomial approximation (or expansion) for $ f(x) $ whose error 1-variance is minimized as follows
\begin{equation}\label{eq9.14}
f(x) = \sum\limits_{k = 0}^{n \to \infty } {\,\frac{{{A_k}}}{{{B_k}}}} {}_4{F_3}\left( {\left. {\begin{array}{*{20}{c}}
{ - k,\,\,k + a + 1,\,a - c,\,\,c + 2}\\
{a + 1,\,\,a - c + 1,\,\,c + 1}
\end{array}\,} \right|\,x} \right)\,,
\end{equation}
in which
\begin{align}\label{eq9.15}
{A_k} &= \int_{\,0}^1 {{x^a}\,f(x)\,{}_4{F_3}\left( {\left. {\begin{array}{*{20}{c}}
{ - k,\,\,k + a + 1,\,a - c,\,\,c + 2}\\
{a + 1,\,\,a - c + 1,\,\,c + 1}
\end{array}\,} \right|\,x} \right)\,dx} \\
&\quad - \left( {2c - a + 1} \right)\int_{\,0}^1 {{x^c}f(x)\,dx} \,\int_{\,0}^1 {{x^c}{}_4{F_3}\left( {\left. {\begin{array}{*{20}{c}}
{ - k,\,\,k + a + 1,\,a - c,\,\,c + 2}\\
{a + 1,\,\,a - c + 1,\,\,c + 1}
\end{array}\,} \right|\,x} \right)\,dx} \notag\\
& = \sum\limits_{j = 0}^k {\frac{{{{( - k)}_j}{{(k + a + 1)}_j}{{(a - c)}_j}{{(c + 2)}_j}}}{{{{(a + 1)}_j}{{(a - c + 1)}_j}{{(c + 1)}_j}\,j!}}\int_{\,0}^1 {{x^{a + j}}f(x)\,dx} }  \notag \\
&\qquad - \frac{{2c - a + 1}}{{{{(c + 1)}^2}}}\frac{{k!\,\,{{(c + 1)}_k}}}{{{{(a + 1)}_k}\,{{(a + 1 - c)}_k}}}\,\int_{\,0}^1 {{x^c}f(x)\,dx} ,\notag
\end{align}
and
\begin{equation}\label{eq9.16}
{B_k} = \frac{1}{{2k + a + 1}}{\left( {\frac{{(a - c)\,k!}}{{(c + 1)\,{{(a + 1)}_k}}}} \right)^2}.
\end{equation}
We clearly observe in the polynomial type approximation \eqref{eq9.14} that its basis do not satisfy an orthogonal condition but a complete uncorrelatedness condition. However, if we define the non-polynomial sequence
\begin{align}\label{eq9.17}
&{{\bf{G}}_n}(x;a,c) = {}_4{F_3}\left( {\left. {\begin{array}{*{20}{c}}
{ - n,\,\,n + a + 1,\,a - c,\,\,c + 2}\\
{a + 1,\,\,a - c + 1,\,\,c + 1}
\end{array}\,} \right|\,x} \right)\\
&\qquad\qquad\qquad- \frac{{2c - a + 1}}{{{{(c + 1)}^2}}}\frac{{n!\,\,{{(c + 1)}_n}}}{{{{(a + 1)}_n}\,{{(a + 1 - c)}_n}}}{x^{c - a}},\notag
\end{align}
satisfying the orthogonality condition
\begin{multline}\label{eq9.18}
\int_{\,0}^1 {{x^a}\,{{\bf{G}}_n}(x;a,c)\,{{\bf{G}}_m}(x;a,c)\,dx}  = \frac{1}{{2n + a + 1}}{\left( {\frac{{(a - c)\,n!}}{{(c + 1)\,{{(a + 1)}_n}}}} \right)^2}{\delta _{m,n}}\\ \Leftrightarrow 2c - a + 1 > 0,\,\,a \ne c\,\,\,{\rm{and}}\,\,\,a,c >  - 1,
\end{multline}
we then obtain a non-polynomial type approximation (or expansion) as follows
\begin{equation}\label{eq9.19}
f(x) = \sum\limits_{k = 0}^{n \to \infty } {\,\frac{{A_k^*}}{{B_k^*}}} \,{{\bf{G}}_k}(x;a,c)\,,
\end{equation}
in which $ A_k^* = {A_k}\,\,\,\,{\rm{and}}\,\,\,B_k^* = {B_k} $ according to the remark \ref{rem4.1.1}, i.e. as the same forms as \eqref{eq9.15} and \eqref{eq9.16}.

Once again, the orthogonal sequence \eqref{eq9.17} can be generated only if we have previously obtained the uncorrelated polynomial sequence \eqref{eq9.8}. Also, the non-polynomial approximation \eqref{eq9.19} is optimized in the sense of ordinary least squares while the polynomial type approximation \eqref{eq9.14} is optimized in the sense of least 1-variances. 
\end{corollary}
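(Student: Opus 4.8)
The plan is to assemble the statement from results already in hand rather than to compute anything new: the least $1$-variance optimality of truncated uncorrelated expansions (Corollary \ref{coro4.4}), the passage from $1$-uncorrelated to orthogonal functions (Remark \ref{rem5.1}), and the coefficient identity between the two expansions (Remark \ref{rem4.1.1}). Throughout I would specialize $p=1$, take the fixed variable $Z=x^{c-a}$ with the probability density proportional to $w(x)=x^{a}$ on $[0,1]$, and use the complete uncorrelated polynomials $\mathbf{P}_n(x;a,0,c,0)$ of \eqref{eq9.8}, whose variances are already evaluated in Corollary \ref{coro9.1.1}.

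First I would establish the polynomial expansion \eqref{eq9.14}. By Corollary \ref{coro4.4}, among all linear combinations of $1,x,\dots,x^{n}$ the one minimizing ${\rm var}_1(f-\sum_k\beta_k x^{k};\,x^{c-a})$ is the truncated $1$-uncorrelated expansion with coefficients ${\rm cov}_1(f,\mathbf{P}_k;x^{c-a})/{\rm var}_1(\mathbf{P}_k;x^{c-a})$, since $\{\mathbf{P}_k(x;a,0,c,0)\}$ is precisely the $1$-uncorrelated sequence attached to $\{x^{k}\}$. It then remains only to read off these coefficients explicitly. Using the $b=d=0$ instance of \eqref{eq9.2}, namely $\tfrac{1}{a+1}{\rm cov}_1(X,Y;x^{c-a})=\int_0^1 x^a XY\,dx-(2c-a+1)\int_0^1 x^c X\,dx\int_0^1 x^c Y\,dx$, the numerator equals $\tfrac{1}{a+1}A_k$ with $A_k$ as in \eqref{eq9.15}; its hypergeometric closed form follows by expanding $\mathbf{P}_k$ as the ${}_4F_3$ sum \eqref{eq9.8} and integrating term by term with \eqref{eq9.10}. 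The denominator equals $\tfrac{1}{a+1}{\rm var}_1(\mathbf{P}_k;x^{c-a})$, i.e. $B_k$ of \eqref{eq9.16}, which is exactly the value computed in Corollary \ref{coro9.1.1}. Since the common factor $\tfrac{1}{a+1}$ cancels in the ratio $A_k/B_k$, the displayed expansion is the optimal one; for polynomial $f$ the series terminates and $\cong$ becomes an equality by Theorem \ref{thm4.1}.

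Next I would prove the orthogonality \eqref{eq9.18}. The cleanest route is to note that $\mathbf{G}_n(x;a,c)$ in \eqref{eq9.17} is exactly the orthogonal companion \eqref{eq5.2} of Remark \ref{rem5.1} at $p=1$, namely $\mathbf{P}_n-{\rm proj}_{x^{c-a}}\mathbf{P}_n$, so orthogonality is automatic with norm square $\bigl(\int_0^1 x^a\,dx\bigr){\rm var}_1(\mathbf{P}_n;x^{c-a})=B_n$. I would also include the self-contained check: writing $\mathbf{G}_n=\mathbf{P}_n-\kappa_n x^{c-a}$ with $\kappa_n=(2c-a+1)\int_0^1 x^c\mathbf{P}_n\,dx$ chosen so that $\int_0^1 x^a\mathbf{G}_n\,x^{c-a}\,dx=0$, the four terms of $\int_0^1 x^a\mathbf{G}_m\mathbf{G}_n\,dx$ collapse, using $\int_0^1 x^{2c-a}\,dx=1/(2c-a+1)$, to precisely the left-hand side of the uncorrelatedness relation \eqref{eq9.9}, which by Corollary \ref{coro9.1.1} is $B_n\delta_{m,n}$. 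Finally, for \eqref{eq9.19} I would invoke Remark \ref{rem4.1.1}: with $T_{k,1}(x^{c-a})=\mathbf{G}_k$ one has $E(f\,\mathbf{G}_k)={\rm cov}_1(f,\mathbf{P}_k;x^{c-a})$ and $E(\mathbf{G}_k^{2})={\rm var}_1(\mathbf{P}_k;x^{c-a})$, so the orthogonal expansion in the $\mathbf{G}_k$ basis and the uncorrelated expansion in the $\mathbf{P}_k$ basis carry \emph{identical} coefficients, giving $A_k^{*}=A_k$ and $B_k^{*}=B_k$; the point of the corollary is that the two expansions nonetheless differ, because each $\mathbf{G}_k$ carries the extra $x^{c-a}$ tail.

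I expect the main obstacle to be bookkeeping rather than ideas: carefully tracking the normalizing constant $1/(a+1)$ between the unnormalized integrals used in Section \ref{sec9} and the normalized ${\rm cov}_1,{\rm var}_1$, and confirming that the closed forms \eqref{eq9.15}--\eqref{eq9.16} genuinely represent the covariance and variance. No new summation is required, since those closed forms already rest on the ${}_3F_2$ and ${}_5F_4$ evaluations of \eqref{eq9.10} and \eqref{eq9.13} (via Saalsch\"utz's theorem and the recurrence technique). The one genuinely analytic gap is the $n\to\infty$ equality for non-polynomial $f$, which needs a completeness or convergence-in-$1$-variance argument in the sense of Definition \ref{def4.6}; for the polynomial applications envisaged here this issue is vacuous.
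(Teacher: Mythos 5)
Your proposal is correct and follows essentially the same route as the paper, which gives no separate proof: Corollary \ref{coro9.1.2} is assembled, exactly as you do, from the general least $p$-variance expansion theory (Theorem \ref{thm4.1}, Corollary \ref{coro4.4}), the evaluated integrals of Corollary \ref{coro9.1.1} and \eqref{eq9.10}, the orthogonal-companion construction of Remark \ref{rem5.1}, and the coefficient identity of Remark \ref{rem4.1.1}. Two points are worth recording. First, a wording slip in your normalization: since $\int_0^1 x^a\,dx=1/(a+1)$, one has ${\rm cov}_1(f,\mathbf{P}_k;x^{c-a})=(a+1)A_k$ and ${\rm var}_1(\mathbf{P}_k;x^{c-a})=(a+1)B_k$, so the numerator is $(a+1)A_k$ rather than $\tfrac{1}{a+1}A_k$; your displayed identity and your conclusion (the common factor cancels in $A_k/B_k$) are nevertheless right. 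Second, and more substantively, your self-contained orthogonality check is not redundant: the coefficient you use, $\kappa_n=(2c-a+1)\int_0^1 x^c\mathbf{P}_n\,dx=\frac{2c-a+1}{c+1}\frac{n!\,(c+1)_n}{(a+1)_n\,(a+1-c)_n}$, is the one forced by $\int_0^1 x^c\mathbf{G}_n\,dx=0$, and it differs by a factor $\frac{1}{c+1}$ from the coefficient printed in \eqref{eq9.17}; with the printed coefficient the four terms do not collapse but leave the residue $(2c-a+1)\bigl(\tfrac{c}{c+1}\bigr)^2\int_0^1 x^c\mathbf{P}_n\,dx\,\int_0^1 x^c\mathbf{P}_m\,dx\neq 0$, so \eqref{eq9.18} would fail. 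Your version agrees with the analogous construction in Remark \ref{rem6.6.3} (where the factor is $\frac{2r+1}{r+1}$, not $\frac{2r+1}{(r+1)^2}$), so your argument in effect corrects a typo in the statement rather than merely reproving it. Finally, the convergence issue you flag for non-polynomial $f$ is equally untreated in the paper, which writes the informal $\sum_{k=0}^{n\to\infty}$; in the polynomial case it is settled by Theorem \ref{thm4.1}, as you say.
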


\subsubsection{A special property for $ {{\bf{P}}_n}(x;a,0,c,0) $}\label{subsubsec9.1.3}

Let us study the polynomials \eqref{eq9.8} from a differential equations point of view. In general, it is known that the hypergeometric series
\[y(x) = {}_4{F_3}\left( {\left. {\begin{array}{*{20}{c}}
{{a_1},\,\,{a_2},{a_3},\,\,{a_4}}\\
{{b_1},\,{b_2},\,\,{b_3}}
\end{array}\,} \right|\,x} \right),\]
satisfies a fourth order equation as
\begin{multline}\label{eq9.20}
{x^3}(1 - x)\,{y^{(4)}}(x) + {x^2}\Big( {{s_1} + 3 - ({s_2} + 6)x} \Big){y^{(3)}}(x) \\
+ x\Big( {{s_1} + 1 + {s_3} - \left( {3\,{s_2} + 7 + \,{s_4}} \right)x} \Big)y''(x)\\
 + \Big( {{b_1}{b_2}{b_3} - \,\left( {{s_2} + 1 + {s_4} + {s_5}} \right)x} \Big)y'(x) - {a_1}\,{a_2}\,{a_3}\,{a_4}\,y(x) = 0,
\end{multline}
in which
\begin{align*}
{s_1} &= {b_1} + {b_2} + {b_3},\\
{s_2} &= {a_1} + {a_2} + {a_3} + {a_4},\\
{s_3} &= {b_1}{b_2} + {b_1}{b_3} + {b_2}{b_3},\\
{s_4} &= {a_1}{a_2} + {a_1}{a_3} + {a_1}{a_4} + {a_2}{a_3} + {a_2}{a_4} + {a_3}{a_4},\\
{s_5} &= {a_1}{a_2}{a_3} + {a_1}{a_2}{a_4} + {a_1}{a_3}{a_4} + {a_2}{a_3}{a_4},
\end{align*}
with a general solution in the special form 
\begin{multline}\label{eq9.21}
y(x) = {c_1}\,{x^{ - {a_1}}}{}_4{F_3}\left( {\left. {\begin{array}{*{20}{c}}
{{a_1},\,\,{a_1} - {b_1} + 1,{a_1} - {b_2} + 1,\,\,{a_1} - {b_3} + 1}\\
{{a_1} - {a_2} + 1,{a_1} - {a_3} + 1,\,\,{a_1} - {a_4} + 1}
\end{array}\,} \right|\,\frac{1}{x}} \right)\\
 + {c_2}\,{x^{ - {a_2}}}{}_4{F_3}\left( {\left. {\begin{array}{*{20}{c}}
{{a_2},\,\,{a_2} - {b_1} + 1,{a_2} - {b_2} + 1,\,\,{a_2} - {b_3} + 1}\\
{{a_2} - {a_1} + 1,{a_2} - {a_3} + 1,\,\,{a_2} - {a_4} + 1}
\end{array}\,} \right|\,\frac{1}{x}} \right)\\
 + {c_3}\,{x^{ - {a_3}}}{}_4{F_3}\left( {\left. {\begin{array}{*{20}{c}}
{{a_3},\,\,{a_3} - {b_1} + 1,{a_3} - {b_2} + 1,\,\,{a_3} - {b_3} + 1}\\
{{a_3} - {a_1} + 1,{a_3} - {a_2} + 1,\,\,{a_3} - {a_4} + 1}
\end{array}\,} \right|\,\frac{1}{x}} \right)\\
 + {c_4}\,{x^{ - {a_4}}}{}_4{F_3}\left( {\left. {\begin{array}{*{20}{c}}
{{a_4},\,\,{a_4} - {b_1} + 1,{a_4} - {b_2} + 1,\,\,{a_4} - {b_3} + 1}\\
{{a_4} - {a_1} + 1,{a_4} - {a_2} + 1,\,\,{a_4} - {a_3} + 1}
\end{array}\,} \right|\,\frac{1}{x}} \right),
\end{multline}
provided that 
\[{a_1} - {a_2},\,\,{a_1} - {a_3},\,\,{a_1} - {a_4},\,\,{a_2} - {a_3},\,\,{a_2} - {a_4}\,\,{\rm{and}}\,\,\,{a_3} - {a_4} \notin\mathbb{Z}.\]
The above information can be found in mathematical sites, e.g. mathworld.wolfram.com.
According to \eqref{eq9.20} and \eqref{eq9.21}, the polynomials $ y = {{\bf{P}}_n}(x;a,0,c,0) $ satisfy the differential equation
\begin{multline}\label{eq9.22}
{x^3}(1 - x)\,{y^{(4)}}(x) + {x^2}\Big( {2a + 6 - (2a + 9)x} \Big){y^{(3)}}(x)\\
 + x\Big( {{a^2} + (c + 6)a + 7 - {c^2} + \big( {n(n + a + 1) + {c^2} + (2 - a)c - {a^2} - 11a - 18} \big)x} \Big)y''(x)\\
 + \Big( {(a + 1)(c + 1)(a - c + 1) + \big( {n(n + a + 1)(a + 3) - (a + 2)(c + 3)(a - c + 1)} \big)x} \Big)y'(x)\\
 + n(n + a + 1)\,(a - c)(c + 2)\,y(x) = 0,
\end{multline}
with the general solution
\begin{multline}\label{eq9.23}
y(x) = {c_1}\,{}_4{F_3}\left( {\left. {\begin{array}{*{20}{c}}
{ - n,\,\,n + a + 1,\,a - c,\,\,c + 2}\\
{a + 1,\,\,a - c + 1,\,\,c + 1}
\end{array}\,} \right|\,x} \right) + {c_3}\,{x^{c - a}}\\
\,\,\,\,\,\,\,\,\,\,\, + {c_2}\,{x^{ - n - a - 1}}{}_4{F_3}\left( {\left. {\begin{array}{*{20}{c}}
{n + a + 1,\,\,n + 1,\,n + c + 1,\,\,n + a - c + 1}\\
{2n + a + 2,\,\,n + c + 2,\,\,n + a - c}
\end{array}\,} \right|\,\frac{1}{x}} \right)\\
\,\,\,\,\,\,\,\,\,\,\, + {c_4}\,{x^{ - c - 2}}{}_4{F_3}\left( {\left. {\begin{array}{*{20}{c}}
{c + 2,\,\,c - a + 2,\,2c - a + 2,\,\,2}\\
{n + c + 3,\,\, - n + c - a + 2,\,\,2c - a + 3}
\end{array}\,} \right|\,\frac{1}{x}} \right),
\end{multline}
where the fixed function $ z(x) = {x^{c - a}} $ and polynomials \eqref{eq9.8} both appeared in the basis solutions. Note in \eqref{eq9.23} and subsequently \eqref{eq9.21} that
\begin{multline*}
{}_4{\bar F_3}\left( {\left. {\begin{array}{*{20}{c}}
{ - n,\,\,n + a + 1,\,a - c,\,\,c + 2}\\
{a + 1,\,\,a - c + 1,\,\,c + 1}
\end{array}\,} \right|\,x} \right) \\
= {x^n}{}_4{\bar F_3}\left( {\left. {\begin{array}{*{20}{c}}
{ - n,\,\, - n - a,\, - n - a + c,\,\, - n - c}\\
{ - 2n - a,\,\, - n - a + c + 1,\,\, - n - c - 1}
\end{array}\,} \right|\,\frac{1}{x}} \right),
\end{multline*}
and
\[{}_4{F_3}\left( {\left. {\begin{array}{*{20}{c}}
{a - c,\,\, - c,\,0,\,\,a - 2c}\\
{n + a - c + 1,\,\, - n - c,\,\,a - 2c - 1}
\end{array}\,} \right|\,\frac{1}{x}} \right) = 1.\]
Now if in \eqref{eq9.23}, 
\[{c_1} = 1,\,\,{c_3} =  - \frac{{2c - a + 1}}{{{{(c + 1)}^2}}}\frac{{n!\,\,{{(c + 1)}_n}}}{{{{(a + 1)}_n}\,{{(a + 1 - c)}_n}}} \quad\text{and}\quad {c_2} = {c_4} = 0 ,\]
the orthogonal sequence $ {{\bf{G}}_n}(x;a,c) $ in \eqref{eq9.17} will  appear that clearly satisfies the same as differential equation \eqref{eq9.22}. The important point is that for $ y = {{\bf{G}}_n}(x;a,c) $ equation \eqref{eq9.22} can be rewritten as
\begin{align}\label{eq9.24}
&\quad{x^2}\frac{{{d^2}}}{{d{x^2}}}\Big( x(1 - x)\,{\bf{G}}''_n(x;a,c) + (a + 1 - (a + 2)x)\,{\bf{G}}'_n(x;a,c)\\
 &\qquad+ \left( {n(n + a + 1) + (2c - a + 1){x^{ - 1}}} \right){{\bf{G}}_n}(x;a,c) \Big)\notag\\
 &\qquad+ (a + 3)x\frac{d}{{dx}}\Big( x(1 - x)\,{\bf{G}}''_n(x;a,c) + (a + 1 - (a + 2)x)\,{\bf{G}}'_n(x;a,c)\notag \\
 &\qquad+ \left( {n(n + a + 1) + (2c - a + 1){x^{ - 1}}} \right){\bf{G}_n}(x;a,c) \Big)\notag\\
&\qquad+(a - c)(c + 2)\Big( x(1 - x)\,{\bf{G}}''_n(x;a,c) + (a + 1 - (a + 2)x)\,{\bf{G}}'_n(x;a,c)\notag \\
&\qquad+ \left( {n(n + a + 1) + (2c - a + 1){x^{ - 1}}} \right){{\bf{G}}_n}(x;a,c) \Big)\notag\\
& = (2c - a + 1)(a - c - 1)(c + 1)\,{x^{ - 1}}\,{{\bf{G}}_n}(x;a,c)\,.\notag
\end{align}
If for simplicity we assume that
\begin{multline}\label{eq9.25}
x(1 - x)\,{\bf{G}}''_n(x;a,c) + (a + 1 - (a + 2)x)\,{\bf{G}}'_n(x;a,c)\\
 + \left( {n(n + a + 1) + (2c - a + 1){x^{ - 1}}} \right){{\bf{G}}_n}(x;a,c) = {{\bf{R}}_n}(x;a,c),
\end{multline}
then \eqref{eq9.24} changes to
\begin{multline}\label{eq9.26}
{x^2}\,{\bf{R}}''_n(x;a,c) + (a + 3)x\,{\bf{R}}'_n(x;a,c) + (a - c)(c + 2)\,{{\bf{R}}_n}(x;a,c)\\
 = (2c - a + 1)(a - c - 1)(c + 1)\,{x^{ - 1}}\,{{\bf{G}}_n}(x;a,c)\,,
\end{multline}
which is a non-homogenous second order linear equation with the analytical solution
\begin{multline}\label{eq9.27}
{{\bf{R}}_n}(x;a,c) = \frac{{(2c - a + 1)(a - c - 1)(c + 1)}}{{(2c - a + 2)}}\\
\times \left( {{x^{c - a}}\int {{x^{a - c - 2}}{{\bf{G}}_n}(x;a,c)\,dx}  - {x^{ - c - 2}}\int {{x^c}{{\bf{G}}_n}(x;a,c)\,dx} } \right),
\end{multline}
where ${y_1}(x) = {x^{c - a}}\,\,\,{\rm{and}}\,\,\,{y_2}(x) = {x^{ - c - 2}}$ are two basis solutions of equation \eqref{eq9.26}.

The two integrals in \eqref{eq9.27} can be simplified as follows
\begin{align*}
\int {{x^{a - c - 2}}{{\bf{G}}_n}(x;a,c)\,dx}  &= \int {{x^{a - c - 2}}{}_4{F_3}\left( {\left. {\begin{array}{*{20}{c}}
{ - n,\,\,n + a + 1,\,a - c,\,\,c + 2}\\
{a + 1,\,\,a - c + 1,\,\,c + 1}
\end{array}\,} \right|\,x} \right)\,dx} \\
&\quad - \frac{{2c - a + 1}}{{{{(c + 1)}^2}}}\frac{{n!\,\,{{(c + 1)}_n}}}{{{{(a + 1)}_n}\,{{(a + 1 - c)}_n}}}\int {{x^{ - 2}}\,dx} \\
 &= \frac{{{x^{a - c - 1}}}}{{a - c - 1}}{}_4{F_3}\left( {\left. {\begin{array}{*{20}{c}}
{ - n,\,\,n + a + 1,\,a - c - 1,\,\,c + 2}\\
{a + 1,\,\,a - c + 1,\,\,c + 1}
\end{array}\,} \right|\,x} \right) \\
&\quad + \frac{{2c - a + 1}}{{{{(c + 1)}^2}}}\frac{{n!\,\,{{(c + 1)}_n}}}{{{{(a + 1)}_n}\,{{(a + 1 - c)}_n}}}{x^{ - 1}},
\end{align*}
and
\begin{multline*}
\int {{x^c}{{\bf{G}}_n}(x;a,c)\,dx}  = \int {{x^c}{}_4{F_3}\left( {\left. {\begin{array}{*{20}{c}}
{ - n,\,\,n + a + 1,\,a - c,\,\,c + 2}\\
{a + 1,\,\,a - c + 1,\,\,c + 1}
\end{array}\,} \right|\,x} \right)\,dx} \\
\,\,\,\,\,\,\,\,\,\,\,\,\,\,\,\,\,\,\,\,\,\,\,\,\,\,\,\,\,\,\,\,\,\,\,\,\, - \frac{{2c - a + 1}}{{{{(c + 1)}^2}}}\frac{{n!\,\,{{(c + 1)}_n}}}{{{{(a + 1)}_n}\,{{(a + 1 - c)}_n}}}\int {{x^{2c - a}}\,dx} \\
 = \frac{{{x^{c + 1}}}}{{c + 1}}{}_3{F_2}\left( {\left. {\begin{array}{*{20}{c}}
{ - n,\,\,n + a + 1,\,a - c}\\
{a + 1,\,\,a - c + 1}
\end{array}\,} \right|\,x} \right) - \frac{1}{{{{(c + 1)}^2}}}\frac{{n!\,\,{{(c + 1)}_n}}}{{{{(a + 1)}_n}\,{{(a + 1 - c)}_n}}}{x^{2c - a + 1}}.
\end{multline*}
Therefore
\begin{multline}\label{eq9.28}
{{\bf{R}}_n}(x;a,c) = (2c - a + 1)\,{x^{ - 1}}{}_3{F_2}\left( {\left. {\begin{array}{*{20}{c}}
{ - n,\,\,n + a + 1,\,a - c - 1}\\
{a + 1,\,\,a - c + 1}
\end{array}\,} \right|\,x} \right)\\
\,\,\,\,\,\,\,\,\,\,\,\,\,\,\,\,\,\,\,\,\,\,\,\, + \frac{{(2c - a + 1)(a - c - 1)}}{{c + 1}}\frac{{n!\,\,{{(c + 1)}_n}}}{{{{(a + 1)}_n}\,{{(a + 1 - c)}_n}}}{x^{c - a - 1}}.
\end{multline}
Relations (9.27), (9.28) and (9.25) now show that in addition to (9.24), $ {{\bf{G}}_n}(x;a,c) $ satisfies the equation
\begin{align}\label{eq9.29}
& x(1 - x)\,{\bf{G}}''_n(x;a,c) + (a + 1 - (a + 2)x)\,{\bf{G}}'_n(x;a,c) + \left( {n(n + a + 1) + (2c - a + 1){x^{ - 1}}} \right){{\bf{G}}_n}(x;a,c)\\
 &\qquad = \frac{{(2c - a + 1)(a - c - 1)(c + 1)}}{{(2c - a + 2)}}\notag \\
&\qquad\qquad \times \left( {{x^{c - a}}\int {{x^{a - c - 2}}{{\bf{G}}_n}(x;a,c)\,dx}  - {x^{ - c - 2}}\int {{x^c}{{\bf{G}}_n}(x;a,c)\,dx} } \right)\notag \\
& \qquad= (2c - a + 1)\,{x^{ - 1}}{}_3{F_2}\left( {\left. {\begin{array}{*{20}{c}}
{ - n,\,\,n + a + 1,\,a - c - 1}\\
{a + 1,\,\,a - c + 1}
\end{array}\,} \right|\,x} \right)\notag \\
&\qquad\qquad + \frac{{(2c - a + 1)(a - c - 1)}}{{c + 1}}\frac{{n!\,\,{{(c + 1)}_n}}}{{{{(a + 1)}_n}\,{{(a + 1 - c)}_n}}}{x^{c - a - 1}}.\notag
\end{align}
Let us write equation \eqref{eq9.29} in a self-adjoint form as
\begin{multline}\label{eq9.30}
{\left( {{x^{a + 1}}(1 - x)\,{\bf{G}}'_n(x;a,c)} \right)^\prime } + \left( {n(n + a + 1){x^a} + (2c - a + 1){x^{a - 1}}} \right){{\bf{G}}_n}(x;a,c)\\
 = {x^a}\,{{\bf{R}}_n}(x;a,c),
\end{multline}
and for the index $ m $ as
\begin{multline}\label{eq9.31}
{\left( {{x^{a + 1}}(1 - x)\,{\bf{G}}'_m(x;a,c)} \right)^\prime } + \left( {m(m + a + 1){x^a} + (2c - a + 1){x^{a - 1}}} \right){{\bf{G}}_m}(x;a,c) \\
= {x^a}\,{{\bf{R}}_m}(x;a,c).
\end{multline}
On multiplying by $ {{\bf{G}}_m}(x;a,c) $ in \eqref{eq9.30} and $ {{\bf{G}}_n}(x;a,c) $ in \eqref{eq9.31} and subtracting we get
\begin{multline}\label{eq9.32}
\Big[{x^{a + 1}}(1 - x)\left( {\bf{G}}_m(x;a,c){\bf{G}}'_n(x;a,c) - {{\bf{G}}_n}(x;a,c){\bf{G}}'_m(x;a,c) \right) \Big]_0^1\\
 + \big( {n(n + a + 1) - m(m + a + 1)} \big)\int_0^1 {{x^a}\,{{\bf{G}}_n}(x;a,c){{\bf{G}}_m}(x;a,c)\,dx} \\
 = \int_0^1 {{x^a}\,\Big( {{{\bf{G}}_m}(x;a,c){{\bf{R}}_n}(x;a,c) - {{\bf{G}}_n}(x;a,c){{\bf{R}}_m}(x;a,c)} \Big)\,dx} \,.
\end{multline}
Since in \eqref{eq9.32},
\begin{multline*}
\int_0^1 {{x^a}\,{{\bf{G}}_n}(x;a,c){{\bf{G}}_m}(x;a,c)\,dx}  = \\
\Big[ {{x^{a + 1}}(1 - x)\left( {{{\bf{G}}_m}(x;a,c){\bf{G}}'_n(x;a,c) - {{\bf{G}}_n}(x;a,c){\bf{G}}'_m(x;a,c)\,} \right)} \Big]_0^1 = 0 \Leftrightarrow n \ne m,
\end{multline*}
if we take
\[{\bf{A}}(n,m) = \int_0^1 {{x^a}\,{{\bf{G}}_n}(x;a,c)\,{{\bf{R}}_m}(x;a,c)\,dx} \,,\]
relation \eqref{eq9.32} shows that for any $ n\ne m $ we finally have
\[{\bf{A}}(m,n) = {\bf{A}}(n,m).\]

\section{p-uncorrelated vectors with respect to a fixed vector}\label{sec10}
The concept of p-uncorrelatedness can also be employed in vector spaces. Let $ {\vec A_m} = ({a_1},{a_2},...,{a_m}) $ and $ {\vec B_m} = ({b_1},{b_2},...,{b_m}) $ be two arbitrary vectors and $ {\vec I_m} = (1,1,...,1) $ denote a unit vector. Also let $ {\vec Z_m} = ({z_1},{z_2},...,{z_m}) $ be a fixed and predetermined vector. Recalling the definition of the inner product of two vectors as
\[{\vec A_m}.{\vec B_m} = \sum\limits_{k = 1}^m {{a_k}{b_k}} ,\]
it is not difficult to verify that
\begin{align}\label{eq10.1}
&\left( {{{\vec A}_m} - (1 - \sqrt {1 - p} )\frac{{{{\vec A}_m}.{{\vec Z}_m}}}{{{{\vec Z}_m}.{{\vec Z}_m}}}{{\vec Z}_m}} \right).\left( {{{\vec B}_m} - (1 - \sqrt {1 - p} )\frac{{{{\vec B}_m}.{{\vec Z}_m}}}{{{{\vec Z}_m}.{{\vec Z}_m}}}{{\vec Z}_m}} \right) \\
&\qquad\qquad\qquad\qquad\qquad= {\vec A_m}.{\vec B_m} - p\frac{{({{\vec A}_m}.{{\vec Z}_m})({{\vec B}_m}.{{\vec Z}_m})}}{{{{\vec Z}_m}.{{\vec Z}_m}}}.\notag
\end{align}
For instance, if $ {\vec Z_m} = {\vec I_m} $, then
\begin{align*}
&\left( {{{\vec A}_m} - (1 - \sqrt {1 - p} )\frac{{{{\vec A}_m}.{{\vec I}_m}}}{m}{{\vec I}_m}} \right).\left( {{{\vec B}_m} - (1 - \sqrt {1 - p} )\frac{{{{\vec B}_m}.{{\vec I}_m}}}{m}{{\vec I}_m}} \right) \\
&\qquad\qquad\qquad\qquad\qquad= {\vec A_m}.{\vec B_m} - \frac{p}{m}({\vec A_m}.{\vec I_m})({\vec B_m}.{\vec I_m}),
\end{align*}
where $ {\vec I_m}.{\vec I_m} = m $ and $ p\in[0,1] $.

Relation \eqref{eq10.1} shows that the two vectors $ {\vec A_m}\,\,\,{\rm{and}}\,\,\,{\vec B_m} $ are p-uncorrelated with respect to the fixed vector $ {\vec Z_m} $  if
\[p\,({\vec A_m}.{\vec Z_m})({\vec B_m}.{\vec Z_m}) = ({\vec A_m}.{\vec B_m})({\vec Z_m}.{\vec Z_m})\,.\]
Also, the notions of p-covariance and p-variance can be defined for these two vectors (with respect to $ {\vec Z_m} $) as follows
\begin{equation}\label{eq10.2}
{{\mathop{\rm cov}} _p}({\vec A_m},{\vec B_m};\,{\vec Z_m}) = \frac{1}{m}\left( {{{\vec A}_m}.{{\vec B}_m} - p\frac{{({{\vec A}_m}.{{\vec Z}_m})({{\vec B}_m}.{{\vec Z}_m})}}{{{{\vec Z}_m}.{{\vec Z}_m}}}} \right),
\end{equation}
and
\begin{equation}\label{eq10.3}
{{\mathop{\rm var}} _p}({\vec A_m};\,{\vec Z_m}) = \frac{1}{m}\left( {{{\vec A}_m}.{{\vec A}_m} - p\frac{{{{({{\vec A}_m}.{{\vec Z}_m})}^2}}}{{{{\vec Z}_m}.{{\vec Z}_m}}}} \right) \ge 0.
\end{equation}
Referring to the basic representation \eqref{eq3.12} and definitions \eqref{eq10.2} and \eqref{eq10.3}, we can establish a set of p-uncorrelated vectors in terms of the parameter $ p\in[0,1] $ if and only if $ m=n+1 $ and the finite set of initial vectors are linearly independent. Under such conditions we have
\begin{multline}\label{eq10.4}
\Delta _{n - 1}^{(p)}\left( {\{ {{\vec V}_{k,m}}\} _{k = 0}^{n - 1};{{\vec Z}_m}} \right){\vec X_{n,m}}(p) \\
= \begin{vmatrix}
{{{{\mathop{\rm var}} }_p}\,({{\vec V}_{0,m}};{{\vec Z}_m})} &  {{{{\mathop{\rm cov}} }_p}\,({{\vec V}_{0,m}},{{\vec V}_{1,m}};{{\vec Z}_m})} & \cdots & {{{{\mathop{\rm cov}} }_p}\,({{\vec V}_{0,m}},{{\vec V}_{n,m}};{{\vec Z}_m})} \\
{{{{\mathop{\rm cov}} }_p}\,({{\vec V}_{1,m}},{{\vec V}_{0,m}};{{\vec Z}_m})} & {{{{\mathop{\rm var}} }_p}\,({{\vec V}_{1,m}};{{\vec Z}_m})} & \cdots & {{{{\mathop{\rm cov}} }_p}\,({{\vec V}_{1,m}},{{\vec V}_{n,m}};{{\vec Z}_m})} \\
 \vdots &  \vdots  & \vdots & \vdots   \\
{{{{\mathop{\rm cov}} }_p}\,({{\vec V}_{n - 1,m}},{{\vec V}_{0,m}};{{\vec Z}_m})} & {{{{\mathop{\rm cov}} }_p}\,({{\vec V}_{n - 1,m}},{{\vec V}_{1,m}};{{\vec Z}_m})} & \cdots & {{{{\mathop{\rm cov}} }_p}\,({{\vec V}_{n - 1,m}},{{\vec V}_{n,m}};{{\vec Z}_m})}\\
{{\vec V}_{0,m}} & {{\vec V}_{1,m}} & \cdots  & {{\vec V}_{n,m}}
\end{vmatrix},
\end{multline}
where
\begin{multline}\label{eq10.5}
\Delta _{n - 1}^{(p)}\left( {\{ {{\vec V}_{k,m}}\} _{k = 0}^{n - 1};{{\vec Z}_m}} \right) \\
=\begin{vmatrix}
{{{{\mathop{\rm var}} }_p}\,({{\vec V}_{0,m}};{{\vec Z}_m})} &  {{{{\mathop{\rm cov}} }_p}\,({{\vec V}_{0,m}},{{\vec V}_{1,m}};{{\vec Z}_m})} & \cdots & {{{{\mathop{\rm cov}} }_p}\,({{\vec V}_{0,m}},{{\vec V}_{n,m}};{{\vec Z}_m})} \\
{{{{\mathop{\rm cov}} }_p}\,({{\vec V}_{1,m}},{{\vec V}_{0,m}};{{\vec Z}_m})} & {{{{\mathop{\rm var}} }_p}\,({{\vec V}_{1,m}};{{\vec Z}_m})} & \cdots & {{{{\mathop{\rm cov}} }_p}\,({{\vec V}_{1,m}},{{\vec V}_{n,m}};{{\vec Z}_m})} \\
 \vdots &  \vdots  & \vdots & \vdots   \\
{{{{\mathop{\rm cov}} }_p}\,({{\vec V}_{n - 1,m}},{{\vec V}_{0,m}};{{\vec Z}_m})} & {{{{\mathop{\rm cov}} }_p}\,({{\vec V}_{n - 1,m}},{{\vec V}_{1,m}};{{\vec Z}_m})} & \cdots & {{{{\mathop{\rm var}} }_p}\,({{\vec V}_{n - 1,m}};{{\vec Z}_m})}
\end{vmatrix},
\end{multline}
and $ \Delta _{ - 1}^{(p)}(.) = 1 $. To better clarify the issue, here we consider a specific example.
\begin{example}\label{ex9.1}
For $ m=3 $, given three initial orthogonal vectors
\[{\vec V_{0,3}}(1,0,0),\,\,{\vec V_{1,3}}(0,1,0),\,\,{\vec V_{2,3}}(0,0,1),\]
together with the fixed vector $ {\vec Z_3}(1,2,3) $. Substituting them into \eqref{eq10.4} and \eqref{eq10.5} eventually yields
\begin{align}
{{\vec X}_{0,3}}(p) &= (1,0,0),\nonumber\\
{{\vec X}_{1,3}}(p) &= (\frac{{2p}}{{14 - p}},1,0),\label{eq10.6}\\
{{\vec X}_{2,3}}(p) &= (\frac{{3p}}{{14 - 5p}},\frac{{6p}}{{14 - 5p}},1),\nonumber
\end{align}
which satisfy the conditions
\[{{\mathop{\rm cov}} _p}\Big({\vec X_{0,3}}(p),{\vec X_{1,3}}(p);{\vec Z_3}\Big) = {{\mathop{\rm cov}} _p}\Big({\vec X_{0,3}}(p),{\vec X_{2,3}}(p);{\vec Z_3}\Big) = {{\mathop{\rm cov}} _p}\Big({\vec X_{1,3}}(p),{\vec X_{2,3}}(p);{\vec Z_3}\Big) = 0.\]
According to theorem \ref{thm4.1}, every arbitrary vector of dimension 3 can be expanded in terms of the above vectors so that we have
\begin{multline}\label{eq10.7}
\vec A = (a,b,c) = \frac{{{{{\mathop{\rm cov}} }_p}\,({{\vec X}_{0,3}}(p),\vec A;{{\vec Z}_3})}}{{{{{\mathop{\rm var}} }_p}\,({{\vec X}_{0,3}}(p);{{\vec Z}_3})}}(1,0,0) + \frac{{{{{\mathop{\rm cov}} }_p}\,({{\vec X}_{1,3}}(p),\vec A;{{\vec Z}_3})}}{{{{{\mathop{\rm var}} }_p}\,({{\vec X}_{1,3}}(p);{{\vec Z}_3})}}(\frac{{2p}}{{14 - p}},1,0)\\
\,\,\,\,\,\,\,\,\,\,\,\,\,\,\,\,\,\,\,\,\,\,\,\,\,\, + \frac{{{{{\mathop{\rm cov}} }_p}\,({{\vec X}_{2,3}}(p),\vec A;{{\vec Z}_3})}}{{{{{\mathop{\rm var}} }_p}\,({{\vec X}_{2,3}}(p);{{\vec Z}_3})}}(\frac{{3p}}{{14 - 5p}},\frac{{6p}}{{14 - 5p}},1),
\end{multline}
in which
\begin{align}\label{eq10.8}
& \frac{{{{{\mathop{\rm cov}} }_p}\,({{\vec X}_{0,3}}(p),\vec A;{{\vec Z}_3})}}{{{{{\mathop{\rm var}} }_p}\,({{\vec X}_{0,3}}(p);{{\vec Z}_3})}} = a - \frac{{2p}}{{14 - p}}b - \frac{{3p}}{{14 - p}}c,\nonumber\\
& \frac{{{{{\mathop{\rm cov}} }_p}\,({{\vec X}_{1,3}}(p),\vec A;{{\vec Z}_3})}}{{{{{\mathop{\rm var}} }_p}\,({{\vec X}_{1,3}}(p);{{\vec Z}_3})}} = b - \frac{{6p}}{{14 - 5p}}c,\\
& \frac{{{{{\mathop{\rm cov}} }_p}\,({{\vec X}_{2,3}}(p),\vec A;{{\vec Z}_3})}}{{{{{\mathop{\rm var}} }_p}\,({{\vec X}_{2,3}}(p);{{\vec Z}_3})}} = c.\nonumber
\end{align}
For $ p=0 $, the finite expansion \eqref{eq10.7} would reduce to an ordinary orthogonal expansion, while there is an important point for the case $ p=1 $. We observe that replacing $ p=1 $ in the last item of \eqref{eq10.6} gives
\[{\vec X_{2,3}}(1) = \frac{1}{3}(1,2,3) = \frac{1}{3}{\vec Z_3}.\]
Hence, in \eqref{eq10.7} (and subsequently \eqref{eq10.8}),
\[{{\mathop{\rm cov}} _1}\,({\vec X_{2,3}}(1),\vec A;{\vec Z_3}) = {{\mathop{\rm cov}} _1}\,({\vec X_{2,3}}(1),\vec A;3{\vec X_{2,3}}(1)) = 0,\]
and
\[{{\mathop{\rm var}} _1}\,({\vec X_{2,3}}(1);{\vec Z_3}) = {{\mathop{\rm var}} _1}\,(\frac{1}{3}{\vec Z_3};{\vec Z_3}) = 0,\]
which shows that the expansion \eqref{eq10.7} is not valid for the sole case $ p=1 $, although it is  valid for any other $ p\in[0,1) $. This is one of the reasons why we have considered this theory for every arbitrary parameter $ p\in[0,1] $. For a better analysis, see figure \ref{fig1} again. In general, we conjecture that
\begin{equation}\label{eq10.9}
{\vec X_{n,m}}(1) = \frac{{\det \,({{\vec V}_{0,m}},{{\vec V}_{1,m}},...,{{\vec V}_{n - 1,m}},{{\vec V}_{n,m}})}}{{\det \,({{\vec V}_{0,m}},{{\vec V}_{1,m}},...,{{\vec V}_{n - 1,m}},{{\vec Z}_m})}}\,\,{\vec Z_m}\,\,\,\,\,\,\,\,\,\,\,(m = n + 1),
\end{equation}
as for the above-mentioned example 
\[\det \,({\vec V_{0,3}},{\vec V_{1,3}},{\vec V_{2,3}}) = \left| {\,\begin{array}{*{20}{c}}
1&0&0\\
0&1&0\\
0&0&1
\end{array}\,} \right| = 1\,\,\,\,\,\,\,{\rm{and}}\,\,\,\,\,\,\,\det \,({\vec V_{0,3}},{\vec V_{1,3}},{\vec Z_3}) = \left| {\,\begin{array}{*{20}{c}}
1&0&1\\
0&1&2\\
0&0&3
\end{array}\,} \right| = 3.\]
We symbolically examined such a conjecture \eqref{eq10.9} for the particular cases $ n=2,3 $ and the results were true.

Now that the explicit forms of the p-uncorrelated vectors \eqref{eq10.6} are available, we can make three parametric orthogonal vectors based on them as follows
\begin{align}\label{eq10.10}
{{\vec V}_{0,3}}(p) &= {{\vec X}_{0,3}}(p) - (1 - \sqrt {1 - p} )\frac{{{{\vec X}_{0,3}}(p).{{\vec Z}_3}}}{{{{\vec Z}_3}.{{\vec Z}_3}}}{{\vec Z}_3} \\
&= \frac{1}{{14}}(13 + \sqrt {1 - p} , - 2 + 2\sqrt {1 - p} , - 3 + 3\sqrt {1 - p} ),\notag\\
{{\vec V}_{1,3}}(p)& = \frac{1}{{14 - p}}(2p - 2 + 2\sqrt {1 - p} , - p + 10 + 4\sqrt {1 - p} , - 6 + 6\sqrt {1 - p} ),\nonumber\\
{{\vec V}_{2,3}}(p) &= \frac{1}{{14 - 5p}}(3p - 3 + 3\sqrt {1 - p} ,6p - 6 + 6\sqrt {1 - p} , - 5p + 5 + 9\sqrt {1 - p} ).\nonumber
\end{align}
Note in the above vectors that
\[{\vec V_{0,3}}(0) = {\vec V_{0,3}},\,\,\,{\vec V_{1,3}}(0) = {\vec V_{1,3}}\,\,\,\,{\rm{and}}\,\,\,\,\,{\vec V_{2,3}}(0) = {\vec V_{2,3}},\]
while for $ p=1 $ we have
\[{\vec V_{0,3}}(1) = \frac{1}{{14}}(13, - 2, - 3),\,\,\,\,\,\,{\vec V_{1,3}}(1) = \frac{3}{{13}}(0,3, - 2)\,\,\,\,\,\,\text{and}\,\,\,\,\,{\vec V_{2,3}}(1) = (0,0,0),\]
which confirms that it is not valid for choosing in the orthogonal vectors \eqref{eq10.10}.
\end{example}

\section{An upper bound for 1-covariances}\label{sec11}
As the optimized case is $ p=1 $, in this part we are going to obtain an upper bound for $ {{\mathop{\rm cov}} _1}\,(X,Y;Z) $.

Let $ {m_X},{m_Y},{M_X} $ and $ M_{Y} $ be real numbers such that
\begin{equation}\label{eq11.1}
{m_X}\,Z \le X \le {M_X}\,Z\quad \text{and} \quad {m_Y}\,Z \le Y \le {M_Y}\,Z.
\end{equation}
It can be verified that the following identity holds true
\begin{multline}\label{eq11.2}
{{\mathop{\rm var}} _1}(X;Z)= \\
\frac{1}{{E({Z^2})}}\Big( {{M_X}E({Z^2}) - E(XZ)} \Big)\Big( {E(XZ) - {m_X}E({Z^2})} \Big) - E\Big( {\left( {{M_X}Z - X} \right)\left( {X - {m_X}Z} \right)} \Big).
\end{multline}
Noting the conditions \eqref{eq11.1} and this fact that
\[E\Big( {\left( {{M_X}\,Z - X} \right)\left( {X - {m_X}\,Z} \right)} \Big)\, \ge 0\,,\]
equality \eqref{eq11.2} leads to the inequality
\begin{multline}\label{eq11.3}
{{\mathop{\rm var}} _1}(X;Z) \le \frac{1}{{E({Z^2})}}\Big( {{M_X}E({Z^2}) - E(XZ)} \Big)\Big( {E(XZ) - {m_X}E({Z^2})} \Big)\\
 \le \frac{1}{{4E({Z^2})}}{\Big( {{M_X}E({Z^2}) - {m_X}E({Z^2})} \Big)^2} = \frac{{E({Z^2})}}{4}{\left( {{M_X} - {m_X}} \right)^2}.
\end{multline}
On the other side, following \eqref{eq11.3} in the well-known inequality
\[{\mathop{\rm cov}} _1^2(X,Y;Z) \le {{\mathop{\rm var}} _1}\,(X;Z)\,\,{\rm var}_{1}(Y;Z),\]
gives
\begin{align}\label{eq11.4}
{\mathop{\rm cov}} _1^2(X,Y;Z) & \le \frac{1}{{{E^2}({Z^2})}}\Big( {{M_X}E({Z^2}) - E(XZ)} \Big)\Big( {E(XZ) - {m_X}E({Z^2})} \Big)\\
 &\times \,\,\Big( {{M_Y}E({Z^2}) - E(YZ)} \Big)\Big( {E(YZ) - {m_Y}E({Z^2})} \Big)\notag\\
 &\le \frac{{{E^2}({Z^2})}}{{16}}{\left( {{M_X} - {m_X}} \right)^2}{\left( {{M_Y} - {m_Y}} \right)^2}.\notag
\end{align}
One of the direct consequences of \eqref{eq11.4} is that
\begin{equation}\label{eq11.5}
\left| {\,{{{\mathop{\rm cov}} }_1}(X,Y;Z)\,} \right| \le \frac{{E({Z^2})}}{4}\left( {{M_X} - {m_X}} \right)\left( {{M_Y} - {m_Y}} \right),
\end{equation}
where the constant $ 1/4 $ in \eqref{eq11.5} is the best possible number in the sense that it cannot be replaced by a smaller quantity.

As a particular case, if in \eqref{eq11.5} we take
\[{P_r}(X = x) = \frac{{w(x)}}{{\int_\alpha ^\beta  {w(x)\,dx} }},\quad X = f(x),\,\,Y = g(x)\,\,\,{\rm{and}}\,\,\,\,Z = z(x) = 1,\]
it will reduce to the weighted Gr\"{u}ss inequality \cite{ref19, ref20}
\[\left| {\frac{{\int_\alpha ^\beta  {w(x)f(x)g(x)\,dx} }}{{\int_\alpha ^\beta  {w(x)\,dx} }} - \frac{{\left( {\int_\alpha ^\beta  {w(x)f(x)\,dx} } \right)\,\left( {\int_\alpha ^\beta  {w(x)g(x)\,dx} } \right)}}{{{{(\int_\alpha ^\beta  {w(x)\,dx} )}^2}}}\,} \right| \le \frac{1}{4}({M_f} - {m_f})({M_g} - {m_g}),\]
in which
\[{m_f} \le f(x) \le {M_f}\quad \text{and}\quad {m_g} \le g(x) \le {M_g} \quad\text{for all}\quad x \in [\alpha ,\beta ].\]

\section{An approximation for p-variances using quadrature rules}\label{sec12}

We begin this section with a general $ n $-point (weighted) quadrature rule as 
\begin{equation}\label{eq12.1}
\int_{\,a}^{\,b} {w(x)\,f(x)\,dx}  = \sum\limits_{k = 1}^n {\,{w_k}\,f({x_k})}  + {R_n}[f],
\end{equation}
in which $ w(x) $ is positive on $ [a,b] $, $ \{ {x_k}\} _{k = 1}^n $ and $ \{ {w_k}\} _{k = 1}^n $ are respectively nodes and weight coefficients and $ {R_n}[f] $ is the corresponding error, see e.g. \cite{ref14, ref16}.

If $ {\mathbf{\Pi}}_{d} $ denotes the set of all algebraic polynomials of degree at most $ d $, the rule \eqref{eq12.1} has degree of exactness $ d $ if for every $ p\in{\mathbf{\Pi}}_{d} $ we have $ {R_n}[p] = 0 $. Moreover, if $ {R_n}[p] \ne 0 $ for some $ p\in{\mathbf{\Pi}}_{d+1} $, formula \eqref{eq12.1} has precise degree of exactness $ d $.

It is well known that for given $ n $ mutually different nodes $ \{ {x_k}\} _{k = 1}^n $ we can always achieve a degree of exactness $ d=n-1 $ by interpolating at these nodes and integrating the interpolated polynomial instead of $ f $. Namely, taking the node polynomial
\[{N_n}(x) = \prod\limits_{k = 1}^n {(x - {x_k})}, \]
and integrating from the Lagrange interpolation formula
\begin{equation}\label{eq12.2}
f(x) = \sum\limits_{k = 1}^n {f({x_k})\,L(x\,;\,{x_k})}  + \frac{1}{{n!}}{f^{(n)}}({\xi _x})\,{N_n}(x)\,,
\end{equation}
where
\[L(x\,;\,{x_k}) = \frac{{{N_n}(x)}}{{{{N'}_n}({x_k})(x - {x_k})}}\,,\]
we obtain \eqref{eq12.1}, with
\[{w_k} = \frac{1}{{{{N'}_n}({x_k})}}\,\,\int_{\,a}^{\,b} {\frac{{{N_n}(x)\,w(x)}}{{x - {x_k}}}\,dx} \,,\]
and
\begin{equation}\label{eq12.3}
{R_n}[f] = \,\frac{1}{{n!}}\int_{\,a}^{\,b} {{f^{(n)}}({\xi _x})\,{N_n}(x)\,w(x)\,dx} \,.
\end{equation}
It is clear in \eqref{eq12.3} that if $ f\in{\mathbf{\Pi}}_{n-1} $ then $ {R_n}[f]=0 $.

To approximate the p-variance values
\begin{equation}\label{eq12.4}
{{\mathop{\rm var}} _p}(f(x);z(x)) = \frac{{\int_{\,a}^b {w(x)\,{f^2}(x)\,dx} }}{{\int_{\,a}^b {w(x)\,dx} }} - p\frac{{{{(\int_{\,a}^b {w(x)\,f(x)\,z(x)\,dx} )}^2}\,}}{{\,\int_{\,a}^b {w(x)\,dx} \,\int_{\,a}^b {w(x)\,{z^2}(x)\,dx} }},
\end{equation}
we can similarly follow the above-mentioned approach.

For this purpose, it is just sufficient to apply $ {{\mathop{\rm cov}} _p}\left( {f(x),\,(.);z(x)} \right) $ on both sides of \eqref{eq12.2} to get
\begin{align*}
&{{\mathop{\rm var}} _p}\left( {f(x);z(x)} \right) = {{\mathop{\rm cov}} _p}\left( {f(x),\sum\limits_{k = 1}^n {f({x_k})\,L(x\,;\,{x_k})} ;z(x)} \right) \\
&\qquad\qquad\qquad\qquad+ {{\mathop{\rm cov}} _p}\left( {f(x),\frac{1}{{n!}}{f^{(n)}}({\xi _x})\,{N_n}(x);z(x)} \right)\\
& = \sum\limits_{k = 1}^n {f({x_k}){{{\mathop{\rm cov}} }_p}\left( {f(x),L(x\,;\,{x_k});z(x)} \right)\,}  + \frac{1}{{n!}}{{\mathop{\rm cov}} _p}\left( {f(x),{f^{(n)}}({\xi _x})\,{N_n}(x);z(x)} \right)\,,
\end{align*}
which gives the right side of quadrature formula \eqref{eq12.1} with
\begin{equation}\label{eq12.5}
{w_k} = {{\mathop{\rm cov}} _p}\left( {f(x),L(x\,;\,{x_k});z(x)} \right)\,,
\end{equation}
and
\begin{equation}\label{eq12.6}
{R_n}[f] = \frac{1}{{n!}}\,{{\mathop{\rm cov}} _p}\left( {f(x),{f^{(n)}}({\xi _x})\,{N_n}(x);z(x)} \right)\,.
\end{equation}
We observe in \eqref{eq12.6} that for any $ f\in{\mathbf{\Pi}}_{n-1} $ we automatically have $ {R_n}[f]=0 $.

Of course, another method to obtain the coefficients \eqref{eq12.5} is to use the undetermined coefficient method via solving the linear system
\begin{equation}\label{eq12.7}
\left[ {\begin{array}{*{20}{c}}
{\begin{array}{*{20}{c}}
1\\
{{x_1}}\\
 \vdots \\
{x_1^{n - 1}}
\end{array}}&{\begin{array}{*{20}{c}}
1\\
{{x_2}}\\
 \vdots \\
{x_2^{n - 1}}
\end{array}}&{\begin{array}{*{20}{c}}
 \cdots \\
 \cdots \\
 \vdots \\
 \cdots 
\end{array}}&{\begin{array}{*{20}{c}}
1\\
{{x_n}}\\
 \vdots \\
{x_n^{n - 1}}
\end{array}}
\end{array}} \right]\left[ {\begin{array}{*{20}{c}}
{{w_1}}\\
{{w_2}}\\
 \vdots \\
{{w_n}}
\end{array}} \right] = \left[ {\begin{array}{*{20}{c}}
{{{{\mathop{\rm var}} }_p}\,(1;z(x))}\\
{{{{\mathop{\rm var}} }_p}\,(x;z(x))}\\
 \vdots \\
{{{{\mathop{\rm var}} }_p}\,({x^{n - 1}};z(x))}
\end{array}} \right].
\end{equation}
For example, the two-point approximate formula for evaluating \eqref{eq12.4} via \eqref{eq12.7} is
\begin{align*}
{{\mathop{\rm var}} _p}(f(x);z(x)) & \cong \frac{{{x_2}{{{\mathop{\rm var}} }_p}(1;z(x)) - {{{\mathop{\rm var}} }_p}(x;z(x))}}{{{x_2} - {x_1}}}\,f({x_1}) \\
&\,\,\,- \frac{{{x_1}{{{\mathop{\rm var}} }_p}(1;z(x)) - {{{\mathop{\rm var}} }_p}(x;z(x))}}{{{x_2} - {x_1}}}\,f({x_2})\,.
\end{align*}

\section{On improving the approximate solutions of over-determined systems}\label{sec13}

For $ n>m $, consider the linear system of equations
\begin{equation}\label{eq13.1}
\sum\limits_{j = 1}^m {{a_{i,j}}\,{x_j}}  = {b_i}\,\,\,\,\,\,(i = 1,2,...,n),
\end{equation}
whose matrix representation is $ {A_{n \times m}}{X_{m \times 1}} = {B_{n \times 1}} $ where
\[A = \left[ {\begin{array}{*{20}{c}}
{\begin{array}{*{20}{c}}
{{a_{11}}}\\
{{a_{21}}}\\
 \vdots \\
{{a_{n1}}}
\end{array}}&{\begin{array}{*{20}{c}}
{{a_{12}}}\\
{{a_{22}}}\\
 \vdots \\
{{a_{n1}}}
\end{array}}&{\begin{array}{*{20}{c}}
 \cdots \\
 \cdots \\
 \vdots \\
 \cdots 
\end{array}}&{\begin{array}{*{20}{c}}
{{a_{1m}}}\\
{{a_{2m}}}\\
 \vdots \\
{{a_{nm}}}
\end{array}}
\end{array}} \right],\,\,\,\,X = \left[ {\begin{array}{*{20}{c}}
{{x_1}}\\
{{x_2}}\\
 \vdots \\
{{x_m}}
\end{array}} \right]\,\,\,\,\,{\rm{and}}\,\,\,\,\,B = \,\left[ {\begin{array}{*{20}{c}}
{{b_1}}\\
{{b_2}}\\
 \vdots \\
{{b_n}}
\end{array}} \right].\]
As we mentioned in the introduction part, the linear system \eqref{eq13.1} is called an over-determined system since the number of equations is more than the number of unknowns.

Such systems usually have no exact solution and the goal is instead to find an approximate solution for the unknowns $ \{ {x_j}\} _{j = 1}^m $ which fit equations in the sense of solving the problem
\begin{equation}\label{eq13.2}
\mathop {\min }\limits_{\{ {x_j}\} } {E_{m,n}}({x_1},...,{x_m}) = \mathop {\min }\limits_{\{ {x_j}\} } \,\sum\limits_{i = 1}^n \Big(\sum\limits_{j = 1}^m {{a_{i,j}}\,{x_j}}  - {b_i}\Big)^2 .
\end{equation}
It has been proved \cite{ref2} that the minimization problem (13.2) has a unique vector solution provided that the $ m $ columns of the matrix $ A $ are linearly independent, given by solving the normal equations
\[{A^T}A\,\tilde X\, = {A^T}b,\]
where $ A^{T} $ indicates the matrix transpose of $ A $ and $ \tilde{X} $ is the approximate solution of the least squares type expressed by
\[\tilde X = {({A^T}A)^{ - 1}}{A^T}b.\]
Instead of considering the problem \eqref{eq12.2}, we would like now to consider the minimization problem
\begin{equation}\label{eq13.3}
\mathop {\min }\limits_{\{ {x_j}\} } {V_{m,n}}(\left. {{x_1},...,{x_m}} \right|\{ {z_i}\} _{i = 1}^n) = \mathop {\min }\limits_{\{ {x_j}\} } \,\,\sum\limits_{i = 1}^n \Big(\sum\limits_{j = 1}^m {{a_{i,j}}\,{x_j}}  - {b_i}\Big)^2  - \frac{p}{{\sum\limits_{i = 1}^n {z_i^2} }}{\left( {\sum\limits_{i = 1}^n {{z_i}\,(\sum\limits_{j = 1}^m {{a_{i,j}}\,{x_j}}  - {b_i})} } \right)^2},
\end{equation}
based on the fixed vector $ Z_{1 \times n}^T = [{z_1},{z_2},...,{z_n}] $, where the quantity \eqref{eq13.3} is clearly smaller than the quantity \eqref{eq13.2} for any arbitrary selection of $ \{ {z_i}\} _{i = 1}^n $. In this direction, 
\begin{multline*}
\frac{{\partial {V_{m,n}}(\left. {{x_1},...,{x_m}} \right|\{ {z_i}\} _{i = 1}^n)}}{{\partial {x_k}}} \\
= 2\sum\limits_{i = 1}^n {{a_{i,k}}(\sum\limits_{j = 1}^m {{a_{i,j}}\,{x_j}}  - {b_i})}  - \frac{{2p}}{{\sum\limits_{i = 1}^n {z_i^2} }}\left( {\sum\limits_{i = 1}^n {{a_{i,k}}{z_i}} } \right)\left( {\sum\limits_{i = 1}^n {{z_i}\,(\sum\limits_{j = 1}^m {{a_{i,j}}\,{x_j}}  - {b_i})} } \right) = 0,
\end{multline*}
leads to the linear system
\begin{multline}\label{eq13.4}
\sum\limits_{j = 1}^m {\left( {\sum\limits_{i = 1}^n {{a_{i,k}}{a_{i,j}}}  - p\frac{{\sum\limits_{i = 1}^n {{a_{i,k}}{z_i}} \sum\limits_{i = 1}^n {{a_{i,j}}\,{z_i}} }}{{\sum\limits_{i = 1}^n {z_i^2} }}} \right)\,{x_j}}\\
  = \sum\limits_{i = 1}^n {{a_{i,k}}{b_i}}  - p\frac{{\sum\limits_{i = 1}^n {{a_{i,k}}{z_i}} \sum\limits_{i = 1}^n {{b_i}\,{z_i}} }}{{\sum\limits_{i = 1}^n {z_i^2} }}\qquad(k = 1,2,...,m),
\end{multline}
which can also be represented as the matrix form
\[\left( {{A^T}A - \frac{p}{{{Z^T}Z}}{A^T}Z\,{Z^T}A} \right)\,{\tilde X_{p,Z}} = {A^T}B - \frac{p}{{{Z^T}Z}}{A^T}Z\,{Z^T}B,\]
with the solution
\begin{equation}\label{eq13.5}
{\tilde X_{p,Z}} = {\left( {{A^T}A - \frac{p}{{{Z^T}Z}}{A^T}Z\,{Z^T}A} \right)^{ - 1}}\left( {{A^T}B - \frac{p}{{{Z^T}Z}}{A^T}Z\,{Z^T}B} \right).
\end{equation}
A simple case of the approximate solution \eqref{eq13.5} is when $ Z_{1 \times n}^T = {I_n} = [1,1,...,1] $ and $ p=1 $, i.e. an ordinary least variance problem. In this case, \eqref{eq13.5} becomes
\[{\tilde X_{1,{I_n}}} = {\left( {{A^T}A - \frac{1}{n}{A^T}I_n^T{I_n}\,A} \right)^{ - 1}}\left( {{A^T}B - \frac{1}{n}{A^T}I_n^T{I_n}\,B} \right).\]
Let us consider a numeric example for the ordinary variances case.
\begin{example}\label{ex13.1}
Suppose $ m=2 $, $ Z_{1 \times n}^T = {I_n} = [1,1,...,1] $ and $ p=1 $. Then, the corresponding over-determined system takes the simple form
\[{a_{i,1}}\,{x_1} + {a_{i,2}}\,{x_2} = {b_i}\,\,\,\,\,\,(i = 1,2,...,n > 2),\]
and the problem \eqref{eq13.3} reduces to
\begin{equation}\label{eq13.6}
\mathop {\min }\limits_{\{ {x_1},{x_2}\} } {V_{2,n}}(\left. {{x_1},{x_2}} \right|{I_n}) = \mathop {\min }\limits_{\{ {x_1},{x_2}\} } \,\,\sum\limits_{i = 1}^n {{{({a_{i,1}}\,{x_1} + {a_{i,2}}\,{x_2} - {b_i})}^2}}  - \frac{1}{n}{\left( {\sum\limits_{i = 1}^n {({a_{i,1}}\,{x_1} + {a_{i,2}}\,{x_2} - {b_i})} } \right)^2}.
\end{equation}
Hence, the explicit solutions of the system \eqref{eq13.4}, i.e. 
\[\left\{ \begin{array}{l}
\left( {\sum\limits_{i = 1}^n {{{({a_{i,1}})}^2}}  - \dfrac{1}{n}{\Big(\sum\limits_{i = 1}^n {{a_{i,1}}}\Big)^2}} \right){x_1} + \left( {\sum\limits_{i = 1}^n {{a_{i,1}}{a_{i,2}}}  - \dfrac{1}{n}\sum\limits_{i = 1}^n {{a_{i,1}}} \sum\limits_{i = 1}^n {{a_{i,2}}} } \right){x_2} \\[3mm]
= \sum\limits_{i = 1}^n {{a_{i,1}}{b_i}}  - \dfrac{1}{n}\sum\limits_{i = 1}^n {{a_{i,1}}} \sum\limits_{i = 1}^n {{b_i}} ,\\[3mm]
\left( {\sum\limits_{i = 1}^n {{a_{i,2}}{a_{i,1}}}  - \dfrac{1}{n}\sum\limits_{i = 1}^n {{a_{i,2}}} \sum\limits_{i = 1}^n {{a_{i,1}}} } \right){x_1} + \left( {\sum\limits_{i = 1}^n {{{({a_{i,2}})}^2}}  - \dfrac{1}{n}{\Big(\sum\limits_{i = 1}^n {{a_{i,2}}}\Big)^2}} \right){x_2}\\[3mm]
 = \sum\limits_{i = 1}^n {{a_{i,2}}{b_i}}  - \dfrac{1}{n}\sum\limits_{i = 1}^n {{a_{i,2}}} \sum\limits_{i = 1}^n {{b_i}} ,
\end{array} \right.\]
are respectively
\[{x_1} = \frac{\begin{array}{l}
\left( {\sum\limits_{i = 1}^n {{a_{i,1}}{b_i}}  - \frac{1}{n}\sum\limits_{i = 1}^n {{a_{i,1}}} \sum\limits_{i = 1}^n {{b_i}} } \right)\left( {\sum\limits_{i = 1}^n {{{({a_{i,2}})}^2}}  - \frac{1}{n}{\Big(\sum\limits_{i = 1}^n {{a_{i,2}}}\Big)^2}} \right) \\
- \left( {\sum\limits_{i = 1}^n {{a_{i,2}}{b_i}}  - \frac{1}{n}\sum\limits_{i = 1}^n {{a_{i,2}}} \sum\limits_{i = 1}^n {{b_i}} } \right)\left( {\sum\limits_{i = 1}^n {{a_{i,1}}{a_{i,2}}}  - \frac{1}{n}\sum\limits_{i = 1}^n {{a_{i,1}}} \sum\limits_{i = 1}^n {{a_{i,2}}} } \right)
\end{array}}{\begin{array}{l}
\left( {\sum\limits_{i = 1}^n {{{({a_{i,1}})}^2}}  - \frac{1}{n}{\Big(\sum\limits_{i = 1}^n {{a_{i,1}}} \Big)^2}} \right)\left( {\sum\limits_{i = 1}^n {{{({a_{i,2}})}^2}}  - \frac{1}{n}{\Big(\sum\limits_{i = 1}^n {{a_{i,2}}} \Big)}^2} \right)\\
 - {{\left( {\sum\limits_{i = 1}^n {{a_{i,1}}{a_{i,2}}}  - \frac{1}{n}\sum\limits_{i = 1}^n {{a_{i,1}}} \sum\limits_{i = 1}^n {{a_{i,2}}} } \right)}^2}
\end{array}},
\]
and
\[{x_2} = \frac{\begin{array}{l}
\left( {\sum\limits_{i = 1}^n {{a_{i,2}}{b_i}}  - \frac{1}{n}\sum\limits_{i = 1}^n {{a_{i,2}}} \sum\limits_{i = 1}^n {{b_i}} } \right)\left( {\sum\limits_{i = 1}^n {{{({a_{i,1}})}^2}}  - \frac{1}{n}{\Big(\sum\limits_{i = 1}^n {{a_{i,1}}} \Big)^2}} \right)\\
 - \left( {\sum\limits_{i = 1}^n {{a_{i,1}}{b_i}}  - \frac{1}{n}\sum\limits_{i = 1}^n {{a_{i,1}}} \sum\limits_{i = 1}^n {{b_i}} } \right)\left( {\sum\limits_{i = 1}^n {{a_{i,1}}{a_{i,2}}}  - \frac{1}{n}\sum\limits_{i = 1}^n {{a_{i,1}}} \sum\limits_{i = 1}^n {{a_{i,2}}} } \right)
\end{array}}{\begin{array}{l}
\left( {\sum\limits_{i = 1}^n {{{({a_{i,1}})}^2}}  - \frac{1}{n}{\Big(\sum\limits_{i = 1}^n {{a_{i,1}}} \Big)^2}} \right)\left( {\sum\limits_{i = 1}^n {{{({a_{i,2}})}^2}}  - \frac{1}{n}{\Big(\sum\limits_{i = 1}^n {{a_{i,2}}} \Big)^2}} \right) \\
- {{\left( {\sum\limits_{i = 1}^n {{a_{i,1}}{a_{i,2}}}  - \frac{1}{n}\sum\limits_{i = 1}^n {{a_{i,1}}} \sum\limits_{i = 1}^n {{a_{i,2}}} } \right)}^2}
\end{array}},\]
while the approximate solutions corresponding to the well-known problem \eqref{eq13.2} are
\[{\tilde x_1} = \frac{{(\sum\limits_{i = 1}^n {{a_{i,1}}{b_i}} )\sum\limits_{i = 1}^n {{{({a_{i,2}})}^2}}  - (\sum\limits_{i = 1}^n {{a_{i,2}}{b_i}} )\sum\limits_{i = 1}^n {{a_{i,1}}{a_{i,2}}} }}{{\sum\limits_{i = 1}^n {{{({a_{i,1}})}^2}} \sum\limits_{i = 1}^n {{{({a_{i,2}})}^2}}  - {\Big(\sum\limits_{i = 1}^n {{a_{i,1}}{a_{i,2}}} \Big)^2}}},\]
and
\[{\tilde x_2} = \frac{{(\sum\limits_{i = 1}^n {{a_{i,2}}{b_i}} )\sum\limits_{i = 1}^n {{{({a_{i,1}})}^2}}  - (\sum\limits_{i = 1}^n {{a_{i,1}}{b_i}} )\sum\limits_{i = 1}^n {{a_{i,1}}{a_{i,2}}} }}{{\sum\limits_{i = 1}^n {{{({a_{i,1}})}^2}} \sum\limits_{i = 1}^n {{{({a_{i,2}})}^2}}  - {\Big(\sum\limits_{i = 1}^n {{a_{i,1}}{a_{i,2}}} \Big)^2}}}.\]
Let us compare these solutions for a particular numerical case. If for example
\[{A_{4 \times 2}} = \left[ {\begin{array}{*{20}{c}}
{\begin{array}{*{20}{c}}
{ - 1}\\
2\\
1\\
{ - 1}
\end{array}}&{\begin{array}{*{20}{c}}
1\\
{ - 1}\\
{ - 2}\\
2
\end{array}}
\end{array}} \right],\,\,\,\,{X_{2 \times 1}} = \left[ {\begin{array}{*{20}{c}}
{{x_1}}\\
{{x_2}}
\end{array}} \right]\,\,\,\,\,{\rm{and}}\,\,\,\,\,{B_{4 \times 1}} = \,\left[ {\begin{array}{*{20}{c}}
1\\
2\\
3\\
4
\end{array}} \right],\]
then the solutions of the ordinary least squares problem are
\[({\tilde x_1},{\tilde x_2}) = (\frac{9}{7},\,1),\]
while the solutions corresponding to the minimization problem \eqref{eq13.6} are
\[\left( {{x_1}(p = 1;{I_4}),{x_2}(p = 1;{I_4})} \right) = (\frac{8}{{74}},\,\frac{{13}}{{74}}).\]
By substituting such values into the remaining term
\[{V_{2,4}}(\left. {{x_1},{x_2}} \right|{I_4}) = \,\sum\limits_{i = 1}^4 {{{({a_{i,1}}\,{x_1} + {a_{i,2}}\,{x_2} - {b_i})}^2}}  - \frac{1}{4}{\left( {\sum\limits_{i = 1}^4 {({a_{i,1}}\,{x_1} + {a_{i,2}}\,{x_2} - {b_i})} } \right)^2},\]
we observe that
\[{V_{2,4}}\left( {\left. {\frac{9}{7},\,1\,} \right|{I_4}} \right) = \frac{{53983}}{{7252}} \cong 7.4438,\]
whereas
\[{V_{2,4}}\left( {\left. {\frac{8}{{74}},\,\frac{{13}}{{74}}\,} \right|{I_4}} \right) = \frac{{35378}}{{7252}} \cong 4.8783.\]
On the other hand, for the well-known remaining term
\[{E_{2,4}}({x_1},{x_2}) = \,\sum\limits_{i = 1}^4 {{{({a_{i,1}}\,{x_1} + {a_{i,2}}\,{x_2} - {b_i})}^2}} ,\]
we observe that
\[{E_{2,4}}\left( {\frac{9}{7},\,1} \right) = \frac{{185}}{7} \cong 26.4285,\]
whereas
\[{E_{2,4}}\left( {\frac{8}{{74}},\,\frac{{13}}{{74}}} \right) = \frac{{80335}}{{2738}} \cong 29.3407.\]
In conclusion,
\[{V_{2,4}}\left( {\left. {\frac{8}{{74}},\,\frac{{13}}{{74}}\,} \right|{I_4}} \right) < {V_{2,4}}\left( {\left. {\frac{9}{7},\,1\,} \right|{I_4}} \right) < {E_{2,4}}\left( {\frac{9}{7},\,1} \right),\]
which confirms inequality \eqref{eq2.4}.
\end{example}

\section{On improving the Bessel inequality and Parseval identity}\label{sec14}
Two cases can be considered for the aforesaid purpose.

\subsection{First type of improvement}\label{sec14.1}
Let $ \{ {\Phi _k}(x)\} _{k = 0}^\infty  $ be a sequence of continuous functions which are p-uncorrelated with respect to the fixed function $ z(x) $ and the probability density function $ w(x)/\int_a^b {w(x)\,dx}  $ on $ [a,b] $ as before. Then, according to \eqref{eq4.1},
\begin{equation}\label{eq14.1}
f(x) \sim \sum\limits_{k = 0}^\infty  {\frac{{{{{\mathop{\rm cov}} }_p}\,\left( {{\Phi _k}(x),f(x);z(x)} \right)}}{{{{{\mathop{\rm var}} }_p}\left( {{\Phi _k}(x);z(x)} \right)}}{\Phi _k}(x)} \,,
\end{equation}
denotes a p-uncorrelated expansion for $ f(x) $ in which 
\begin{multline*}
\frac{{{{{\mathop{\rm cov}} }_p}\,\left( {{\Phi _k}(x),f(x);z(x)} \right)}}{{{{{\mathop{\rm var}} }_p}\left( {{\Phi _k}(x);z(x)} \right)}} = \\
\frac{{\int_{\,a}^b {w(x)\,{z^2}(x)\,dx} \int_{\,a}^b {w(x)\,{\Phi _k}(x)f(x)\,dx}  - p\int_{\,a}^b {w(x)\,{\Phi _k}(x)\,z(x)\,dx} \,\int_{\,a}^b {w(x)f(x)\,z(x)\,dx} }}{{\int_{\,a}^b {w(x)\,{z^2}(x)\,dx} \int_{\,a}^b {w(x)\,\Phi _k^2(x)\,dx}  - p{{\left( {\int_{\,a}^b {w(x)\,{\Phi _k}(x)\,z(x)\,dx} } \right)}^2}\,}}.
\end{multline*}
Referring to corollary \ref{coro4.5} and relation \eqref{eq4.16}, the following inequality holds for the expansion \eqref{eq14.1}:
\begin{equation}\label{eq14.2}
0 \le \sum\limits_{k = 0}^\infty  {\frac{{{\mathop{\rm cov}} _p^2\,\left( {{\Phi _k}(x),f(x);z(x)} \right)}}{{{{{\mathop{\rm var}} }_p}\left( {{\Phi _k}(x);z(x)} \right)}}}  \le {{\mathop{\rm var}} _p}\left( {f(x);z(x)} \right).
\end{equation}
Also, according to the definition of convergence in p-variance, inequality \eqref{eq14.2} will be transformed to an equality if
\begin{equation}\label{eq14.3}
\mathop {\lim }\limits_{n \to \infty } \,\,{{\mathop{\rm var}} _p}\left( {f(x) - \sum\limits_{k = 0}^n {\frac{{{{{\mathop{\rm cov}} }_p}\,\left( {{\Phi _k}(x),f(x);z(x)} \right)}}{{{{{\mathop{\rm var}} }_p}\left( {{\Phi _k}(x);z(x)} \right)}}{\Phi _k}(x)} \,} \right) = 0,
\end{equation}
which results in
\begin{equation}\label{eq14.4}
\sum\limits_{k = 0}^\infty  {\frac{{{\mathop{\rm cov}} _p^2\,\left( {{\Phi _k}(x),f(x);z(x)} \right)}}{{{{{\mathop{\rm var}} }_p}\left( {{\Phi _k}(x);z(x)} \right)}}}  = {{\mathop{\rm var}} _p}\left( {f(x);z(x)} \right).
\end{equation}
If \eqref{eq14.3} or equivalently \eqref{eq14.4} is satisfied, the p-uncorrelated sequence $ \{ {\Phi _k}(x)\} _{k = 0}^\infty  $ is ``complete" with respect to the fixed function $ z(x) $ and the symbol ``$ \sim $" in \eqref{eq14.1} will change to the equality.

Noting the above comments, now let $ f, g $ be two expandable functions of type \eqref{eq14.1} and $ \{ {\Phi _k}(x)\} _{k = 0}^\infty  $ be a ``complete" p-uncorrelated sequence. Since
\[f(x) = \sum\limits_{k = 0}^\infty  {\frac{{{{{\mathop{\rm cov}} }_p}\,\left( {{\Phi _k}(x),f(x);z(x)} \right)}}{{{{{\mathop{\rm var}} }_p}\left( {{\Phi _k}(x);z(x)} \right)}}{\Phi _k}(x)} \,,\]
and
\[g(x) = \sum\limits_{k = 0}^\infty  {\frac{{{{{\mathop{\rm cov}} }_p}\,\left( {{\Phi _k}(x),g(x);z(x)} \right)}}{{{{{\mathop{\rm var}} }_p}\left( {{\Phi _k}(x);z(x)} \right)}}{\Phi _k}(x)} \,,\]
thanks to the general identity
\[{{\mathop{\rm cov}} _p}\left( {\sum\limits_{k = 0}^n {{a_k}{\Phi _k}(x)} ,\,\sum\limits_{j = 0}^m {{b_j}{\Phi _j}(x)} ;z(x)} \right) = \sum\limits_{k = 0}^n {\sum\limits_{j = 0}^m {{a_k}{b_j}\,{{{\mathop{\rm cov}} }_p}\,\left( {{\Phi _k}(x),{\Phi _j}(x);z(x)} \right)} } ,\]
and this fact that
\[{{\mathop{\rm cov}} _p}\,\left( {{\Phi _k}(x),{\Phi _j}(x);z(x)} \right) = {{\mathop{\rm var}} _p}\left( {{\Phi _k}(x);z(x)} \right)\,{\delta _{k,j}},\]
we obtain
\begin{multline}\label{eq14.5}
{{\mathop{\rm cov}} _p}\,\left( {f(x),g(x);z(x)} \right)=\\
{{\mathop{\rm cov}} _p}\left( {\left( {\sum\limits_{k = 0}^\infty  {\frac{{{{{\mathop{\rm cov}} }_p}\,\left( {{\Phi _k}(x),f(x);z(x)} \right)}}{{{{{\mathop{\rm var}} }_p}\left( {{\Phi _k}(x);z(x)} \right)}}{\Phi _k}(x)} } \right),\,\left( {\sum\limits_{k = 0}^\infty  {\frac{{{{{\mathop{\rm cov}} }_p}\,\left( {{\Phi _k}(x),g(x);z(x)} \right)}}{{{{{\mathop{\rm var}} }_p}\left( {{\Phi _k}(x);z(x)} \right)}}{\Phi _k}(x)} } \right);z(x)} \right)\\
\,\,\,\,\,\,\,\,\,\,\,\,\,\,\,\,\,\,\,\,\,\,\,\,\,\,\,\,\,\,\,\,\, = \sum\limits_{k = 0}^\infty  {\frac{{{{{\mathop{\rm cov}} }_p}\,\left( {{\Phi _k}(x),f(x);z(x)} \right)\,\,{{{\mathop{\rm cov}} }_p}\,\left( {{\Phi _k}(x),g(x);z(x)} \right)}}{{{{{\mathop{\rm var}} }_p}\left( {{\Phi _k}(x);z(x)} \right)}}} .
\end{multline}
which is an extension of the identity \eqref{eq14.4} for $ f(x) = g(x) $. Also, for $ p=0 $, this important identity leads to the generalized Parseval identity \cite{ref5}
\begin{equation}\label{eq14.6}
E\left( {f(x)g(x)} \right) = \sum\limits_{k = 0}^\infty  {\frac{{E\left( {f(x)\,{\Phi _k}(x)} \right)\,\,E\left( {g(x)\,{\Phi _k}(x)} \right)}}{{E\left( {\Phi _k^2(x)} \right)}}} .
\end{equation}
The finite type of \eqref{eq14.5} is when $ f, g $ and $ \{ {\Phi _k}(x)\} _{k = 0}^\infty  $ are all polynomial functions. For example, let $ {\Phi _k}(x) = {{\bf{P}}_k}(x;a,0,c,0) $ denote the same as polynomials \eqref{eq9.8} satisfying
\begin{multline*}
\int_{\,0}^1 {{x^a}\,{{\bf{P}}_n}(x;a,0,c,0)\,{{\bf{P}}_m}(x;a,0,c,0)\,dx}  \\
- (2c - a + 1)\,\int_{\,0}^1 {{x^c}\,{{\bf{P}}_n}(x;a,0,c,0)\,dx} \,\int_{\,0}^1 {{x^c}\,{{\bf{P}}_m}(x;a,0,c,0)\,dx} \\
 = \frac{1}{{2n + a + 1}}{\left( {\frac{{(a - c)\,n!}}{{(c + 1)\,{{(a + 1)}_n}}}} \right)^2}{\delta _{m,n}}\quad \Leftrightarrow \quad 2c - a + 1 > 0,\,\,a \ne c\,\,\,{\rm{and}}\,\,\,a,c >  - 1\,.
\end{multline*}
Also let
\[{Q_m}(x) = \sum\limits_{k = 0}^m {{q_k}{x^k}} \,\,\,{\rm{and}}\,\,\,{R_m}(x) = \sum\limits_{k = 0}^m {{r_k}{x^k}} ,\]
be two arbitrary polynomials of the same degree. Since
\begin{multline*}
{Q_m}(x) = \sum\limits_{k = 0}^m (2k + a + 1){{\left( {\frac{{(c + 1)\,{{(a + 1)}_k}}}{{(a - c)\,k!}}} \right)}^2}\\
\times {{{\mathop{\rm cov}} }_1}\,\left( {{{\bf{P}}_k}(x;a,0,c,0),{Q_m}(x);{x^{c - a}}} \right){{\bf{P}}_k}(x;a,0,c,0) \,,
\end{multline*}
and
\begin{multline*}
{R_m}(x) = \sum\limits_{k = 0}^m (2k + a + 1){{\left( {\frac{{(c + 1)\,{{(a + 1)}_k}}}{{(a - c)\,k!}}} \right)}^2}\\
\times {{{\mathop{\rm cov}} }_1}\,\left( {{{\bf{P}}_k}(x;a,0,c,0),{R_m}(x);{x^{c - a}}} \right){{\bf{P}}_k}(x;a,0,c,0) \,,
\end{multline*}
according to \eqref{eq14.5} we have
\begin{multline*}
{{\mathop{\rm cov}} _1}\,\left( {{Q_m}(x),{R_m}(x);{x^{c - a}}} \right) =
\sum\limits_{k = 0}^m (2k + a + 1){{\left( {\frac{{(c + 1)\,{{(a + 1)}_k}}}{{(a - c)\,k!}}} \right)}^2}\\
\times {{{\mathop{\rm cov}} }_1}\,\left( {{{\bf{P}}_k}(x;a,0,c,0),{Q_m}(x);{x^{c - a}}} \right){{{\mathop{\rm cov}} }_1}\,\left( {{{\bf{P}}_k}(x;a,0,c,0),{R_m}(x);{x^{c - a}}} \right) .
\end{multline*}

\subsection{Second type of improvement}\label{subsec14.2}
As inequality \eqref{eq4.14} is valid for any arbitrary selection of the coefficients $ \{ {\alpha _k}\} _{k = 0}^n $, i.e.
\begin{equation}\label{eq14.7}
0 \le {{\mathop{\rm var}} _p}\,\left( {Y - \sum\limits_{k = 0}^n {{\alpha _k}{X_k}} \,;Z} \right) \le E\,\left( {{\Big(Y - \sum\limits_{k = 0}^n {{\alpha _k}{X_k}} \Big)^2}} \right),
\end{equation}
such kind of inequalities can be applied for orthogonal expansions. Suppose that $ \{ {\Phi _k}(x)\} _{k = 0}^\infty  $ is a sequence of continuous functions orthogonal with respect to the weight function $ w(x) $ on $ [a,b] $. If $ f(x) $ is a piecewise continuous function, then
\[f(x) \sim \sum\limits_{k = 0}^\infty  {{\alpha _k}{\Phi _k}(x)} \,\,\,\,\,\,\,\text{with}\,\,\,\,\,\,{\alpha _k} = \frac{{{{\left\langle {f,{\Phi _k}} \right\rangle }_w}}}{{{{\left\langle {{\Phi _k},{\Phi _k}} \right\rangle }_w}}},\]
is known as its corresponding orthogonal expansion in which
\[{\left\langle {f,g} \right\rangle _w} = \int_{\,a}^b {w(x)\,f(x)g(x)\,dx} \,.\]
The positive quantity
\begin{equation}\label{eq14.8}
{S_n} = \int_{\,a}^b {w(x)\,{{\left( {\sum\limits_{k = 0}^n {{\alpha _k}{\Phi _k}(x)}  - f(x)} \right)}^2}dx} \,,
\end{equation}
will eventually lead to the Bessel inequality \cite{ref26}
\[0 \le {S_n} = {\left\langle {f,f} \right\rangle _w} - \sum\limits_{k = 0}^n {\frac{{\left\langle {f,{\Phi _k}} \right\rangle _w^2}}{{{{\left\langle {{\Phi _k},{\Phi _k}} \right\rangle }_w}}}} \,.\]
Now, noting \eqref{eq14.7} and \eqref{eq14.8}, instead of $ S_{n} $ we define the following positive quantity
\begin{multline*}
{V_n}(p;z(x)) = {S_n} - R_n^2(p;z(x)) = \int_{\,a}^b {w(x)\,{{\left( {\sum\limits_{k = 0}^n {{\alpha _k}{\Phi _k}(x)}  - f(x)} \right)}^2}dx} \\
- \frac{p}{{\int_{\,a}^b {w(x)\,{z^2}(x)dx} }}{\left( {\int_{\,a}^b {w(x)\,z(x)\,\left( {\sum\limits_{k = 0}^n {{\alpha _k}{\Phi _k}(x)}  - f(x)} \right)dx} } \right)^2}.
\end{multline*}
It is clear that
\begin{equation}\label{eq14.9}
0 \le {V_n}(p;z(x)) \le {S_n}\,.
\end{equation}
Therefore
\begin{multline*}
0 \le {V_n}(p;z(x)) = {\left\langle {f,f} \right\rangle _w} - \sum\limits_{k = 0}^n {\frac{{\left\langle {f,{\Phi _k}} \right\rangle _w^2}}{{{{\left\langle {{\Phi _k},{\Phi _k}} \right\rangle }_w}}}} \,\\
 - \frac{p}{{{{\left\langle {z,z} \right\rangle }_w}}}\left( {{{\left( {\sum\limits_{k = 0}^n {\frac{{{{\left\langle {f,{\Phi _k}} \right\rangle }_w}{{\left\langle {z,{\Phi _k}} \right\rangle }_w}}}{{{{\left\langle {{\Phi _k},{\Phi _k}} \right\rangle }_w}}}} } \right)}^2} + \left\langle {f,z} \right\rangle _w^2 - 2{{\left\langle {f,z} \right\rangle }_w}\sum\limits_{k = 0}^n {\frac{{{{\left\langle {f,{\Phi _k}} \right\rangle }_w}{{\left\langle {z,{\Phi _k}} \right\rangle }_w}}}{{{{\left\langle {{\Phi _k},{\Phi _k}} \right\rangle }_w}}}} } \right)\,,
\end{multline*}
can be re-written as
\begin{multline}\label{eq14.10}
{\left\langle {z,z} \right\rangle _w}{\left\langle {f,f} \right\rangle _w} - p\left\langle {f,z} \right\rangle _w^2 \ge {\left\langle {z,z} \right\rangle _w}\sum\limits_{k = 0}^n {\frac{{\left\langle {f,{\Phi _k}} \right\rangle _w^2}}{{{{\left\langle {{\Phi _k},{\Phi _k}} \right\rangle }_w}}}}  + p\,{\left( {\sum\limits_{k = 0}^n {\frac{{{{\left\langle {f,{\Phi _k}} \right\rangle }_w}{{\left\langle {z,{\Phi _k}} \right\rangle }_w}}}{{{{\left\langle {{\Phi _k},{\Phi _k}} \right\rangle }_w}}}} } \right)^2}\\
\,\,\,\,\,\,\,\,\,\,\,\,\,\,\,\,\,\,\,\,\,\,\,\,\,\,\,\,\,\,\,\,\,\,\,\,\,\,\,\,\,\,\,\,\,\,\,\, - 2p{\left\langle {f,z} \right\rangle _w}\sum\limits_{k = 0}^n {\frac{{{{\left\langle {f,{\Phi _k}} \right\rangle }_w}{{\left\langle {z,{\Phi _k}} \right\rangle }_w}}}{{{{\left\langle {{\Phi _k},{\Phi _k}} \right\rangle }_w}}}} \,.
\end{multline}
Inequality \eqref{eq14.10} is an improvement of the well-known Bessel inequality for every $ p\in[0,1] $ with respect to the fixed function $ z(x) $.

For example, if $ {\Phi _k}(x) = \sin (k + 1)x $ for $ x\in[0,\pi] $ and $ w(x)=z(x)=1 $ are replaced in \eqref{eq14.10}, the Bessel inequality of the Fourier sine expansion will be improved as follows
\begin{multline*}
\int_0^\pi  {{f^2}(x)\,dx}  - \frac{p}{\pi }{\left( {\int_0^\pi  {f(x)\,dx} } \right)^2} \ge \frac{2}{\pi }\sum\limits_{k = 0}^n {{{\left( {\int_0^\pi  {f(x)\sin (k + 1)x\,dx} } \right)}^2}} \\
\,\,\,\,\,\,\,\,\,\,\,\,\,\,\,\,\,\,\,\,\,\,\,\,\,\,\,\,\,\,\,\,\,\,\,\,\,\,\,\,\,\,\,\,\,\,\,\,\,\,\,\,\,\,\,\,\,\,\,\,\,\,\,\,\, + \frac{{4p}}{{{\pi ^3}}}\,{\left( {\sum\limits_{k = 0}^n {\frac{{1 + {{( - 1)}^k}}}{{k + 1}}\int_0^\pi  {f(x)\sin (k + 1)x\,dx} } } \right)^2}\\
\,\,\,\,\,\,\,\,\,\,\,\,\,\,\,\,\,\,\,\,\,\,\,\,\,\,\,\,\,\,\,\,\,\,\,\,\,\,\,\,\,\,\,\,\,\,\,\,\,\,\,\,\,\,\,\,\,\,\,\,\,\,\,\,\,\, - \frac{{4p}}{{{\pi ^2}}}\left( {\int_0^\pi  {f(x)\,dx} } \right)\sum\limits_{k = 0}^n {\frac{{1 + {{( - 1)}^k}}}{{k + 1}}\int_0^\pi  {f(x)\sin (k + 1)x\,dx} } \,.
\end{multline*}
Obviously \eqref{eq14.10} will remain an inequality if the sequence $ \{ {\Phi _k}(x)\} _{k = 0} $ does not form a complete orthogonal system. On the other side, if they are a complete orthogonal set, inequality \eqref{eq14.10} becomes an equality when $ n\rightarrow\infty $. In other words, suppose $ \{ {\Phi _k}(x)\} _{k = 0} $ is a complete orthogonal set. Since
\[\mathop {\lim }\limits_{n \to \infty } {S_n} = \mathop {\lim }\limits_{n \to \infty } \int_{\,a}^b {w(x)\,{{\left( {\sum\limits_{k = 0}^n {{\alpha _k}{\Phi _k}(x)}  - f(x)} \right)}^2}dx} \, = 0,\]
we directly conclude from \eqref{eq14.9} that
\[0 \le \mathop {\lim }\limits_{n \to \infty } {V_n}(p;z(x)) \le \mathop {\lim }\limits_{n \to \infty } {S_n} = 0,\]
and therefore
\[\mathop {\lim }\limits_{n \to \infty } \,\,{S_n} - R_n^2(p;z(x)) = \mathop {\lim }\limits_{n \to \infty } \,{\left( {\int_{\,a}^b {w(x)\,z(x)\,\left( {\sum\limits_{k = 0}^n {{\alpha _k}{\Phi _k}(x)}  - f(x)} \right)dx} } \right)^2} = 0,\]
eventually yields
\[\sum\limits_{k = 0}^\infty  {\frac{{{{\left\langle {f,{\Phi _k}} \right\rangle }_w}{{\left\langle {z,{\Phi _k}} \right\rangle }_w}}}{{{{\left\langle {{\Phi _k},{\Phi _k}} \right\rangle }_w}}}}  = {\left\langle {f,\,z} \right\rangle _w},\]
which is known in the literature as the inner product form of the generalized Parseval identity \eqref{eq14.6}

In the next section, we will refer to the above-mentioned results in order to extend the presented theory in terms of a set of fixed mutually orthogonal variables.

\section{Least p-variances with respect to fixed orthogonal variables}\label{sec15}
Since the parts of this section are somewhat similar to the previous sections, we just state basic concepts and related theorems without proof.

Suppose $ x, y $ and $ \{ {z_k}\} _{k = 1}^m $ are elements of an inner product space $ \mathbf{S} $ such that $ \{ {z_k}\} _{k = 1}^m $ are mutually orthogonal as
\begin{equation}\label{eq15.1}
\left\langle {{z_i}\,,\,\,{z_j}} \right\rangle  = \left\langle {{z_j}\,,\,\,{z_j}} \right\rangle \,{\delta _{i,j}}.
\end{equation}
Due to the orthogonality property \eqref{eq15.1}, the following identity holds true
\begin{align}\label{eq15.2}
&\left\langle {x - (1 - \sqrt {1 - p} )\sum\limits_{k = 1}^m {\frac{{\left\langle {x,{z_k}} \right\rangle }}{{\left\langle {{z_k},{z_k}} \right\rangle }}\,{z_k}} \,,\,\,y - (1 - \sqrt {1 - p} )\sum\limits_{k = 1}^m {\frac{{\left\langle {y,{z_k}} \right\rangle }}{{\left\langle {{z_k},{z_k}} \right\rangle }}\,{z_k}} } \right\rangle  \\
&\qquad\qquad\qquad = \left\langle {x,y} \right\rangle  - p\sum\limits_{k = 1}^m {\frac{{\left\langle {x,{z_k}} \right\rangle \left\langle {y,{z_k}} \right\rangle }}{{\left\langle {{z_k},{z_k}} \right\rangle }}} ,\notag
\end{align}
and for $ y=x $, gives
\begin{equation}\label{eq15.3}
\left\langle {x,x} \right\rangle  - p\sum\limits_{k = 1}^m {\frac{{{{\left\langle {x,{z_k}} \right\rangle }^2}}}{{\left\langle {{z_k},{z_k}} \right\rangle }}}  \ge 0\,\,\,\,\,\,\,\,\,\,\,\forall p \in [0,1].
\end{equation}
The identity \eqref{eq15.2} and inequality \eqref{eq15.3} can again be employed in mathematical statistics.

\begin{definition}\label{def15.1}
Let $ X, Y $ and $ \{ {Z_k}\} _{k = 1}^m $ be arbitrary random variables such that $ \{ {Z_k}\} _{k = 1}^m $ are mutually orthogonal, i.e.
\begin{equation}\label{eq15.4}
E({Z_i}\,{Z_j}) = E(Z_j^2)\,{\delta _{i,j}},\,\,\,\,\,\,\,\,\,\,i,j = 1,2,...,m.
\end{equation}
Corresponding to \eqref{eq15.2} we define
\begin{align}\label{eq15.5}
&{{\mathop{\rm cov}} _p}(X,Y;\{ {Z_k}\} _{k = 1}^m)\\ 
&\quad = E\left( {\left( {X - (1 - \sqrt {1 - p} )\sum\limits_{k = 1}^m {\frac{{E(X{Z_k})}}{{E(Z_k^2)}}\,{Z_k}} } \right)\left( {Y - (1 - \sqrt {1 - p} )\sum\limits_{k = 1}^m {\frac{{E(Y{Z_k})}}{{E(Z_k^2)}}\,{Z_k}} } \right)} \right)\notag\\
 & \quad= E(XY) - p\sum\limits_{k = 1}^m {\frac{{E(X{Z_k})E(Y{Z_k})}}{{E(Z_k^2)}}} \,,\notag 
\end{align}
and call it "p-covariance of $ X $ and $ Y $ with respect to the fixed orthogonal variables $ \{ {Z_k}\} _{k = 1}^m $".
\end{definition}
For $ Y=X $, \eqref{eq15.5} changes to
\begin{align}\label{eq15.6}
{{\mathop{\rm var}} _p}(X;\{ {Z_k}\} _{k = 1}^m)&= E\left( {{{\left( {X - (1 - \sqrt {1 - p} )\sum\limits_{k = 1}^m {\frac{{E(X{Z_k})}}{{E(Z_k^2)}}\,{Z_k}} } \right)}^2}} \right)\\
&= E({X^2}) - p\sum\limits_{k = 1}^m {\frac{{{E^2}(X{Z_k})}}{{E(Z_k^2)}}}  \ge 0\,,\notag
\end{align}
where $ \{ E(Z_k^2)\} _{k = 1}^m $ are all positive.

Note in \eqref{eq15.5} that
\[\sum\limits_{k = 1}^m {\frac{{E(X{Z_k})}}{{E(Z_k^2)}}\,{Z_k}}  = \sum\limits_{k = 1}^m {{\rm{pro}}{{\rm{j}}_{\,{Z_k}}}X} ,\]
and therefore e.g. for $ p=1 $,
\[{{\mathop{\rm cov}} _1}(X,Y;\{ {Z_k}\} _{k = 1}^m) = E\left( {(X - \sum\limits_{k = 1}^m {{\rm{pro}}{{\rm{j}}_{\,{Z_k}}}X} )\,(Y - \sum\limits_{k = 1}^m {{\rm{pro}}{{\rm{j}}_{\,{Z_k}}}Y} )} \right).\]
Moreover, for orthogonal variables $ \{ {Z_k}\} _{k = 1}^m $ and $ p \in [0,1] $, we have
\begin{equation}\label{eq15.7}
0 \le {{\mathop{\rm var}} _1}(X;\{ {Z_k}\} _{k = 1}^m) \le {{\mathop{\rm var}} _p}(X;\{ {Z_k}\} _{k = 1}^m) \le {{\mathop{\rm var}} _0}(X;\{ {Z_k}\} _{k = 1}^m) = E({X^2}).
\end{equation}
A remarkable point in \eqref{eq15.7} is that if $ m,n $ are two natural numbers such that $ n>m $, then
\[0 \le {{\mathop{\rm var}} _p}(X;\{ {Z_k}\} _{k = 1}^n) \le {{\mathop{\rm var}} _p}(X;\{ {Z_k}\} _{k = 1}^m),\]
which can be proved directly via \eqref{eq15.6}. For instance, if $ n=2 $ and $ m=1 $, then
\[0 \le {{\mathop{\rm var}} _1}(X;{Z_1},{Z_2}) \le {{\mathop{\rm var}} _p}(X;{Z_1},{Z_2}) \le {{\mathop{\rm var}} _p}(X;{Z_1}) \le {{\mathop{\rm var}} _0}(X;{Z_1}) = E({X^2}),\]
in which
\[E({Z_1}\,{Z_2}) = 0.\]
The following properties hold true for definitions \eqref{eq15.5} and \eqref{eq15.6} provided that the orthogonal condition \eqref{eq15.4} is satisfied:
\begin{align*}
b1) &\qquad \qquad\qquad {{\mathop{\rm cov}} _p}(X,Y;\{ {Z_k}\} _{k = 1}^m) = {{\mathop{\rm cov}} _p}(Y,X;\{ {Z_k}\} _{k = 1}^m).\\
b2) & \qquad \qquad {{\mathop{\rm cov}} _p}\,(\alpha X,\beta Y;\{ {Z_k}\} _{k = 1}^m) = \alpha \beta \,\,{{\mathop{\rm cov}} _p}(X,Y;\{ {Z_k}\} _{k = 1}^m)\,\,\,\,\,\,\,\,\,\,\,(\alpha ,\beta  \in\mathbb{R}).\\
b3) & \begin{array}{l}
{{\mathop{\rm cov}} _p}\,(X + \alpha ,Y + \beta ;\{ {Z_k}\} _{k = 1}^m) = \,\,{{\mathop{\rm cov}} _p}\,(X,Y;\{ {Z_k}\} _{k = 1}^m) + \alpha \,{{\mathop{\rm cov}} _p}\,(1,Y;\{ {Z_k}\} _{k = 1}^m)\\[3mm]
\,\,\,\,\,\,\,\,\,\,\,\,\,\,\,\,\,\,\,\,\,\,\,\,\,\,\,\,\,\,\,\,\,\,\,\,\,\,\,\,\,\,\,\,\,\,\,\,\,\,\,\,\,\,\,\,\,\,\,\,\qquad \qquad + \beta \,\,{{\mathop{\rm cov}} _p}\,(X,1;\{ {Z_k}\} _{k = 1}^m) + \alpha \beta \,\,{{\mathop{\rm cov}} _p}\,(1,1;\{ {Z_k}\} _{k = 1}^m).
\end{array}\\
b4) &\quad {{\mathop{\rm cov}} _p}\,\left( {\sum\limits_{k = 0}^n {{c_k}{X_k}} ,{X_j};\{ {Z_k}\} _{k = 1}^m} \right) = \sum\limits_{k = 0}^n {{c_k}{{{\mathop{\rm cov}} }_p}\,({X_k},{X_j};\{ {Z_k}\} _{k = 1}^m)} \,\,\,\,\,\,\,(\{ {c_k}\} _{k = 0}^n \in\mathbb{R}).\\
b5) & \begin{array}{l}
{\rm var}_{p}(\alpha X + \beta Y;\{ {Z_k}\} _{k = 1}^m) = {\alpha ^2}\,{{\mathop{\rm var}} _p}\,(X;\{ {Z_k}\} _{k = 1}^m) + {\beta ^2}{{\mathop{\rm var}} _p}\,(Y;\{ {Z_k}\} _{k = 1}^m)\\[3mm]
\,\,\,\,\,\,\,\,\,\,\,\,\,\,\,\,\,\,\,\,\,\,\,\,\,\,\,\,\,\,\,\,\,\,\,\,\,\,\,\,\,\,\,\,\,\,\,\,\,\,\,\qquad\qquad + 2\alpha \beta \,\,{{\mathop{\rm cov}} _p}\,(X,Y;\{ {Z_k}\} _{k = 1}^m).
\end{array}
\end{align*}
\begin{definition}\label{def15.2}
Based on definitions \eqref{eq15.5} and \eqref{eq15.6} and the orthogonality condition \eqref{eq15.4}, we define
\begin{equation}\label{eq15.8}
{\rho _p}\,(X,Y;\{ {Z_k}\} _{k = 1}^m) = \frac{{{{{\mathop{\rm cov}} }_p}\,(X,Y;\{ {Z_k}\} _{k = 1}^m)}}{{\sqrt {{{{\mathop{\rm var}} }_p}\,(X;\{ {Z_k}\} _{k = 1}^m){\rm var}_{p}(Y;\{ {Z_k}\} _{k = 1}^m)} }},
\end{equation}
and call it ``p-correlation coefficient of $ X $ and $ Y $ with respect to the fixed orthogonal variables $ \{ {Z_k}\} _{k = 1}^m $".
\end{definition}
Clearly
\[{\rho _p}\,(X,Y;\{ {Z_k}\} _{k = 1}^m) \in [ - 1,1],\]
because if
\[U = X - (1 - \sqrt {1 - p} )\sum\limits_{k = 1}^m {\frac{{E(X{Z_k})}}{{E(Z_k^2)}}\,{Z_k}} \,\,\,\,\,\,\,{\rm{and}}\,\,\,\,\,\,V = Y - (1 - \sqrt {1 - p} )\sum\limits_{k = 1}^m {\frac{{E(Y{Z_k})}}{{E(Z_k^2)}}\,{Z_k}} ,\]
are replaced in the Cauchy-Schwarz inequality \eqref{eq1.11}, then
\[{\mathop{\rm cov}} _p^2(X,Y;\{ {Z_k}\} _{k = 1}^m) \le {{\mathop{\rm var}} _p}\,(X;\{ {Z_k}\} _{k = 1}^m)\,\,{\rm var}_{p}(Y;\{ {Z_k}\} _{k = 1}^m).\]
\begin{definition}\label{def15.3}
If $ {\rho _p}\,(X,Y;\{ {Z_k}\} _{k = 1}^m) = 0 $ in \eqref{eq15.8}, we say that $ X $ and $ Y $ are p-uncorrelated with respect to $ \{ {Z_k}\} _{k = 1}^m $ and we have 
\[E(XY) = p\sum\limits_{k = 1}^m {\frac{{E(X{Z_k})E(Y{Z_k})}}{{E(Z_k^2)}}\,} .\]
\end{definition}

\subsection{Least p-variance approximations based on fixed orthogonal variables}\label{subsec15.4}

Again, consider the approximation \eqref{eq2.1},
\[Y \cong \sum\limits_{k = 0}^n {{c_k}{X_k}} \,,\]
and define the p-variance of the remaining term
\[R({c_0},{c_1},...,{c_n}) = \sum\limits_{k = 0}^n {{c_k}{X_k}}  - Y\,,\]
with respect to the orthogonal variables $ \{ {Z_k}\} _{k = 1}^m $ as follows
\begin{multline}\label{eq15.9}
{{\mathop{\rm var}} _p}\,\left( {R({c_0},...,{c_n});\{ {Z_k}\} _{k = 1}^m} \right) \\
= E\left( {{{\left( {R({c_0},...,{c_n}) - (1 - \sqrt {1 - p} )\sum\limits_{k = 1}^m {\frac{{E({Z_k}R({c_0},...,{c_n}))}}{{E(Z_k^2)}}\,{Z_k}} } \right)}^2}} \right).
\end{multline}
To minimize \eqref{eq15.9}, the relations
\[\frac{{\partial {{{\mathop{\rm var}} }_p}\,\left( {R({c_0},...,{c_n});\{ {Z_k}\} _{k = 1}^m} \right)}}{{\partial {c_j}}} = 0\,\,\,\,\,{\rm{for}}\,\,\,j = 0,1,\,...\,,\,n,\]
eventually lead to the following linear system
\begin{align}\label{eq15.10}
&\left[ {\begin{array}{*{20}{c}}
{\begin{array}{*{20}{c}}
{{{{\mathop{\rm var}} }_p}\,({X_0};\{ {Z_k}\} _{k = 1}^m)}\\
{{{{\mathop{\rm cov}} }_p}\,({X_1},{X_0};\{ {Z_k}\} _{k = 1}^m)}\\
 \vdots \\
{{{{\mathop{\rm cov}} }_p}\,({X_n},{X_0};\{ {Z_k}\} _{k = 1}^m)}
\end{array}}&{\begin{array}{*{20}{c}}
{{{{\mathop{\rm cov}} }_p}\,({X_0},{X_1};\{ {Z_k}\} _{k = 1}^m)}\\
{{{{\mathop{\rm var}} }_p}\,({X_1};\{ {Z_k}\} _{k = 1}^m)}\\
 \vdots \\
{{{{\mathop{\rm cov}} }_p}\,({X_n},{X_1};\{ {Z_k}\} _{k = 1}^m)}
\end{array}}&{\begin{array}{*{20}{c}}
 \cdots \\
 \cdots \\
 \vdots \\
 \cdots 
\end{array}}&{\begin{array}{*{20}{c}}
{{{{\mathop{\rm cov}} }_p}\,({X_0},{X_n};\{ {Z_k}\} _{k = 1}^m)}\\
{{{{\mathop{\rm cov}} }_p}\,({X_1},{X_n};\{ {Z_k}\} _{k = 1}^m)}\\
 \vdots \\
{{{{\mathop{\rm var}} }_p}\,({X_n};\{ {Z_k}\} _{k = 1}^m)}
\end{array}}
\end{array}} \right]\\
&\qquad\qquad\qquad\qquad\qquad\qquad \times \left[ {\begin{array}{*{20}{c}}
{{c_0}}\\
{{c_1}}\\
 \vdots \\
{{c_n}}
\end{array}} \right]
= \left[ {\begin{array}{*{20}{c}}
{{{{\mathop{\rm cov}} }_p}\,({X_0},Y;\{ {Z_k}\} _{k = 1}^m)}\\
{{{{\mathop{\rm cov}} }_p}\,({X_1},Y;\{ {Z_k}\} _{k = 1}^m)}\\
 \vdots \\
{{{{\mathop{\rm cov}} }_p}\,({X_n},Y;\{ {Z_k}\} _{k = 1}^m)}
\end{array}} \right].\notag
\end{align}
Two continuous and discrete spaces can be considered for the system \eqref{eq15.10}.

\subsubsection{First case.}\label{subsubsec15.4.1}
If $ \left\{ {{X_k} = {\Phi _k}(x)} \right\}_{k = 0}^n $, $ Y=f(x) $ and the orthogonal set $ \left\{ {{Z_k} = {z_k}(x)} \right\}_{k = 1}^m $ are defined in a continuous space with a probability density function as
\[{P_r}\left( {X = x} \right) = \frac{{w(x)}}{{\int_{\,a}^b {w(x)\,dx} }},\]
the elements of the system \eqref{eq15.10} appear as
\begin{multline*}
{{\mathop{\rm cov}} _p}\,\left( {{\Phi _i}(x),{\Phi _j}(x)\,;\left\{ {{z_k}(x)} \right\}_{k = 1}^m} \right) = \\
\frac{1}{{\int_{\,a}^b {w(x)\,dx} }}\Big(\int_{\,a}^b {w(x)\,{\Phi _i}(x)\,{\Phi _j}(x)\,dx}  - p\sum\limits_{k = 1}^m {\frac{{\int_{\,a}^b {w(x)\,{\Phi _i}(x)\,{z_k}(x)\,dx} \,\int_{\,a}^b {w(x)\,{\Phi _j}(x)\,{z_k}(x)\,dx} }}{{\,\int_{\,a}^b {w(x)\,z_k^2(x)\,dx} }}} \Big),
\end{multline*}
only if
\[\int_{\,a}^b {w(x)\,{z_i}(x)\,{z_j}(x)\,dx}  = \left( {\int_{\,a}^b {w(x)\,z_j^2(x)\,dx} } \right){\delta _{i,j}}.\]

\subsubsection{Second case.} \label{subsubsec15.4.2}
If the above-mentioned variables are defined on a counter set, say $ {A^*} = \{ {x_k}\} _{k = 0}^m $, with a discrete probability density function as
\[{P_r}\left( {X = x} \right) = \frac{{j(x)}}{{\sum\limits_{x \in {A^*}} {j(x)} }},\]
then
\begin{multline*}
\,\,\,\,\,\,\,\,\,\,\,\,\,\,\,\,\,\,\,\,\,\,\,\,\,\,\,\,\,\,\,\,\,\,{{\mathop{\rm cov}} _p}\,\left( {{\Phi _i}(x),{\Phi _j}(x)\,;\{ {z_k}(x)\} _{k = 1}^m} \right) = \\
\frac{1}{{\sum\limits_{x \in {A^*}} {j(x)} }}\Big(\sum\limits_{x \in {A^*}} {j(x){\Phi _i}(x)\,{\Phi _j}(x)}  - p\sum\limits_{k = 1}^m {\frac{{\sum\limits_{x \in {A^*}} {j(x){\Phi _i}(x)\,{z_k}(x)} \,\sum\limits_{x \in {A^*}} {j(x){\Phi _j}(x)\,{z_k}(x)} }}{{\,\sum\limits_{x \in {A^*}} {j(x)\,z_k^2(x)} }}} \Big),
\end{multline*}
only if
\[\sum\limits_{x \in {A^*}} {j(x){z_i}(x)\,{z_j}(x)}  = \left( {\sum\limits_{x \in {A^*}} {j(x)z_i^2(x)} } \right){\delta _{i,j}}.\]
For example, suppose $ m=2 $, $ x \in [0,\pi ] $ and $ {P_r}\left( {X = x} \right) = \frac{1}{\pi } $. In this case, it is well known that if $ {z_1}(x) = \sin x $ and $ {z_2}(x) = \cos x $, then
\[E\left( {{z_1}(x){z_2}(x)} \right) = \frac{1}{\pi }\int_{\,0}^\pi  {\sin x\,\cos x\,dx}  = 0,\]
and therefore
\begin{multline*}
\,{{\mathop{\rm cov}} _p}\,\left( {{\Phi _i}(x),{\Phi _j}(x)\,;\sin x,\,\cos x} \right) = \frac{1}{\pi }\int_{\,0}^\pi  {{\Phi _i}(x)\,{\Phi _j}(x)\,dx} \\
 - \frac{{2p}}{{{\pi ^2}}}\left( {\int_{\,0}^\pi  {{\Phi _i}(x)\,\sin x\,dx} \int_{\,0}^\pi  {{\Phi _j}(x)\,\sin x\,dx}  + \int_{\,0}^\pi  {{\Phi _i}(x)\,\cos x\,dx} \int_{\,0}^\pi  {{\Phi _j}(x)\,\cos x\,dx} } \right).
\end{multline*}
The weighted version of the above example can be considered in various cases. For example, if $ w(x) = {e^x}, z_{1}(x)=e^{-x}\sin x $ and $ z_{2}(x)=\cos x $ all defined on $ [0,\pi] $, then
\[E\left( {{z_1}(x){z_2}(x)} \right) = \frac{1}{{{e^\pi } - 1}}\int_{\,0}^\pi  {\sin x\,\cos x\,dx}  = 0,\]
and consequently the corresponding space is defined by
\begin{multline*}
\,\left( {{e^\pi } - 1} \right)\,\,{{\mathop{\rm cov}} _p}\,\left( {{\Phi _i}(x),{\Phi _j}(x)\,;\sin x,\,\cos x} \right) = \int_{\,0}^\pi  {{e^x}{\Phi _i}(x)\,{\Phi _j}(x)\,dx} \\
 - p\left( {\frac{{\int_{\,0}^\pi  {{\Phi _i}(x)\,\sin x\,dx} \int_{\,0}^\pi  {{\Phi _j}(x)\,\sin x\,dx} }}{{\int_{\,0}^\pi  {{e^{ - x}}{{\sin }^2}x\,dx} }} + \frac{{\int_{\,0}^\pi  {{\Phi _i}(x)\,{e^x}\cos x\,dx} \int_{\,0}^\pi  {{\Phi _j}(x)\,{e^x}\cos x\,dx} }}{{\int_{\,0}^\pi  {{e^x}{{\cos }^2}x\,dx} }}} \right),
\end{multline*}
where
\[\int_{\,0}^\pi  {{e^{ - x}}{{\sin }^2}x\,dx}  = \,\frac{2}{5}(1 - {e^{ - \pi }})\,\,\,\,\,\,\,{\rm{and}}\,\,\,\,\,\,\int_{\,0}^\pi  {{e^x}{{\cos }^2}x\,dx}  = \frac{3}{5}({e^\pi } - 1).\]
In the sequel, applying the uncorrelatedness condition
\begin{equation}\label{eq15.11}
{{\mathop{\rm cov}} _p}\,({X_i},{X_j};\{ {Z_k}\} _{k = 1}^m) = {{\mathop{\rm var}} _p}\,({X_j};\{ {Z_k}\} _{k = 1}^m)\,{\delta _{i,j}}\,\,\,{\rm{for}}\,{\rm{every}}\,\,\,i,j = 0,1,...,n,
\end{equation}
on the elements of the linear system \eqref{eq15.10}, one can obtain the unknown coefficients as
\[{c_k} = \frac{{{{{\mathop{\rm cov}} }_p}\,({X_k},Y;\{ {Z_k}\} _{k = 1}^m)}}{{{{{\mathop{\rm var}} }_p}\,({X_k};\{ {Z_k}\} _{k = 1}^m)}}.\]
In this case
\[Y \cong \sum\limits_{k = 0}^n {\frac{{{{{\mathop{\rm cov}} }_p}({X_k},Y;\{ {Z_k}\} _{k = 1}^m)}}{{{{{\mathop{\rm var}} }_p}\,({X_k};\{ {Z_k}\} _{k = 1}^m)}}{X_k}} \,,\]
is the best approximation in the sense of least p-variance of the error with respect to the fixed orthogonal variables $ \{ {Z_k}\} _{k = 1}^m $.

\begin{theorem}\label{thm15.5}
Any finite set of random variables satisfying the condition \eqref{eq15.11} is linearly independent.
\end{theorem}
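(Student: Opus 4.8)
The plan is to mirror the argument used for Theorem \ref{thm3.1}, replacing the single fixed variable $Z$ by the orthogonal family $\{Z_k\}_{k=1}^m$ and invoking the multi-variable analogues of the covariance properties recorded just before Definition \ref{def15.2}. First I would suppose, toward a contradiction, that the set $\{X_k\}_{k=0}^n$ satisfying \eqref{eq15.11} is linearly dependent, so that there exist scalars $\{a_k\}_{k=0}^n$, not all zero, with $\sum_{k=0}^n a_k X_k = 0$.

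Next I would apply the functional ${{\mathop{\rm cov}}_p}(\,\cdot\,,X_j;\{Z_k\}_{k=1}^m)$ to both sides of this identity for a fixed index $j$. Using the linearity property b5) together with b4), the image of the left-hand side expands as a weighted sum of p-covariances, while the right-hand side is $0$. By hypothesis \eqref{eq15.11} every off-diagonal term vanishes and only the $k=j$ term survives, giving
\[
0 = {{\mathop{\rm cov}}_p}\!\left(\sum_{k=0}^n a_k X_k,\,X_j;\{Z_k\}_{k=1}^m\right) = \sum_{k=0}^n a_k\,{{\mathop{\rm cov}}_p}(X_k,X_j;\{Z_k\}_{k=1}^m) = a_j\,{{\mathop{\rm var}}_p}(X_j;\{Z_k\}_{k=1}^m).
\]

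Finally, I would invoke the positivity recorded in \eqref{eq15.6}, namely ${{\mathop{\rm var}}_p}(X_j;\{Z_k\}_{k=1}^m)\ge 0$, strengthened to strict positivity for a genuine (non-degenerate) variable $X_j$, in order to conclude that $a_j=0$. Since $j\in\{0,1,\ldots,n\}$ was arbitrary, all coefficients vanish, contradicting the assumption that they were not all zero, and the linear independence follows.

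The only real subtlety — and the step I would treat most carefully — is justifying that the surviving diagonal factor is strictly positive rather than merely non-negative: if ${{\mathop{\rm var}}_p}(X_j;\{Z_k\}_{k=1}^m)=0$ for some $j$, the coefficient $a_j$ could not be recovered. One must therefore assume (as is already implicit in \eqref{eq15.11}, where the $X_j$ play the role of a normalizable uncorrelated basis, and in the coefficient formula $c_k={{\mathop{\rm cov}}_p}(X_k,Y;\{Z_k\})/{{\mathop{\rm var}}_p}(X_k;\{Z_k\})$ that divides by these variances) that each p-variance is nonzero; this is exactly the non-degeneracy guaranteed, e.g., by the determinant identity analogous to \eqref{eq3.11}. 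Apart from this nondegeneracy check, the argument is a direct transcription of the single-fixed-variable case and requires no new computation.
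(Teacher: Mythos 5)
Your proposal is correct and matches the paper's intent exactly: the paper states Theorem \ref{thm15.5} without proof (Section \ref{sec15} explicitly defers to the earlier sections), and the intended argument is precisely the transcription of the proof of Theorem \ref{thm3.1} that you give — apply ${{\mathop{\rm cov}}_p}(\,\cdot\,,X_j;\{Z_k\}_{k=1}^m)$ to the dependence relation, use linearity (property b4) and condition \eqref{eq15.11} to reduce to $a_j\,{{\mathop{\rm var}}_p}(X_j;\{Z_k\}_{k=1}^m)=0$. Your explicit flagging of the nondegeneracy requirement ${{\mathop{\rm var}}_p}(X_j;\{Z_k\}_{k=1}^m)>0$ is a point the paper itself leaves implicit (also in Theorem \ref{thm3.1}), so no gap remains.
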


\begin{theorem}\label{thm15.6}
Let $ \{ {V_k}\} _{k = 0} $ be a finite or infinite sequence of random variables such that any finite number of elements $ \{ {V_k}\} _{k = 0}^{n} $ are linearly independent. One can find constants $ \{ {a_{i,j}}\}  $ such that the elements
\begin{align*}
{X_0} &= {V_0},\\
{X_1} &= {V_1} + {a_{12}}{V_0},\\
{X_2} &= {V_2} + {a_{22}}{V_1} + {a_{23}}{V_0},\\
&\,\,\,\,\,\,\,\,\,\,\,\,\,\,\,\,\,\,\,\,\,\,\,\,\, \vdots \\
{X_n} &= {V_n} + {a_{n2}}{V_{n - 1}} + ... + {a_{n,n + 1}}{V_0} = {V_n} - \sum\limits_{k = 0}^{n - 1} {\frac{{{{{\mathop{\rm cov}} }_p}({X_k},Y;\{ {Z_k}\} _{k = 1}^m)}}{{{{{\mathop{\rm var}} }_p}\,({X_k};\{ {Z_k}\} _{k = 1}^m)}}{X_k}} ,
\end{align*}
are mutually p-uncorrelated with respect to the fixed orthogonal variables $ \{ {Z_k}\} _{k = 1}^m $.

Also, there are constants $ \{ {b_{i,j}}\}  $ such that
\begin{align*}
{V_0} &= {X_0},\\
{V_1} &= {X_1} + {b_{12}}{X_0},\\
{V_2} &= {X_2} + {b_{22}}{X_1} + {b_{23}}{X_0},\\
\,\,\,\,\,\,\,\,\,\,\,\,\,\,\,\,\,\,\,\,\,\,\,\,\, \vdots \\
{V_n} &= {X_n} + {b_{n2}}{X_{n - 1}} + ... + {b_{n,n + 1}}{X_0},
\end{align*}
and
\[{{\mathop{\rm cov}} _p}\,({X_n},{V_k};\{ {Z_k}\} _{k = 1}^m) = 0\,\,\,\,\,\,\,\,{\rm{for}}\,\,\,k = 0,1,...,n - 1,\]
provided that
\[{{\mathop{\rm cov}} _p}\,({X_i},{X_j};\{ {Z_k}\} _{k = 1}^m) = 0\,\,\,\,\,\,\,{\rm{for}}\,\,\,\,\,\,i \ne j.\]
\end{theorem}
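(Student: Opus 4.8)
The plan is to treat Theorem~\ref{thm15.6} as the verbatim analogue of Theorems~\ref{thm3.2} and~\ref{thm3.3}, with the single fixed variable $Z$ replaced throughout by the orthogonal family $\{Z_k\}_{k=1}^m$. The essential observation is that the only structural facts those earlier proofs used are that $ {{\mathop{\rm cov}} _p}(\cdot,\cdot;Z) $ is a \emph{symmetric bilinear form} on the random variables and that $ {{\mathop{\rm var}} _p}(X_k;Z)\ge 0 $. Properties b1 (symmetry) and b4 (linearity in the first slot), together with b2, guarantee that $ {{\mathop{\rm cov}} _p}(\cdot,\cdot;\{Z_k\}_{k=1}^m) $ has exactly the same symmetry and bilinearity, and inequality \eqref{eq15.6} supplies the non-negativity of the p-variances. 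Thus I would first record these two abstract ingredients and then run the generalized Gram--Schmidt recursion unchanged.

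For the first assertion I would define the elements recursively by
\[
{X_n} = {V_n} - \sum\limits_{k = 0}^{n - 1} {\frac{{{{{\mathop{\rm cov}} }_p}\,({X_k},{V_n};\{ {Z_k}\} _{k = 1}^m)}}{{{{{\mathop{\rm var}} }_p}\,({X_k};\{ {Z_k}\} _{k = 1}^m)}}{X_k}},
\]
which is manifestly a linear combination of $V_0,\dots,V_n$ (correcting the evident misprint $Y$ for $V_n$ in the statement), so that constants $\{a_{i,j}\}$ of the stated triangular shape exist. I would then argue by induction on $n$: assuming $X_0,\dots,X_{n-1}$ are mutually p-uncorrelated, I apply $ {{\mathop{\rm cov}} _p}(\cdot,X_j;\{Z_k\}_{k=1}^m) $ to both sides for each $j\le n-1$ and expand by b4. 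The inductive hypothesis $ {{\mathop{\rm cov}} _p}(X_k,X_j;\{Z_k\}_{k=1}^m)={{\mathop{\rm var}} _p}(X_j;\{Z_k\}_{k=1}^m)\,\delta_{k,j} $ collapses the sum to its single $k=j$ term, cancelling the denominator, and symmetry b1 then forces $ {{\mathop{\rm cov}} _p}(X_n,X_j;\{Z_k\}_{k=1}^m)={{\mathop{\rm cov}} _p}(V_n,X_j;\{Z_k\}_{k=1}^m)-{{\mathop{\rm cov}} _p}(X_j,V_n;\{Z_k\}_{k=1}^m)=0 $.

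For the reverse assertion I would note that the map \eqref{eq3.3} (in the present orthogonal setting) is lower-triangular with unit diagonal, hence invertible, so there exist constants $\{b_{i,j}\}$ writing $V_n=X_n+b_{n2}X_{n-1}+\cdots+b_{n,n+1}X_0$. Writing the general member $V_k=X_k+\sum_{j} b_{k,j}X_{k+1-j}$ as a combination of $X_0,\dots,X_k$, applying $ {{\mathop{\rm cov}} _p}(X_n,\cdot;\{Z_k\}_{k=1}^m) $ and using b4 together with the hypothesis $ {{\mathop{\rm cov}} _p}(X_i,X_j;\{Z_k\}_{k=1}^m)=0 $ for $i\ne j$ annihilates every term because each index appearing is at most $k<n$, yielding $ {{\mathop{\rm cov}} _p}(X_n,V_k;\{Z_k\}_{k=1}^m)=0 $ for $k=0,1,\dots,n-1$. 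This is precisely the computation of Theorem~\ref{thm3.3} carried over term by term.

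The one point I expect to need genuine care is the well-definedness of the recursion: the denominators $ {{\mathop{\rm var}} _p}(X_k;\{Z_k\}_{k=1}^m) $ must be \emph{strictly} positive, whereas \eqref{eq15.6} only gives $\ge 0$. A vanishing p-variance at $p=1$ would mean some $X_k$ lies in the span of the fixed orthogonal variables $\{Z_k\}_{k=1}^m$, the same degeneracy exhibited in Example~\ref{ex9.1}. The clean way around this is to invoke the linear-independence hypothesis on $\{V_k\}_{k=0}^n$ together with Theorem~\ref{thm15.5}, and to restrict, as in Section~\ref{sec10}, to those $p\in[0,1]$ for which the fixed family does not absorb the constructed elements; under these conditions each $ {{\mathop{\rm var}} _p}(X_k;\{Z_k\}_{k=1}^m)>0 $ and the recursion proceeds. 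Apart from this positivity bookkeeping, the argument is routine once b1--b5 are in hand.
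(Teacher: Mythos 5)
Your proposal is correct and coincides with the paper's intended argument: Section \ref{sec15} explicitly states its theorems without proof because they parallel earlier sections, and your proof is precisely the term-by-term carryover of the paper's proofs of Theorems \ref{thm3.2} and \ref{thm3.3}, with the single fixed variable $Z$ replaced by the orthogonal family $\{ {Z_k}\} _{k = 1}^m$ (including your correct identification of the misprint $Y$ for $V_n$ in the last line of the statement). Your extra bookkeeping on the strict positivity of the denominators $ {{\mathop{\rm var}} _p}\,({X_k};\{ {Z_k}\} _{k = 1}^m) $ is a point the paper itself glosses over even in Theorem \ref{thm3.2}, so it is a refinement rather than a departure from the paper's approach.
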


\subsection{A general representation for p-uncorrelated variables with respect to the fixed orthogonal variables}\label{subsec15.7}

Let $ \{ {V_k}\} _{k = 0}^{} $ be a finite or infinite sequence of random variables such that any finite number of its elements are linearly independent. As before, if $ \{ {Z_k}\} _{k = 1}^{m} $ are orthogonal satisfying the condition \eqref{eq15.4}, the following non-monic type determinant shows a general representation for the generic p-uncorrelated variable $ X_{n} $:
\begin{equation}\label{eq15.12}
{X_n} =\begin{vmatrix}
{{{{\mathop{\rm var}} }_p}\,({V_0};\{ {Z_k}\} _{k = 1}^m)} & {{{{\mathop{\rm cov}} }_p}\,({V_0},{V_1};\{ {Z_k}\} _{k = 1}^m)} & \ldots & {{{{\mathop{\rm cov}} }_p}\,({V_0},{V_n};\{ {Z_k}\} _{k = 1}^m)}\\
{{{{\mathop{\rm cov}} }_p}\,({V_1},{V_0};\{ {Z_k}\} _{k = 1}^m)} & {{{{\mathop{\rm var}} }_p}\,({V_1};\{ {Z_k}\} _{k = 1}^m)} & \ldots & {{{{\mathop{\rm cov}} }_p}\,({V_1},{V_n};\{ {Z_k}\} _{k = 1}^m)}\\
\vdots & \vdots & \vdots & \vdots \\
{{{{\mathop{\rm cov}} }_p}\,({V_{n - 1}},{V_0};\{ {Z_k}\} _{k = 1}^m)} & {{{{\mathop{\rm cov}} }_p}\,({V_{n - 1}},{V_1};\{ {Z_k}\} _{k = 1}^m)} & \ldots & {{{{\mathop{\rm cov}} }_p}\,({V_{n - 1}},{V_n};\{ {Z_k}\} _{k = 1}^m)}\\
{V_0} & {V_1} & \ldots & {V_n}
\end{vmatrix}.
\end{equation}

\begin{theorem}\label{thm15.8}
Let $ \{ {V_k}\} _{k = 0}^n $ be linearly independent variables and $ \{ {X_k}\} _{k = 0}^n $ be their corresponding p-uncorrelated elements generated by $ \{ {V_k}\} _{k = 0}^n $ in \eqref{eq15.12}. If $ \sum\limits_{k = 0}^n {{a_k}{V_k}}  = {W_n} $ then
\[{W_n} = \sum\limits_{k = 0}^n {\frac{{{{{\mathop{\rm cov}} }_p}\,({X_k},{W_n};\{ {Z_k}\} _{k = 1}^m)}}{{{{{\mathop{\rm var}} }_p}\,({X_k};Z)}}{X_k}} \,.\]
\end{theorem}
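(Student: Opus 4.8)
The plan is to transcribe the proof of Theorem \ref{thm4.1} into the orthogonal-family setting, substituting the single fixed variable $Z$ by $\{Z_k\}_{k=1}^m$ and invoking the multivariable analogues developed in Section \ref{sec15}. The structural prerequisites are already available: Theorem \ref{thm15.5} ensures $\{X_k\}_{k=0}^n$ is linearly independent, while the reverse expansion in Theorem \ref{thm15.6} writes each $V_k$ as a triangular combination of $X_0,\dots,X_k$ with leading coefficient $1$. That triangular change of basis is invertible, so $\{X_k\}_{k=0}^n$ and $\{V_k\}_{k=0}^n$ span the same space, and the hypothesis $W_n=\sum_{k=0}^n a_kV_k$ can be recast as $W_n=\sum_{j=0}^n c_jX_j$ for constants $\{c_j\}_{j=0}^n$.

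First I would perform this substitution explicitly: inserting the reverse representation of Theorem \ref{thm15.6} into $W_n=\sum_{k=0}^n a_kV_k$ and regrouping yields $W_n=\sum_{j=0}^n c_jX_j$. Next I would apply the functional $\mathrm{cov}_p(\,\cdot\,,X_k;\{Z_k\}_{k=1}^m)$ to both sides. By the linearity property (b4) for p-covariance with respect to the fixed orthogonal variables, the right-hand side expands to $\sum_{j=0}^n c_j\,\mathrm{cov}_p(X_j,X_k;\{Z_k\}_{k=1}^m)$.

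The sum then collapses under the mutual p-uncorrelatedness condition \eqref{eq15.11},
\[
\mathrm{cov}_p(X_j,X_k;\{Z_k\}_{k=1}^m)=\mathrm{var}_p(X_k;\{Z_k\}_{k=1}^m)\,\delta_{j,k},
\]
leaving only the term $j=k$, so that
\[
\mathrm{cov}_p(W_n,X_k;\{Z_k\}_{k=1}^m)=c_k\,\mathrm{var}_p(X_k;\{Z_k\}_{k=1}^m).
\]
Because each $\mathrm{var}_p(X_k;\{Z_k\}_{k=1}^m)$ is strictly positive by \eqref{eq15.6} together with the linear independence of the $X_k$, I can solve $c_k=\mathrm{cov}_p(W_n,X_k;\{Z_k\}_{k=1}^m)/\mathrm{var}_p(X_k;\{Z_k\}_{k=1}^m)$ and, invoking the symmetry property (b1), rewrite the numerator as $\mathrm{cov}_p(X_k,W_n;\{Z_k\}_{k=1}^m)$. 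Substituting these coefficients back into $W_n=\sum_{j=0}^n c_jX_j$ delivers the claimed expansion, once the evident typographical slip in the denominator of the statement (written $\mathrm{var}_p(X_k;Z)$) is read as $\mathrm{var}_p(X_k;\{Z_k\}_{k=1}^m)$.

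I anticipate no genuine obstacle, as the argument faithfully reproduces the single-variable case; the only step warranting care is confirming that $W_n\in\mathrm{span}\{X_0,\dots,X_n\}$, which is precisely what the invertibility of the triangular system of Theorem \ref{thm15.6} provides. A secondary point is to verify that the orthogonality \eqref{eq15.4} of the $Z_k$ is exactly the hypothesis legitimizing both the bilinearity (b4) and the collapsing identity \eqref{eq15.11}, so that the functional manipulation remains valid in the many-variable setting.
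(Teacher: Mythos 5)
Your proposal is correct and follows exactly the route the paper intends: Section \ref{sec15} states Theorem \ref{thm15.8} without proof precisely because it is the transcription of Theorem \ref{thm4.1} to the orthogonal-family setting, and your argument (reverse triangular expansion from Theorem \ref{thm15.6}, application of $\mathrm{cov}_p(\,\cdot\,,X_k;\{Z_k\}_{k=1}^m)$ with linearity (b4), collapse via condition \eqref{eq15.11}, and solving for the coefficients) reproduces the paper's proof of Theorem \ref{thm4.1} step for step. You also correctly identify the typographical slip $\mathrm{var}_p(X_k;Z)$ in the statement as standing for $\mathrm{var}_p(X_k;\{Z_k\}_{k=1}^m)$.
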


\begin{theorem}\label{thm15.9}
Let $ \{ {X_k}\} _{k = 0}^n $ be p-uncorrelated variables satisfying the condition \eqref{eq15.11} and let $ Y $ be arbitrary. Then
\[{{\mathop{\rm var}} _p}\,\left( {Y - \sum\limits_{k = 0}^n {\frac{{{{{\mathop{\rm cov}} }_p}\,({X_k},Y;\{ {Z_k}\} _{k = 1}^m)}}{{{{{\mathop{\rm var}} }_p}\,({X_k};\{ {Z_k}\} _{k = 1}^m)}}{X_k}} \,;\{ {Z_k}\} _{k = 1}^m} \right) \le {{\mathop{\rm var}} _p}\,\left( {Y - \sum\limits_{k = 0}^n {{\alpha _k}{X_k}} \,;\{ {Z_k}\} _{k = 1}^m} \right),\]
for any selection of constants $ \{ {\alpha _k}\} _{k = 0}^n $.
\end{theorem}

\begin{corollary}\label{coro15.10}
Under the conditions of the above theorem we have the following equality
\begin{align*}
&{{\mathop{\rm var}} _p}\,\left( {Y - \sum\limits_{k = 0}^n {\frac{{{{{\mathop{\rm cov}} }_p}\,({X_k},Y;\{ {Z_k}\} _{k = 1}^m)}}{{{{{\mathop{\rm var}} }_p}\,({X_k};\{ {Z_k}\} _{k = 1}^m)}}{X_k}} \,;\{ {Z_k}\} _{k = 1}^m} \right) \\
&\qquad\qquad\qquad\qquad= {{\mathop{\rm var}} _p}\,(Y;\{ {Z_k}\} _{k = 1}^m) - \sum\limits_{k = 0}^n {\frac{{{\mathop{\rm cov}} _p^2({X_k},Y;\{ {Z_k}\} _{k = 1}^m)}}{{{{{\mathop{\rm var}} }_p}\,({X_k};\{ {Z_k}\} _{k = 1}^m)}}} \\
&\qquad\qquad\qquad\qquad= {{\mathop{\rm var}} _p}\,(Y;\{ {Z_k}\} _{k = 1}^m)\left( {1 - \sum\limits_{k = 0}^n {\rho _p^2\,({X_k},Y;\{ {Z_k}\} _{k = 1}^m)} } \right),
\end{align*}
and as a result
\[\sum\limits_{k = 0}^n {\rho _p^2\,({X_k},Y;\{ {Z_k}\} _{k = 1}^m)}  \le 1.\]
\end{corollary}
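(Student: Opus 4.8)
The plan is to mirror, almost verbatim, the proof of Theorem \ref{thm4.3} and Corollary \ref{coro4.5}, since the present setting differs only in that a single fixed variable $Z$ is replaced by a finite orthogonal system $\{Z_k\}_{k=1}^m$, while all the algebraic machinery (the bilinearity property b4, the variance-of-a-sum identity b5, and the uncorrelatedness condition \eqref{eq15.11}) has exactly the same form as in Section \ref{sec4}. First I would expand $ {{\mathop{\rm var}} _p}(Y-\sum_{k=0}^n\alpha_k X_k;\{Z_k\}_{k=1}^m) $ using property b5, obtaining
\begin{align*}
{{\mathop{\rm var}} _p}\Big(Y-\sum_{k=0}^n\alpha_k X_k;\{Z_k\}_{k=1}^m\Big)
&= {{\mathop{\rm var}} _p}(Y;\{Z_k\}_{k=1}^m) + \sum_{k=0}^n\alpha_k^2\,{{\mathop{\rm var}} _p}(X_k;\{Z_k\}_{k=1}^m)\\
&\quad - 2\sum_{k=0}^n\alpha_k\,{{\mathop{\rm cov}} _p}(X_k,Y;\{Z_k\}_{k=1}^m),
\end{align*}
where the cross terms $ {{\mathop{\rm cov}} _p}(X_i,X_j;\{Z_k\}_{k=1}^m) $ for $i\neq j$ vanish by \eqref{eq15.11}, so that the variance of the sum decouples into a sum of variances exactly as in the identity displayed after \eqref{eq4.12}.

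Next I would complete the square in each summand, writing the middle and last terms as
\[
\sum_{k=0}^n\left(\alpha_k\sqrt{{{\mathop{\rm var}} _p}(X_k;\{Z_k\}_{k=1}^m)}-\frac{{{\mathop{\rm cov}} _p}(X_k,Y;\{Z_k\}_{k=1}^m)}{\sqrt{{{\mathop{\rm var}} _p}(X_k;\{Z_k\}_{k=1}^m)}}\right)^2-\sum_{k=0}^n\frac{{\mathop{\rm cov}} _p^2(X_k,Y;\{Z_k\}_{k=1}^m)}{{{\mathop{\rm var}} _p}(X_k;\{Z_k\}_{k=1}^m)}.
\]
The theorem $ {{\mathop{\rm var}} _p}(X_k;\{Z_k\}_{k=1}^m)\ge 0 $ from \eqref{eq15.6} (together with linear independence from Theorem \ref{thm15.5}, ensuring the denominators are strictly positive) justifies taking these square roots. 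Choosing $\alpha_k$ to be the minimizing value $ {{\mathop{\rm cov}} _p}(X_k,Y;\{Z_k\}_{k=1}^m)/{{\mathop{\rm var}} _p}(X_k;\{Z_k\}_{k=1}^m) $ annihilates the square bracket and leaves precisely the first displayed equality of the corollary; substituting the definition \eqref{eq15.8} of $\rho_p$ yields the second equality after factoring out $ {{\mathop{\rm var}} _p}(Y;\{Z_k\}_{k=1}^m) $.

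Finally, for the inequality $\sum_{k=0}^n\rho_p^2(X_k,Y;\{Z_k\}_{k=1}^m)\le 1$, I would invoke the nonnegativity of a $p$-variance one more time: the left-hand side of the established equality is itself a $p$-variance, hence $\ge 0$ by \eqref{eq15.6}, so
\[
{{\mathop{\rm var}} _p}(Y;\{Z_k\}_{k=1}^m)\left(1-\sum_{k=0}^n\rho_p^2(X_k,Y;\{Z_k\}_{k=1}^m)\right)\ge 0,
\]
and dividing by the positive quantity $ {{\mathop{\rm var}} _p}(Y;\{Z_k\}_{k=1}^m) $ gives the bound. I expect no genuine obstacle here; the only point requiring a moment's care is confirming that the orthogonal-system analogues b4, b5 and \eqref{eq15.6}—which the excerpt states without proof—do license the same decoupling of the variance of a linear combination that \eqref{eq1.9} provided in the single-variable case. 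Once that parallel is granted, the argument is a direct transcription of the Section \ref{sec4} proofs, and the degenerate case $ {{\mathop{\rm var}} _p}(Y;\{Z_k\}_{k=1}^m)=0 $ (for which the inequality is trivial) can be disposed of in one line.
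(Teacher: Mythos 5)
Your proposal is correct and is essentially the proof the paper intends: Section~\ref{sec15} explicitly states its results without proof because they are direct transcriptions of the Section~\ref{sec4} arguments, and your completion-of-squares expansion via properties b4/b5 and condition \eqref{eq15.11}, followed by the nonnegativity of the $p$-variance, is exactly the argument of Theorem~\ref{thm4.3} and Corollary~\ref{coro4.5} adapted to the orthogonal system $\{Z_k\}_{k=1}^m$. Your added care about strictly positive denominators and the degenerate case ${\mathop{\rm var}}_p(Y;\{Z_k\}_{k=1}^m)=0$ goes slightly beyond what the paper records, but introduces no deviation in method.
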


\begin{theorem}\label{thm15.11}
Let $ \{ {V_k}\} _{k = 0}^n $ be linearly independent variables and $ \{ {X_k}\} _{k = 0}^n $ be their corresponding p-uncorrelated elements generated by $ \{ {V_k}\} _{k = 0}^n $ in \eqref{eq15.12}. Then, for any selection of constants $ \{ {\lambda _k}\} _{k = 0}^{n - 1} $ we have
\[{{\mathop{\rm var}} _p}\,\left( {{X_n}\,;\{ {Z_k}\} _{k = 1}^m} \right) \le {{\mathop{\rm var}} _p}\,\left( {{V_n} + \sum\limits_{k = 0}^{n - 1} {{\lambda _k}{V_k}} \,;\{ {Z_k}\} _{k = 1}^m} \right).\]
\end{theorem}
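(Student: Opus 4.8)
The plan is to replicate, essentially verbatim, the argument used to prove Theorem \ref{thm4.7}, with the single fixed variable $Z$ replaced throughout by the orthogonal system $\{Z_k\}_{k=1}^m$, and with the one-variable ingredients swapped for their orthogonal-variable counterparts already assembled in this section: Theorem \ref{thm15.6} (the generalized Gram--Schmidt construction together with its inverse) and Theorem \ref{thm15.9} (the extremal property of the p-uncorrelated coefficients). The whole proof rests on two facts: that the optimal truncated expansion of $V_n$ in the basis $\{X_k\}_{k=0}^{n-1}$ reproduces precisely the recursion that defines $X_n$, and that spanning $\{V_0,\dots,V_{n-1}\}$ by linear combinations is the same as spanning $\{X_0,\dots,X_{n-1}\}$.

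First I would apply Theorem \ref{thm15.9} with $n$ replaced by $n-1$ and with $Y$ taken to be $V_n$. This yields, for every choice of constants $\{\alpha_k\}_{k=0}^{n-1}$,
\[
{{\mathop{\rm var}} _p}\!\left( {V_n} - \sum_{k=0}^{n-1} \frac{{{{\mathop{\rm cov}} }_p}({X_k},{V_n};\{ {Z_k}\} _{k = 1}^m)}{{{{\mathop{\rm var}} }_p}({X_k};\{ {Z_k}\} _{k = 1}^m)}\,{X_k}\, ;\{ {Z_k}\} _{k = 1}^m\right) \le {{\mathop{\rm var}} _p}\!\left( {V_n} - \sum_{k=0}^{n-1}\alpha_k {X_k}\, ;\{ {Z_k}\} _{k = 1}^m\right).
\]
The key observation is that the minimizing combination on the left is exactly $X_n$: this is the content of the recursion in Theorem \ref{thm15.6}, which gives
\[
{X_n} = {V_n} - \sum_{k=0}^{n-1}\frac{{{{\mathop{\rm cov}} }_p}({X_k},{V_n};\{ {Z_k}\} _{k = 1}^m)}{{{{\mathop{\rm var}} }_p}({X_k};\{ {Z_k}\} _{k = 1}^m)}\,{X_k}.
\]
Substituting this identity into the left-hand side collapses it to ${{\mathop{\rm var}} _p}({X_n};\{ {Z_k}\} _{k = 1}^m)$, exactly as in the passage from \eqref{eq4.19} to \eqref{eq4.18}.

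Second, I would convert the bound over $\{\alpha_k\}$ into the desired bound over $\{\lambda_k\}$. By the invertibility of the triangular change of basis recorded in Theorem \ref{thm15.6}, the linear spans of $\{V_0,\dots,V_{n-1}\}$ and of $\{X_0,\dots,X_{n-1}\}$ coincide; hence for any prescribed $\{\lambda_k\}_{k=0}^{n-1}$ there exist $\{\alpha_k\}_{k=0}^{n-1}$ with $\sum_{k=0}^{n-1}\lambda_k V_k = -\sum_{k=0}^{n-1}\alpha_k X_k$, so that $V_n + \sum_{k=0}^{n-1}\lambda_k V_k = V_n - \sum_{k=0}^{n-1}\alpha_k X_k$. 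Feeding this equality into the right-hand side of the inequality above produces precisely the claimed estimate.

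The only genuinely delicate point, and the step I would verify most carefully, is the equality of spans and hence the surjectivity of the correspondence $\{\lambda_k\}\leftrightarrow\{\alpha_k\}$; everything else is bookkeeping. This requires the orthogonal-variable Gram--Schmidt relations and their inverses in Theorem \ref{thm15.6} to hold with each $X_k$ a genuine linear combination of $\{V_j\}$ having unit leading coefficient, which in turn presupposes that the recursion is well defined, i.e. that ${{\mathop{\rm var}} _p}({X_k};\{ {Z_k}\} _{k = 1}^m)>0$ for $k=0,\dots,n-1$. I would therefore record at the outset that the linear independence of $\{V_k\}_{k=0}^n$ (via the determinant positivity \eqref{eq3.11} transported to the orthogonal setting) guarantees these strictly positive p-variances, after which the span argument and the whole chain of inequalities go through unchanged.
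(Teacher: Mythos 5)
Your proof is correct and is exactly the intended argument: the paper states Theorem \ref{thm15.11} without proof (Section \ref{sec15} explicitly defers to earlier sections), and its single-variable prototype, Theorem \ref{thm4.7}, is proved by precisely the chain you reproduce --- apply the extremal property (Theorem \ref{thm4.3} there, Theorem \ref{thm15.9} here) with $n \to n-1$ and $Y = V_n$, identify the resulting minimizer with $X_n$ via the Gram--Schmidt recursion (Theorem \ref{thm3.2} there, Theorem \ref{thm15.6} here), and pass between linear combinations of $\{X_k\}_{k=0}^{n-1}$ and of $\{V_k\}_{k=0}^{n-1}$ through the invertible triangular change of basis. Your additional remarks on the monic normalization of $X_n$ and the strict positivity of the p-variances are sound refinements of this same approach rather than a departure from it.
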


\begin{corollary}\label{coro15.12}
Let $ \{ {X_k}\} _{k = 0}^n $ be p-uncorrelated variables with respect to the fixed orthogonal variables $ \{ {Z_k}\} _{k = 1}^m $. Then, for any variable $ Y $ and every $ j = 0,1,...,n $ we have
\[{{\mathop{\rm cov}} _p}\,\left( {Y - \sum\limits_{k = 0}^n {\frac{{{{{\mathop{\rm cov}} }_p}\,({X_k},Y;\{ {Z_k}\} _{k = 1}^m)}}{{{{{\mathop{\rm var}} }_p}\,({X_k};\{ {Z_k}\} _{k = 1}^m)}}{X_k}} ,{X_j}\,;\{ {Z_k}\} _{k = 1}^m} \right) = 0.\]

\subsection{p-uncorrelated functions with respect to the fixed orthogonal functions}\label{subsec15.13}

Let $ \left\{ {{X_k} = {\Phi _k}(x)} \right\}_{k = 0}^n $, $ Y=f(x) $ and the orthogonal set $ \left\{ {{Z_k} = {z_k}(x)} \right\}_{k = 1}^m $ be defined in a continuous space with a probability density function as
\[{P_r}\left( {X = x} \right) = \frac{{w(x)}}{{\int_{\,a}^b {w(x)\,dx} }}.\]
We say that $ \left\{ {{\Phi _k}(x)} \right\}_{k = 0}^\infty  $ are (weighted) p-uncorrelated functions with respect to the fixed orthogonal functions $\left\{ {{z _k}(x)} \right\}_{k = 1}^m $ if they satisfy the condition
\begin{multline*}
\int_{\,a}^b {w(x)\,{\Phi _i}(x)\,{\Phi _j}(x)\,dx}  - p\sum\limits_{k = 1}^m {\frac{{\int_{\,a}^b {w(x)\,{\Phi _i}(x)\,{z_k}(x)\,dx} \,\int_{\,a}^b {w(x)\,{\Phi _j}(x)\,{z_k}(x)\,dx} }}{{\,\int_{\,a}^b {w(x)\,z_k^2(x)\,dx} }}} \\
\,\,\,\,\,\,\,\,\,\,\,\,\,\,\,\,\,\,\,\,\,\,\,\,\,\,\,\,\,\,\,\,\,\,\,\,\,\,\,\, = \left( {\int_{\,a}^b {w(x)\,\Phi _j^2(x)\,dx}  - p\sum\limits_{k = 1}^m {\frac{{{{\left( {\int_{\,a}^b {w(x)\,{\Phi _j}(x)\,{z_k}(x)\,dx} } \right)}^2}}}{{\,\int_{\,a}^b {w(x)\,z_k^2(x)\,dx} }}} } \right)\,{\delta _{i,j}},
\end{multline*}
provided that
\[\int_{\,a}^b {w(x)\,{z_i}(x)\,{z_j}(x)\,dx}  = \left( {\int_{\,a}^b {w(x)\,z_j^2(x)\,dx} } \right){\delta _{i,j}}.\]
\end{corollary}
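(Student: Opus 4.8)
The plan is to reproduce verbatim the two-line argument that established Corollary \ref{coro4.8}, since the single fixed variable $Z$ there and the orthogonal family $\{Z_k\}_{k=1}^m$ here enter the computation in exactly the same structural way. The only ingredients required are the bilinearity of the p-covariance operator (properties b1 and b4 listed for Definitions \eqref{eq15.5} and \eqref{eq15.6}) together with the p-uncorrelatedness hypothesis \eqref{eq15.11}. No analytic estimate or limiting argument is needed; the identity is purely formal.

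First I would apply the operator $\mathrm{cov}_p(\,\cdot\,,X_j;\{Z_k\}_{k=1}^m)$ to the argument $Y-\sum_{k=0}^n \frac{\mathrm{cov}_p(X_k,Y;\{Z_k\}_{k=1}^m)}{\mathrm{var}_p(X_k;\{Z_k\}_{k=1}^m)}X_k$ and expand it using additivity in the first slot and scalar homogeneity (property b4), which gives
\[
\mathrm{cov}_p(Y,X_j;\{Z_k\}_{k=1}^m) - \sum_{k=0}^n \frac{\mathrm{cov}_p(X_k,Y;\{Z_k\}_{k=1}^m)}{\mathrm{var}_p(X_k;\{Z_k\}_{k=1}^m)}\,\mathrm{cov}_p(X_k,X_j;\{Z_k\}_{k=1}^m).
\]
Next I would invoke \eqref{eq15.11}, namely $\mathrm{cov}_p(X_k,X_j;\{Z_k\}_{k=1}^m)=\mathrm{var}_p(X_j;\{Z_k\}_{k=1}^m)\,\delta_{k,j}$, which annihilates every summand except the one with $k=j$. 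That surviving term equals $\frac{\mathrm{cov}_p(X_j,Y;\{Z_k\}_{k=1}^m)}{\mathrm{var}_p(X_j;\{Z_k\}_{k=1}^m)}\,\mathrm{var}_p(X_j;\{Z_k\}_{k=1}^m)=\mathrm{cov}_p(X_j,Y;\{Z_k\}_{k=1}^m)$, and by the symmetry property b1 this coincides with $\mathrm{cov}_p(Y,X_j;\{Z_k\}_{k=1}^m)$. Subtracting it from the first term yields $0$, as asserted.

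The step I would scrutinize most carefully is not the cancellation itself but the validity of the bilinearity property b4 in this multi-variable setting, since that is the only place the orthogonality of the $\{Z_k\}$ is genuinely used. Unfolding Definition \eqref{eq15.5}, the correction term $(1-\sqrt{1-p})\sum_{k}\frac{E(\,\cdot\,Z_k)}{E(Z_k^2)}Z_k$ is linear in its probabilistic argument, and upon multiplying out the product the mixed contributions organize into $\sum_{i,j}\frac{E(XZ_i)E(YZ_j)}{E(Z_i^2)E(Z_j^2)}E(Z_iZ_j)$; the condition $E(Z_iZ_j)=E(Z_j^2)\delta_{i,j}$ from \eqref{eq15.4} collapses this double sum to $\sum_k E(XZ_k)E(YZ_k)/E(Z_k^2)$, which is precisely what makes the closed form $E(XY)-p\sum_k E(XZ_k)E(YZ_k)/E(Z_k^2)$ bilinear. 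Hence I would first confirm b4 (or simply cite it from the already-established properties b1--b5), after which the displayed collapse completes the proof.
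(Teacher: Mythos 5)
Your proposal is correct and is exactly the argument the paper intends: Section 15 explicitly states its results "without proof" because they mirror the earlier sections, and the intended proof is the two-line expansion-plus-cancellation already displayed in Corollary \ref{coro4.8}, which you reproduce faithfully with $\{Z_k\}_{k=1}^m$ in place of $Z$. Your extra verification that property b4 survives in the multi-variable setting (via the collapse of the double sum under the orthogonality condition \eqref{eq15.4}) is a sound and worthwhile addition, though once the closed form $E(XY)-p\sum_{k}E(XZ_k)E(YZ_k)/E(Z_k^2)$ in \eqref{eq15.5} is in hand, bilinearity is immediate and b4 can simply be cited.
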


\begin{remark}\label{rem15.14}
After deriving the above-mentioned p-uncorrelated functions, one will be able to define a sequence of orthogonal functions in the form
\begin{equation*}
{\Phi _n}(x) - (1 - \sqrt {1 - p} )\sum\limits_{k = 1}^m {\frac{{\,\int_{\,a}^b {w(x)\,{\Phi _n}(x)\,{z_k}(x)\,dx} }}{{\,\int_{\,a}^b {w(x)\,z_k^2(x)\,dx} }}\,{z_k}(x)}  = {G_n}(x;p,\{ {z_k}(x)\} _{k = 1}^m)\,,
\end{equation*}
having the orthogonality property 
\begin{multline*}
\int_{\,a}^b {w(x)\,{G_i}(x;p,\{ {z_k}(x)\} _{k = 1}^m){G_j}(x;p,\{ {z_k}(x)\} _{k = 1}^m)\,dx}  \\
= \left( {\int_{\,a}^b {w(x)\,dx} } \right)\Big( {{{{\mathop{\rm var}} }_p}({\Phi _j}(x);\,\{ {z_k}(x)\} _{k = 1}^m)} \Big){\delta _{i,j}}.
\end{multline*}
\end{remark}

\subsection{p-uncorrelated vectors with respect to fixed orthogonal vectors}\label{subsec15.15}

Let $ {\vec A_m} = ({a_1},{a_2},...,{a_m}) $ and $ {\vec B_m} = ({b_1},{b_2},...,{b_m}) $ be two arbitrary vectors and $ \left\{ {{{\vec Z}_{k,m}} = ({z_{k,1}},{z_{k,2}},...,{z_{k,m}})} \right\}_{k = 1}^l $ be a set of fixed orthogonal vectors. It can be verified that
\begin{align*}
&\left( {{{\vec A}_m} - (1 - \sqrt {1 - p} )\sum\limits_{k = 1}^l {\frac{{{{\vec A}_m}.{{\vec Z}_{k,m}}}}{{{{\vec Z}_{k,m}}.{{\vec Z}_{k,m}}}}{{\vec Z}_{k,m}}} } \right).\left( {{{\vec B}_m} - (1 - \sqrt {1 - p} )\sum\limits_{k = 1}^l {\frac{{{{\vec B}_m}.{{\vec Z}_{k,m}}}}{{{{\vec Z}_{k,m}}.{{\vec Z}_{k,m}}}}{{\vec Z}_{k,m}}} } \right)\\
&\qquad\qquad\,\,\,\,\,\,\,\,\,\,\,\,\,\,\,\,\,\,\,\,\,\,\,\,\,\,\,\,\,\,\,\,\,\,\,\,\,\,\,\,\,\,\,\,\,\,\,\, = {{\vec A}_m}.{{\vec B}_m} - p\sum\limits_{k = 1}^l {\frac{{({{\vec A}_m}.{{\vec Z}_{k,m}})({{\vec B}_m}.{{\vec Z}_{k,m}})}}{{{{\vec Z}_{k,m}}.{{\vec Z}_{k,m}}}}} ,
\end{align*}
only if
\[{\vec Z_{k,m}}.\,{\vec Z_{j,m}} = \left( {{{\vec Z}_{j,m}}.\,{{\vec Z}_{j,m}}} \right)\,{\delta _{k,j}}.\]

\bigskip

\noindent
{\textbf{Final Remark.}} As we observed, this theory was established based on an algebraic inequality of type \eqref{eq1.6}, i.e. 
\[0 \le {{\mathop{\rm var}} _1}\,(X;Z) \le {{\mathop{\rm var}} _p}\,(X;Z) \le {{\mathop{\rm var}} _0}\,(X;Z) = E({X^2}).\]
A notable point is that there is still another algebraic inequality that refines the above inequality and consequently improves the theory presented in this work. However, it contains various new definitions, concepts and calculations and therefore needs a much time to be completed. We hope inshallah to finish it soon.

\section*{Acknowledgments}
This work has been supported by the Alexander von Humboldt Foundation under the grant number: Ref 3.4 - IRN - 1128637 - GF-E.

\bigskip

\end{document}